\title[{Correspondences of categories for subregular $\mathcal W$-algebras and principal $\mathcal W$-superalgebras}]{Correspondences of categories for subregular $\mathcal W$-algebras and principal $\mathcal W$-superalgebras}
\author{Thomas Creutzig}
\address{Department of Mathematical and Statistical Sciences, University of Alberta, 632 CAB, Edmonton, Alberta, Canada T6G 2G1}
\email{creutzig@ualberta.ca}
\author{Naoki Genra}
\address{Department of Mathematical and Statistical Sciences, University of Alberta, 632 CAB, Edmonton, Alberta, Canada T6G 2G1}
\email{genra@ualberta.ca}
\author{Shigenori Nakatsuka}
\address{Kavli Institute for the Physics and Mathematics of the Universe (WPI), The University of Tokyo Institutes for Advanced Study, The University of Tokyo, Kashiwa, Chiba 277-8583, Japan}
\email{shigenori.nakatsuka@ipmu.jp}
\author{Ryo Sato}
\address{Institute of Mathematics, Academia Sinica, Taipei, Taiwan 10617}
\email{sato@gate.sinica.edu.tw}
\definecolor{rouge}{rgb}{0.85,0.1,.4}
\definecolor{bleu}{rgb}{0.1,0.2,0.9}
\definecolor{violet}{rgb}{0.7,0,0.8}
\newtheorem{definition}{Definition}[section]
\newtheorem{proposition}[definition]{Proposition}
\newtheorem{theorem}[definition]{Theorem}
\newtheorem{corollary}[definition]{Corollary}
\newtheorem{lemma}[definition]{Lemma}
\newtheorem{remark}[definition]{Remark}
\newtheorem{assumption}[definition]{Assumption}
\newtheorem*{thmA}{Main Theorem 1}
\newtheorem*{thmB}{Main Theorem 2}
\numberwithin{equation}{section}
\newcommand{\N}{\mathbb{N}}
\newcommand{\Z}{\mathbb{Z}}
\newcommand{\C}{\mathbb{C}}
\newcommand{\End}{\operatorname{End}}
\newcommand{\Hom}{\operatorname{Hom}}
\newcommand{\Com}{\operatorname{Com}}
\newcommand{\id}{\operatorname{id}}
\newcommand{\Ker}{\operatorname{Ker}}
\newcommand{\ch}{\operatorname{ch}}
\newcommand{\tr}{\operatorname{tr}}
\newcommand{\E}{\mathcal{E}}
\newcommand{\F}{\mathcal{F}}
\newcommand{\G}{\mathcal{G}}
\newcommand{\K}{\mathcal{K}}
\newcommand{\Q}{\mathbb{Q}}
\newcommand{\g}{\mathfrak{g}}
\newcommand{\sll}{\mathfrak{sl}}
\newcommand{\Mod}{\text{-}\mathrm{mod}}
\newcommand{\Pic}{\operatorname{Pic}}
\newcommand{\irr}{\operatorname{Irr}}
\newcommand{\Rep}{\operatorname{Rep}}
\newcommand{\wtcat}[1]{\mathcal{W}^{#1}\text{-}\mathrm{mod}^{\sf{wt}}}
\newcommand{\modcat}[1]{\mathcal{W}^{#1}\text{-}\mathrm{mod}}
\newcommand{\coset}{\boldsymbol{\Omega}}
\newcommand{\relcoh}[1]{H^{\mathrm{{\sf rel}},#1}_\infty}
\newcommand{\Relcoh}{\mathbf{H}^{{\sf rel}}}
\newcommand{\W}{\mathcal{W}}
\newcommand{\sub}{f_{\mathrm{sub}}}
\newcommand{\prin}{f_{\mathrm{prin}}}
\newcommand{\subW}{\mathcal{W}_{\mathrm{\textsf{sb}}}(n, r)}
\newcommand{\prinW}{\mathcal{W}_{\mathrm{\textsf{pr}}}(r, n)}
\newcommand{\prinsW}{\mathcal{W}_{\mathrm{\textsf{spr}}}(n,r)}
\newcommand{\subWtwo}{\mathcal{W}_{\mathrm{\textsf{sb}}}(n, 2)}
\newcommand{\prinWtwo}{\mathcal{W}_{\mathrm{\textsf{spr}}}(n, 2)}
\newcommand{\Wplus}{\mathcal{W}^+}
\newcommand{\Wminus}{\mathcal{W}^-}
\newcommand{\Wpm}{\mathcal{W}^\pm}
\newcommand{\Wmp}{\mathcal{W}^\mp}
\newcommand{\spmod}{\mathbf{L}_{\mathrm{\textsf{spr}}}}
\newcommand{\sbmod}{\mathbf{L}_{\mathrm{\textsf{sb}}}}
\newcommand{\prmod}{\mathbf{L}_{\mathcal{W}}}
\newcommand{\SubW}{\mathcal{W}_{\mathrm{\textsf{sb}}}}
\newcommand{\PrinW}{\mathcal{W}_{\mathrm{\textsf{pr}}}}
\newcommand{\PrinsW}{\mathcal{W}_{\mathrm{\textsf{spr}}}}
\newcommand{\Hsub}{H^+}
\newcommand{\Hsup}{H^-}
\newcommand{\tHsub}{\widetilde{H}^+}
\newcommand{\Hpm}{H^\pm}
\newcommand{\Hmp}{H^\mp}
\newcommand{\tHpm}{\widetilde{H}^\pm}
\newcommand{\heisplus}{\pi^{\mathfrak{h}_+}}
\newcommand{\heisminus}{\pi^{\mathfrak{h}_-}}
\newcommand{\oboxtimes}{\operatorname{\boxtimes}}
\newcommand{\Proj}{\pi_{P/Q}}
\newcommand{\boson}[1]{\phi^{#1}}
\newcommand{\ssqrt}[1]{\operatorname{\sqrt{\smash[b]{#1}}}}
\newcommand{\gluing}[1]{\widetilde{\mathcal{V}}^{#1}}
\newcommand{\difexp}[1]{E\left(#1,z\right)}
\newcommand\doi[2]{\href{http://dx.doi.org/#1}{#2}}
\begin{document}
\maketitle

\markboth{Correspondences of categories for subregular $\mathcal W$-algebras and principal $\mathcal W$-superalgebras}{Correspondences of categories for subregular $\mathcal W$-algebras and principal $\mathcal W$-superalgebras}

\begin{abstract}
Based on the Kazama--Suzuki type coset construction and its inverse coset between the subregular $\W$-algebra for $\sll_n$ and the principal $\W$-superalgebra for $\sll_{1|n}$, we prove a weight-wise linear equivalence of their representation categories. 
Our main results are then improvements of these correspondences incorporating the monoidal structures. 
Firstly, in the rational case, we obtain the classification of simple modules and their fusion rules via simple current extensions from their Heisenberg cosets. 
Secondly, beyond the rational case, we use a certain kernel VOA together with a relative semi-infinite cohomology functor to get functors from categories of modules for the subregular $\W$-algebra for $\sll_n$ to categories of modules for the principal $\W$-superalgebra for $\sll_{1|n}$ and vice versa. 
We study these functors and in particular prove isomorphisms between the superspaces  of logarithmic intertwining operators.  As a corollary, we obtain a  correspondence of representation categories in the monoidal sense beyond the rational case as well.
\end{abstract}

\section{Introduction}

Let $\g$ be a simple Lie superalgebra, $f$ an even nilpotent element in $\g$ and $k$ a complex number. Then one associates via quantum Hamiltonian reduction to the affine vertex superalgebra of $\g$ at level $k$, $V^k(\g)$, the $\mathcal W$-superalgebra $\mathcal W^k(\g, f)$ \cite{FF1, KRW, KW}.
Historically, principal $\mathcal W$-algebras, that is $f$ principal nilpotent and $\g$ a Lie algebra, have received most attention. However more general $\mathcal W$-superalgebras and their cosets have recently received increased attention due to their relevance in quantum field theory \cite{GR} and moduli spaces of instantons on surfaces \cite{RSYZ}. These can be viewed as generalizations of the celebrated Alday--Gaiotto--Tachikawa correspondence \cite{AGT}. 
One of the major predictions of this new development is that cosets of certain $\mathcal W$-superalgebras obey isomorphisms, called triality \cite{GR}. This has been proven by Andrew Linshaw and one of us whenever at least one of the three involved $\mathcal W$-superalgebras is in fact a $\mathcal W$-algebra \cite{CL1, CL2}.
We are interested in using these new relations of different $\mathcal W$-algebras and superalgebras to get correspondences of their representation categories. Triality is a vast generalization of both Feigin--Frenkel duality and the coset realization of principal $\mathcal W$-algebras. We now quickly review this principal case. 

In the case of principal $\mathcal W$-algebras one has Feigin--Frenkel duality \cite{FF2}, that is $\mathcal W^k(\g, f_{\text{prin}}) \cong \mathcal W^\ell({}^L\g, f_{\text{prin}})$ with the levels being non-critical and related by
\begin{equation}\label{FF}
r^\vee (k + h^\vee)(\ell + {}^Lh^\vee) =1
\end{equation} 
and $r^\vee$ the lacety of $\g$, ${}^L\g$ the Lie algebra whose roots coincide with the coroots of $\g$ and $h^\vee, {}^Lh^\vee$ the dual Coxeter numbers. 
Moreover for simply-laced $\g$ one also has the coset realization of principal $\mathcal W$-algebras \cite{ACL2}, that is for $k + h^\vee$ not a non-positive rational number one has $\text{Com}(V^k(\g), V^{k-1}(\g) \otimes L_1(\g)) \cong  \mathcal W^\ell(\g, f_{\text{prin}})$ with $L_1(\g)$ the simple quotient of $V^1(\g)$. In this instance $k$ and $\ell$ are related by 
\begin{equation}\label{coset}
\frac{1}{k+h^\vee} + \frac{1}{\ell+h^\vee} = 1. 
\end{equation}
Let $P_+$ be the set of dominant weights of $\g$ and for $\lambda \in P_+$ denote by $V^k(\lambda)$ the highest-weight module of $V^k(\g)$ whose top-level is the irreducible highest-weight module $E_\lambda$  of $\g$ of highest-weight $\lambda$, see subsection \ref{Affine_Notation} for the precise definition. Let $\mathcal W^k(\lambda, f)$ be its image under quantum Hamiltonian reduction corresponding to the nilpotent element $f$. Then the coset Theorem can be restated as follows for generic $k$, 
\begin{equation}
V^{k-1}(\g) \otimes L_1(\g) \cong  \bigoplus_{\lambda \in P_+ \cap Q} V^k(\lambda) \otimes \mathcal W^\ell(\lambda, f_{\text{prin}})
\end{equation}
with $Q$ the root lattice of $\g$. 
We specialize to type $A$, that is $\g = \mathfrak{sl}_n$. Denote by $\omega_i$ the $i$-th fundamental weight. Let $L = \sqrt{n}\mathbb Z$ and define the map $s : P_+ \rightarrow L'/L$ by $s(\lambda) = \frac{i}{\sqrt{n}} + L$ if $\lambda = \varpi_i \mod Q$. With this notation the coset Theorem can be restated as a sum over all dominant weights, namely for generic $k$
\begin{equation}\label{ACL}
V^{k-1}( \mathfrak{sl}_n) \otimes \mathcal E^{\otimes n} \cong  \bigoplus_{\lambda \in P_+ } V^k(\lambda) \otimes \mathcal W^\ell(\lambda, f_{\text{prin}}) \otimes V_{s(\lambda)}
\end{equation}
with $\mathcal E$ the vertex superalgebra of a pair of free fermions and $V_L$ the lattice vertex algebra of $L$.
Decompositions of vertex algebras of this type together with the theory of vertex superalgebra extensions of \cite{CKM} can be very efficiently used to get 
nice functors between representation categories of different vertex algebras. Indeed there are fully faithful braided tensor functors from certain subcategories of the simple quotient $\mathcal W_\ell(\lambda, f_{\text{prin}})$ of $\mathcal W^\ell(\lambda, f_{\text{prin}})$ to certain categories of ordinary modules of $L_{k-1}(\g) \otimes L_1(\g)$ if $k-1$ is an admissible level \cite{CHY, C1} or $\g = \mathfrak{sl}_2$ at generic level $k$ \cite{CJORY}. Such correspondences of braided tensor categories have been conjectured in the context of the quantum geometric Langlands program \cite{AFO}. 
In this work we will study correspondences between representation categories of  subregular $\mathcal W$-algebra of $\mathfrak{sl}_n$ and  principal $\mathcal W$-superalgebra of $\mathfrak{sl}_{n|1}$. Our first method will also use the theory of vertex superalgebra extensions. This method is very efficient, however it only applies if one can ensure the existence of vertex tensor category structure in the sense of \cite{HLZ1}-\cite{HLZ8}. This is usually a very difficult problem and hence in addition we seek a different method.

In order to motivate our second approach we need to discuss generalizations of Feigin--Frenkel duality. 
Both Feigin--Frenkel duality and the coset realization of principal $\mathcal W$-algebras of type $A$ and $D$ are part of a large family of trialities relating cosets of $\mathcal W$-(super)algebras \cite{CL1, CL2}. We review the Feigin--Frenkel type duality in type $A$. 
Nilpotent elements of $\mathfrak{sl}_N$ are characterized by partitions of $N$. Let $N = n+m$ and $f_{n, m}$ the nilpotent element corresponding to the partition $(n, 1, \dots, 1)$ of $N$. Let $\psi = k+ n+m$ and for $n>0$ one sets $\mathcal W^\psi(n, m) := \mathcal W^k(\mathfrak{sl}_{n+m}, f_{n, m})$  and we only note that the $\mathcal W^\psi(0, m)$-algebra is defined slightly differently. $\mathcal W^\psi(n, m)$ has an affine subalgebra of $\mathfrak{gl}_m$ at level $k +n -1$ and the coset is denoted by $C^\psi(n, m)$. Next, consider $\mathfrak{sl}_{n|m}$. In this case nilpotent elements are characterized by pairs of partitions of $n$ and $m$. Let $f_{n|m}$ be the nilpotent element corresponding to the partition $(n|1, \dots, 1)$ of $(n|m)$ and set $\mathcal V^\phi(n, m) := \mathcal W^\ell(\mathfrak{sl}_{n|m}, f_{n|m})$ with $\phi = \ell + n-m$. Again the case $n=0$ is slightly differently and in fact is precisely \eqref{ACL}.
The superalgebra $\mathcal V^\phi(n, m)$ has an affine subalgebra of $\mathfrak{gl}_m$ at level $-\ell  -n  +1$ and the coset is denoted by $D^\phi(n, m)$ if $n\neq m$. We only note that the definition of $\mathcal V^\phi(n, n)$ and $D^\phi(n, n)$ are a little bit different since $\mathfrak{sl}_{n|n}$ is not a simple Lie superalgebra. The Feigin--Frenkel type duality is then \cite{CL1}
\begin{align}\label{isom of cosets}
    C^\psi(n -m, m) \cong D^{\psi^{-1}}(n, m)
\end{align}
for $n\geq m$. The case $m=0$ is precisely Feigin--Frenkel duality of type $A$. 
These results are a very good starting point for investigating the problem of finding a nice functor from certain representation categories of $\mathcal W^\psi(n - m, m)$ to categories of $\mathcal V^{\psi^{-1}}(n, m)$-modules. The idea is that one pairs $\mathcal W^\psi(n - m, m)$ with a certain kernel vertex operator algebra, such that $\mathcal V^{\psi^{-1}}(n, m)$ is realized as a cohomology. The candidate kernel vertex operator algebra is suggested from physics:
$S$-duality for four-dimensional $\mathcal N=4$ supersymmetric GL-twisted gauge theory is a physics pendant of the quantum geometric Langlands program and it especially conjectures (and proves for $\mathfrak{sl}_2$) the existence of certain junction vertex operator algebras, also called quantum geometric Langlands kernel VOAs \cite{CG}, see also  \cite{CGL, FG} for subsequent studies. In the case of type $A$, the central object is 
\begin{equation}\label{kernel}
A[\mathfrak{sl}_N, \psi] :=   \bigoplus_{\lambda \in P^+ } V^k(\lambda) \otimes V^\ell(\lambda)\otimes V_{s(\lambda)}
\end{equation}
with $k, \ell$ related by \eqref{coset} and $\psi = k+N$. 
This has conjecturally a simple vertex superalgebra structure for generic $\psi$. 
Comparing with \eqref{ACL}, we see that the principal quantum Hamiltonian reduction on the second factor gives the decomposition of $V^{k-1}( \mathfrak{sl}_n) \otimes \mathcal E^{\otimes n}$. In \cite{CL1} it is proven that in fact all the $\mathcal W^\phi(n, m)$ can be realized as a coset of a reduction of $A[\mathfrak{sl}_N, \psi]$ provided a certain existence and simplicity conjecture is true. There it is also conjectured that a certain relative semi-infinite Lie algebra cohomology of $\mathcal W^\phi(n -m , m) \otimes A[\mathfrak{sl}_m, 1- \psi]  \otimes \pi^{k-\ell}$ with $\pi^{k-\ell}$ a rank one Heisenberg vertex algebra is isomorphic as a vertex superalgebra to $\mathcal V^{\psi^{-1}}(n, m)$. This conjecture is also reduced to a certain existence and simplicity statement  \cite[Theorem 1.4]{CL1}. 
In the case of $m=1$, the kernel vertex algebra in this conjecture is replaced by $V_\Z\otimes \pi$ with $\pi$ a rank one non-degenerate Heisenberg vertex algebra. One major result of this work is that we not only prove the Conjecture for $m=1$, but especially understand how it provides a functor from 
$\mathcal W^\psi(n - 1, 1)$-modules to $\mathcal V^{\psi^{-1}}(n , 1)$-modules. Note that the   $\mathcal W^\psi(n - 1, 1)$-algebra is the subregular $\mathcal W$-algebra of $\mathfrak{sl}_n$ and  $\mathcal V^{\psi^{-1}}(n , 1)$ is the principal $\mathcal W$-superalgebra of $\mathfrak{sl}_{n|1}$ and in this case there is a very different proof of the duality that is rather useful for our understanding \cite{CGN}. Let us now explain our main results and especially the very nice properties of the functor from $\mathcal W^\psi(n - 1, 1)$-modules to $\mathcal V^{\psi^{-1}}(n , 1)$-modules.

\subsection{Results}
In this paper we employ two different methods to establish a correspondence between the representation theory of the subregular $\mathcal W$-algebra of $\mathfrak{sl}_n$ and that of the principal $\mathcal W$-superalgebra of $\mathfrak{sl}_{n|1}\simeq\mathfrak{sl}_{1|n}$. One method is a tensor categorical,   which is based on the vertex tensor category theory \cite{HLZ1}-\cite{HLZ8} together with the vertex operator superalgebra extension theory \cite{HKL, CKM}. This method only applies for categories of modules that are vertex tensor categories, as e.g.~for $C_2$-cofinite vertex operator algebras.  Alternatively, the other method is of cohomological nature and it does not require the existence of vertex tensor categories.
In order to state our main results on the correspondence of monoidal structures, we first briefly review correspondences of linear structures.

As mentioned earlier,
\begin{equation}
    \W^+:=\W^\psi(n-1, 1)=\W^k(\sll_n,\sub),\quad \W^-:=\mathcal{V}^{\psi^{-1}}(n, 1)\simeq\W^\ell(\sll_{1|n},\prin)
\end{equation}
contain affine $\mathfrak{gl}_1$ subalgebras, denoted by $\pi^+$ and $\pi^-$, respectively.
In \cite{CGN}, the first three authors of this paper introduced two diagonal coset constructions
\begin{equation}\label{intro:KS_and_inverse}
    \W^-\xrightarrow{\simeq}\Com(\Delta(\pi^+),\W^+\otimes V_\Z),\quad\W^+\xrightarrow{\simeq}\Com(\Delta(\pi^-),\W^-\otimes V_{\sqrt{-1}\Z}).
\end{equation}
Here $V_L$ stands for the lattice vertex superalgebra associated to an integral lattice $L$ and $\Delta(\pi^\pm)$ for certain diagonal Heisenberg subalgebras of $\W^\pm \otimes V_{\sqrt{\pm 1}\Z}$. When $n=2$, the former coincides with the Kazama--Suzuki coset construction firstly studied by \cite{DVPYZ,KaSu} at unitary levels, and the latter coincides with its inverse coset construction firstly studied by \cite{FST} at general levels (see also \cite{Ad1,Ad2,HM,Sato1, CLRW}).
Based on the same technique used in \cite{Sato1} (see also \cite{FST}), we consider the categories of $\W^\pm$-modules on which $\pi^\pm$ acts semisimply with the $\mathfrak{gl}_1$-weight decomposition (see \S\ref{Coset_Functor} for the definition)
\begin{equation}
    \W^\pm\Mod^{\sf wt}=\bigoplus_{[\lambda]\in\C/\Z}\W^\pm\Mod^{\sf wt}_{[\lambda]}
\end{equation}
and establish linear categorical equivalences
\begin{equation}\label{categorical equivalence: abelian}
    \W^+\Mod^{\sf wt}_{[\lambda]}\xrightarrow{\simeq}\W^-\Mod^{\sf wt}_{[\check{\lambda}]},\quad    \W^-\Mod^{\sf wt}_{[\check{\lambda}]}\xrightarrow{\simeq}\W^+\Mod^{\sf wt}_{[\lambda]}
\end{equation}
for appropriate pairs $(\lambda,\check{\lambda})$.
In addition, these equivalences are also valid if we replace $\W^\pm$ with their simple quotients (see \S\ref{sec:Kazama-Suzuki} for the precise statements). In the following, we lift this result to correspondences that include the monoidal structure of the categories. 

 \subsubsection{Correspondences via categorical methods}
\label{categorical method}
The simple subregular $\W$-algebra $\W_k(\sll_n,\sub)$ at the levels
\begin{align}\label{condition for r}
    k=-n+\frac{n+r}{n-1},\quad (r\in \Z_{\geq0},\ \mathrm{gcd}(n+r,r-1)=1)
\end{align}
is called exceptional. Its $C_2$-cofiniteness is established by Arakawa \cite{Ar2} and rationality was first proven for $n=3$ \cite{Ar1} and $n=4$ \cite{CL3} and then in general by Arakawa and van Ekeren \cite{AvE2}.
In this case, the Heisenberg subalgebra $\pi^+$ inside the subregular $\W$-algebra is extended to a lattice vertex algebra \cite{Mason} and thus the representation theory at these levels can be studied by the theory of \emph{simple current extensions} modulo the representation theory of the Heisenberg coset.   
Linshaw and one of the authors \cite{CL1} identified the Heisenberg coset with a principal $\W$-algebra 
\begin{equation}\label{intro: level-rank}
    \mathrm{Com}(\pi^+, \W_{-n+\frac{n+r}{n-1}}(\sll_n,\sub))\simeq \W_{-r+\frac{r+n}{r+1}}(\sll_r,\prin),
\end{equation}
which generalizes the level-rank duality for the $\sll_2$-parafermion algebra when $n=2$ \cite{ALY} as well as the cases $n=3$ \cite{ACL1} and $n=4$ \cite{CL3}. Here the right-hand side in \eqref{intro: level-rank} for the cases $r=0,1$ is interpreted as $\C$, see Section \ref{Fusion_Subregular}. The principal $\W$-algebra at these levels are $C_2$-cofinite and rational \cite{Ar2,Ar3} and the representation theory is completely understood \cite{Ar3}: the complete set of representatives of simple modules is in one-to-one correspondence with that of the simple affine vertex algebra $L_n(\sll_r)$ \cite{Ar3,FKW} and both of their fusion rules are the same \cite{AvE1, C1}. Hence it gives an isomorphism of their fusion rings
$$\mathcal{K}(L_n(\sll_r))\xrightarrow{\simeq} \mathcal{K}(\W_{-n+\frac{n+r}{n-1}}(\sll_n,\sub)),\quad L(\lambda)\mapsto \mathbf{L}_\mathcal{W}(\lambda),$$
where $L_n(\lambda)$ is the unique simple quotient of the previous $V^n(\lambda)$ and $\lambda$ runs through the set $P_+^n(r)$ of dominant weights of $\sll_r$ at level $n$.
Now, by \cite{CL3, CL1}, one can decompose the subregular $\W$-algebra into
$$\W_{-n+\frac{n+r}{n-1}}(\sll_n,\sub) \simeq \bigoplus_{i \in \Z_r} \mathbf{L}_\W(n\varpi_i) \otimes V_{\frac{ni}{\ssqrt{nr}}+\ssqrt{nr}\Z},$$
where $\varpi_i$ denotes the $i$-the fundamental weight, including the cases $r=0,1$ as $\C\otimes \C$ and $\C\otimes V_{\sqrt{n}\Z}$, respectively (see Remark \ref{r=0_1}). Note that the Kazama--Suzuki coset construction \eqref{intro:KS_and_inverse} implies a similar description for the simple principal $\W$-superalgebra. Namely, we have by Theorem \ref{decomposition for principal super}
\begin{align}\label{intro extension to superW}
\W_{-(n-1)+\frac{n-1}{n+r}}(\sll_{1|n},\prin)\simeq \bigoplus_{i\in \Z_r}\mathbf{L}_\mathcal{W}(n\varpi_i)\otimes V_{\frac{(n+r)i}{\ssqrt{(n+r)r}}+\ssqrt{(n+r)r}\Z},
\end{align}
including the cases $r=1,2$ as $\C\otimes \C$ and $\C\otimes V_{\ssqrt{n+1}\Z}$, respectively (see Remark \ref{r=0,1 for super}).

To deduce the classification of simple modules and the fusion ring of the simple subregular $\W$-algebra and principal $\W$-superalgebra, we view these (super)algebras as extensions of the principal $\W$-algebra tensored with lattice vertex (super)algebras. 
Based on earlier works \cite{HKL, CKL,CKM,KO} on simple current extensions, we consider the simple current extensions of the form
$$\mathcal{E}=\bigoplus_{a\in N/L} S_a\otimes V_{a+L},$$
where $N$ is an arbitrary lattice lying between a non-degenerate integral lattice $L$ and its dual lattice $L'$, and $\{V_{a+L}\,|\,a\in N/L\}$ (resp.~$\{S_a\,|\,a\in N/L\}$) is a group of simple currents for the lattice vertex superalgebra $V_L$ associated to $L$ (resp.~for a simple $C_2$-cofinite $\frac{1}{2}\Z$-graded vertex operator superalgebra $V$), satisfying some mild assumptions. 
Then the classification of simple modules and the description of fusion rings for the one are expressed by the other. In particular, the fusion rings are related in the following symmetric way:
\begin{equation}\label{duality by lattice}
\begin{split}
\mathcal{K}(\mathcal{E})&\simeq 
    \left(\mathcal{K}(V)\underset{\Z[N/L]}{\otimes}\Z[L'/L]\right)^{N/L},\quad \\
\mathcal{K}(V)&\simeq \left(\mathcal{K}(\mathcal{E})\underset{\Z[N'/L]}{\otimes}\Z[L'/L]\right)^{N'/L},
\end{split}
\end{equation}
see Theorem \ref{extension law} and \ref{Inverting_SCE} for details.
We note that this kind of duality in the rational setting between fusion rings for simple current extensions is essentially obtained by Yamada and Yamauchi \cite{YY} and  by Kanade, McRae and one of the authors \cite{CKM}. See also \cite{ADJR} in the case of parafermion vertex algebras.

By applying this general theory of simple current extensions and the level-rank duality \eqref{level-rank duality} of the fusion rings of $L_n(\sll_r)$ and $L_r(\sll_n)$ \cite{Fr,OS}, we obtain the following main theorem in the rational case:
\begin{thmA}\label{Main_Theorem_1}
Let $r\in\Z_{\geq0}$ such that $\mathrm{gcd}(n+r,r-1)=1$.\\
(1) The complete set of simple modules for $\W_k(\sll_n,\sub)$ at $k=-n+\frac{n+r}{n-1}$ is in one-to-one correspondence with that of $L_r(\sll_n)$, i.e., $P_+^r(n)$. Moreover, we have an isomorphism of fusion rings:
$$\mathcal{K}\left(\W_k(\sll_n,\sub)\right)\simeq \mathcal{K}(L_r(\sll_n)).$$
(2) The complete set of simple modules for $\W_\ell(\sll_{1|n},\prin)$ at $\ell=-(n-1)+\frac{n-1}{n+r}$ is in one-to-one correspondence with
$$\left\{(\lambda,a)\in P_+^r(n)\times \Z_{n(n+r)}\,\big|\, \Proj(\lambda)=a\in \Z_n\right\}\Bigm/\Z_n.$$
Moreover, we have an isomorphism of fusion rings:
$$\mathcal{K}(\W_\ell(\sll_{1|n},\prin))\simeq\left(\mathcal{K}(L_r(\sll_n))\underset{\Z[\Z_n]}{\otimes}\Z[\Z_{n(n+r)}]\right)^{\Z_n}.$$
\end{thmA}
See Proposition \ref{fusion ring of subregular} and Theorem \ref{the final fusion for sprin}, respectively.
We note that for (1), the classification result of simple modules together with the description of the fusion ring when $n$ is even is already obtained in \cite{AvE2}. For (2), the case of $n=2$ corresponds to the $\mathcal{N}=2$ super Virasoro algebra and the classification result is obtained by Adamovi\'{c} \cite{Ad1} and the description of fusion ring essentially coincides with the one in \cite{Ad2}. We also note that the description of fusion ring in the case $n=2$ as in (2) was inferred by Wakimoto \cite{Wak} using the Verlinde formula.

Since the coset constructions \eqref{intro:KS_and_inverse} in the rational case are indeed simple current extensions, the fusion rings for $\W_k(\sll_n,\sub)$ and $\W_\ell(\sll_{1|n},\prin)$ in Main theorem 1 are actually related by the isomorphisms \eqref{duality by lattice}. This manifests the compatibility of the monoidal structure under the correspondence \eqref{intro:KS_and_inverse}.

In subsection \ref{more} we comment that this procedure only requires the existence of a vertex tensor category structure on the Heisenberg coset. This is however only known in two series of non-rational subregular $\W$-algebras, namely for $\W_k(\sll_{n},\sub)$ for $k = -n+\frac{n}{n+1}$ and  $k = -n +\frac{n+1}{n}$ by \cite{ACGY, CL1}. The Heisenberg cosets are singlet algebras and certain vertex tensor subcategories are understood \cite{CMY2}. We thus seek a  second method that does not require the existence of vertex tensor category.

\subsubsection{Correspondences via relative semi-infinite cohomology and a kernel VOA}

One of the features of the coset construction \eqref{intro:KS_and_inverse} is that the difference between the two  $\W$-superalgebras $\W^{\pm}$ is the dressings by the Fock modules of the Heisenberg vertex algebra of rank one. 
In order to see this, let us decompose $\W^{\pm}$ as modules of tensor products of the common Heisenberg coset $\mathscr{C}_0$ i.e., \eqref{isom of cosets} with $m=1$ and the Heisenberg vertex algebra:
\begin{align}
    \W^+\simeq \bigoplus_{a\in\Z} \mathscr{C}_a\otimes \pi^+_a,\quad \W^-\simeq \bigoplus_{a\in\Z} \mathscr{C}_a\otimes \pi^-_a
\end{align}
Here $\mathscr{C}_a$ ($a\in\Z$) are $\mathscr{C}_0$-modules appearing as the highest weight vectors for $\pi^{\pm}$ of highest weight $a$.
We see that in order to go from $\W^+$ to $\W^-$ we just need to replace $\pi^+_a$ by $\pi^-_a$.
We are looking for functors that not only replace $\pi^+_a$ by $\pi^-_a$ and vice versa, but that also allow us to relate the monoidal structures of the representation categories of $\W^+$ and $\W^-$ in an efficient way. 
Here comes the relative semi-infinite cohomology into play.

The relative semi-infinite cohomology was introduced by Feigin \cite{Fe} and by I.~Frenkel, Garland and Zuckerman \cite{FGZ}. It is a vertex algebraic or infinite dimensional Lie algebraic analogue of the usual relative Lie algebra cohomology.
Let $\pi^\pm_\dag$ denote a Heisenberg vertex algebra generated by a field $H_\dag^\pm(z)$ which satisfies the OPE equal to the minus of that of the generator $H^\pm(z)$ of $\pi^\pm$, i.e.,
$$H_\dag^\pm(z)H_\dag^\pm(w)\sim -H^\pm(z)H^\pm(w).$$
Then $\pi^\pm\otimes \pi_\dag^\pm$ contains a commutative vertex algebra generated by a diagonal Heisenberg field $A(z)=H^\pm(z)+H^\pm_\dag(z)$.
We may consider the semi-infinite cohomology 
$\relcoh{p}(\pi^\pm_a\otimes \pi^\pm_{\dag,b})$ of $\mathfrak{gl}_1(\!(z)\!)$ with coefficients in $\pi^\pm_a\otimes \pi^\pm_{\dag,b}$ relative to $\mathfrak{gl}_1$, which satisfies
$$\relcoh{p}(\pi^\pm_a\otimes \pi^\pm_{\dag,b})=\delta_{p,0}\delta_{a+b,0}\C.$$
The non-zero cohomology class is spanned by the tensor product of the highest weight vectors. To obtain $\W^-$ from $\W^+$ or $\W^+$ from $\W^-$, we consider the direct sums:
$$\bigoplus_{a\in \Z}\pi^+_{\dag,-a}\otimes \pi^-_{a},\quad \bigoplus_{a\in \Z}\pi^-_{\dag,-a}\otimes \pi^+_{a}.$$
By changing the bases of the Heisenberg fields inside $\pi^\pm\otimes \pi^\pm_\dag$, we find that they admit vertex superalgebra structure:
\begin{align}\label{gluing objects}
V_\Z\otimes \pi^{\ssqrt{-1}\Z},\quad V_{\ssqrt{-1}\Z}\otimes \pi^{\Z}.
\end{align}
Indeed, $V_\Z$ in the first is the kernel VOA \eqref{kernel} for $m=1$. Then we may reconstruct $\W^+$ from $\W^-$ and vice versa in the following way:
$$\W^-\simeq \relcoh{0}\left(\W^+\otimes V_\Z\otimes \pi^{\ssqrt{-1}\Z}\right),\quad \W^+\simeq \relcoh{0}\left(\W^-\otimes V_{\ssqrt{-1}\Z}\otimes \pi^{\Z}\right),$$
see Proposition \ref{Comparison_VA}. Note, that the first isomorphism in particular proves Conjecture 1.2 of \cite{CL1} for the case $m=1$, that is $\mathfrak{gl}_1$.
We may generalize this construction for modules by replacing the second factor in \eqref{gluing objects} by their Fock modules with appropriate highest weights. This actually defines functors weight-wisely:
\begin{align}\label{categorical equivalence}
\Relcoh_{+,\epsilon\lambda}\colon \wtcat{+}_{[\lambda]}\rightarrow \wtcat{-}_{[\check{\lambda}]},\quad \Relcoh_{-,\epsilon\lambda}\colon \wtcat{-}_{[\check{\lambda}]}\rightarrow \wtcat{+}_{[\lambda]},
\end{align}
which turns out to be naturally isomorphic to the ones in \eqref{categorical equivalence: abelian}, see Theorem \ref{Comparison_Module}. Therefore, $\Relcoh_{+,\epsilon\lambda}$ and $\Relcoh_{-,\epsilon\lambda}$ themselves give an equivalence of categories as abelian categories. 
The upshot of this method is that the the way of comparing the monoidal structure is clear: we may use the intertwining operators among Fock modules to compare the intertwining operators among $\W^\pm$-modules related by the equivalence \eqref{categorical equivalence}. The second main theorem of this paper is the following:
\begin{thmB}
For $i=1,2,3$, let $M_i^+$ be an object in $\wtcat{+}_{[\lambda_i]}$ and $M_i^-$ in  $\wtcat{-}_{[\check{\lambda}_i]}$.\\
(1) The functors $\{\Relcoh_{\pm,\epsilon\lambda_i}\}$ induce isomorphisms between the superspaces of logarithmic intertwining operators:
\begin{align*}
\mathbf{H}_{\pm}\colon I_{\Wpm}\binom{M_3^\pm}{M_1^\pm\ M_2^\pm}\xrightarrow{\simeq} I_{\Wmp}\binom{\Relcoh_{\pm,\epsilon\lambda_3}(M_3^\pm)}{\Relcoh_{\pm,\epsilon\lambda_1}(M_1^\pm)\ \Relcoh_{\pm,\epsilon\lambda_2}(M_2^\pm)}.
\end{align*}
\\
(2) Suppose that $M_1^\pm$ and $M_2^\pm$ admit a fusion product $M_1^\pm\boxtimes M_2^\pm$. Then the corresponding $\W^\mp$-module $\Relcoh_{\pm,\epsilon(\lambda_1+\lambda_2)}(M_1^\pm\boxtimes M_2^\pm)$ equipped with the image of the canonical intertwining operator of type $\binom{M_1^\pm\boxtimes M_2^\pm}{M_1^\pm\ M_2^\pm}$ gives a fusion product of $\Relcoh_{\pm,\epsilon\lambda_1}(M_1^\pm)$ and $\Relcoh_{\pm,\epsilon\lambda_2}(M_2^\pm)$. 
In particular, we have a natural isomorphism
\begin{align*}
 \Relcoh_{\pm,\epsilon\lambda_1}(M_1^\pm)\boxtimes\Relcoh_{\pm,\epsilon\lambda_2}(M_2^\pm)\simeq\Relcoh_{\pm,\epsilon(\lambda_1+\lambda_2)}(M_1^\pm\boxtimes M_2^\pm).
\end{align*}
\end{thmB}
\noindent
See Theorem \ref{isom for intertwining operators} and Corollary \ref{SIcoh_Fusion} for more precise statements. See also Theorem \ref{check of monoidality in the rational case} for an explicit description in the rational case.

\subsection{Outlook}
 
In \cite{CL1, CL2} it has been conjectured that certain $\mathcal W$-(super)algebras whose levels are related by a relation of type \eqref{FF} are connected via tensoring with a certain kernel VOA and then taking a relative semi-infinite cohomology, see Sections 10 of \cite{CL1} and Section 1.2 of \cite{CL2}.
We have proven that in the simplest case, that is when the kernel VOA is a lattice vertex algebra times a Heisenberg vertex algebra, this indeed works. Moreover we have seen that this approach is very efficient in connecting representation categories of the involved $\mathcal W$-(super)algebras. It is thus a major future problem to lift our studies to higher rank cases, this means it is important to settle \cite[Conjecture 1.1 and 1.2]{CL1} and  \cite[Conjecture 1.2]{CL2}.

It is also interesting to study correspondences of correlation functions and conformal blocks and we expect that our relative semi-infinite cohomology approach will be quite useful. Recall that we benefitted from physics suggestions and so it is worth noting that physics suggests us an additional relation. 
The relation between correlation functions of the $H_3^+$-model (a non-compact version of the $\sll_2$ WZW-theory) and $\mathcal{N}=2$ super Liouville theory (the underlying vertex algebra is the $\mathcal{N}=2$ super Virasoro algebra) is called the supersymmetric Fateev--Zamolodchikov--Zamolodchikov duality in physics. It is understood using mirror symmetry \cite{HK} or the path integral \cite{CHR}. The latter method actually relates correlation functions of both theories to correlation functions in Liouville theory (whose underlying vertex algebra is the Virasoro algebra) and uses Liouville theories self-duality, i.e. Feigin--Frenkel duality of the Virasoro algebra.  This method generalizes to higher rank cases \cite{CH} and that generalization is actually inspired from \cite{CGN, CL1}.

Also note that the connection between characters of modules is already quite interesting. For example in the simplest example of $\sll_2$ at admissible level characters are formal distributions or expansions of meromorphic Jacobi forms \cite{CR, Ad5}, while characters of the $\mathcal{N}=2$ super Virasoro algebra are expressed in terms of Jacobi theta functions and mock Jacobi forms \cite{STT}. This correspondence has been studied in \cite{FSST, S3, KoS, CLRW}. From our perspective the character of a module that we obtain via $\Relcoh_{\pm,\epsilon\lambda}$ is  the Euler-Poincar\'e supercharacter of the relative complex as higher cohomologies vanish. 

It is usually a very difficult problem to establish the existence of a vertex tensor category structure on a category of modules of a vertex algebra $V$ that is not $C_2$-cofinite. However, Theorem 3.3.4 of \cite{CY} tells us that if the category of ordinary modules of $V$ is of finite length and $C_1$-cofinite, then it is a vertex tensor category. This Theorem applies for example to $L_k(\mathfrak{sl}_2)$ at admissible level \cite{CHY}, but almost all modules in this case are not ordinary and in fact are often not even lower bounded. The dual $\mathcal W$-superalgebra is the $\mathcal{N}=2$ super Virasoro algebra, and modules are always ordinary \cite{Sato1, CLRW}. It is a reasonable hope that they are in fact all $C_1$-cofinite and of finite length. 
Note that the duality between subregular $\mathcal W$-algebras of $\mathfrak{sl}_n$ and the principal $\mathcal W$-superalgebras of $\mathfrak{sl}_{n|1}$ somehow extends to $n=1$, namely an analogous relation between the $\beta\gamma$-system and the affine vertex superalgebra of $\mathfrak{gl}_{1|1}$ at non-degenerate level. For the latter, it is indeed true that the category of ordinary modules is $C_1$-cofinite and of finite length \cite{CMY3} and since the $\beta\gamma$-system times a pair of free fermions is a simple current extension of the affine vertex superalgebra of $\mathfrak{gl}_{1|1}$ at non degenerate level \cite{CR2} one obtains the category of $\beta\gamma$-modules  that includes relaxed-highest-weight modules. In fact, $\beta\gamma$ is the first example where one has a general vertex tensor category result including all relaxed-highest weight modules \cite{AW}, this result can thus be reproduced, but also $L_k(\sll_{2|1})$ for $k=1$ and $k=-1/2$, see \cite[Section 5]{CMY3}.

\subsection{Organization}
This paper is organized as follows. In Section \ref{Fusion_Lattice_Cosets} we first review and explore general results about the monoidal structure of categories of VOA modules and their super extensions, following \cite{CKM}. Based on purely categorical treatments of simple current extensions in Appendix \ref{Sec. BTC and Simple currents}, we prove the ring isomorphisms \eqref{duality by lattice} in Section \ref{Simple current extensions by Lattice}. After we review of the Feigin--Semikhatov duality \cite{CGN} in Section \ref{Feigin_Semikhatov_Duality}, we prove, in Section \ref{SuperW_rational}, Main Theorem 1 by using the ring isomorphisms \eqref{duality by lattice} and the level-rank duality discussed in Appendix \ref{Level_Rank_Duality}.
The weight-wise linear equivalences between module categories of the subregular $\W$-algebra and the principal $\W$-superalgebra are proved in Section \ref{sec:Kazama-Suzuki}.
Lastly, in Section 6, we introduce the relative semi-infinite cohomology functor and prove Main Theorem 2. We note that the purely categorical perspective of simple current extensions in Appendix \ref{Sec. BTC and Simple currents} is of independent interest.

\subsection*{Acknowledgements}

T.C. is supported by NSERC $\#$RES0048511.
N.G is supported by JSPS Overseas Research Fellowships Grant Number 112035.
S.N. is supported by the Program for Leading Graduate Schools, MEXT, Japan and by JSPS KAKENHI Grant Number 20J10147. 
This work was supported by World Premier International Research Center Initiative (WPI Initiative), MEXT, Japan.

\section{Fusion rules of lattice cosets}\label{Fusion_Lattice_Cosets}

\subsection{Vertex superalgebras and their modules}
We recall basics on vertex superalgebras and their modules, following \cite{HLZ1, CKM}. 
Recall that a vector superspace $V$ (over $\C$) is a $\Z_2$-graded vector space $V=V_{\bar{0}}\oplus V_{\bar{1}}$ where $\Z_2 = \{\bar{0}, \bar{1}\}$ and that an element $a \in V_{\bar{0}}$ (resp.\ $a \in V_{\bar{1}}$) is all even (resp.\ odd) or of parity $\overline{a}=\bar{0}$ (resp.\ $\overline{a}=\bar{1}$). 
The $\Z_2$-grading on $V$ induces a vector superspace structure on the space of $\C$-linear endomorphisms $\End V$ and thus $\End V$ is naturally a Lie superalgebra by $[a,b]=a b-(-1)^{\overline{\mathstrut a}\cdot\overline{\mathstrut b}}b a$ for parity homogeneous elements $a, b \in \End V$.

A vertex superalgebra is a vector superspace $V$ equipped with a non-zero vector $\mathbf{1}\in V_{\bar{0}}$, a linear map $\partial \in (\End V)_{\bar{0}}$ and a parity-preserving linear map
\begin{align*}
Y(\cdot,z)\colon V\rightarrow \End V [\![z,z^{-1}]\!],\quad a\mapsto Y(a,z)=a(z)=\sum_{n\in \Z}a_{(n)}z^{-n-1}
\end{align*}
such that $Y(a,z)b\in V(\!(z)\!)$ ($a,b\in V$) satisfying 
\begin{enumerate}
    \item $Y(\mathbf{1},z)a=a$ and $Y(a,z)\mathbf{1}\in a+V[\![z]\!]z$ for $a \in V$,
    \item $\partial\mathbf{1} = 0$ and $[\partial, Y(a, z)] = \partial_z Y(a, z)$ for $a \in V$,
    \item for $a, b \in V$, there exists $m \in \Z_{\geq 0 }$ such that $(z-w)^m[Y(a,z),Y(b,w)]=0$.
\end{enumerate}
It follows from the axioms that $[Y(a,z),Y(b,w)]=\sum_{n=0}^\infty Y(a_{(n)}b,w)\partial_w^n\delta(z-w)/n!$ where $\delta(z-w)=\sum_{n\in\Z}z^n w^{-n-1}$, which we write as 
\begin{align*}
Y(a,z)Y(b,w)\sim \sum_{n=0}^\infty \frac{Y(a_{(n)}b,w)}{(z-w)^{n+1}},
\end{align*}
called the OPE of $Y(a,z)$ and $Y(b,w)$. If $V = V_{\bar{0}}$, then it is called a vertex algebra.
An even element $\omega\in V_{\bar{0}}$ is called a conformal vector if the corresponding field $Y(\omega,z)=L(z)= \sum_{n\in \Z}L_n z^{-n-2}$ satisfies that $L_{-1} = \partial$, $L_0$ acts on $V$ semisimply, and
\begin{align*}
L(z)L(w) \sim \frac{\partial_w L(w)}{z-w} + \frac{2L(w)}{(z-w)^2} + \frac{c/2}{(z-w)^4}
\end{align*}
for some $c \in \C$, called the central charge of $\omega$. The $L_0$-grading $V=\oplus_{\Delta\in\C} V_\Delta$ is called the conformal grading and in this paper we consider the case $V$ is at most $\frac{1}{2}\Z$-graded.
A vertex operator superalgebra of CFT type is a pair $(V,\omega)$ such that $V$ is $\frac{1}{2}\Z_{\geq0}$-graded by $L_0$ and $V_0=\C \mathbf{1}$ and $\dim V_\Delta < \infty$ for $\Delta\in\frac{1}{2}\Z_{\geq0}$.

In the following, we will consider a negative-definite lattice vertex superalgebra e.g.\ $V_{\ssqrt{-1}\Z}$, which does not satisfy this condition, see \S \ref{lattice VOA} below. This motivates us to consider an additional grading, following \cite{HLZ1,Yan}: given a finitely generated abelian group $A$, we consider an additional $A$-grading on a vertex superalgebra $V=\oplus_{a\in A}V^a$ as a vector superspace such that 
$$Y(\cdot,z)\colon V^a\times V^b\rightarrow V^{a+b}(\!(z)\!),\quad (a,b\in A).$$
Then by a strongly $A$-graded vertex operator superalgebra of CFT type we mean a $\frac{1}{2}\Z$-graded vertex superalgebra $(V,\omega)$ with an additional $A$-grading 
$$V=\bigoplus_{a\in A} V^a=\bigoplus_{\Delta\in\frac{1}{2}\Z,a\in A} V_{\Delta}^a,\quad V_{\Delta}^a=V^a\cap V_{\Delta}$$
as a vector superspace, satisfying
\begin{enumerate}
    \item $V^a_\Delta$ is finite dimensional for all $(a,\Delta)$ and $V_0^0=\C\mathbf{1}$,
    \item $V_{\Delta}^0=0$ for $\Delta<0$ and $V_{\Delta+m}^a=0$ for sufficiently negative $m$ for each ($\Delta,a)$.
\end{enumerate}
Note that both of a lattice vertex superalgebra $V_L$ associated with a non-degenerate integral lattice $L$ with a natural strong $L$-grading and a $\frac{1}{2}\Z_{\geq0}$-graded vertex operator superalgebra with trivial strong $A$-grading satisfy this condition. In the following, we will consider modules for $V$ with a strong $A$-grading as above.

A weak $V$-module is a vector superspace $M$ equipped with a parity-preserving linear map
\begin{align*}
Y_M(\cdot,z) \colon V \rightarrow \End M [\![z, z^{-1}]\!],\quad
a \mapsto Y_M(a, z) = a^M(z) = \sum_{n \in \Z} a_{(n)}^M z^{-n-1}
\end{align*}
such that $Y_M(\cdot,z)\colon V\times M\rightarrow M(\!(z)\!)$ satisfying
\begin{enumerate}
    \item $Y_M(\mathbf{1},z)a = a$ for all $a\in M$,
    \item for all $a, b \in V$,
    \begin{align*}
&z_{0}^{-1}\delta\left(\frac{z_{1}-z_{2}}{z_{0}}\right)a^M(z_{1})b^M(z_{2})\\
&-(-1)^{\overline{\mathstrut a}\cdot\overline{\mathstrut b}}z_{0}^{-1}\delta\left(\frac{z_{2}-z_{1}}{-z_{0}}\right)b^M(z_{2})a^M(z_{1})=z_{2}^{-1}\delta\left(\frac{z_{1}-z_{0}}{z_{2}}\right)Y_{M}(a(z_{0})b,z_{1}),
\end{align*}
\end{enumerate}
where $\delta(z)=\sum_{n\in \Z}z^{n}$.
We note that the axioms imply $\partial_z Y_M(a, z) = Y_M( \partial a, z)$ for $a\in V$. We call a weak $V$-module $A$-gradable if it admits a grading as a vector superspace by an $A$-torsor $\widetilde{A}$, denoted by $M = \oplus_{a\in \widetilde{A}}M^a$, and $Y_M(\cdot,z)$ restricts to $V^a\times M^b\rightarrow M^{a+b}(\!(z)\!)$.
For $A$-gradable weak $V$-modules $M$ and $N$, a morphism from $M$ to $N$ is a linear map $f \colon M \rightarrow N$ such that
\begin{align}\label{eq: V-hom def}
f\left(a^{M}(z)v\right) = a^{N}(z)f(v),\quad(a \in V, v \in M)
\end{align}
which induces a morphism of $A$-torsors on gradings of $M$ and $N$.
In the following, the term $A$-gradable is dropped if $A$ is clear from context (especially, if $A$ is trivial).

Next, we recall several classes of modules. An $A$-gradable weak $V$-module $M$ is called an ($A$-gradable) generalized $V$-module if it admits a generalized $L_0^M$-eigenspace decomposition 
\begin{align}\label{decomposing module}
M = \bigoplus_{\Delta \in \C} M_\Delta=\bigoplus_{\Delta \in \C,a\in \widetilde{A}} M^a_\Delta, \quad M_\Delta = \left\{ v \in M \mid \exists N\geq0, (L_0-\Delta)^N v =0\right\}
\end{align}
A generalized $V$-module $M$ is called \emph{grading-restricted} if $\dim M^a_{\Delta} < \infty$ and $M^a_{\Delta + N}=0$ for $a\in \widetilde{A},\ \Delta \in \C$ and sufficiently negative $N \in \Z$;
\emph{ordinary} if $M$ is grading-restricted and \eqref{decomposing module} is an $L_0^M$-eigenspace decomposition.

Let $V\Mod_\C$ be the category of grading-restricted generalized $V$-modules.
It is naturally a supercategory \cite{BrE}, that is the Hom spaces are vector superspaces and the compositions of morphisms are compatible with the superstructure. (The symbol $\C$ indicated that the $L_0$-eigenvalues of objects are not restricted to $\mathbb{R}\subset \C$ whereas the later restriction is needed for monoidal structure as we will see in the next subsection.)
Recall that the underlying category $\underline{V\Mod}_\C$ is the subcategory of $V\Mod_\C$ whose objects are the same as $V\Mod_\C$ but whose morphisms consist of even homomorphisms. 
Although $\underline{V\Mod}_\C$ is naturally an abelian category, $V\Mod$ is merely a $\C$-linear additive supercategory since the kernel and cokernel objects for parity-inhomogeneous morphisms usually do not exists.
Then by a subquotient object in $V\Mod_\C$ we mean one in $\underline{V\Mod}_\C$ and by the notion of (semi)simplicity as well. 
Note that $V\Mod$ has an involutive autofunctor $\Pi$ which switches the superstructure of objects: $(\Pi M)_{\bar{i}} = M_{\overline{i+1}}$ ($\bar{i}\in \Z_2$). Therefore, we may recover all the properties of parity-homogeneous morphisms by composing $\Pi$ if necessary.

\subsection{Intertwining operators and fusion products}
We recall the notion of (logarithmic) intertwining operators, $P(x)$-tensor products and a braided monoidal structure on module categories, following \cite{HLZ1}-\cite{HLZ8} and \cite{CKM}. 
We restrict the category $V\Mod_\C$ to the full subcategory $V\Mod$ consisting of objects $M$ such that the grading \eqref{decomposing module} for $M$ is supported on $\mathbb{R}$ and the size $N$ of Jordan block for $L_0$ is uniformly bounded with respect to $\Delta\in\mathbb{R}$, following \cite[Assumption 3.9]{CKM}.
We assume that the grading by an $A$-torsor for each object in $V\Mod$ all comes from a grading with respect to a common abelian group containing $A$, or otherwise we restrict to a full subcategory satisfying this property.
For objects $M_i$ ($i=1,2,3$) in $V\Mod$, a parity-homogeneous (logarithmic) intertwining operator of type $\binom{M_3}{M_1 M_2}$ is a parity-homogeneous bilinear map
\begin{equation}\label{def of intertwiner}
\begin{array}{rcl}
\mathcal{Y}(\cdot,z) \colon  M_1 \times M_2 & \rightarrow & M_3\{z\}[\log z] \\
 (m_1, m_2) & \mapsto & \displaystyle \mathcal{Y}(m_1, z)m_2 =\sum_{k =0}^K \sum_{n \in \C} (m_1)_{(n;k)}^\mathcal{Y}m_2 z^{-n-1} (\log z)^k
\end{array}
\end{equation}
such that $\mathcal{Y}(\cdot, z)\colon M_1^a\times M_2^b \rightarrow M_3^{a+b}\{z\}[\log z]$ satisfying 
\begin{enumerate}
\item for all $m_i \in M_i$ ($i=1,2$), $(m_1)_{(n+N;k)}^\mathcal{Y}m_2 = 0$ for sufficiently large $N \in \N$,
\item $\mathcal{Y}(L_{-1}^{M_1}m_1,z)=\partial_z\mathcal{Y}(m_1,z)$,
\item for all homogeneous $a \in V$ and $m_1 \in M_1$.
\begin{align}\label{Jacobi for intertwining op}
\begin{split}
    &(-1)^{\overline{\mathstrut a}\cdot\overline{\mathstrut \mathcal{Y}}}z_0^{-1}\delta\left(\frac{z_1-z_2}{z_0}\right)a^{M_3}(z_1)\mathcal{Y}(m_1,z_2)\\
&-(-1)^{\overline{a}\cdot \overline{m}_1}z_{0}^{-1}\delta\left(\frac{z_2-z_1}{-z_0}\right)\mathcal{Y}(m_1,z_2)a^{M_2}(z_{1})=z_{2}^{-1}\delta\left(\frac{z_{1}-z_{0}}{z_{2}}\right)\mathcal{Y}\left(a^{M_1}(z_{0})m_1,z_{2}\right).
\end{split}
\end{align}
\end{enumerate}
A general intertwining operator of type $\binom{M_3}{M_1 M_2}$ is a sum of parity-homogeneous ones and the vector superspace consisting of them is denoted by  $I_V\binom{M_3}{M_1 M_2}$. 

In \cite{HLZ1}-\cite{HLZ8}, quite a similar notion called a $P(x)$-intertwining map was introduced. Essentially, it is obtained from an intertwining operator by evaluation of $z$ at a complex number in $x\in \C\backslash \{0\}$. To avoid the problem of convergence, in the definition of $P(x)$-intertwining map $I_x(\cdot,\cdot)$, \eqref{def of intertwiner} is replaced by a formal one
$$I_x(\cdot,\cdot)\colon  M_1 \times M_2  \rightarrow \overline{M}_3,$$
where $\overline{M}$ denote the algebraic completion $\overline{M} = \overline{M}_{\bar{0}} \oplus \overline{M}_{\bar{1}}$ with $\overline{M}_{\bar{i}} = \prod_{\Delta \in \C} M_{\Delta,\bar{i}}$.
By \cite[Proposition 3.15]{CKM}, intertwining operators and $P(x)$-intertwining maps are in one-to-one correspondence by evaluation of $z$ at $x$ and extension from the point $z=x$ by the differential equation (2) and we denote by $I_x\binom{M_3}{M_1\ M_2}$ the (super)space of $P(x)$-intertwining map of type $\binom{M_3}{M_1\ M_2}$.
For objects $M_1$ and $M_2$ in $V\Mod$, if the superfunctor from $V\Mod$ to the supercategory of vector superspaces over $\C$ given by $N\mapsto I_x\binom{N}{M_1\ M_2}$ is representable, then the representing object, which is unique up to isomorphisms, is called the $P(x)$-tensor product and denoted by $M_1\boxtimes_{P(x)} M_2$. It admits a canonical $P(x)$-intertwining map $I_x^{M_1,M_2}(\cdot,\cdot)\colon M_1\otimes M_2\rightarrow \overline{M_1\boxtimes_{P(z)} M_2}$ and thus a canonical intertwining operator 
$$\mathcal{Y}_{M_1\boxtimes_{P(x)} M_2}(\cdot,z)\colon M_1\times M_2\rightarrow \overline{M_1\boxtimes_{P(x)} M_2}\{z\}[\log z].$$
Note that the existence of $P(x)$-tensor product for some $x$ implies that for all the other values in $\C\backslash\{0\}$ and they are all isomorphic
$M_1\boxtimes_{P(x)}M_2\simeq M_1\boxtimes_{P(x')}M_2$ by an even morphism \cite[Corollary 3.36]{CKM}.

Now suppose the existence of $P(x)$-tensor product for all objects in $V\Mod$. 
It follows from the universality of representing objects that morphisms $f_i \colon M_i \rightarrow N_i$ ($i=1,2$) induce a unique morphism 
\begin{align*}
f_1 \boxtimes_{P(x)} f_2 \colon M_1 \boxtimes_{P(x)} M_2 \rightarrow N_1 \boxtimes_{P(x)} N_2,
\end{align*}
which gives rise to a superfunctor $\boxtimes_{P(x)}\colon V\Mod\times V\Mod\rightarrow V\Mod$. Moreover, the $P(x)$-intertwining map $M_1\otimes M_2\rightarrow M_2\boxtimes_{P(-x)}M_1$ defined by
$$(m_1,m_2)\mapsto (-1)^{\overline{m}_1\cdot \overline{m}_2}e^{z L_{-1}}\mathcal{Y}_{M_1\boxtimes_{P(x)} M_2}(m_1,z)m_2|_{z=e^{\log x+\pi\ssqrt{-1}}}$$
induces a unique morphism $\mathcal{R}_{P(x);M_1,M_2}^+\colon M_1\boxtimes_{P(x)}M_2\rightarrow M_2\boxtimes_{P(-x)}M_1$.
Then under several assumptions on convergence properties for iterated $P(x)$-tensor products among several values $x\in \C\backslash\{0\}$, the category $V\Mod$ equipped with $\boxtimes_{P(x)}$ and $\mathcal{R}^+$ has the structure of vertex tensor supercategory. It is first established in the even setting by Huang, Lepowsky and Zhang \cite{HLZ1}-\cite{HLZ8} and in the super setting by McRae, Kanade and one of the author \cite{CKM}. See \cite[\S 3.3]{CKM} for a detailed review on precise construction and the required assumptions. 

As a consequence, $V\Mod$ has a structure of additive braided monoidal supercategory with fusion (tensor) product $\boxtimes=\boxtimes_{P(1)}$ and braiding $\mathcal{R}_{M_1,M_2}\colon M_1\boxtimes M_2\mapsto M_2\boxtimes_{P(-1)}M_1\xrightarrow{\simeq} M_2\boxtimes M_1$, see Appendix A for more details on braided monoidal supercategory. We will denote the canonical intertwining operator as $\mathcal{Y}_{M_1\boxtimes M_2}(\cdot,z)=\mathcal{Y}_{M_1\boxtimes_{P(x)} M_2}(\cdot,z)|_{x=1}$.
Sufficient conditions for the required assumptions are obtained by Huang \cite[Theorem 4.13]{Hua2}, when $V=V_{\bar{0}}$ and $\Z_{\geq0}$-graded with trivial strong $A$-grading, and $V\Mod$ is restricted to the usual module category in this setting, that is, the full subcategory $V\Mod_{\bar{0}}\subset V\Mod$ consisting of objects $M$ such that $M=M_{\bar{0}}$. In particular, when $V=V_{\bar{0}}$, the $C_2$-cofiniteness condition, i.e., $\dim V/V_{(-2)}V<\infty$ is sufficient by \cite[Proposition 4.1, Theorem 4.13]{Hua2}.
In this paper, we consider the case for $V\Mod$ with $\frac{1}{2}\Z_{\geq0}$-grading but trivial strong $A$-grading or with lattice vertex superalgebras associated with non-degenerate integral lattices. The first one is an immediate consequence of \cite[Theorem 3.65]{CKM} and \cite[Proposition 4.1, Theorem 4.13]{Hua2}: 
\begin{proposition}\label{prop: V-Mod property}
Let $V$ be a $C_2$-cofinite, $\frac{1}{2}\Z_{\geq0}$-graded vertex operator superalgebra of CFT type. Then
$V\Mod$ is a $\C$-linear additive braided monoidal supercategory satisfying the following property (P): 
all Hom spaces are finite dimensional, all object have finite length, and the fusion product $\boxtimes$ is right exact.
\end{proposition}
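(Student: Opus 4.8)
The plan is to deduce the statement from two inputs already recalled in the text: the super analogue of Huang's braided tensor category theorem, namely \cite[Theorem 3.65]{CKM}, and Huang's cofiniteness results \cite[Proposition 4.1, Theorem 4.13]{Hua2} for the even case. First I would observe that since $V = V_{\bar 0}\oplus V_{\bar 1}$ is $C_2$-cofinite as a vertex operator superalgebra, its even part $V_{\bar 0}$ is a $C_2$-cofinite vertex operator algebra, and $V$ is an object in $V_{\bar 0}\Mod_{\bar 0}$ of finite length (in fact $V$ viewed as a $V_{\bar 0}$-module is a direct sum of $V_{\bar 0}$ and $V_{\bar 1}$); this lets one bootstrap from the even theory. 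Concretely, $C_2$-cofiniteness of $V$ forces the $C_2$-cofiniteness hypothesis of \cite[Proposition 4.1, Theorem 4.13]{Hua2} for $V_{\bar 0}$, so the convergence and expansion assumptions needed to run the Huang--Lepowsky--Zhang machinery hold for $V_{\bar 0}\Mod_{\bar 0}$. The trivial strong $A$-grading means we are in exactly the setting where \cite[Theorem 3.65]{CKM} transfers the vertex tensor (super)category structure from the even VOA $V_{\bar 0}$ to the VOSA $V$, so $V\Mod$ acquires the structure of a braided monoidal supercategory with $\boxtimes = \boxtimes_{P(1)}$ and the braiding $\mathcal R$ described above.

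Next I would verify property (P). Finite-dimensionality of Hom spaces and finite length of objects: by $C_2$-cofiniteness of $V$ (hence of $V_{\bar 0}$), the category $V_{\bar 0}\Mod_{\bar 0}$ has these properties by the standard Zhu-algebra/Abe--Buhl--Dong finiteness arguments invoked in \cite{Hua2}, and every object of $V\Mod$ is finite length and finitely generated as a $V_{\bar 0}$-module (one restricts scalars along $V_{\bar 0}\hookrightarrow V$, noting $V$ is a finite free-ish extension), so finiteness descends. The Hom spaces in $V\Mod$ are built from Hom spaces of the restricted $V_{\bar 0}$-modules together with a finite parity bookkeeping, so they remain finite-dimensional. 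For right-exactness of $\boxtimes$: this is a formal consequence of the fact that, for each object $M_2$, the functor $M_1\mapsto M_1\boxtimes_{P(x)} M_2$ is defined as a representing object for the functor $N\mapsto I_x\binom{N}{M_1\ M_2}$, and such tensor-product functors in a rigid-or-at-least-$C_2$-cofinite setting are right exact — indeed one can cite the corresponding statement in the even case from \cite{Hua2} (right-exactness of $\boxtimes$ on $V_{\bar 0}\Mod_{\bar 0}$) and transport it along the equivalence/induction from \cite[Theorem 3.65]{CKM}, since $\boxtimes$ on $V\Mod$ is compatible with the one on $V_{\bar 0}\Mod_{\bar 0}$ under restriction of scalars up to the parity-shift functor $\Pi$.

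The main obstacle I anticipate is purely one of bookkeeping rather than of substance: one must make sure that the hypotheses of \cite[Theorem 3.65]{CKM} (that $V$ is obtained from $V_{\bar 0}$ by a suitable super-extension, that the relevant induced-module functor is exact and monoidal, and that the convergence assumptions of \cite{HLZ1}-\cite{HLZ8} genuinely hold for $V_{\bar 0}\Mod_{\bar 0}$) are all met in the generality of an arbitrary $C_2$-cofinite $\tfrac12\Z_{\ge 0}$-graded VOSA of CFT type — in particular that nothing goes wrong at half-integer conformal weights or with the odd part $V_{\bar 1}$ failing to be an \emph{object} of the even module category in the expected way. The cleanest route is to treat $V_{\bar 0}\subset V$ as a $\Z_2$-simple-current extension (or a conformal extension) and invoke the extension theory of \cite{HKL,CKM} to simultaneously get the tensor structure and the finiteness; I would spell out this reduction carefully and then note that right-exactness and the finiteness in property (P) are immediate corollaries. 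Everything else — associativity, the hexagon axioms, compatibility of $\mathcal R$ with $\Pi$ — is already packaged in \cite[\S 3.3]{CKM} and needs only to be cited.
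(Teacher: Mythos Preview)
Your approach is close in spirit to the paper's, but there is a genuine gap in the reduction step. You pass to the even subalgebra $V_{\bar 0}$ and then invoke \cite[Proposition 4.1, Theorem 4.13]{Hua2}. The trouble is that Huang's results are formulated for $\Z_{\geq 0}$-graded vertex operator algebras, whereas in the generality of the Proposition the parity and the $\tfrac12\Z$-grading need not be correlated: $V_{\bar 0}$ can still carry half-integer conformal weights. (A concrete instance is the subregular $\W$-algebra of $\mathfrak{sl}_{2m+1}$, which is purely even yet $\tfrac12\Z_{\geq 0}$-graded.) So \cite{Hua2} does not apply to $V_{\bar 0}\Mod_{\bar 0}$ as written. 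You flag the half-integer weight issue as a potential obstacle at the end, but the fix is not to hope that nothing goes wrong---it is to change the subalgebra.

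The paper's proof resolves both this and a second point you glossed over. It passes to the smaller subalgebra $V^{\Z}_{\bar 0}=\bigoplus_{\Delta\in\Z_{\geq 0}}V_{\Delta,\bar 0}$, which is by construction $\Z_{\geq 0}$-graded and is characterized as the fixed-point subalgebra under the finite-order automorphisms $\theta_V=e^{2\pi i L_0}$ and $P_V=\id_{V_{\bar 0}}-\id_{V_{\bar 1}}$. The descent of $C_2$-cofiniteness from $V$ to $V^{\Z}_{\bar 0}$ is then not automatic (you asserted it for $V_{\bar 0}$ without proof); it requires Miyamoto's cyclic orbifold theorem \cite{Mi}. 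Once $V^{\Z}_{\bar 0}$ is known to be $C_2$-cofinite and $\Z_{\geq 0}$-graded, Huang's results give the braided tensor structure and property~(P) on $V^{\Z}_{\bar 0}\Mod_{\bar 0}$, and the remainder of the argument---viewing $V$ as a commutative algebra object in the superization of $V^{\Z}_{\bar 0}\Mod_{\bar 0}$, identifying $V\Mod$ with $\Rep^0(V)$ via \cite{HKL,CKL}, and transporting the braided tensor structure via \cite[Theorem~3.65]{CKM}---proceeds as you outlined.
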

\begin{proof}
We start with the subalgebra $V^\Z_{\bar{0}}=\bigoplus_{\Delta\in \Z_{\geq0}}V_{\Delta,\bar{0}}$,
which is a vertex operator algebra of CFT type characterized as the fixed-point by finite order automorphisms $\theta_V=e^{2\pi i L_0}$ and $P_V=\id_{V_{\bar{0}}}-\id_{V_{\bar{1}}}$.
Then $V^\Z_{\bar{0}}$ is also $C_2$-cofinite by \cite{Mi} and thus $V^\Z_{\bar{0}}\Mod_{\bar{0}}$ has a structure of $\C$-linear braided tensor category satisfying (P) by \cite[Theorem 3.24, 4.13]{Hua2} (moreover, it is finite as abelian category).
Note that $V$ itself is an object in $V_{\bar{0}}^\Z\Mod$ and that $V_{\bar{0}}^\Z\Mod$ is equivalent to the superization of $V^\Z_{\bar{0}}\Mod_{\bar{0}}$ as $\C$-linear additive braided monoidal supercategory, see Remark \ref{Superalgebra Object} for the definition of superization.
Since $V$ is an algebra object in $V^\Z_{\bar{0}}\Mod$, we may consider $V\Mod$ through the category $\Rep^0(V)$ of local $V$-modules in $V_{\bar{0}}^\Z\Mod$, see \S \ref{algebra object} for a review on this subject.
It follows from the construction that $\Rep^0 V$ is a $\C$-linear additive braided monoidal supercategory \cite{KO, CKM} induced by $V_{\bar{0}}^\Z\Mod$ satisfying the same property (P). Moreover, $\Rep^0 V$ is equivalent to $V\Mod$ as $\C$-linear additive supercategories by \cite{HKL, CKL} and in addition is the same as the one given in \cite{HLZ1}--\cite{HLZ8} by \cite[Theorem 3.65]{CKM}.
\end{proof}
In the following, we denote by $\irr(V)$ the complete set of representatives of simple objects in $V\Mod$ and by $\Pic(V)\subset \irr(V)$ the Picard group, i.e., the set consisting of simple currents (equivalently, simple invertible objects). The set $\Pic(V)$ forms a group by tensor product, see Appendix A for their properties. 

\subsection{Lattice vertex superalgebras}\label{lattice VOA}
Let us first fix some notation for Heisenberg vertex algebras and their Fock modules. Given a finite dimensional (even) vector space $\mathfrak{a}$ equipped with a bilinear form $(\cdot|\cdot)$, we denote by $\pi^\mathfrak{a}$ the associated Heisenberg vertex algebra, which is generated by the fields $a(z)$ ($a\in \mathfrak{a}$) with OPE 
$$a(z)b(w)\sim \frac{(a|b)}{(z-w)^2}.$$
When $(\cdot|\cdot)$ is non-degenerate, $\pi^\mathfrak{a}$ has a conformal vector of central charge $\dim \mathfrak{a}$ by the Sugawara construction. 
For $\lambda\in \mathfrak{a}^*$, we denote by $\pi^\mathfrak{a}_\lambda$ the Fock module of $\pi^\mathfrak{a}$ of highest weight $\lambda$. It is generated by a highest weight vector which we denote by $\ket{\lambda}$ satisfying 
$$a_{(n)}\ket{\lambda}=\delta_{n,0}\lambda(a)\ket{\lambda},\quad (a\in \mathfrak{a},\ n\geq0).$$
When $(\cdot|\cdot)$ is non-degenerate, we always identify $\mathfrak{a}$ with $\mathfrak{a}^*$ via $(\cdot|\cdot)$ and also use $\pi^\mathfrak{a}_{a}$ $(a\in \mathfrak{a})$. When $\mathfrak{a}$ is one dimensional and spanned by $A\in \mathfrak{a}$, we also use the notation $\pi^{A}=\pi^{\mathfrak{a}}$ and $\pi^{A}_{\lambda(A)}=\pi^{\mathfrak{a}}_{\lambda}$. 

Let $L$ be a non-degenerate integral lattice of finite rank and $L' = \{ a \in L_\Q:=L \otimes_\Z \Q \mid (a|L) \subset \Z\}$ its dual lattice.
Let $V_L = \bigoplus_{a \in L} \pi^{L_\C}_a$ denote the lattice vertex superalgebra associated to $L$.
It is naturally a strongly $L$-graded, $\frac{1}{2}\Z$-graded vertex operator superalgebra of CFT type where the conformal vector is the one in $\pi^{L_\C}$ by the Sugawara construction. 
We denote by $V_L\Mod$ whose objects are $L'$-graded has a family of ($L'$-graded) ordinary simple  $V_L$-modules $V_{a + L} = \bigoplus_{b \in L} \pi^{L_\C}_{a + b}$ ($a \in L'/L$). We summarize the property of $V_L\Mod$.
\begin{proposition}\label{prop: V_L property} Let $L$ be a non-degenerate integral lattice of finite rank.\\
(1) $V_L\Mod$ is a $\C$-linear finite semisimple braided tensor supercategory.\\
(2) $\irr(V_L)=\Pic(V_L)=\{ V_{a + L}\}_{a \in L'/L}$.\\
(3) The fusion product and the monodromy among $\irr(V_L)$ is given by 
$$V_{a + L}\boxtimes V_{b + L}\simeq V_{a + b + L},\quad \mathcal{M}_{V_{a+L},V_{b+L}}=e^{2 \pi i (a|b)}.$$
(4) $V_L\Mod$ is rigid and the fusion product $\boxtimes$ is bi-exact.
\begin{proof}
From the same proof in \cite[Theorem 3.16]{DLM} when $L$ is positive-definite, it follows that $V_L\Mod$ is semisimple and $\irr(V_L)=\{ V_{a + L}\}_{a \in L'/L}$. Then (1) follows from  \cite[Theorem 7.4]{Yan}. 
Then the assertions (2) and (3) follow from the same argument \cite{LL,Hua1,CKM} when $L$ is positive-definite. 
Then (4) follows from Lemma \ref{lem: exactness of simple currents}.
\end{proof}
\end{proposition}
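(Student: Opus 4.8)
The plan is to reduce the assertions to the positive-definite case by the standard trick of adjusting the conformal structure while keeping the vertex superalgebra structure and its module category fixed. First I would observe that all four assertions concern only the underlying vertex superalgebra $V_L$ and its category $V_L\Mod$ of $L'$-graded grading-restricted generalized modules, together with the fusion product and monodromy; none of these depend on which conformal vector we equip $V_L$ with, except insofar as we must know that the relevant vertex tensor category machinery applies. The subtlety with a non-degenerate but indefinite lattice $L$ is precisely that the Sugawara conformal vector of $\pi^{L_\C}$ gives conformal weights that are unbounded below on $V_L$, so $V_L$ is not a vertex operator superalgebra of CFT type in the usual sense; this is why the paper introduces the strongly $A$-graded formalism of \cite{HLZ1, Yan}. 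So the first step is to invoke that formalism: with the strong $L$-grading, $V_L$ is a strongly $L$-graded vertex operator superalgebra of CFT type, and $V_L\Mod$ is within the scope of the constructions of \cite{HLZ1}--\cite{HLZ8} and \cite{CKM}.

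Next I would establish semisimplicity and the classification of simple objects, assertions (2) and the first half of (1). Here the reference is \cite[Theorem 3.16]{DLM}: although stated for positive-definite $L$, the proof — which shows that every $L'$-graded module decomposes as a sum of Fock modules $\pi^{L_\C}_{a+b}$ glued along cosets of $L$ in $L'$ — uses only the Heisenberg commutation relations and the action of the lattice operators $e^\alpha$, not positivity of the form. Hence it goes through verbatim and yields $\irr(V_L) = \{V_{a+L}\}_{a\in L'/L}$ with $V_L\Mod$ semisimple. Combining this with the general result \cite[Theorem 7.4]{Yan}, which provides the braided tensor (super)category structure on the module category of a strongly graded vertex operator (super)algebra under a semisimplicity/finiteness hypothesis, gives the full statement (1). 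For (2), the point is that each $V_{a+L}$ is a simple current: $V_{a+L}\boxtimes V_{b+L}\simeq V_{a+b+L}$, so every simple object is invertible, whence $\Pic(V_L)=\irr(V_L)$. The fusion rule and the monodromy formula $\mathcal M_{V_{a+L},V_{b+L}} = e^{2\pi i(a|b)}$ in (3) are computed exactly as in the positive-definite case \cite{LL, Hua1, CKM}: the fusion product is read off from intertwining operators among Fock modules (which exist and are one-dimensional by the usual Heisenberg computation), and the monodromy is the scalar by which double braiding acts, governed by the difference of conformal weights $\Delta_{a+b} - \Delta_a - \Delta_b \equiv (a|b) \bmod \Z$ — and this congruence class is independent of the choice of conformal vector, so indefiniteness causes no trouble. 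Finally, (4): rigidity of each $V_{a+L}$ follows from invertibility, with dual $V_{-a+L}$ and the evident evaluation/coevaluation intertwining maps; bi-exactness of $\boxtimes$ on the semisimple category $V_L\Mod$ is then automatic, but to phrase it uniformly I would cite Lemma \ref{lem: exactness of simple currents}, which presumably records that tensoring with a simple current (an invertible, hence rigid, object) is exact.

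The main obstacle I anticipate is purely a matter of bookkeeping rather than mathematical depth: one must check that the hypotheses under which \cite[Theorem 7.4]{Yan} (and the companion results of \cite{HLZ1}--\cite{HLZ8}, \cite{CKM}) produce a genuine braided tensor supercategory — convergence and associativity of iterated $P(x)$-tensor products, the grading-compatibility assumption that all objects are graded by a common abelian group — are actually satisfied for $V_L\Mod$ with $L$ indefinite. Semisimplicity does most of the work here, since for a semisimple category the relevant convergence can be checked on the finitely many simple objects, where everything reduces to explicit Heisenberg correlators. The one genuinely new ingredient compared to the classical positive-definite story is that conformal weights are not bounded below, but since we have passed to the strongly $L$-graded framework precisely to accommodate this, and since each graded piece $V^a_{L,\Delta}$ is finite-dimensional with weights bounded below in each fixed $L$-degree, the axioms of a strongly graded vertex operator superalgebra are met and the cited theorems apply. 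I would therefore expect the write-up to be short, consisting mostly of pointers to \cite{DLM, Yan, CKM} with a remark explaining why positive-definiteness is not used.
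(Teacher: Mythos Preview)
Your proposal is correct and follows essentially the same route as the paper: semisimplicity and classification of simples via the argument of \cite[Theorem 3.16]{DLM} (noting it does not use positive-definiteness), the braided tensor structure from \cite[Theorem 7.4]{Yan}, fusion and monodromy from the positive-definite computations in \cite{LL,Hua1,CKM}, and rigidity/bi-exactness from Lemma \ref{lem: exactness of simple currents}. Your write-up is somewhat more expansive in justifying why the strongly graded framework absorbs the indefiniteness issue, but the logical skeleton and the cited inputs are identical.
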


\subsection{Simple current extensions by lattices}\label{Simple current extensions by Lattice}
Let $V$ be a simple, $C_2$-cofinite, $\frac{1}{2}\Z_{\geq0}$-graded vertex operator superalgebra of CFT type with trivial strong grading and $V_L$ be the lattice vertex superalgebra associated with $L$ be a non-degenerate integral lattice with strong $L$-grading. Here we consider the module category of simple current extensions of $V\otimes V_L$, following \cite{CKM} and Appendix A.

Since $V \otimes V_L$ is a simple, strongly $L$-graded, $C_2$-cofinite, $\frac{1}{2}\Z_{\geq0}$-graded vertex operator superalgebra of CFT type, we have the $\C$-linear additive braided monoidal supercategory $V\otimes V_L\Mod$ by the previous two subsections. It is naturally isomorphic to the Deligne product $(V\Mod)\otimes (V_L\Mod)$ by \cite[Theorem 5.5]{CKM2}:
\begin{align}\label{Deligne product}
\begin{array}{ccc}
V\otimes V_L\Mod&\simeq& (V\Mod)\otimes (V_L\Mod)\\
M&\mapsto& \displaystyle{\bigoplus_{a\in L'/L}} \Omega_{a}(M)\otimes V_{a+L}
\end{array}
\end{align}
where $\Omega_{a}(M):=\{m\in M\mid \forall \lambda\in L,\ n\geq0,\  h_{(n)}m=\delta_{n,0}\lambda(a)m, \}$. The equivalence implies the decomposition of the Picard groups:
\begin{align}\label{decomposition of Picard groupoid}
\Pic(V)\times \Pic(V_L)\simeq \Pic(V\otimes V_L),\quad (M,V_{a+L})\mapsto M\otimes V_{a+L}.
\end{align}
Let $N$ be a sublattice of $L'$ such that $L \subset N \subset L'$. 
We consider an algebra object $\mathcal{E}$ in $V\otimes V_L\Mod$ of the form
\begin{align}\label{lattice extension: easy}
\mathcal{E}=\bigoplus_{a \in N/L}\mathcal{E}_{a},\quad
\mathcal{E}_a = S_{a}\otimes V_{a+L},
\end{align}
where $\{S_a\}_{a\in N/L}$ is a subgroup of $\Pic(V)$ with $S_0 = V$. Then $\mathcal{E}$ is a categorical simple current extension, see \S \ref{Categorical simple current extensions}. By Proposition \ref{prop: V-Mod property} and \ref{prop: V_L property}, $V\otimes V_L\Mod$ satisfies Assumption \ref{assumption1}, \ref{assumption2} and \ref{assumption4}. As in \S \ref{Categorical simple current extensions}, we also suppose that $V\otimes V_L\Mod$ satisfies Assumption \ref{assumption3} and that $\mathcal{E}$ satisfies (S1) in \S \ref{Categorical simple current extensions} and equations
\begin{align*}
\theta_{\mathcal{E}}^2=\id,\quad
\theta_{\mathcal{E}_a}\theta_{\mathcal{E}_b}=\theta_{\mathcal{E}_{a+b}},\quad
a, b \in N/L,
\end{align*}
where $\theta_{M} = \mathrm{e}^{2\pi\sqrt{-1}L_0^M}$.
Then (S2) in \S \ref{Categorical simple current extensions} is automatically satisfied since $\irr(V_L)=\Pic(V_L)$. Thus, by \cite[Theorem 3.42]{CKM}, $\mathcal{E}$ is a simple, $C_2$-cofinite, strongly $L'$-graded, $\frac{1}{2}\Z_{\geq0}$-graded vertex operator superalgebra of CFT type and we can use all the results in Appendix A.
By Theorem \ref{thm: monodromy decomposition}, we have the decomposition
\begin{align*}
V\Mod=\bigoplus_{\phi\in (N/L)^\vee}V\Mod_\phi,
\end{align*}
where $(N/L)^\vee=\Hom_{\operatorname{Grp}}(N/L,\C^\times)$ and $V\Mod_\phi$ is the full subcategory of $V\Mod$ consisting of objects $M$ such that $\mathcal{M}_{S_{a},M}=\phi (a)\id_{S_{a}\boxtimes M}$ for $a\in N/L$. We write $\phi_M$ for $\phi$ if $M \in V\Mod_\phi$.
Similarly, 
\begin{align*}
V_L\Mod=\bigoplus_{\phi\in (N/L)^\vee}V_L\Mod_\phi.
\end{align*}
Since $V_L\Mod_\phi$ is semisimple, it follows that
\begin{align*}
\irr(V_L\Mod_\phi)=\left\{V_{a+L}\middle|\,b\in L'/L,\ e^{2\pi \sqrt{-1}(a|b)}=\phi(a)\ \mathrm{for}\ \mathrm{all}\ b\in N/L\right\}.
\end{align*}
The group homomorphism
\begin{align*}
\gamma \colon L'\rightarrow (N/L)^\vee,\quad
a \mapsto \left(\gamma_a \colon N/L \ni b \mapsto \gamma_a(b) = e^{2\pi \sqrt{-1}(a|b)} \in \C^\times \right).
\end{align*}
induces an isomorphism $L'/N' \simeq (N/L)^\vee$ and then $\irr(V_L\Mod_\phi)=\{V_{a+L}\mid a \in L'/L,\,\gamma_a = \phi\}$. Let $(V\otimes V_L \Mod)^0$ be the full subcategory of $V\otimes V_L \Mod$ whose objects $M$ satisfy that $\mathcal{M}_{\mathcal{E}, M} = \id_{\mathcal{E} \boxtimes M}$, or equivalently, that $M$ is local for $\mathcal{E}$. Then it follows from \eqref{Deligne product} that
\begin{align}
(V\otimes V_L \Mod)^0\simeq \bigoplus_{\phi\in (N/L)^\vee} (V\Mod)_\phi\otimes (V_L\Mod)_{\phi^{-1}}.
\end{align}
The following is a slight generalization to the irrational setting of \cite[Theorem 4.39, Theorem 4.41]{CKM} and of \cite[Theorem 3.7, Theorem 3.13]{YY}.
\begin{theorem}[cf.~\cite{CKM,YY}]\label{extension law}
Let $V,V_L$ and $\mathcal{E}$ as above.\\
(i) The complete set of isomorphism classes of simple objects in $\Rep^0(\mathcal{E})\simeq \mathcal{E}\Mod$ is 
\begin{align*}
\irr(\mathcal{E})\simeq \{(M,a)\in \irr(V)\times (L'/L)\mid\phi_M\phi_{V_{a+L}}=1\}\big/(N/L)
\end{align*}
by $(M,a)\mapsto\mathcal{F}(M\otimes V_{a+L})=\mathcal{E}\boxtimes_{V\otimes V_L}(M\otimes V_{a+L})$.
In particular, we have $|\irr(\mathcal{E})|=|\irr(V)|\cdot |N'/L|/|N/L|$ and 
$$\Pic(\mathcal{E})\simeq \{(M,a)\in \Pic(V)\times (L'/L)\mid\phi_M\phi_{V_{a+L}}=1\}\big/(N/L).$$\\
(ii) We have a ring isomorphism 
\begin{align}\label{isom1}
\K(\mathcal{E})\simeq \left(\mathcal{K}(V)\underset{\Z[N/L]}{\otimes}\Z[L'/L]\right)^{N/L}
\end{align}
where the tensor product over $\Z[N/L]$ is given by $[M\boxtimes S_a]\otimes b=[M]\otimes (-a+b)$ for $a\in N/L$.
\end{theorem}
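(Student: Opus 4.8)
The plan is to reduce the statement to known results about categorical simple current extensions collected in Appendix A, applied to the specific extension $\mathcal{E} = \bigoplus_{a \in N/L} S_a \otimes V_{a+L}$ inside the Deligne product $(V\Mod) \otimes (V_L\Mod) \simeq V \otimes V_L\Mod$. The whole point is that all the hard analytic input (existence of vertex tensor category structure, $C_2$-cofiniteness of the extension, rigidity of the relevant subcategories) has already been established: Proposition \ref{prop: V-Mod property} gives property (P) for $V\Mod$, Proposition \ref{prop: V_L property} gives semisimplicity and rigidity for $V_L\Mod$, and \cite[Theorem 3.42]{CKM} promotes $\mathcal{E}$ to a simple, $C_2$-cofinite, strongly $L'$-graded VOSA. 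So the proof is really a matter of transporting \cite[Theorem 4.39, 4.41]{CKM} and \cite[Theorem 3.7, 3.13]{YY} from the rational to the irrational setting, noting that the only place rationality was used in those references is through semisimplicity of the \emph{lattice} factor, which still holds here by Proposition \ref{prop: V_L property}(1).

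For part (i), first I would invoke the general classification of simple objects in $\Rep^0(\mathcal{E})$ for a categorical simple current extension (the appropriate statement in Appendix A, in the spirit of \cite[Theorem 3.42(iv)]{CKM} / \cite{KO}): simple $\mathcal{E}$-modules are induced from simple $V \otimes V_L$-modules, i.e.\ $\mathcal{F}(-) = \mathcal{E} \boxtimes_{V \otimes V_L} (-)$ is surjective onto $\irr(\mathcal{E})$, it is defined on the local objects $(V \otimes V_L\Mod)^0$, and $\mathcal{F}(X) \simeq \mathcal{F}(Y)$ iff $Y \simeq \mathcal{E}_a \boxtimes X$ for some $a \in N/L$. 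Then, using the Deligne product decomposition \eqref{Deligne product} together with the locality condition $\mathcal{M}_{\mathcal{E},M} = \id$, the local simple objects are exactly the $M \otimes V_{a+L}$ with $\phi_M \phi_{V_{a+L}} = 1$; quotienting by the $N/L$-action $(M,a) \mapsto (M \boxtimes S_c, a - c)$ (here one uses \eqref{decomposition of Picard groupoid} to see $\mathcal{E}_c \boxtimes (M \otimes V_{a+L}) = (S_c \boxtimes M) \otimes V_{c+a+L}$) gives the stated index set. The cardinality count $|\irr(\mathcal{E})| = |\irr(V)| \cdot |N'/L|/|N/L|$ follows because for each $M \in \irr(V)$ the fiber of allowed $a$ is a coset of $N'/L = \ker\gamma$ (using the isomorphism $L'/N' \simeq (N/L)^\vee$), which has size $|N'/L|$, and the free $N/L$-action cuts this down. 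The statement for $\Pic(\mathcal{E})$ is the restriction of the same bijection to invertible objects, since $\mathcal{F}$ sends simple currents to simple currents (induction is monoidal on the local category, $\mathcal{F}(X \boxtimes Y) \simeq \mathcal{F}(X) \boxtimes_\mathcal{E} \mathcal{F}(Y)$) and conversely any simple current in $\mathcal{E}\Mod$ restricts to a sum of simple currents of $V \otimes V_L$.

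For part (ii), the strategy is to compute the fusion ring $\K(\mathcal{E}) = \K(\Rep^0(\mathcal{E}))$ via induction. Because $\mathcal{F}\colon (V\otimes V_L\Mod)^0 \to \Rep^0(\mathcal{E})$ is a surjective braided monoidal functor (this is where right-exactness of $\boxtimes$ and the categorical machinery of Appendix A are used), it induces a surjective ring homomorphism on Grothendieck rings $\K((V\otimes V_L\Mod)^0) \twoheadrightarrow \K(\mathcal{E})$. The source decomposes, via \eqref{Deligne product} restricted to local objects, as $\bigoplus_\phi \K(V\Mod_\phi) \otimes \K(V_L\Mod_{\phi^{-1}})$; since $V_L\Mod$ is semisimple with $\irr = \Pic$, its Grothendieck ring is just the group ring $\Z[L'/L]$ and $\K(V\otimes V_L\Mod) \simeq \K(V) \otimes_\Z \Z[L'/L]$. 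The subring coming from the local category is the $\gamma$-graded piece, and after identifying it one checks that the kernel of $\mathcal{F}_*$ is generated by $[S_a \boxtimes M] \otimes b - [M] \otimes (a+b)$, i.e.\ exactly the relations defining $\K(V) \otimes_{\Z[N/L]} \Z[L'/L]$, and that the image is the $N/L$-invariant part (the $N/L$-action on $\K(\mathcal{E})$ coming from the deck transformations of $\mathcal{F}$, which act trivially on the image). This is precisely the content of \cite[Theorem 4.41]{CKM} / \cite[Theorem 3.13]{YY}, and I would cite those for the ring-theoretic bookkeeping, emphasizing that their argument never used anything beyond what Appendix A provides here.

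The main obstacle I anticipate is \emph{not} the ring algebra, which is essentially formal once the functor $\mathcal{F}$ is set up, but rather verifying carefully that the Appendix A hypotheses — Assumptions \ref{assumption1}--\ref{assumption4}, conditions (S1), (S2), and the theta-function constraints $\theta_\mathcal{E}^2 = \id$, $\theta_{\mathcal{E}_a}\theta_{\mathcal{E}_b} = \theta_{\mathcal{E}_{a+b}}$ — are genuinely met in the irrational setting, and that the surjectivity/kernel computation for $\mathcal{F}_*$ does not secretly use rigidity of all of $V\Mod$ (which we do not have) rather than just rigidity of the simple currents $S_a$ and of $V_L\Mod$. Lemma \ref{lem: exactness of simple currents} and the semisimplicity of $V_L\Mod$ should be exactly what is needed to localize the argument to the simple-current directions, but this is the delicate point where the generalization beyond \cite{CKM,YY} actually happens, so I would spell it out rather than merely cite.
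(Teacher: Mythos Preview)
Your proposal is correct and follows essentially the same approach as the paper: part (i) is deduced directly from Corollary~\ref{classification of irreducibles} applied to $G=N/L$ acting on $\irr(V\otimes V_L)$ together with the Deligne product decomposition~\eqref{Deligne product}, and part (ii) from Proposition~\ref{fusion algebra} (which gives the $(N/L)^\vee$-grading on $\K(V)\otimes\Z[L'/L]$) plus Corollary~\ref{Grothendieck groups for induction} (which identifies $\K(\mathcal{E})$ with the quotient of $\K((V\otimes V_L\Mod)^0)$ by the ideal $\mathcal{J}$). Your one slight imprecision is in the order of operations in (ii): the $N/L$-invariants appear already at the level of the \emph{source} $\K((V\otimes V_L\Mod)^0)\simeq(\K(V)\otimes\Z[L'/L])^{N/L}$ (this is the locality condition, i.e.\ the degree-zero piece of the monodromy grading), and the tensor over $\Z[N/L]$ then implements the quotient by $\mathcal{J}$---not the other way around.
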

\proof 
(i) is immediate from Corollary \ref{classification of irreducibles}. For (ii), note that 
$\mathcal{K}(V_L)\simeq \Z[L'/L]$, ($[V_{a+L}]\mapsto a$).
Since $\mathcal{K}(\mathcal{E})$ and $\mathcal{K}(V_L)$ are $(N/L)^\vee$-graded $\Z[N/L]$-algebras by Theorem \ref{fusion algebra}, the fusion algebra $\mathcal{K}\left((V\otimes V_L\Mod)^0\right)$ is a diagonal $N/L$-invariant subalgebra
\begin{align*}
\mathcal{K}\left((V\otimes V_L\Mod)^0\right)&\simeq \left(\mathcal{K}(V)\otimes\mathcal{K}(V_L)\right)^{N/L}\\
&\simeq\left(\mathcal{K}(V)\otimes\Z[L'/L]\right)^{N/L}.
\end{align*}
Hence, by Corollary \ref{Grothendieck groups for induction} the induction functor $\mathcal{E}\boxtimes_{V\otimes V_L}\cdot$ induces an isomorphism 
\begin{align*}
\mathcal{K}(\mathcal{E})\simeq \mathcal{K}\left((V\otimes V_L\Mod)^0\right)/\mathcal{J}
\end{align*}
where $\mathcal{J}$ is generated by 
$[M]\otimes b-[M\boxtimes S_a]\otimes (a+b)$ for an object $M$ in $V\Mod$, $a\in N/L$ and $b\in L'/L$. This implies \eqref{isom1}.
\endproof

Next, we consider an inverse statement of Theorem \ref{extension law}. Recall that $\Rep(\mathcal{E})$ is a $\C$-linear monoidal supercategory and the induction functor
\begin{align*}
\mathcal{F}(\cdot)=\mathcal{E}\boxtimes_{V\otimes V_L}\!(\cdot)\colon (V\Mod)\otimes (V_L\Mod)\rightarrow \Rep(\mathcal{E})
\end{align*}
is a $\C$-linear monoidal superfunctor. By (S2), the functors
\begin{align*}
&V\Mod\rightarrow \Rep(\mathcal{E}),\quad M\mapsto \mathcal{F}(M\otimes V_L),\\
&V_L\Mod\rightarrow \Rep(\mathcal{E}),\quad M\mapsto \mathcal{F}(V\otimes M),
\end{align*}
are embeddings so that we may consider $V\Mod$ and $V_L\Mod$ are $\C$-linear monoidal subcategories of $\Rep(\mathcal{E})$. From now on, we will write $V_{a+L}$ instead of $\mathcal{F}(V\boxtimes V_{a+L})$ as an object in $\Rep(\mathcal{E})$ for $a \in L'/L$ by abuse of notation.
\begin{lemma}\label{decomposition of simple objects}
Every simple object $M$ in $\Rep(\mathcal{E})$ is isomorphic to $\widetilde{M}\boxtimes_{\mathcal{E}}V_{a+L}$ for some $\widetilde{M}\in \irr(\mathcal{E})$ and $a\in L'/L$. Moreover, $N'/L$ acts simply transitively on the set $\{(\widetilde{M}, a) \in \irr(\mathcal{E}) \times L'/L \mid \widetilde{M}\boxtimes_{\mathcal{E}}V_{a+L} \in \irr(\Rep(\mathcal{E})) \}$ by
\begin{align*}
b \cdot (\widetilde{M},a)=(\widetilde{M}\boxtimes_{\mathcal{E}}V_{b+L},a-b),\quad
b \in N'/L.
\end{align*}
\end{lemma}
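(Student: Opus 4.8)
The plan is to decompose the proof of Lemma \ref{decomposition of simple objects} into two parts: first showing that every simple object of $\Rep(\mathcal{E})$ arises from a simple object of $\Rep^0(\mathcal{E})=\mathcal{E}\Mod$ by fusing with a lattice simple current $V_{a+L}$, and second verifying that the indicated $N'/L$-action is simply transitive on the parametrizing set. For the first part, let $M$ be a simple object in $\Rep(\mathcal{E})$. Since $V_L\Mod\subset\Rep(\mathcal{E})$ is a subcategory of simple currents (they are invertible by Proposition \ref{prop: V_L property}(2), and remain invertible after induction), each $V_{a+L}$ is invertible in $\Rep(\mathcal{E})$ with inverse $V_{-a+L}$. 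Consider the monodromy of $M$ against the simple currents $V_{a+L}$ for $a\in N'/L$: since $\gamma$ induces $L'/N'\simeq (N/L)^\vee$, and $\mathcal{E}$ is built from $V_{a+L}$ with $a\in N/L$, one computes that the monodromy $\mathcal{M}_{\mathcal{E},M\boxtimes_{\mathcal{E}}V_{b+L}}$ for suitable $b\in L'/L$ can be trivialized precisely when a compatibility condition of the form $\phi_{\widetilde M}\phi_{V_{a+L}}=1$ holds. Concretely, I would pick $b\in L'/L$ so that $M\boxtimes_{\mathcal{E}}V_{-b+L}$ has trivial monodromy with all of $\mathcal{E}$, i.e.\ lies in $\Rep^0(\mathcal{E})$; this is possible because as $b$ ranges over $L'/L$ the monodromy characters $\gamma_b$ exhaust $(N/L)^\vee$, which is exactly the obstruction. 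Setting $\widetilde M:=M\boxtimes_{\mathcal{E}}V_{-b+L}$ gives $\widetilde M\in\irr(\mathcal{E})$ (simplicity is preserved under fusing with an invertible object) and $M\simeq\widetilde M\boxtimes_{\mathcal{E}}V_{b+L}$.

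For the second part, I would first check that the formula $b\cdot(\widetilde M,a)=(\widetilde M\boxtimes_{\mathcal{E}}V_{b+L},a-b)$ for $b\in N'/L$ indeed defines an action on the set $\{(\widetilde M,a)\in\irr(\mathcal{E})\times L'/L\mid \widetilde M\boxtimes_{\mathcal{E}}V_{a+L}\in\irr(\Rep(\mathcal{E}))\}$. That it is well-defined into this set follows because $\widetilde M\boxtimes_{\mathcal{E}}V_{b+L}$ is again simple in $\Rep(\mathcal{E})$ and, using associativity of $\boxtimes_{\mathcal{E}}$ and $V_{b+L}\boxtimes_{\mathcal{E}}V_{-b+L}\simeq\mathcal{E}$, the product $(\widetilde M\boxtimes_{\mathcal{E}}V_{b+L})\boxtimes_{\mathcal{E}}V_{(a-b)+L}\simeq\widetilde M\boxtimes_{\mathcal{E}}V_{a+L}$ is unchanged, hence still simple; and $\widetilde M\boxtimes_{\mathcal{E}}V_{b+L}$ lies in $\irr(\mathcal{E})=\irr(\Rep^0(\mathcal{E}))$ precisely because $b\in N'/L$ means $\gamma_b$ is trivial on $N/L$, i.e.\ $V_{b+L}\in V_L\Mod_{\mathbf 1}$ after induction, so fusing with it does not change the monodromy class. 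Group axioms are then immediate from associativity. Transitivity is exactly the content of the first part applied relative to a fixed base point: if $M\simeq\widetilde M\boxtimes_{\mathcal{E}}V_{a+L}\simeq\widetilde M'\boxtimes_{\mathcal{E}}V_{a'+L}$ are two presentations of the same simple object, then $\widetilde M'\simeq\widetilde M\boxtimes_{\mathcal{E}}V_{(a-a')+L}$, and since both $\widetilde M,\widetilde M'\in\irr(\mathcal{E})$ this forces $V_{(a-a')+L}$ to have trivial monodromy with $\mathcal{E}$, i.e.\ $a-a'\in N'/L$; thus $b=a-a'$ carries one presentation to the other. For freeness, if $b\cdot(\widetilde M,a)=(\widetilde M,a)$ then $a-b=a$ in $L'/L$, forcing $b=0$ in $L'/L$ and hence in $N'/L$.

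The main obstacle I anticipate is the careful bookkeeping of monodromy and grading characters: one must pin down exactly which objects $V_{a+L}$, upon induction to $\Rep(\mathcal{E})$, remain local (i.e.\ land in $\Rep^0(\mathcal{E})$) versus merely twist the monodromy character, and verify that the map $L'/L\to(N/L)^\vee$, $a\mapsto\gamma_a$, together with its kernel $N'/L$, governs this dichotomy. This is essentially a translation of the already-established decomposition $\Rep^0(\mathcal{E})\simeq\mathcal{E}\Mod$ and the isomorphism $L'/N'\simeq(N/L)^\vee$ into the setting of the larger category $\Rep(\mathcal{E})$, using that the induction functor $\mathcal{F}=\mathcal{E}\boxtimes_{V\otimes V_L}(\cdot)$ is monoidal and sends invertible objects to invertible objects. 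I would lean on Theorem \ref{extension law}(i), the semisimplicity and rigidity of $V_L\Mod$ from Proposition \ref{prop: V_L property}, and the standard fact that in a braided monoidal category fusing a simple object with an invertible one yields a simple object, so that all simplicity claims reduce to tracking monodromy; no genuinely new tensor-categorical input beyond Appendix A should be needed.
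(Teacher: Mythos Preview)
Your approach is correct and reaches the same conclusion, but the paper's proof is organized a little differently and is somewhat shorter. The paper begins by invoking Proposition~\ref{induction}(ii) to write any simple $M$ in $\Rep(\mathcal{E})$ as $\mathcal{F}(\overline{M}\otimes V_{a+L})$ for some $\overline{M}\in\irr(V)$ and $a\in L'/L$; then, using that $\mathcal{F}$ is monoidal, it decomposes $\mathcal{F}(\overline{M}\otimes V_{a+L})\simeq\mathcal{F}(\overline{M}\otimes V_{b+L})\boxtimes_{\mathcal{E}}V_{a-b+L}$ for any $b$, and chooses $b$ so that $\overline{M}\otimes V_{b+L}\in\mathcal{C}^0$ (possible precisely because $\gamma\colon L'/N'\xrightarrow{\simeq}(N/L)^\vee$ is surjective). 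Uniqueness of $b$ up to $N'/L$ then gives the simple transitivity in one line.

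Your route instead stays in $\Rep(\mathcal{E})$ and twists $M$ directly by $V_{-b+L}$, measuring locality via the monodromy with the summands $\mathcal{E}_c$ in the underlying category $\mathcal{C}$. This is fine, but the step where you say the monodromy obstruction ``is exactly'' a single character in $(N/L)^\vee$ needs one extra observation you left implicit: since $\mathcal{E}\in\mathcal{C}^0$, the $\mathcal{E}$-action preserves each monodromy block $\mathcal{C}_\phi$ of Theorem~\ref{thm: monodromy decomposition}, so a simple object of $\Rep(\mathcal{E})$ has underlying $\mathcal{C}$-object lying in a single $\mathcal{C}_\phi$. The paper bypasses this by appealing to Proposition~\ref{induction}(ii), which already guarantees $M$ is induced from a simple (hence monodromy-homogeneous) object of $\mathcal{C}$. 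Your treatment of the $N'/L$-action in the second part is more explicit than the paper's and is correct; just note that $\Rep(\mathcal{E})$ is only monoidal (not braided), so the simplicity-under-invertible-fusion fact you cite is Lemma~\ref{lem: exactness of simple currents}, not a braided statement.
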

\proof
By Proposition \ref{induction} (ii), we have $M\simeq \mathcal{F}(\overline{M}\otimes V_{a+L})$ for some $\overline{M}\in \irr(V)$ and $a\in L'/L$. Since $\mathcal{F}$ is a monoidal superfunctor, we may decompose 
\begin{align}\label{decomposition of irreducibles}
\mathcal{F}(\overline{M}\otimes V_{a+L})\simeq \mathcal{F}(\overline{M}\otimes V_{b+L})\boxtimes_{\mathcal{E}}V_{a-b+L}
\end{align}
for any $b \in L'/L$. By using the isomorphism $\gamma \colon L'/N'\xrightarrow{\simeq}(N/L)^\vee$, we may take $b \in L'/L$ such that $\widetilde{M} \otimes V_{b+L}$ is a local $V\otimes V_L$-module with respect to $\{S_c\otimes V_{c+L}\}_{c\in N/L}$. Hence, $\mathcal{F}(\overline{M}\otimes V_{b+L})\in \irr(\mathcal{E})$. This proves the first part of the statement. The element $b\in L'/L$ in the decomposition \eqref{decomposition of irreducibles} such that $\overline{M}\otimes V_{b+L}$ is local as above are uniquely determined up to $N'/L$. This implies  the second part of the statement.
\endproof
Since objects of $\Rep(\mathcal{E})$ consist of pairs $(M, \mu_M)$ with $M\in (V\Mod)\otimes (V_L\Mod)$ and a morphism $\mu_M\colon \mathcal{E}\boxtimes_{V\otimes V_L} M\rightarrow M$ in $(V\Mod)\otimes (V_L\Mod)$, we have two families of monodromy actions 
\begin{align*}
\mathcal{M}_{N,\bullet}=\mathcal{M}_{N\otimes V_L,\bullet},\quad N\in V\Mod, \quad
\mathcal{M}_{V_{a+L},\bullet}=\mathcal{M}_{V\otimes V_{a+L},\bullet},\ a\in L'/L.
\end{align*}
Clearly, any object $M$ in the subcategory $V\Mod$ of $\Rep(\mathcal{E})$ has trivial monodromy with $V_{a+L}$, i.e., $\mathcal{M}_{V_{a+L}, \mathcal{F}(M\otimes V_L)}=\id$. Conversely, any simple object in $\Rep(\mathcal{E})$ satisfying this monodromy-free property lies in $V\Mod$. This implies the following slight generalization in the  irrational setting of \cite{CKM,YY} as Theorem \ref{extension law}.

\begin{theorem}[cf.~\cite{CKM,YY}]\label{Inverting_SCE}
Let $V,V_L$ and $\mathcal{E}$ as above.\\
(i) The set $\irr(V)$ has one-to-one correspondence to
$$\{(M,a)\in \irr(\mathcal{E})\times (L'/L)\mid\mathcal{M}_{V_{b+L},M\boxtimes_\mathcal{E}V_{a+L}}=\id,\ (\forall b\in N'/L)\}/(N'/L)$$
by $(M,a)\mapsto \overline{M}$ such that $\mathcal{F}(\overline{M}\otimes V_L)\simeq M\boxtimes_{\mathcal{E}} V_{a+L}$. In particular, we have $|\irr(V)|=|\irr(\mathcal{E})| \cdot|N/L|/|N'/L|$ and $\Pic(V)$ is naturally isomorphic to 
\begin{align*}
\{(M,a)\in \Pic(\mathcal{E})\times (L'/L)\mid\mathcal{M}_{V_{b+L},M\boxtimes_\mathcal{E}V_{a+L}}=\id,\ (\forall b\in N'/L)\}/(N'/L)
\end{align*}
(ii) We have an isomorphism of rings
\begin{align}\label{isom2}
\mathcal{K}(V)\simeq \left(\mathcal{K}(\mathcal{E})\underset{\Z[N'/L]}{\otimes}\Z[L'/L]\right)^{N'/L}
\end{align}
where the tensor product over $\Z[N'/L]$ is given by $[M\boxtimes_\mathcal{E}V_{a+L}]\otimes b=[M]\otimes (a+b)$ for $a\in N'/L$.
\end{theorem}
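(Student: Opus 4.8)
The plan is to deduce Theorem \ref{Inverting_SCE} from Theorem \ref{extension law} by exchanging the roles of the vertex operator superalgebra and its simple current extension, so that $\mathcal{E}$ itself becomes the ``base'' and $V\otimes V_L$ is reconstructed as a simple current extension of $\mathcal{E}$ by a group of simple currents of the form $V_{a+L}$ ($a\in N'/L$). More precisely, we first observe that $\mathcal{E}=\bigoplus_{a\in N/L}S_a\otimes V_{a+L}$ carries the Heisenberg (lattice) subalgebra coming from $V_L$, and that $\bigoplus_{a\in N'/L}\mathcal{F}(V\otimes V_{a+L})$ is a well-defined algebra object in $\Rep^0(\mathcal{E})$: the condition $L\subset N'\subset L'$ is exactly what makes the objects $V_{a+L}$ ($a\in N'/L$) simple currents in $\Rep(\mathcal{E})$ with trivial self-monodromy (this is where $N'$ rather than $N$ enters), and the extension it generates is $V\otimes V_L$ itself by \eqref{Deligne product}. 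The twist conditions (S1), (S2) for this new extension follow from those already imposed on $\mathcal{E}$ together with the fact that the relevant objects are all invertible.

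Concretely, the first step is to establish Lemma \ref{decomposition of simple objects}, which says every simple $\mathcal{E}$-module decomposes as $\widetilde{M}\boxtimes_{\mathcal{E}}V_{a+L}$ with $\widetilde{M}\in\irr(\mathcal{E})$ and $a\in L'/L$, with $N'/L$ acting simply transitively on the admissible pairs; this is the ``inverse'' of the classification in Theorem \ref{extension law}(i) and follows from Proposition \ref{induction}(ii) and the isomorphism $\gamma\colon L'/N'\xrightarrow{\simeq}(N/L)^\vee$ used to adjust the $V_L$-grading into the local sector. For part (i), one then notes that a simple object of $\Rep(\mathcal{E})$ lies in the monoidal subcategory $V\Mod$ precisely when it has trivial monodromy with every $V_{b+L}$, $b\in N'/L$; combining this with Lemma \ref{decomposition of simple objects} gives the stated bijection $\irr(V)\simeq\{(M,a)\}/(N'/L)$, and the index count $|\irr(V)|=|\irr(\mathcal{E})|\cdot|N/L|/|N'/L|$ follows by comparing with $|\irr(\mathcal{E})|=|\irr(V)|\cdot|N'/L|/|N/L|$ from Theorem \ref{extension law}(i). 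The Picard-group statement is the restriction of this bijection to invertible objects, using \eqref{decomposition of Picard groupoid} and the fact that induction sends simple currents to simple currents.

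For part (ii), the ring isomorphism \eqref{isom2} is obtained by running the proof of Theorem \ref{extension law}(ii) with $\mathcal{E}$ in place of $V\otimes V_L$. One has the fusion ring of the local category $(\Rep(\mathcal{E}))^0$ (local for the extension $\bigoplus_{a\in N'/L}V_{a+L}$) which by the same Deligne-product and grading argument is $\bigl(\mathcal{K}(\mathcal{E})\otimes\Z[L'/L]\bigr)^{N'/L}$, since $\mathcal{K}(V_L)\simeq\Z[L'/L]$ and $\mathcal{K}(\mathcal{E})$ is $(N'/L)^\vee$-graded over $\Z[N'/L]$ by Theorem \ref{fusion algebra}; then Corollary \ref{Grothendieck groups for induction} identifies $\mathcal{K}(V)$ with the quotient of this by the ideal generated by $[M]\otimes b-[M\boxtimes_{\mathcal{E}}V_{a+L}]\otimes(a+b)$ for $a\in N'/L$, $b\in L'/L$, which is exactly the claimed coinvariant-of-tensor-product description. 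The sign convention in the $\Z[N'/L]$-action ($[M\boxtimes_{\mathcal{E}}V_{a+L}]\otimes b=[M]\otimes(a+b)$, without the minus sign appearing in \eqref{isom1}) simply reflects that here we are undoing, rather than applying, the extension.

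The main obstacle I anticipate is purely bookkeeping: one must check that all the structural hypotheses (Assumptions \ref{assumption1}--\ref{assumption4}, conditions (S1), (S2), and the twist equations $\theta_{\mathcal{E}}^2=\id$, $\theta_{\mathcal{E}_a}\theta_{\mathcal{E}_b}=\theta_{\mathcal{E}_{a+b}}$) transfer from the extension ``$V\otimes V_L\rightsquigarrow\mathcal{E}$'' to the extension ``$\mathcal{E}\rightsquigarrow V\otimes V_L$'' without circularity, in particular that $\mathcal{E}$ is again simple, $C_2$-cofinite and of CFT type (which it is, by \cite[Theorem 3.42]{CKM}, once the hypotheses on the original data are in force) so that Appendix A applies to it verbatim. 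The only genuinely new point beyond re-indexing is verifying that the objects $V_{a+L}$, $a\in N'/L$, really do form a group of simple currents \emph{in $\Rep(\mathcal{E})$} with the correct (trivial) monodromy pairing — this is where one uses $N'/L=(L'/N)^\vee$-type duality and the computation of the monodromy $\mathcal{M}_{V_{a+L},V_{b+L}}=e^{2\pi\sqrt{-1}(a|b)}$ from Proposition \ref{prop: V_L property}(3), noting that $(a|b)\in\Z$ whenever $a\in N'$ and $b\in L$, which is exactly the statement that the extension $\bigoplus_{a\in N'/L}V_{a+L}\supset\mathcal{E}$ is local.
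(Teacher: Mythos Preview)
Your treatment of part (i) is correct and matches the paper: Lemma \ref{decomposition of simple objects} plus the monodromy characterization of the image of $V\Mod$ inside $\Rep(\mathcal{E})$ is exactly what is used.

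For part (ii), however, the ``swap roles and re-run Theorem \ref{extension law}'' framing has a genuine gap. The object $\bigoplus_{a\in N'/L}\mathcal{F}(V\otimes V_{a+L})$ is \emph{not} $V\otimes V_L$: as an object of $V\otimes V_L\Mod$ it is $\bigoplus_{a\in N'/L}\bigoplus_{c\in N/L}S_c\otimes V_{a+c+L}$, which has $|N/L|\cdot|N'/L|$ summands, almost none of them equal to $V\otimes V_L$. More seriously, the condition $a\in N'/L$ only guarantees that $V_{a+L}$ is \emph{local for $\mathcal{E}$} (i.e.\ lies in $\Rep^0(\mathcal{E})$); it does \emph{not} force the pairwise monodromies $\mathcal{M}^{\mathcal{E}}_{V_{a+L},V_{b+L}}=e^{2\pi\sqrt{-1}(a|b)}$ among the would-be summands to be trivial. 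That would require $N'$ itself to be integral, which is not assumed (and fails already when $N=L$). So $\bigoplus_{a\in N'/L}V_{a+L}$ is in general not a commutative algebra object in $\Rep^0(\mathcal{E})$, and you cannot invoke Corollary \ref{Grothendieck groups for induction} for it. The appeal to a ``Deligne-product argument'' also does not apply: unlike $V\otimes V_L$, the extension $\mathcal{E}$ has no tensor-product factorisation that would give $\mathcal{E}\Mod\simeq(\text{something})\otimes V_L\Mod$.

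The paper's proof of (ii) sidesteps all of this by never building a second extension. It works in the \emph{non-local} category $\Rep(\mathcal{E})$, uses Lemma \ref{decomposition of simple objects} directly to obtain $\mathcal{K}(\Rep(\mathcal{E}))\simeq\mathcal{K}(\mathcal{E})\otimes_{\Z[N'/L]}\Z[L'/L]$ (every simple is $\widetilde{M}\boxtimes_{\mathcal{E}}V_{a+L}$, with $N'/L$ acting simply transitively on the redundancy), and then observes that the monodromy action of $\{V_{b+L}\}_{b\in N'/L}$ gives an $(N'/L)^\vee$-grading on this ring whose trivial-degree part is exactly $\mathcal{K}(V)$, by the characterisation you yourself quote before the theorem. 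The ingredients you identified---Lemma \ref{decomposition of simple objects} and the monodromy criterion---are precisely the ones needed; only the packaging as a second simple current extension fails.
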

\proof
(i) is immediate from Lemma \ref{decomposition of simple objects}. We show (ii). Since every object in $V\Mod\otimes V_L\Mod$ has finite length, so does every object in $\Rep(\mathcal{E})$ and $\mathcal{E}\Mod$. Thus, we may take bases of $\mathcal{K}(\Rep(\mathcal{E}))$ and $\mathcal{K}(\mathcal{E})$ by complete sets of simple objects. By Lemma \ref{decomposition of simple objects}, we have a natural isomorphism
\begin{align*}
\mathcal{K}(\Rep(\mathcal{E}))\simeq \mathcal{K}(\mathcal{E})\underset{\Z[L'/N]}{\otimes} \mathcal{K}(V_L)\simeq \mathcal{K}(\mathcal{E})\underset{\Z[L'/N]}{\otimes} \Z[L'/L].
\end{align*}
Note that $\mathcal{K}(\Rep(\mathcal{E}))$ is an $(N'/L)^\vee$-graded ring by the monodromy action of $\{V_{a+L}\}_{a\in N'/L}$. Since the $(N'/L)$-invariant subring is spanned by $\irr(V)$, we obtain the assertion.
\endproof

\section{Feigin--Semikhatov Duality}\label{Feigin_Semikhatov_Duality}

In this section we review the Feigin--Semikhatov duality \cite{CGN,CL1} between the subregular $\W$-algebra and the principal $\W$-superalgebra conjectured by Feigin and Semikhatov in \cite{FS}.

\subsection{Affine vertex superalgebras and $\W$-superalgebras}\label{Affine_Notation}
Let $\mathfrak{g}$ be a finite dimensional Lie superalgebra equipped with a non-degenerate even supersymmetric invariant bilinear form $(\cdot|\cdot)$ and $\mathfrak{h}$ be its Cartan subalgebra. When $\mathfrak{g}$ is simple, we always normalize the form $(\cdot|\cdot)$ so that the highest even root of $\mathfrak{g}$ has square length $2$.
Let $\widehat{\mathfrak{g}} = \mathfrak{g}[t, t^{-1}] \oplus \C K$ be the affinization of $\mathfrak{g}$ defined by
\begin{align*}
[at^m, bt^n] = [a, b]t^{m+n}+m(a|b)\delta_{m+n,0}K,\quad
[K, at^m] = 0
\end{align*}
for $a, b\in\mathfrak{g}$ and $m, n\in\Z$. The induced $\widehat{\mathfrak{g}}$-module
\begin{align*}
V^k(\mathfrak{g})= U(\widehat{\mathfrak{g}})\underset{U(\mathfrak{g}[t]\oplus\C K)}{\otimes}\C_{k},
\end{align*}
where $\C_{k}$ is the $(1|0)$-dimensional $\mathfrak{g}[t]\oplus\C K$-module by $\mathfrak{g}[t] =0$ and $K = k\id$,
has a vertex superalgebra structure and is called the universal affine vertex superalgebra associated to $\mathfrak{g}$ at level $k$.
It is $\Z_{\geq0}$-graded conformal if $k+h^\vee \neq 0$ by the Sugawara construction, where $h^\vee$ is the dual Coxeter number of $\mathfrak{g}$. 
Similarly, for a dominant weight $\lambda \in P_+$ of $\g$, let $E_{\lambda, k}$ be the irreducible highest-weight representation of $\g \oplus \C K$ of highest-weight $\lambda$ on which $K$ acts by $k\id$. This lifts to a $\mathfrak{g}[t]\oplus\C K$-module by $\mathfrak{g}[t] =0$ and we obtain the induced $V^k(\g)$-module
\begin{align*}
V^k(\lambda)= U(\widehat{\mathfrak{g}})\underset{U(\mathfrak{g}[t]\oplus\C K)}{\otimes} E_{\lambda, k}.
\end{align*}

Suppose that $\mathfrak{g}$ is simple and take an even nilpotent element $f$ in $\mathfrak{g}_{\bar{0}}$ together with a good $\frac{1}{2}\Z$-grading $\Gamma$ of $\mathfrak{g}$ adapted to $f$ (see \cite{KRW} for the definition).
In \cite{KRW}, the universal $\W$-algebra $\mathcal{W}^k(\mathfrak{g},f;\Gamma)$ is defined by the generalized Drinfeld-Sokolov reduction of $V^k(\mathfrak{g})$ associated to $(\mathfrak{g}, f, \Gamma)$. 
When $k+h^\vee \neq 0$, it has a standard conformal vector $\omega_\Gamma$ and then is a $\frac{1}{2}\Z_{\geq0}$-graded vertex operator superalgebra of CFT type. Once we fix $\Gamma$, we abbreviate $\mathcal{W}^k(\mathfrak{g},f;\Gamma)$ as $\mathcal{W}^k(\mathfrak{g},f)$ and denote by $\W_k(\g,f)$ its unique simple quotient. In the next two subsection, we consider two $\W$-superalgebras which we consider in this paper.

\subsection{Subregular $\mathcal{W}$-algebras}\label{def of subreg}
Let $\mathfrak{g} = \mathfrak{sl}_n$, $f=\sub$ be a subregular nilpotent element in $\mathfrak{sl}_n$ and $\Gamma$  the Dynkin grading corresponding to the weighted Dynkin diagram
\begin{align*}
\quad\\
\setlength{\unitlength}{1mm}
\begin{picture}(0,0)(20,10)
\put(-38,9){$\mathfrak{g} = \mathfrak{sl}_{2m}:$}
\put(0,10){\circle{2}}
\put(-1,13){\footnotesize$1$}
\put(-1,5){\footnotesize$\alpha_1$}
\put(1,10.3){\line(1,0){4}}
\put(6.5,9.4){$\cdot$}
\put(8,9.4){$\cdot$}
\put(9.5,9.4){$\cdot$}
\put(12,10.3){\line(1,0){4}}
\put(17,10){\circle{2}}
\put(16,13){\footnotesize$1$}
\put(14.5,5){\footnotesize$\alpha_{m-1}$}
\put(18,10.3){\line(1,0){8}}
\put(27,10){\circle{2}}
\put(26,13){\footnotesize$0$}
\put(26,5){\footnotesize$\alpha_{m}$}
\put(28,10.3){\line(1,0){8}}
\put(37,10){\circle{2}}
\put(36,13){\footnotesize$1$}
\put(34,5){\footnotesize$\alpha_{m+1}$}
\put(38,10.3){\line(1,0){4}}
\put(43.5,9.4){$\cdot$}
\put(45,9.4){$\cdot$}
\put(46.5,9.4){$\cdot$}
\put(49,10.3){\line(1,0){4}}
\put(54,10){\circle{2}}
\put(53,13){\footnotesize$1$}
\put(53,5){\footnotesize$\alpha_{2m-1}$}
\put(56,9){,}
\end{picture}\\
\end{align*}
\begin{align*}
\quad\\
\setlength{\unitlength}{1mm}
\begin{picture}(0,0)(20,10)
\put(-38,9){$\mathfrak{g} = \mathfrak{sl}_{2m+1}:$}
\put(0,10){\circle{2}}
\put(-1,13){\footnotesize$1$}
\put(-1,5){\footnotesize$\alpha_1$}
\put(1,10.3){\line(1,0){4}}
\put(6.5,9.4){$\cdot$}
\put(8,9.4){$\cdot$}
\put(9.5,9.4){$\cdot$}
\put(12,10.3){\line(1,0){4}}
\put(17,10){\circle{2}}
\put(16,13){\footnotesize$1$}
\put(14.5,5){\footnotesize$\alpha_{m-1}$}
\put(18,10.3){\line(1,0){8}}
\put(27,10){\circle{2}}
\put(26,13){\footnotesize$\frac{1}{2}$}
\put(26,5){\footnotesize$\alpha_{m}$}
\put(28,10.3){\line(1,0){8}}
\put(37,10){\circle{2}}
\put(36,13){\footnotesize$\frac{1}{2}$}
\put(34,5){\footnotesize$\alpha_{m+1}$}
\put(38,10.3){\line(1,0){8}}
\put(47,10){\circle{2}}
\put(46,13){\footnotesize$1$}
\put(44,5){\footnotesize$\alpha_{m+2}$}
\put(48,10.3){\line(1,0){4}}
\put(53.5,9.4){$\cdot$}
\put(55,9.4){$\cdot$}
\put(56.5,9.4){$\cdot$}
\put(59,10.3){\line(1,0){4}}
\put(64,10){\circle{2}}
\put(63,13){\footnotesize$1$}
\put(63,5){\footnotesize$\alpha_{2m}$}
\end{picture}\\
\end{align*}
for $m\in\Z_{\geq1}$.
Then the associated $\W$-algebra $\mathcal{W}^k(\mathfrak{sl}_n,\sub)$ is called the subregular $\W$-algebra and 
$\Z_{\geq0}$-graded (resp.~$\frac{1}{2}\Z_{\geq0}$-graded) with respect to $\omega_{\mathrm{sub}}=\omega_\Gamma$ if $n=2m$ (resp.~$n=2m+1$).
As in \cite[\S4.1]{CGN}\footnote{In \cite{CGN}, another good grading $\Gamma_o$ corresponding to
\begin{align*}
\quad\\
\setlength{\unitlength}{1mm}
\begin{picture}(0,0)(20,10)
\put(-38,9){$\mathfrak{g} = \mathfrak{sl}_n:$}
\put(0,10){\circle{2}}
\put(-1,13){\footnotesize$0$}
\put(-1,5){\footnotesize$\alpha_1$}
\put(1,10.3){\line(1,0){8}}
\put(10,10){\circle{2}}
\put(9,13){\footnotesize$1$}
\put(9,5){\footnotesize$\alpha_2$}
\put(11,10.3){\line(1,0){6}}
\put(18.5,9.4){$\cdot$}
\put(20,9.4){$\cdot$}
\put(21.5,9.4){$\cdot$}
\put(24,10.3){\line(1,0){6}}
\put(31,10){\circle{2}}
\put(30,13){\footnotesize$1$}
\put(30,5){\footnotesize$\alpha_{n-1}$}
\put(33,9){,}
\end{picture}\\
\end{align*}
is used. Since $\mathcal{W}^k(\mathfrak{sl}_n,\sub,\Gamma)$ is isomorphic to $\mathcal{W}^k(\mathfrak{sl}_n,\sub;\Gamma_o)$ as a vertex algebra (see \cite{DSK} and \cite[Theorem 3.2.6.1]{AKM}), the element $\Hsub$ corresponds to $H_1$ in \cite[\S4.1]{CGN}.} (see \S\ref{Free_Field} for details), there exists a Heisenberg vertex subalgebra $\pi^{\Hsub}$ of $\mathcal{W}^k(\mathfrak{sl}_n,\sub)$
such that $H^+_{(0)}$ acts diagonally with integer eigenvalues and
\begin{equation}\label{H_plus}
    \Hsub(z)\Hsub(w)\sim\frac{\varepsilon_+}{(z-w)^2},\quad\varepsilon_+:=\frac{n-1}{n}(k+n)-1.
\end{equation}
The subregular $\W$-algebra is strongly $\Z$-graded by $H^+_{(0)}$ and, accordingly, we work with the module category whose objects are strongly $\C$-graded by $H^+_{(0)}$.

\subsection{Principal $\W$-superalgebras}\label{def of sprin}
Let $\mathfrak{g} = \mathfrak{sl}_{1|n}$, $f=\prin$ be a principal nilpotent element in $(\mathfrak{sl}_{1|n})_{\bar{0}}=\mathfrak{gl}_n$ and $\Gamma$ the good $\Z$-grading of $\mathfrak{sl}_{1|n}$ in \cite[\S3.3]{CGN}. By choosing a set $\{\beta_i\}_{i=0}^{n-1}$ of simple roots of $\mathfrak{sl}_{1|n}$ so that $(\beta_i|\beta_{i+1}) = -1$ and $\beta_0$ is a unique odd root, we can express the weighted Dynkin diagram for $\Gamma$ as 
\begin{align*}
\quad\\
\setlength{\unitlength}{1mm}
\begin{picture}(0,0)(20,10)
\put(-38,9){$\mathfrak{g} = \mathfrak{sl}_{1|n}$:}
\put(0,10){\circle{2}}
\put(-1.1,9.3){\footnotesize$\times$}
\put(-1,13){\footnotesize$0$}
\put(-1,5){\footnotesize$\beta_0$}
\put(1,10.3){\line(1,0){8}}
\put(10,10){\circle{2}}
\put(9,13){\footnotesize$1$}
\put(9,5){\footnotesize$\beta_1$}
\put(11,10.3){\line(1,0){6}}
\put(18.5,9.4){$\cdot$}
\put(20,9.4){$\cdot$}
\put(21.5,9.4){$\cdot$}
\put(24,10.3){\line(1,0){6}}
\put(31,10){\circle{2}}
\put(30,13){\footnotesize$1$}
\put(30,5){\footnotesize$\beta_{n-1}$}
\put(33,9){.}
\end{picture}\\
\end{align*}
By \cite[\S4.1]{CGN} (see \S\ref{Free_Field} for details), there exists a Heisenberg vertex subalgebra $\pi^{\Hsup}$ of $\mathcal{W}^k(\mathfrak{sl}_{1|n},\prin)$
such that $H^-_{(0)}$ acts diagonally with integer eigenvalues and
\begin{equation}\label{H_minus}
    \Hsup(z)\Hsup(w)\sim\frac{\varepsilon_-}{(z-w)^2},\quad\varepsilon_-:=-\frac{n}{n-1}(k+n-1)+1.
\end{equation}
In this paper we use $\omega_{\mathrm{sup}}:=\omega_\Gamma-\frac{1}{2}\partial H^-$ so that $\mathcal{W}^k(\mathfrak{sl}_{1|n},\prin)$ is $\frac{1}{2}\Z_{\geq0}$-graded (resp.~$\Z_{\geq0}$-graded) if $n=2m$ (resp.~$n=2m+1$).
Similarly to the subregular case, the principal $\W$-superalgebra is strongly $\Z$-graded by $H^-_{(0)}$ and we work with the module category whose objects are strongly $\C$-graded by $H^-_{(0)}$.

\subsection{Feigin--Semikhatov Duality}
Let $\kappa\in \C\setminus\{0,\frac{n}{n-1}\}$ and set 
\begin{equation}\label{def of Wpm}
    \Wplus:=\W^{\kappa-n}(\sll_{n},\sub),\quad \Wminus:=\W^{\frac{1}{\kappa}-(n-1)}(\sll_{1|n},\prin).
\end{equation}
By \eqref{H_plus} and \eqref{H_minus}, the condition $\kappa\neq\frac{n}{n-1}$ implies $\varepsilon_\pm\neq0$.
Then, by \cite{KRW,KW}, the Heisenberg vertex subalgebra $\pi^{\Hpm}$ acts semisimply on $\Wpm$ and thus we have a decomposition
\begin{equation}\label{support of W}
\Wpm\simeq\bigoplus_{\lambda\in\Z}\Omega^{\Hpm}_\lambda(\Wpm)\otimes\pi^{\Hpm}_{\lambda}
\end{equation}
as a $\Com(\pi^{\Hpm},\Wpm)\otimes\pi^{\Hpm}$-module, where
\begin{equation}
    \Omega_\lambda^{\Hpm}(\Wpm):=\{w\in \Wpm\mid H^\pm_{(n)}w=\delta_{n,0} \lambda w\text{ for }n\geq0\}.
\end{equation}

By using lattice vertex superalgebras $\mathcal{V}_{\pm}=V_{\Z\boson{\pm}}$ with $(\boson{\pm}|\boson{\pm})=\pm1$ and certain free field realizations (see \S\ref{Free_Field} below for details), we obtain the Kazama--Suzuki coset construction and its inverse \cite[Corollary 5.15]{CGN}:
\begin{equation}\label{KS_Duality}
{\sf KS}_\pm\colon\Wmp
\xrightarrow{\simeq}\mathrm{Com}(\pi^{\tHpm},\Wpm\otimes \mathcal{V}^{\pm}),
\end{equation}
where $\tHpm(z):=\mp\Hpm(z)+\boson{\pm}(z)$. We have
\begin{align}
\tHpm(z)\tHpm(w) \sim \frac{\widetilde{\varepsilon}_{\pm}}{(z-w)^2},\quad\widetilde{\varepsilon}_{\pm}:=\varepsilon_{\pm}\pm1=\pm\Big(\frac{n-1}{n}\kappa\Big)^{\pm1}.
\end{align}
The restriction of \eqref{KS_Duality} gives the Feigin--Semikhatov duality \cite{CGN,CL1}:
\begin{equation}\label{FS_duality}
\Com(\pi^{\Hmp},\Wmp)\xrightarrow{\simeq}
\Com(\pi^{\Hpm},\Wpm).
\end{equation}
\subsection{Free field realization revisited}\label{Free_Field}
Here we briefly review the free field realization of $\Wpm$ used in \cite{CGN} to construct the isomorphism \eqref{KS_Duality} for later use.

Let $\heisplus$ denote the Heisenberg vertex algebra generated by fields $\alpha_i(z)$ ($i=1,\dots,n-1$) corresponding to the simple roots of $\mathfrak{sl}_{n}$, which satisfy the OPEs
$$\alpha_i(z)\alpha_j(w)\sim \frac{\kappa(\alpha_i|\alpha_j)}{(z-w)^2}.$$
Then the Miura map for $\Wplus$ \cite{KW} induces an embedding 
\begin{align*}
p_+ \colon \Wplus \hookrightarrow \heisplus \otimes \mathcal{U}^+,
\end{align*}
which sends $\Hsub(z)$ to $\frac{1}{n}\sum_{i=1}^{n-1}(n-i)\alpha_i(z)-\phi_-(z)$. Here $\mathcal{U}^+$ denotes the vertex subalgebra of $V_{\Z\boson{+}\oplus \Z\boson{-}}$ generated by $\{\boson{+}(z),\boson{-}(z),\ket{m(\boson{+}+\boson{-})}(z)\,|\,m\in\Z\}$.
Similarly, let $\heisminus$ denote the Heisenberg vertex algebra generated by fields $\beta_i(z)$ ($i=0,\dots,n-1$) corresponding to the simple roots of $\mathfrak{sl}_{1|n}$ satisfying the OPEs
$$\beta_i(z)\beta_j(w)\sim \frac{\frac{1}{\kappa}(\beta_i|\beta_j)}{(z-w)^2}.$$
Then the Miura map for $\Wminus$ \cite{KW} induces an embedding 
\begin{align*}
p_- \colon\Wminus \hookrightarrow \heisminus \otimes \mathcal{U}^-,
\end{align*}
with $\mathcal{U}^-=V_{\Z\boson{+}}$, which sends $\Hsup(z)$ to $-\frac{1}{n-1}\sum_{i=0}^n(n-i)\beta_i(z)+\boson{+}(z)$.

We introduce vertex superalgebra embeddings $\widetilde{\sf{KS}}_\pm$ by
$$\widetilde{\sf{KS}}_+\colon \heisminus\otimes \mathcal{U}^-\rightarrow\heisplus\otimes \mathcal{U}^+\otimes \mathcal{V}^+,$$
\begin{align*}
    &\textstyle\beta_0(z)\mapsto
    \frac{1}{\kappa}(\boson{-}(z)+ \boson{+}_R(z)), \quad\beta_1(z)\mapsto -\frac{1}{\kappa}\alpha_1(z)+(\boson{+}_L(z)+ \boson{-}(z)),\\
    &\textstyle\beta_i(z)\mapsto -\frac{1}{\kappa}\alpha_i(z),\quad (i=2,\dots,n-1),\\
    &\boson{+}(z)\mapsto  \boson{+}_L(z)+\boson{-}(z)+\boson{+}_R(z),\quad\ket{m\boson{+}}\mapsto  \ket{m(\boson{+}+\boson{-})}\otimes \ket{m\boson{+}},
\end{align*}
where $\boson{+}_L(z)$ (resp.\ $\boson{+}_R(z)$) denotes the field $\boson{+}(z)$ in $\mathcal{U}^+$ (resp.\ $\mathcal{V}^+$), and by 
$$\widetilde{\sf{KS}}_-\colon \heisplus\otimes \mathcal{U}^+\rightarrow\heisminus\otimes \mathcal{U}^-\otimes \mathcal{V}^-,$$
\begin{align*}
    &\alpha_1(z)\mapsto  -\kappa\big(\beta_1(z)-\boson{+}(z)-\boson{-}(z)\big), \quad\alpha_i(z)\mapsto  -\kappa\beta_i(z),\quad (i=2,\dots,n-1),\\
    &\boson{\pm}(z)\mapsto  \mp\kappa\beta_0(z)+\boson{\pm}(z),\quad\ket{m(\boson{+}+\boson{-})}\mapsto  \ket{m\boson{+}}\otimes \ket{m \boson{-}}.
\end{align*}
Then, ${\sf KS}_\pm$ is obtained as a restriction of $\widetilde{\sf KS}_\pm$ by $p_\pm$ in the following way:
\begin{equation*}
\SelectTips{cm}{}
\xymatrix@W15pt@H11pt@R24pt@C25pt@M=5pt{
\Wminus \ar@<-0.3ex>@{^{(}->}[r]^-{{\sf KS}_+}\ar@<-0.6ex>@{^{(}->}[d]_-{p_-}&\Wplus\otimes \mathcal{V}^{+}\ar@<0.3ex>@{^{(}->}[d]^-{p_+\otimes \mathrm{id}}
&\Wplus\ar@<-0.3ex>@{^{(}->}[r]^-{{\sf KS}_-}\ar@<-0.6ex>@{^{(}->}[d]_-{p_+}&\Wminus\otimes \mathcal{V}^{-}\ar@<0.3ex>@{^{(}->}[d]^-{p_-\otimes\mathrm{id}}\\
\heisminus\otimes \mathcal{U}^- \ar@<-0.3ex>@{^{(}->}[r]^-{\widetilde{\sf KS}_+}&\heisplus\otimes \mathcal{U}^+\otimes \mathcal{V}^+,\ar@{}[lu]|{\circlearrowright}&\heisplus\otimes \mathcal{U}^+ \ar@<-0.3ex>@{^{(}->}[r]^-{\widetilde{\sf KS}_-}&\heisminus\otimes \mathcal{U}^-\otimes \mathcal{V}^-.\ar@{}[lu]|{\circlearrowright}
}
\end{equation*}
From this explicit form of ${\sf KS}_\pm$, the following lemma is obtained straightforwardly:
\begin{lemma}\label{Twisted_Emb}
\hspace{1cm}\\
(1) We have 
$\psi^{\pm}(z):={\sf KS}_\pm(\Hmp(z))=\widetilde{\varepsilon}_{\mp}\tHpm(z)\pm\boson{\pm}(z).$\\
(2) The composition
$g_\pm:={\sf KS}_\mp\circ {\sf KS}_\pm\colon
\Wpm\hookrightarrow
\Wpm\otimes \mathcal{V}^+\otimes \mathcal{V}^-$
satisfies
$$g_\pm(w)=w\otimes\ket{\lambda \boson{+}}\otimes\ket{\lambda \boson{-}},\quad (w\in\Omega_\lambda^{\Hpm}(\Wpm),\ \lambda\in \Z).$$
\end{lemma}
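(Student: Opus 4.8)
The plan is to compute everything from the explicit formulas for $\widetilde{\sf KS}_\pm$ and the Miura embeddings $p_\pm$ recorded in this subsection, since both statements are assertions about images of specific fields and vectors under compositions of known vertex superalgebra homomorphisms. For part (1), I would start from the definition ${\sf KS}_+(w)$ being the restriction of $\widetilde{\sf KS}_+$ along $p_-$, so that I need the image of $H^-(z)$, which $p_-$ sends to $-\frac{1}{n-1}\sum_{i=0}^{n-1}(n-i)\beta_i(z)+\boson{+}(z)$ (using that $\beta_0,\dots,\beta_{n-1}$ are the simple roots; the coefficient of $\beta_0$ is $n$). Applying $\widetilde{\sf KS}_+$ termwise: the $\beta_i$ with $i\geq 2$ go to $-\frac1\kappa\alpha_i$, the $\beta_1$ term produces $-\frac1\kappa\alpha_1+(\boson{+}_L+\boson{-})$, the $\beta_0$ term produces $\frac1\kappa(\boson{-}+\boson{+}_R)$, and $\boson{+}$ goes to $\boson{+}_L+\boson{-}+\boson{+}_R$. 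Collecting the $\alpha_i$-contributions reproduces (up to the scalar $-\frac1\kappa$) the image of $H^-$-like combination of simple roots, i.e.\ $-\frac1\kappa$ times $p_+(H^+(z))$ shifted; collecting the lattice/free-boson contributions $\boson{+}_L,\boson{-},\boson{+}_R$ and matching with the definitions $\widetilde H^+(z)=-H^+(z)+\boson{+}(z)$ and $\widetilde\varepsilon_\pm$, one reads off $\psi^+(z)=\widetilde\varepsilon_-\,\widetilde H^+(z)+\boson{+}(z)$. The $\mp$/$\pm$ case is the mirror computation with the roles of $n$ and $n-1$, $\kappa$ and $\frac1\kappa$, and $\boson{+}$ and $\boson{-}$ swapped. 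The only care needed is bookkeeping of signs and of which copy ($L$ versus $R$, i.e.\ inside $\mathcal U^\pm$ versus $\mathcal V^\pm$) each free boson lives in, together with the fact that $p_\pm$ are algebra maps so these field identities propagate from generators.

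For part (2), I would first observe that $g_\pm$ is a composition of two embeddings and hence an embedding $\W^\pm\hookrightarrow \W^\pm\otimes\mathcal V^+\otimes\mathcal V^-$, so it suffices to identify its value on a spanning set; the natural choice is the $\pi^{H^\pm}$-weight vectors, since $\W^\pm=\bigoplus_{\lambda\in\Z}\Omega_\lambda^{H^\pm}(\W^\pm)\otimes\pi^{H^\pm}_\lambda$ by \eqref{support of W}. The claim is that on $\Omega_\lambda^{H^\pm}(\W^\pm)$ the map $g_\pm$ acts by $w\mapsto w\otimes\ket{\lambda\boson{+}}\otimes\ket{\lambda\boson{-}}$. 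To see this, note that ${\sf KS}_\pm$ lands in the commutant of $\pi^{\widetilde H^\pm}$, and the composition ${\sf KS}_\mp\circ{\sf KS}_\pm$ therefore sends a $\pi^{H^\pm}$-weight-$\lambda$ vector into a prescribed Fock-charge sector: tracking the image of $\ket{m\boson{+}}$ and $\ket{m(\boson{+}+\boson{-})}$ under $\widetilde{\sf KS}_\pm$ in the two relevant orders, one finds that the charge-$\lambda$ part is carried precisely by the lattice vectors $\ket{\lambda\boson{+}}\otimes\ket{\lambda\boson{-}}\in\mathcal V^+\otimes\mathcal V^-$, while the ``$\W^\pm$ part'' is returned unchanged because $\widetilde{\sf KS}_\mp\circ\widetilde{\sf KS}_\pm$ restricted to the relevant Heisenberg$\otimes\mathcal U^\pm$ factor is, modulo these lattice corrections, the identity (this is visible from the formulas: e.g.\ $\alpha_i\mapsto-\kappa\beta_i\mapsto-\kappa(-\frac1\kappa\alpha_i)=\alpha_i$ for $i\geq2$, and the $i=1$ and $\boson{\pm}$ generators produce only the controlled lattice shifts that cancel on weight-zero vectors). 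I would verify the identity on the generating fields and on the highest-weight vectors $\ket{m\boson{\pm}}$, and then invoke that $g_\pm$ is a homomorphism of vertex superalgebras to conclude on all of $\W^\pm$.

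The main obstacle I anticipate is not conceptual but combinatorial: correctly normalizing the simple-root sums in $p_\pm(H^\pm(z))$ (the coefficients $\frac1n(n-i)$ versus $-\frac1{n-1}(n-i)$, and whether the sum over odd root $\beta_0$ carries coefficient $n$ or is absorbed) and then matching the resulting free-boson combination against $\widetilde\varepsilon_\mp\widetilde H^\pm\pm\boson{\pm}$ without sign errors, using \eqref{H_plus}, \eqref{H_minus}, and $\widetilde\varepsilon_\pm=\pm(\tfrac{n-1}{n}\kappa)^{\pm1}$. A secondary subtlety is that $\mathcal U^+$ and $\mathcal V^+$ both contain a field called $\boson{+}$, so the computation in part (1) must consistently use the $L/R$ labels introduced in the definition of $\widetilde{\sf KS}_+$, and in part (2) one must keep straight in which of the two $\mathcal V^\pm$ factors each lattice vector sits after the two successive applications. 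Once the normalizations are pinned down, both statements follow by direct substitution, so I would present part (1) as a one-line field computation and part (2) as a check on generators plus the weight-space decomposition \eqref{support of W}.
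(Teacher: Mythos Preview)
Your proposal is correct and is precisely the approach the paper intends: the paper states only that the lemma ``is obtained straightforwardly'' from the explicit form of ${\sf KS}_\pm$ given via $\widetilde{\sf KS}_\pm$ and $p_\pm$, and offers no further details. Your plan---direct substitution of the generating fields for part (1) and verification on generators plus lattice highest-weight vectors using the weight decomposition \eqref{support of W} for part (2)---is exactly this straightforward computation, and your identification of the sign/normalization bookkeeping and the $L/R$ labeling of the two copies of $\boson{\pm}$ as the only genuine hazards is apt.
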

Note that Lemma \ref{Twisted_Emb} (1) implies 
$$g_\pm(\Hpm)(z)=\Hpm(z)+\varepsilon_\pm(\boson{+}(z)+\boson{-}(z)).$$
Hence, if we write a general element $v\in \Omega_\lambda^{\Hpm}(\Wpm)\otimes \pi^{\Hpm}_\lambda$ as 
$v=F(\Hpm)\overline{v}$ where $F(\Hpm)$ is a polynomial in the variables $\Hpm_{(-m)}$ ($m\in \Z_{>0}$) and $\overline{v}\in \Omega^{\Hpm}_\lambda(\Wpm)$, we have  
\begin{align}
g_\pm(v)=F\left(g_\pm(\Hpm)\right)\overline{v}\otimes \ket{\lambda \boson{+}}\otimes \ket{\lambda \boson{-}}
\end{align}
by Lemma \ref{Twisted_Emb} (2). Now, the formal Taylor expansion formula
$$F(x+y)={\sf exp}\left(y\frac{d}{dx}\right)F(x)$$
implies the following:
\begin{corollary}\label{Twist_Emb_VA}
Define an even linear operator $\mathcal{H}_\pm$ on $\W\pm\otimes\mathcal{V}_+\otimes\mathcal{V}_-$ by
\begin{equation}\label{Twist_Operator}
\mathcal{H}_\pm=\sum_{m=1}^{\infty}\frac{1}{m}\Hpm_{(m)}\otimes \left(\boson{+}_{(-m)}+\boson{-}_{(-m)}\right).
\end{equation}
Then, we have
$g_\pm(v)={\sf exp}(\mathcal{H}_\pm)(v\otimes\ket{\lambda \boson{+}}\otimes\ket{\lambda \boson{-}})$
for $v\in\Omega_\lambda^{\Hpm}(\Wpm)\otimes \pi^{\Hpm}_\lambda$.
\end{corollary}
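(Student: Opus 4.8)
The plan is to prove the identity by writing out two explicit formulas for $g_\pm(v)$ and matching them. From the computation displayed just above the statement we already know $g_\pm(v)=F\big(g_\pm(\Hpm)\big)\overline{v}\otimes\ket{\lambda\boson{+}}\otimes\ket{\lambda\boson{-}}$, and, by the identity $g_\pm(\Hpm)(z)=\Hpm(z)+\varepsilon_\pm(\boson{+}(z)+\boson{-}(z))$ noted right after Lemma~\ref{Twisted_Emb}, the symbol $F\big(g_\pm(\Hpm)\big)$, viewed as an operator on $\Wpm\otimes\mathcal{V}^+\otimes\mathcal{V}^-$, is obtained from the differential polynomial $F$ by the substitution $\Hpm_{(-m)}\rightsquigarrow\Hpm_{(-m)}\otimes1+\varepsilon_\pm\,1\otimes\big(\boson{+}_{(-m)}+\boson{-}_{(-m)}\big)$ for every $m\geq1$ (the operators substituted for distinct $m$ mutually commute, all modes involved being negative and even, so this is genuinely a substitution into a commutative polynomial). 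Applying the multivariable version of the Taylor formula $F(x+y)={\sf exp}\big(y\,\tfrac{d}{dx}\big)F(x)$ quoted in the text (i.e.\ applying the scalar version one variable at a time, the sum being finite since $F$ is a polynomial), this becomes
\[
g_\pm(v)=\Big[{\sf exp}\Big(\varepsilon_\pm\sum_{m\geq1}\big(\boson{+}_{(-m)}+\boson{-}_{(-m)}\big)\,\partial_{\Hpm_{(-m)}}\Big)F\Big](\Hpm)\,\overline{v}\otimes\ket{\lambda\boson{+}}\otimes\ket{\lambda\boson{-}},
\]
where $\partial_{\Hpm_{(-m)}}$ acts on the polynomial symbol and the $\boson{\pm}_{(-m)}$ act on the Fock factors $\mathcal{V}^\pm$.

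Next I would compute the action of $\mathcal{H}_\pm$ on the subspace spanned by vectors $G(\Hpm)\overline{v}\otimes u$ with $\overline{v}\in\Omega_\lambda^{\Hpm}(\Wpm)$, $G$ a polynomial in the modes $\Hpm_{(-m)}$ ($m\geq1$), and $u\in\mathcal{V}^+\otimes\mathcal{V}^-$ arbitrary. Since $\Hpm_{(m)}\overline{v}=0$ for all $m\geq1$ by the definition of $\Omega_\lambda^{\Hpm}(\Wpm)$, and since $[\Hpm_{(m)},\Hpm_{(-m')}]=m\varepsilon_\pm\delta_{m,m'}$ is central while $[\Hpm_{(m)},\,\cdot\,]$ is a derivation of the algebra generated by the $\Hpm_{(-m')}$, one gets $\Hpm_{(m)}G(\Hpm)\overline{v}=m\varepsilon_\pm\big(\partial_{\Hpm_{(-m)}}G\big)(\Hpm)\overline{v}$; hence on this ($\mathcal{H}_\pm$-stable) subspace $\mathcal{H}_\pm$ acts as $\varepsilon_\pm\sum_{m\geq1}\partial_{\Hpm_{(-m)}}\otimes\big(\boson{+}_{(-m)}+\boson{-}_{(-m)}\big)$. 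The summands pairwise commute—the derivations commute with each other, the creation operators $\boson{+}_{(-m)}+\boson{-}_{(-m)}$ commute with each other, and the two families act on disjoint tensor factors—and each summand strictly lowers the polynomial degree of $G$, hence acts locally nilpotently, so ${\sf exp}(\mathcal{H}_\pm)$ is well defined on $\Omega_\lambda^{\Hpm}(\Wpm)\otimes\pi^{\Hpm}_\lambda\otimes\mathcal{V}^+\otimes\mathcal{V}^-$ and factors as a product of commuting exponentials. Evaluating it on $v\otimes\ket{\lambda\boson{+}}\otimes\ket{\lambda\boson{-}}=F(\Hpm)\overline{v}\otimes\ket{\lambda\boson{+}}\otimes\ket{\lambda\boson{-}}$ reproduces verbatim the displayed expression for $g_\pm(v)$ above, which gives the claimed equality; extending linearly over a basis of $\Omega_\lambda^{\Hpm}(\Wpm)\otimes\pi^{\Hpm}_\lambda$ and over $\lambda\in\Z$ completes the proof.

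I expect the computation to be essentially bookkeeping; the one point deserving care is that the substitution $\tfrac1m\Hpm_{(m)}\rightsquigarrow\varepsilon_\pm\partial_{\Hpm_{(-m)}}$ must remain valid at every order in the expansion of ${\sf exp}(\mathcal{H}_\pm)$, not merely on its first application. This is automatic here, because each application of $\mathcal{H}_\pm$ only alters the polynomial coefficient multiplying $\overline{v}$, leaving $\overline{v}$—which is annihilated by all positive modes $\Hpm_{(m)}$—untouched, so the hypothesis licensing the rewriting is preserved throughout. Convergence never enters, since inside a fixed generalized $L_0$-eigenspace only finitely many $m$ contribute and $\mathcal{H}_\pm$ is nilpotent there.
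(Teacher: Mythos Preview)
Your proof is correct and follows the same approach as the paper. The paper simply states that the formal Taylor expansion formula $F(x+y)={\sf exp}(y\tfrac{d}{dx})F(x)$ implies the corollary; you have carefully unpacked why, namely that on vectors of the form $G(\Hpm)\overline{v}\otimes u$ with $\overline{v}\in\Omega_\lambda^{\Hpm}(\Wpm)$ the operator $\tfrac{1}{m}\Hpm_{(m)}$ acts as $\varepsilon_\pm\,\partial_{\Hpm_{(-m)}}$, so that $\mathcal{H}_\pm$ implements exactly the Taylor shift.
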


\section{Fusion rules of $\mathcal{W}$-superalgebras: Rational cases}\label{SuperW_rational}
Here we describe the fusion rings of $\W_k(\sll_n,\sub)$ and $\W_\ell(\sll_{1|n},\prin)$ in the rational case, that is,
\begin{align}\label{notation of W-algebras}
    \subW:=\W_{-n+\frac{n+r}{n-1}}(\mathfrak{sl}_n,\sub),\quad \prinsW:=\W_{-(n-1)+\frac{n-1}{n+r}}(\mathfrak{sl}_{1|n},\prin)
\end{align}
for $n,r\in\Z_{\geq2}$ with $\mathrm{gcd}(n-1,r+1)=1$
in terms of that of the simple affine vertex algebra of type $A$.

\subsection{Fusion rules of affine vertex algebras}
Here we recall the fusion ring of the simple affine vertex algebra $L_n(\sll_r)$. 
Let $\mathfrak{h}$ denote the Cartan subalgebra of $\sll_r$,  $Q$ the root lattice, $P$ the weight lattice and $P_+$ the set of dominant integral weights. 
The quotient $P/Q$ is represented by the set of fundamental weights $\{\varpi_i\}_{i=1}^{r-1}$ and is isomorphic to $\Z_r$ as groups by $\varpi_i \mapsto i$, where $\varpi_0=0$. The sum $\overline{\rho}=\sum_{i=1}^{r-1}\varpi_i$ is called the Weyl vector.

Let $\mathfrak{h}_{\mathrm{aff}}$ denote the Cartan subalgebra of the affinization $\widehat{\sll}_r =\sll_r[t, t^{-1}] \oplus \C K$, which is $\mathfrak{h}_{\mathrm{aff}}=\mathfrak{h} \oplus \C K$, and $\{\Lambda_i\}_{i=0}^{r-1}$ the set of affine fundamental weights, $\widehat{P}$ the affine weight lattice, $\widehat{P}_+$ the set of dominant integral affine weights, and $\rho=\sum_{i=0}^{r-1}\Lambda_i$ the affine Weyl vector.
We identify $\mathfrak{h}_{\mathrm{aff}}^*$ with $\mathfrak{h}^*\oplus\C\Lambda_0$ and
write $\mathfrak{h}_{\mathrm{aff}}^* \ni \lambda \mapsto \overline{\lambda} \in \mathfrak{h}^*$ for the natural projection. Note that we have a group homomorphism
\begin{align}\label{eq:proj}
\Proj\colon\widehat{P} \twoheadrightarrow P \twoheadrightarrow P/Q \simeq \Z_r,\quad
\Lambda_i \mapsto i.
\end{align}

By \cite{FZ}, the set of simple $L_n(\mathfrak{sl}_r)$-modules $\irr(L_n(\mathfrak{sl}_r))$ is identified as \begin{equation}
    \widehat{P}_+^n(r):=\{\lambda\in \widehat{P}_+\,|\,\lambda(K)=n\}\xrightarrow{\simeq} \irr(L_n(\mathfrak{sl}_r)),\quad \lambda \mapsto L(\lambda).
\end{equation}
We denote by $N_{\lambda, \mu}^\nu$ the fusion rule 
\begin{align}\label{eq:sl-fusion}
L(\lambda)\boxtimes L(\mu)\simeq \bigoplus_{\nu\in \widehat{P}_+^n(r)}N_{\lambda,\mu}^{\nu}L(\nu),
\end{align}
which is calculated by the Kac--Walton formula, see e.g.~\cite[\S 16.2]{DFMS}. By \cite{Fu}, the group of simple currents is $\Pic(L_n(\sll_r))= \{L(n \Lambda_i)\}_{i\in \Z_r}$ and is isomorphic to $\Z_r$ by $L(n \Lambda_i) \mapsto i$. Thus we have a $\Z_r$-action on $\irr(L_n(\sll_r))$ by fusion product
\begin{align}\label{cyclic action}
L(n \Lambda_i)\boxtimes L(\mu)\simeq L(\sigma^i(\mu))
\end{align}
for $i\in\Z_r$ and $\mu \in \widehat{P}_+^n(r)$, where $\sigma$ is the cyclic permutation $\sigma(\Lambda_i)=\Lambda_{i+1}$.

\subsection{Fusion rules of principal $\mathcal{W}$-algebras}
The principal $\W$-algebra $\W^k(\sll_r):=\W^k(\sll_r,\prin)$ is the $\W$-algebra associated with $\sll_r$, the principal nilpotent element $\prin$ and the principal $\Z$-grading on $\mathfrak{sl}_r$. The simple quotient $\W_k(\sll_r)$ at the level
\begin{align*}
k=-r+\frac{r+n}{r+1},\quad (n \in \Z_{\geq0},\ \mathrm{gcd}(n-1, r+1) =1),
\end{align*}
which we denote by $\prinW$, is $C_2$-cofinite \cite{Ar2} and rational \cite{Ar3}.
In addition, we have $\irr(\prinW)=\{\prmod(\lambda)\mid \lambda \in \widehat{P}_+^n(r)\}$ where 
\begin{align}\label{eq:simple-prinW}
\mathbf{L}_\mathcal{W}(\lambda) = H^0_-\left(L(\overline{\lambda}-(k+r)\overline{\rho}+k\Lambda_0)\right),\quad
\left(k = -r+\frac{r+n}{r+1},\quad
\lambda \in \widehat{P}_+^n(r)\right),
\end{align}
and $H^0_-(?)$ denotes the ``$-$''-reduction functor introduced in \cite{FKW}. Note that $\prmod(\lambda)$ has the lowest conformal dimension
\begin{align}\label{conformal dimension of prinW}
h_\lambda^\mathcal{W} := \frac{(\lambda|\lambda+2\rho)}{2(k+r)}-(\lambda|\rho).
\end{align}
It follows from \cite{FKW, C1, AvE1} that the fusion rules of $\prinW$ are given by
\begin{align*}
\prmod(\lambda)\boxtimes \prmod(\mu)\simeq \bigoplus_{\nu\in \widehat{P}_+^n(r)}N_{\lambda,\mu}^{\nu} \prmod(\nu),
\end{align*}
where $N_{\lambda,\mu}^{\nu}$ is determined by \eqref{eq:sl-fusion}.
Thus we have an isomorphism of fusion rings
\begin{align}\label{fusion of prinW}
\mathcal{K}(L_n(\mathfrak{sl}_r)) \xrightarrow{\simeq} \mathcal{K}(\prinW),\quad
L(\lambda) \mapsto  \prmod(\lambda).
\end{align}
In particular, we have an ismorphism of groups
\begin{align*}
\mathrm{Pic}(\prinW) = \{\prmod(n \Lambda_i)\}_{i\in \Z_r}\simeq \Z_r,\quad
\prmod(n \Lambda_i) \mapsto i.
\end{align*}

\subsection{Fusion rules of subregular $\mathcal{W}$-algebras}\label{Fusion_Subregular}
We consider the subregular $\W$-algebra $\subW$ as in \eqref{notation of W-algebras}.
The norm of the Heisenberg field \eqref{H_plus} is $\varepsilon_+=\frac{r}{n}$.

\begin{lemma}[\cite{CL1}]
Let $n, r \in \Z_{\geq2}$ such that $\mathrm{gcd}(n-1, r+1) = 1$. Then
\begin{align}\label{eq:CL1-isom}
\Com\big(\pi^{\Hsub},\subW\big) \simeq \prinW.
\end{align}
\begin{proof}
Since \eqref{eq:CL1-isom} is proven for $r \geq 3$ \cite[Corollary 6.15]{CL1}, we show the case $r =2$.
We first prove that $C_0:=\Com\big(\pi^{\Hsub},\SubW(n,2)\big)$ is a conformal extension of $\PrinW(2,n)$. By \cite[Lemma 2.8]{AvEM}, it suffices to show that the asymptotic growth of $C_0$ is less than $1$. Since $\SubW(n,2)$ is of CFT type and $\pi^{\Hsub}$ acts semisimply on it, both $C_0$ and $\Com\big(C_0,\SubW(n,2)\big)$ are simple and the latter is isomorphic to a positive-definite lattice vertex algebra $V_L$ (cf.~\cite{LX}).
Then $C_0$ is rational by \cite[Theorem 4.12]{CKLR} and its asymptotic growth coincides with the effective central charge $c_{\text{eff}}(C_0)$ by  \cite[Proposition 2.4]{AvEM}.
Since the effective central charge of a simple $\mathcal W$-algebra at admissible level is given by 
\[
c_{\text{eff}}\left(\mathcal W_{-h^\vee + \frac{p}{q}}(\mathfrak g, f)\right) = \text{dim}(\mathfrak g^f) - \frac{h^\vee \text{dim}(\mathfrak g)}{pq},
\]
and the effective central charge of a unitary vertex algebra as e.g. a lattice vertex algebra of a positive definite lattice coincides with the central charge, we obtain 
\[
c_{\text{eff}}\left(\subWtwo\right)-c_{\text{eff}}(V_L) = \frac{n}{n+2} \geq c_{\text{eff}}\left(C_0\right).
\]
The last inequality follows from the fact that every simple $\subWtwo$-module decomposes into a direct sum of simple $C_0\otimes V_L$-modules. Hence the asymptotic growth of $C_0$ is less than $1$.

Next we decompose $\subWtwo$ as a $\prinWtwo\otimes \pi^{\Hsub}$-module. Since $\subWtwo$ is completely reducible, we have 
 $$\subWtwo=\bigoplus_{m\in \Z}C_m\otimes \pi^{\Hsub}_m,$$
 where $C_m$ is a certain simple $C_0$-module. By \cite[Theorem 9.1]{CL1}, $\subWtwo$ is weakly generated by $\prinWtwo\otimes \pi^{\Hsub}$ and the fields $G^\pm(z)$, which are the highest weight vectors in $C_{\pm1}\otimes \pi^{\Hsub}_{\pm1}$. By the equality of conformal weight of $G^+(z)$
 $$\frac{n}{2}=\Delta^{\subWtwo}(G^+(z))=\Delta^{\prinWtwo}(G^+(z))+\Delta^{\pi^{\Hsub}}(G^+(z))$$
and $\Delta^{\pi^{\Hsub}}(G^+(z))=n/4$, we have $\Delta^{\prinWtwo}(G^+(z))=n/4$, which is maximal among the conformal weights for the $(n+2,3)$-minimal series representations. Indeed it coincides uniquely with the conformal weight of the simple current $\mathbf{L}_{\W}(n\Lambda_1)$ of $\prinWtwo$. Recalling that $G^\pm(z)$ generates the lattice $\sqrt{2n}\Z$, we have a conformal embedding
$$\prinWtwo\otimes V_{\sqrt{2n}\Z}\oplus \mathbf{L}_{\W}(n\Lambda_1)\otimes V_{\frac{n}{\sqrt{2n}}+\sqrt{2n}\Z}\hookrightarrow \subWtwo,$$
which completes the weak generators. Therefore, the above embedding is surjective. This completes the proof.
\end{proof}
\end{lemma}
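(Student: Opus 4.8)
We must show that for $n,r \in \Z_{\geq 2}$ with $\gcd(n-1,r+1)=1$, the Heisenberg coset $\Com(\pi^{\Hsub},\subW)$ is isomorphic to the exceptional principal $\W$-algebra $\prinW = \W_{-r+\frac{r+n}{r+1}}(\sll_r)$. Since the case $r \geq 3$ is already established in \cite[Corollary 6.15]{CL1}, the entire content is the case $r = 2$, where the coset in question is $C_0 := \Com(\pi^{\Hsub},\subWtwo)$ and the target is the $(n+2,3)$-minimal series $\W$-algebra $\prinWtwo$.

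\textbf{Plan.} The strategy is a two-step argument: first show $C_0$ is a conformal extension of $\prinWtwo$, then show the extension is trivial. For the first step, I would invoke the asymptotic-growth criterion of \cite[Lemma 2.8]{AvEM}: it suffices to bound the asymptotic growth (equivalently, since $C_0$ turns out rational, the effective central charge) of $C_0$ by $1$. To get rationality, observe that $\subWtwo$ is $C_2$-cofinite and rational, $\pi^{\Hsub}$ acts semisimply on it, and the double coset $\Com(C_0,\subWtwo)$ is a simple, positive-definite lattice VOA $V_L$ (using e.g.\ \cite{LX}); then \cite[Theorem 4.12]{CKLR} gives rationality of $C_0$ and \cite[Proposition 2.4]{AvEM} identifies its asymptotic growth with $c_{\mathrm{eff}}(C_0)$. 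Now apply the known formula for the effective central charge of an admissible-level simple $\W$-algebra, $c_{\mathrm{eff}}(\W_{-h^\vee+p/q}(\g,f)) = \dim\g^f - h^\vee\dim\g/(pq)$, to compute $c_{\mathrm{eff}}(\subWtwo) - c_{\mathrm{eff}}(V_L) = \tfrac{n}{n+2}$; since every simple $\subWtwo$-module decomposes over $C_0 \otimes V_L$, this difference dominates $c_{\mathrm{eff}}(C_0)$, giving the desired bound.

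\textbf{Second step.} Having established that $C_0$ is a conformal extension of $\prinWtwo$, I would decompose $\subWtwo$ as a $\prinWtwo \otimes \pi^{\Hsub}$-module, $\subWtwo = \bigoplus_{m \in \Z} C_m \otimes \pi^{\Hsub}_m$ with each $C_m$ a simple $C_0$-module, and use the weak-generation result \cite[Theorem 9.1]{CL1}: $\subWtwo$ is weakly generated by $\prinWtwo \otimes \pi^{\Hsub}$ together with fields $G^\pm(z)$ that are highest-weight vectors in $C_{\pm 1} \otimes \pi^{\Hsub}_{\pm 1}$. The key numerical input is a conformal-weight bookkeeping: from $\Delta^{\subWtwo}(G^+) = n/2$ and $\Delta^{\pi^{\Hsub}}(G^+) = n/4$ (the latter from $\varepsilon_+ = r/n = 2/n$ and the charge being $1$) one gets $\Delta^{\prinWtwo}(G^+) = n/4$. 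But $n/4$ is exactly the maximal conformal weight among the $(n+2,3)$-minimal-series primaries, realized uniquely by the simple current $\mathbf{L}_\W(n\Lambda_1)$. Hence $G^\pm$ generate a conformal embedding $\prinWtwo \otimes V_{\sqrt{2n}\Z} \oplus \mathbf{L}_\W(n\Lambda_1) \otimes V_{\frac{n}{\sqrt{2n}}+\sqrt{2n}\Z} \hookrightarrow \subWtwo$ into the full set of weak generators, so the embedding is onto; comparing with the decomposition above forces $C_0 \simeq \prinWtwo$.

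\textbf{Expected main obstacle.} The delicate point is pinning down that the Heisenberg-charge-$1$ generators $G^\pm$ have minimal-$\W$-weight exactly $n/4$ and that this value is \emph{maximal and uniquely attained} among minimal-series primaries — i.e.\ identifying the $C_0$-module $C_1$ with the simple current $\mathbf{L}_\W(n\Lambda_1)$ rather than merely knowing its conformal weight. This requires the explicit conformal-weight data for the $(n+2,3)$-minimal models (via \eqref{conformal dimension of prinW}) and the observation that the lattice generated by $G^\pm$ is $\sqrt{2n}\Z$, matching $\varepsilon_+ = 2/n$ after the coset decomposition. The first step's rationality of $C_0$ via \cite{CKLR} is also load-bearing and relies on the double coset being a genuine positive-definite lattice VOA; verifying the hypotheses of \cite[Theorem 4.12]{CKLR} is where care is needed.
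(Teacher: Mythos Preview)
Your proposal is correct and follows essentially the same approach as the paper's proof: the same asymptotic-growth bound via \cite{AvEM} and \cite{CKLR} to get the conformal extension, followed by the same conformal-weight bookkeeping on $G^\pm$ to identify $C_1$ with the simple current $\mathbf{L}_\W(n\Lambda_1)$ and conclude surjectivity. The citations, numerical computations, and logical structure all match.
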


\begin{remark}\label{r=0_1}
It is straightforward to show that we have $\SubW(n,0)\simeq\C$ and $\SubW(2m,1)\simeq V_{\sqrt{2m}\Z}$ for $n\in\Z_{\geq2}$ and $m\in\Z_{\geq1}$.
\end{remark}

It follows from \cite[Theorem 9.4]{CL1} that
\begin{align}
&\Com\left(\prinW, \subW \right) \simeq V_{\ssqrt{nr}\Z},\\
\label{extension to subregular} &\subW \simeq \bigoplus_{i \in \Z_r} \prmod(n\Lambda_i) \otimes V_{\frac{ni}{\ssqrt{nr}}+\ssqrt{nr}\Z}.
\end{align}
In particular, $\subW$ is a simple current extension of $\prinW \otimes V_{\ssqrt{nr}\Z}$ and thus is $C_2$-cofinite and rational (see \cite[Corollary 5.19 (1)]{CGN}). Then, by using $\Proj$ given in \eqref{eq:proj} and Theorem \ref{extension law}, we obtain the following theorem.

\begin{theorem}\label{fusion rules of subregular}
There exists a one-to-one correspondence 
\begin{align*}
\irr(\subW)\simeq \big\{(\lambda,a)\in \widehat{P}_+^n(r)\times \Z_{nr}\,\big|\, \Proj(\lambda)=a\in \Z_r\big\}\big/\Z_r,
\end{align*}
where the $\Z_r$-action on $\widehat{P}_+^n(r)\times \Z_{nr}$ is defined by $m \cdot (\lambda, a) = (\sigma^m(\lambda), a + m n)$ for $m \in \Z_r$. Moreover, we have a $\prinW\otimes V_{\ssqrt{nr}\Z}$-module decomposition
\begin{equation}
\sbmod(\lambda,a)\simeq 
\bigoplus_{i\in\Z_r}\prmod(\sigma^{i}(\lambda))
\otimes V_{\frac{a+ni}{\ssqrt{nr}}+\ssqrt{nr}\Z}
\end{equation}
and the fusion product formula
\begin{align*}
\sbmod(\lambda,a)\boxtimes \sbmod(\mu,b)\simeq \bigoplus_{\nu\in \widehat{P}_+^n(r)} N_{\lambda\mu}^\nu \sbmod(\nu, a+b),
\end{align*}
where $\sbmod(\lambda,a)$ denotes the simple $\subW$-module corresponding to $(\lambda, a)$ and $N_{\lambda\mu}^\nu$ is the affine fusion rule given by \eqref{eq:sl-fusion}.
\end{theorem}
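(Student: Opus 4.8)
The statement follows by applying Theorem~\ref{extension law} to the simple current extension \eqref{extension to subregular}, so the plan is chiefly to make that theorem explicit. Put $V=\prinW$ and $V_L=V_{\ssqrt{nr}\Z}$, so that $L=\ssqrt{nr}\Z$, $L'=\frac{1}{\ssqrt{nr}}\Z$ and $L'/L\simeq\Z_{nr}$ via $\frac{m}{\ssqrt{nr}}+L\mapsto m$. The intermediate lattice realizing \eqref{extension to subregular} is $N=\frac{n}{\ssqrt{nr}}\Z$, so that $N/L\simeq\Z_r$ sits inside $L'/L\simeq\Z_{nr}$ as the multiples of $n$, with generator $\frac{n}{\ssqrt{nr}}+L$, and the attached group of simple currents is $\{S_m\}_{m\in\Z_r}$ with $S_m=\prmod(n\Lambda_m)$, which is precisely $\Pic(\prinW)$. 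Since $\prinW$ is simple, $C_2$-cofinite, rational and $\Z_{\geq0}$-graded of CFT type with trivial strong grading, and $V_{\ssqrt{nr}\Z}$ is the lattice vertex algebra of a positive-definite rank-one lattice, the data $(V\otimes V_L\Mod,\mathcal{E})$ with $\mathcal{E}=\subW$ satisfies all the requirements of \S\ref{Simple current extensions by Lattice}; indeed $\mathcal{E}=\subW$ is already known to be a $C_2$-cofinite, rational vertex operator algebra by \eqref{extension to subregular} and \cite[Corollary 5.19 (1)]{CGN}. Thus Theorem~\ref{extension law} applies and it remains to translate its output.

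The essential computation is the description of the gradings $\prinW\Mod=\bigoplus_\phi\prinW\Mod_\phi$ and $V_{\ssqrt{nr}\Z}\Mod=\bigoplus_\phi V_{\ssqrt{nr}\Z}\Mod_\phi$ over $(N/L)^\vee$, which we identify with $\Z_r$ by evaluation at the generator $\frac{n}{\ssqrt{nr}}+L$. On the lattice side Proposition~\ref{prop: V_L property}(3) gives $\mathcal{M}_{V_{\frac{n}{\ssqrt{nr}}+L},\,V_{\frac{m}{\ssqrt{nr}}+L}}=e^{2\pi\sqrt{-1}\,m/r}$, so $\phi_{V_{\frac{m}{\ssqrt{nr}}+L}}$ corresponds to $m\bmod r$. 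On the $\prinW$ side, using the ribbon twist $\theta_M=e^{2\pi\sqrt{-1}L_0}$ together with $\prmod(n\Lambda_m)\boxtimes\prmod(\lambda)\simeq\prmod(\sigma^m(\lambda))$ one has $\mathcal{M}_{\prmod(n\Lambda_m),\,\prmod(\lambda)}=e^{2\pi\sqrt{-1}(h_{\sigma^m(\lambda)}^{\mathcal{W}}-h_{n\Lambda_m}^{\mathcal{W}}-h_\lambda^{\mathcal{W}})}$; a short computation with the conformal dimensions \eqref{conformal dimension of prinW} — equivalently, the standard fact that the monodromy charge of the $\Z_r$ of simple currents in type $A$ is detected by $\Proj$ — shows this depends only on $\Proj(\lambda)$ and equals $e^{-2\pi\sqrt{-1}\,m\Proj(\lambda)/r}$, so that $\phi_{\prmod(\lambda)}$ corresponds to $-\Proj(\lambda)\bmod r$. (As a consistency check, locality of \eqref{extension to subregular} forces $\phi_{\prmod(n\Lambda_m)}\phi_{V_{\frac{nm}{\ssqrt{nr}}+L}}=1$, i.e.\ $-\Proj(n\Lambda_m)+nm\equiv0\pmod r$, which holds since $\Proj(n\Lambda_m)=nm$.)

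Feeding this into Theorem~\ref{extension law}(i), $\irr(\subW)=\irr(\mathcal{E})$ is in bijection with $\{(\prmod(\lambda),V_{\frac{a}{\ssqrt{nr}}+L})\mid a\in\Z_{nr},\ \phi_{\prmod(\lambda)}\phi_{V_{\frac{a}{\ssqrt{nr}}+L}}=1\}\big/(N/L)$. By the previous paragraph the gluing condition reads $-\Proj(\lambda)+a\equiv0\pmod r$, i.e.\ $\Proj(\lambda)=a$ in $\Z_r$, and the $N/L\simeq\Z_r$-action of Theorem~\ref{extension law}(i), after using $S_m=\prmod(n\Lambda_m)$, \eqref{cyclic action} and the inclusion $N/L\hookrightarrow L'/L$ sending the generator to $n\in\Z_{nr}$, becomes $m\cdot(\lambda,a)=(\sigma^m(\lambda),\,a+nm)$. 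This is exactly the asserted description of $\irr(\subW)$, and we set $\sbmod(\lambda,a):=\mathcal{F}(\prmod(\lambda)\otimes V_{\frac{a}{\ssqrt{nr}}+L})$ with $\mathcal{F}=\mathcal{E}\boxtimes_{V\otimes V_L}(\cdot)$ the induction functor.

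The remaining two assertions use only that $\mathcal{F}$ is a $\C$-linear monoidal superfunctor. As a $V\otimes V_L$-module one has $\mathcal{F}(X)\simeq\bigoplus_{m\in\Z_r}\mathcal{E}_m\boxtimes X$; taking $X=\prmod(\lambda)\otimes V_{\frac{a}{\ssqrt{nr}}+L}$, $\mathcal{E}_m=\prmod(n\Lambda_m)\otimes V_{\frac{nm}{\ssqrt{nr}}+L}$, the factorization of $\boxtimes$ on the Deligne product $\prinW\Mod\otimes V_{\ssqrt{nr}\Z}\Mod$, together with $\prmod(n\Lambda_m)\boxtimes\prmod(\lambda)\simeq\prmod(\sigma^m(\lambda))$ and Proposition~\ref{prop: V_L property}(3), yields the stated decomposition of $\sbmod(\lambda,a)$. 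Finally, monoidality of $\mathcal{F}$ gives $\sbmod(\lambda,a)\boxtimes\sbmod(\mu,b)\simeq\mathcal{F}\big((\prmod(\lambda)\boxtimes\prmod(\mu))\otimes(V_{\frac{a}{\ssqrt{nr}}+L}\boxtimes V_{\frac{b}{\ssqrt{nr}}+L})\big)$, and by the $\prinW$-fusion rules \eqref{fusion of prinW} and \eqref{eq:sl-fusion} and additivity of lattice fusion this equals $\bigoplus_\nu N_{\lambda\mu}^\nu\,\mathcal{F}(\prmod(\nu)\otimes V_{\frac{a+b}{\ssqrt{nr}}+L})=\bigoplus_\nu N_{\lambda\mu}^\nu\,\sbmod(\nu,a+b)$. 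The only genuinely non-formal step is the monodromy-charge computation of the second paragraph, i.e.\ matching $\phi_{\prmod(\lambda)}$ with $-\Proj(\lambda)$; this is where the explicit structure of $\irr(\prinW)$ — via the conformal dimensions \eqref{conformal dimension of prinW}, or the monodromy charges of $L_n(\sll_r)$ — must be invoked, everything else being lattice bookkeeping and formal properties of the induction functor.
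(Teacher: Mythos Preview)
Your proposal is correct and follows essentially the same approach as the paper: compute the monodromy $\mathcal{M}_{\prmod(n\Lambda_i),\prmod(\lambda)}=\zeta_r^{-i\Proj(\lambda)}$ from the conformal dimensions \eqref{conformal dimension of prinW}, read off the locality condition $\Proj(\lambda)=a$ in $\Z_r$, and then invoke the general simple-current-extension machinery (the paper cites Corollaries~\ref{classification of irreducibles} and~\ref{Grothendieck groups for induction}, which package exactly the induction-functor argument you spell out). Your account is more explicit about deriving the module decomposition and the fusion formula from monoidality of $\mathcal{F}$, but this is precisely what those corollaries encode.
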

\proof
Using \eqref{conformal dimension of prinW}, it follows that $h_{n\Lambda_i}^\mathcal{W} = i n (r-i)/2r$ so that the monodromy operator $\mathcal{M}_{\prmod(n\Lambda_i),\prmod(\lambda)}$ is
\begin{align*}
\mathcal{M}_{\prmod(n\Lambda_i),\prmod(\lambda)}=\zeta_r^{-i\pi_{P/Q}(\lambda)},\quad \zeta_r=\mathrm{e}^{\frac{2\pi \ssqrt{-1}}{r}}.
\end{align*}
Thus the simple module $\mathbf{L}_\W(\lambda)\otimes V_{\frac{an}{rn}+\ssqrt{nr}\Z}$ is local with respect to the simple currents $\mathbf{L}_\W(n\Lambda_i)\otimes V_{\frac{ni}{nr}+\ssqrt{nr}\Z}$ if and only if 
$$\pi_{P/Q}(\lambda)=a\in \Z_r.$$
Here $a\in \Z_{nr}$ is regarded as an element of $\Z_r$ by the natural projection $\Z_{nr}\twoheadrightarrow \Z_r$. Thus the assertion follows from Corollary \ref{classification of irreducibles} and \ref{Grothendieck groups for induction}.
\endproof

\begin{corollary}
The modular $S$-matrix for $\subW$ is given by
\begin{equation*}
S^{\,\sf sb}_{(\lambda,a),(\mu,b)}=e^{\frac{2\pi\sqrt{-1}ab}{nr}}\sqrt{\frac{r}{n}}S^{\W}_{\lambda,\mu},
\end{equation*}
where $S^{\W}_{\lambda,\mu}$ is the modular $S$-matrix for $\prinW$.
\end{corollary}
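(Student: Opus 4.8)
The plan is to read off $S^{\,\sf sb}$ from the modular $S$-transformation of the characters of $\subW$-modules, using the $\prinW\otimes V_{\ssqrt{nr}\Z}$-module decomposition of Theorem \ref{fusion rules of subregular}. Work with the normalized characters $\ch_{\sbmod(\lambda,a)}(\tau,u)$ carrying the Heisenberg fugacity $z=e^{2\pi\sqrt{-1}u}$ coupled to $\Hsub_{(0)}$; since $\subW$ is $C_2$-cofinite and rational these span a finite-dimensional, $\mathrm{SL}_2(\Z)$-stable space, and, up to the common Heisenberg anomaly prefactor, the matrix of $\tau\mapsto-1/\tau$ on this basis is $S^{\,\sf sb}$. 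By Theorem \ref{fusion rules of subregular} one has $\ch_{\sbmod(\lambda,a)}=\sum_{i\in\Z_r}\ch_{\prmod(\sigma^i(\lambda))}(\tau)\,\ch_{V_{\frac{a+ni}{\ssqrt{nr}}+\ssqrt{nr}\Z}}(\tau,u)$, where the first factor transforms via $S^{\W}$ and the second, a rank-one theta function divided by $\eta$, transforms via the lattice $S$-matrix $\tfrac{1}{\sqrt{nr}}e^{2\pi\sqrt{-1}\,c c'/nr}$ on $L'/L\simeq\Z_{nr}$.

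The essential algebraic input is the simple-current symmetry of $S^{\W}$: since $\prmod(n\Lambda_i)$ are the simple currents of $\prinW$ with $\prmod(n\Lambda_i)\boxtimes\prmod(\lambda)\simeq\prmod(\sigma^i(\lambda))$ and, as computed in the proof of Theorem \ref{fusion rules of subregular}, $\mathcal{M}_{\prmod(n\Lambda_i),\prmod(\lambda)}=\zeta_r^{-i\Proj(\lambda)}$, one has $S^{\W}_{\sigma^i(\lambda),\mu}=\zeta_r^{-i\Proj(\mu)}S^{\W}_{\lambda,\mu}$; recall also that $\Proj(\sigma^i(\lambda))=\Proj(\lambda)+ni$ because $\lambda$ is at level $n$. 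Applying $S$ termwise and substituting these relations, the sum over $i\in\Z_r$ collapses to a geometric sum $\sum_{i\in\Z_r}\zeta_r^{\,i(c-\Proj(\nu))}$, which equals $r$ when $c\equiv\Proj(\nu)\bmod r$ and vanishes otherwise. Hence the transform of $\ch_{\sbmod(\lambda,a)}$ is supported exactly on the ``local'' pairs $(\nu,c)$ with $c\equiv\Proj(\nu)\bmod r$, and by Theorem \ref{fusion rules of subregular} these are partitioned into the $\Z_r$-orbits $\{(\sigma^j(\mu),b+nj)\mid j\in\Z_r\}$, one for each class $[(\mu,b)]\in\irr(\subW)$. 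Re-summing over each such orbit reconstitutes $\ch_{\sbmod(\mu,b)}$, with coefficient $\tfrac{r}{\sqrt{nr}}\,\zeta_r^{\,j(a-\Proj(\lambda))}\,e^{2\pi\sqrt{-1}ab/nr}\,S^{\W}_{\lambda,\mu}$, which is independent of $j$ precisely because $a\equiv\Proj(\lambda)\bmod r$ for $(\lambda,a)\in\irr(\subW)$; since $\tfrac{r}{\sqrt{nr}}=\sqrt{r/n}$, this gives $S^{\,\sf sb}_{(\lambda,a),(\mu,b)}=e^{2\pi\sqrt{-1}ab/nr}\sqrt{r/n}\,S^{\W}_{\lambda,\mu}$.

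The computation is essentially bookkeeping; the one point demanding care — and the main obstacle — is to fix all phase conventions mutually compatibly (the sign of the exponent in the lattice $S$-matrix, the sign in $\mathcal{M}_{\prmod(n\Lambda_1),\prmod(\lambda)}$, and the direction of the simple-current symmetry of $S^{\W}$), so that the geometric sum lands on the local pairs and the $j$-independent cancellation uses $a\equiv\Proj(\lambda)\bmod r$ exactly as above; the Heisenberg anomaly factor $e^{\pi\sqrt{-1}(\cdots)u^2/\tau}$ is common to all $\subW$-modules — they all carry the field $\Hsub$ of norm $\varepsilon_+=r/n$ — and so drops out of $S^{\,\sf sb}$. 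One then verifies that the resulting matrix is symmetric and unitary, e.g.\ $\sum_{(\mu,b)}|S^{\,\sf sb}_{(\lambda,a),(\mu,b)}|^2=\tfrac{r}{n}\cdot\tfrac{n}{r}=1$ using $\sum_{\mu}|S^{\W}_{\lambda,\mu}|^2=1$ and the fact that each affine orbit contributes exactly $n$ simple $\subW$-modules. Alternatively, the same formula can be obtained from the Verlinde formula: by the fusion rules of Theorem \ref{fusion rules of subregular} the fusion matrices of $\subW$ are the tensor products of those of $\prinW$ with the translation action of $\Z_{nr}$, restricted to the orbit set, hence are simultaneously diagonalised by $e^{2\pi\sqrt{-1}ab/nr}S^{\W}_{\lambda,\mu}$, with the normalisation $\sqrt{r/n}$ pinned down by unitarity.
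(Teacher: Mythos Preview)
Your proof is correct and is exactly the computation the paper has in mind: the corollary is stated without proof, as an immediate consequence of the $\prinW\otimes V_{\ssqrt{nr}\Z}$-module decomposition in Theorem \ref{fusion rules of subregular}, and you have simply spelled out the character calculation and simple-current bookkeeping that this entails.
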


Theorem \ref{fusion rules of subregular} together with \eqref{level-rank duality} implies the following isomorphism.

\begin{proposition}\label{fusion ring of subregular}
The one-to-one correspondence
$$\irr(L_r(\sll_n))\xrightarrow{\simeq}\irr(\subW);\  L(\lambda)\mapsto \sbmod(\lambda^t,\ell(\lambda))\quad \big(\lambda\in P^r_+(n)\big),$$
gives an isomorphism of fusion rings
\begin{equation}\label{Fusion_Subregular_Affine}
    \K(L_r(\sll_n))\simeq \K(\subW).
\end{equation}
\end{proposition}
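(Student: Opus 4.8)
The plan is to read off $\K(\subW)$ from Theorem~\ref{fusion rules of subregular} and then invoke the level-rank duality of type $A$ recorded as \eqref{level-rank duality} in Appendix~\ref{Level_Rank_Duality}. Concretely, Theorem~\ref{fusion rules of subregular} (an application of Theorem~\ref{extension law} to the simple current extension \eqref{extension to subregular}) identifies $\irr(\subW)$ with the orbit set
\[
\bigl\{(\nu,a)\in \widehat{P}_+^n(r)\times\Z_{nr}\mid \Proj(\nu)=a\ \text{in}\ \Z_r\bigr\}\big/\Z_r,\qquad m\cdot(\nu,a)=(\sigma^m\nu,\,a+mn),
\]
and describes the fusion product as $\sbmod(\lambda,a)\boxtimes\sbmod(\mu,b)\simeq\bigoplus_{\nu}N^{\nu}_{\lambda\mu}\,\sbmod(\nu,a+b)$, where $N^{\nu}_{\lambda\mu}$ are the Kac--Walton coefficients of $L_n(\sll_r)$; equivalently, by \eqref{fusion of prinW} and \eqref{isom1}, $\K(\subW)\simeq\bigl(\K(L_n(\sll_r))\otimes_{\Z[\Z_r]}\Z[\Z_{nr}]\bigr)^{\Z_r}$. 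It therefore remains to produce a ring isomorphism of $\K(L_r(\sll_n))$ onto this ring implementing $L(\lambda)\mapsto\sbmod(\lambda^t,\ell(\lambda))$.

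This is exactly the content of level-rank duality. First, $\lambda\mapsto(\lambda^t,\ell(\lambda))$ does land in the orbit set: $\lambda^t\in\widehat{P}_+^n(r)$ after deleting full columns, and $\Proj(\lambda^t)\equiv\ell(\lambda)$ in $\Z_r$ by the construction of $\ell$; a counting argument (using the identity $n\binom{n+r-1}{r-1}/r=\binom{n+r-1}{n-1}$ and freeness of the $\Z_r$-action) then shows that the induced map $\irr(L_r(\sll_n))\to\irr(\subW)$ is a bijection. Second, the classical level-rank identity for $A$-type fusion coefficients \cite{Fr,OS} asserts that the transpose correspondence carries the Kac--Walton coefficient $N^{\nu}_{\lambda\mu}$ for $\widehat{\sll}_n$ at level $r$ to $N^{\nu^t}_{\lambda^t\mu^t}$ for $\widehat{\sll}_r$ at level $n$, while the $\ell$-coordinates add up to the $\Z_r$-orbit ambiguity. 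Hence, applying $L(\lambda)\mapsto\sbmod(\lambda^t,\ell(\lambda))$ to $L(\lambda)\boxtimes L(\mu)=\bigoplus_\nu N^{\nu}_{\lambda\mu}L(\nu)$ and normalizing each summand $\sbmod(\nu^t,\ell(\nu))$ to $\sbmod(\nu^t,\ell(\lambda)+\ell(\mu))$ via the orbit relation reproduces termwise the fusion formula of $\subW$ above; thus the bijection is a ring homomorphism, hence an isomorphism of fusion rings.

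The main obstacle I anticipate is the combinatorial bookkeeping packaged in \eqref{level-rank duality}: fixing the definition of $\ell(\cdot)\in\Z_{nr}$ and the transpose correspondence so that $L(\lambda)\mapsto\sbmod(\lambda^t,\ell(\lambda))$ is well-defined and bijective on the nose --- in particular verifying that the $\Z_r$-quotient appearing in Theorem~\ref{fusion rules of subregular} exactly cancels the $\Z_n$ simple-current ambiguity inherent in the level-rank transpose --- and checking that, with these conventions, the two families of Kac--Walton coefficients match. Everything beyond that is formal, once \eqref{level-rank duality} is granted.
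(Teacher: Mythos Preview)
Your proposal is correct and follows essentially the same route as the paper: combine the description of $\K(\subW)$ furnished by Theorem~\ref{fusion rules of subregular} (equivalently the ring isomorphism \eqref{isom1} applied to \eqref{extension to subregular}) with the level-rank duality \eqref{level-rank duality}. The paper's argument is literally one sentence---``Theorem~\ref{fusion rules of subregular} together with \eqref{level-rank duality} implies the following isomorphism''---whereas you spell out the matching of simple objects via a counting argument and of fusion coefficients via the transpose identity; but this extra bookkeeping is exactly what is packaged inside the ring isomorphism \eqref{level-rank duality}, which the paper derives in Appendix~\ref{Level_Rank_Duality} categorically (through the simple current extension $\mathcal{E}_{m,n}$ and \cite{CKM2}) rather than by direct comparison of Kac--Walton coefficients.
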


\begin{remark}
In \cite{AvE2}, T.~Arakawa and J.~van Ekeren construct another ring isomorphism $\mathcal{K}(L_r(\mathfrak{sl}_n))\simeq \mathcal{K}(\subW)$ for even $n$. Let $v$ be coprime to $n+r$ and $\ell = - n + \frac{n+r}{v}$, then the fusion rings of $\mathcal{K}(L_r(\mathfrak{sl}_n))$ and $\mathcal{K}(L_\ell(\mathfrak{sl}_n))$ coincide and in fact for $v = n-1$ there is an equivalence of braided tensor categories between the category of ordinary modules of  
$L_\ell(\mathfrak{sl}_n)$ and those of $\subW$ \cite[Theorem 10.4]{ACF}.
We expect that these two parameterization of simple modules coincide.
\end{remark}
\subsection{Fusion rules of principal $\W$-superalgebras}\label{Fusion_SPrin}
We consider the principal $\W$-superalgebra $\prinsW$ as in \eqref{notation of W-algebras}.
The norm of the Heisenberg field \eqref{H_minus} is $\varepsilon_-=\frac{r}{n+r}$.

\begin{theorem}\label{decomposition for principal super}
There exists an isomorphism of vertex superalgebras 
\begin{align*}
\Com\left(\prinW,\prinsW\right)\simeq V_{\ssqrt{(n+r)r}\Z}.
\end{align*}
Moreover, we have
\begin{align}\label{extension to superW}
\prinsW\simeq \bigoplus_{i\in \Z_r}\mathbf{L}_\mathcal{W}(n\Lambda_i)\otimes V_{\frac{(n+r)i}{\ssqrt{(n+r)r}}+\ssqrt{(n+r)r}\Z}
\end{align}
as $\prinW\otimes V_{\ssqrt{(n+r)r}\Z}$-modules. In particular, $\prinsW$ is a simple current extension of $\prinW \otimes V_{\ssqrt{(n+r)r}\Z}$ and thus is $C_2$-cofinite and rational.
\begin{proof}
First of all, the generator of $\ssqrt{nr}\Z$ in \eqref{extension to subregular} is given by $n\Hsub$ and we have
\begin{align*}
\subW = \bigoplus_{i\in \Z_r}\bigoplus_{\lambda \in \Z}\prmod(n\Lambda_i)\otimes\pi^{\Hsub}_{r\lambda+i}.
\end{align*}
Then, since $(n+r)\psi^+ = n\Hsub + r \boson{+}$ by Lemma \ref{Twisted_Emb} (1), we have
\begin{align}\label{Kazama-Suzuki decomposition}
\begin{split}
\subW \otimes V_\Z
&= \bigoplus_{i\in \Z_r}\bigoplus_{\lambda, \mu\in \Z}\prmod(n\Lambda_i)\otimes\pi^{\Hsub}_{r\lambda +i}\otimes \pi^{\boson{+}}_\mu\\
&\simeq \bigoplus_{i\in \Z_r}\bigoplus_{\lambda, \mu\in \Z}\prmod(n\Lambda_i)\otimes\pi^{\tHsub}_{\mu-r\lambda- i}\otimes \pi^{(n+r)\psi^+}_{nr\lambda+ni+r\mu}
\end{split}
\end{align}
as $\prinW\otimes\pi^{\tHsub}\otimes \pi^{(n+r)\psi^+}$-modules. Thus, by \cite[Corollary 5.16]{CGN}, we obtain
\begin{align*}
\prinsW
&\simeq \Com\left(\pi^{\tHsub},\subW\otimes \mathcal{V}_+\right)\\
&\simeq \bigoplus_{i\in \Z_r}\bigoplus_{\lambda \in \Z}\prmod(n\Lambda_i) \otimes \pi^{(n+r)\psi^+}_{(n+r)(r \lambda +i)}.
\end{align*}
This implies the assertion since we have
$$\Com(\prinW,\prinsW)\simeq \bigoplus_{\lambda\in \Z}\pi^{(n+r)\psi^+}_{(n+r)r\lambda}\simeq V_{\ssqrt{(n+r)r}\Z}$$
by the characterization of lattice vertex superalgebras.
\end{proof}
\end{theorem}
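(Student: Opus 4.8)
The plan is to derive the decomposition of $\prinsW$ directly from the already-established decomposition \eqref{extension to subregular} of $\subW$ by pushing it through the Kazama--Suzuki coset construction \eqref{KS_Duality}. First I would rewrite \eqref{extension to subregular} by unpacking the lattice vertex algebra $V_{\ssqrt{nr}\Z}$ into its Heisenberg Fock modules: since the lattice generator is realized as $n\Hsub$ (this is the normalization fixed in \S\ref{def of subreg} and \S\ref{Free_Field}, with $\varepsilon_+ = r/n$ so that $(n\Hsub|n\Hsub) = n^2\varepsilon_+ = nr$), we get
\begin{align*}
\subW = \bigoplus_{i\in\Z_r}\bigoplus_{\lambda\in\Z}\prmod(n\Lambda_i)\otimes\pi^{\Hsub}_{r\lambda+i}
\end{align*}
as a $\prinW\otimes\pi^{\Hsub}$-module, using \eqref{eq:CL1-isom} to identify the Heisenberg coset with $\prinW$.

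Next I would tensor with the lattice vertex superalgebra $\mathcal{V}_+ = V_{\Z\boson{+}}$ and perform the change of Heisenberg variables that underlies the coset: inside $\pi^{\Hsub}\otimes\pi^{\boson{+}}$ one has the two commuting fields $\tHsub = -\Hsub + \boson{+}$ and (a multiple of) $\psi^+$. By Lemma \ref{Twisted_Emb}(1), $\psi^+ = {\sf KS}_+(\Hsup) = \widetilde\varepsilon_-\tHsub + \boson{+}$, and a short computation gives $(n+r)\psi^+ = n\Hsub + r\boson{+}$, which has norm $(n+r)^2$ times the relevant value — I would track the norms so that $\pi^{(n+r)\psi^+}_{(n+r)r\lambda}$ generates exactly $V_{\ssqrt{(n+r)r}\Z}$. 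Re-grading the double sum $\bigoplus_{\lambda,\mu}\pi^{\Hsub}_{r\lambda+i}\otimes\pi^{\boson{+}}_\mu$ by the new pair $(\tHsub, (n+r)\psi^+)$ yields \eqref{Kazama-Suzuki decomposition}. Then taking $\Com(\pi^{\tHsub},-)$ selects the summands with vanishing $\tHsub$-weight, i.e.\ $\mu = r\lambda + i$, and by \cite[Corollary 5.16]{CGN} (the form of ${\sf KS}_+$ relating $\prinsW$ to this commutant) the surviving piece is
\begin{align*}
\prinsW \simeq \bigoplus_{i\in\Z_r}\bigoplus_{\lambda\in\Z}\prmod(n\Lambda_i)\otimes\pi^{(n+r)\psi^+}_{(n+r)(r\lambda+i)},
\end{align*}
which reassembles into \eqref{extension to superW} upon collecting the Fock modules over $\lambda$ into cosets of $\ssqrt{(n+r)r}\Z$. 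Identifying $\bigoplus_\lambda\pi^{(n+r)\psi^+}_{(n+r)r\lambda}$ with $V_{\ssqrt{(n+r)r}\Z}$ by the standard characterization of lattice vertex superalgebras (a simple, $\Z$-graded, suitably generated vertex algebra whose weight-one space produces the lattice) gives the commutant statement. Finally, $C_2$-cofiniteness and rationality follow from the simple current extension structure exactly as in the subregular case, citing \cite[Corollary 5.19]{CGN}; the relevant simple currents $\prmod(n\Lambda_i)\otimes V_{\frac{(n+r)i}{\ssqrt{(n+r)r}}+\ssqrt{(n+r)r}\Z}$ are invertible because the $\prmod(n\Lambda_i)$ are and because lattice modules are always simple currents.

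The main obstacle I anticipate is bookkeeping the Heisenberg normalizations consistently: one must be careful that the lattice generator in \eqref{extension to subregular} really is $n\Hsub$ (not $\Hsub$ or $r\Hsub$), that the OPE constants $\varepsilon_+ = r/n$ and $\widetilde\varepsilon_\pm = \pm(\tfrac{n-1}{n}\kappa)^{\pm1}$ from \S\ref{Free_Field} feed correctly into the coefficient $(n+r)$ appearing in $(n+r)\psi^+ = n\Hsub + r\boson{+}$, and that the resulting Fock-module index $(n+r)(r\lambda+i)$ is the one that matches the lattice $\ssqrt{(n+r)r}\Z$ with the correct glue vectors $\tfrac{(n+r)i}{\ssqrt{(n+r)r}}$. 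Everything else is a formal manipulation of direct sums, but getting these scalars right is where an error would hide. A secondary point is to confirm that taking the commutant commutes with the infinite direct sum decomposition — this is standard because $\pi^{\tHsub}$ acts semisimply with integral weights, as noted after \eqref{def of Wpm}.
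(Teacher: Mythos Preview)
Your proposal is correct and follows essentially the same route as the paper: unpack \eqref{extension to subregular} into Heisenberg Fock modules, change basis via $(n+r)\psi^+ = n\Hsub + r\boson{+}$ from Lemma~\ref{Twisted_Emb}(1), select the $\tHsub$-weight-zero summands using \cite[Corollary~5.16]{CGN}, and reassemble into the lattice $V_{\ssqrt{(n+r)r}\Z}$. The bookkeeping concerns you flag are exactly the ones the paper handles (implicitly), and your final remark on $C_2$-cofiniteness and rationality via the simple current extension is likewise what the paper invokes.
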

\begin{remark}\label{r=0,1 for super}
As a counterpart to Remark \ref{r=0_1}, we have
$\PrinsW(n,0)\simeq\C$ and $\PrinsW(2m,1)\simeq V_{\sqrt{2m+1}\Z}$ for $n\in\Z_{\geq2}$ and $m\in\Z_{\geq1}$.
\end{remark}

By the same argument as in the proof of Theorem \ref{fusion rules of subregular}, the simple current extension \eqref{extension to superW} implies the following.
\begin{theorem}\label{fusion rules of superW}
There exists a one-to-one correspondence
\begin{align*}
\irr(\prinsW)\simeq \big\{(\lambda,a)\in \widehat{P}_+^n(r)\times \Z_{(n+r)r}\,\big|\, \Proj(\lambda)=a\in \Z_r\big\}\big/\Z_r,
\end{align*}
where the $\Z_r$-action on $\widehat{P}_+^n(r)\times \Z_{(n+r)r}$ is defined by $m \cdot (\lambda, a) = \big(\sigma^m(\lambda), a + m(n+r)\big)$ for $m \in \Z_r$. Moreover, we have a $\prinW\otimes V_{\ssqrt{(n+r)r}\Z}$-module decomposition
\begin{equation*}
\spmod(\lambda,a)\simeq 
\bigoplus_{i\in\Z_r}\prmod(\sigma^{i}(\lambda))
\otimes V_{\frac{a+(n+r)i}{\ssqrt{(n+r)r}}+\ssqrt{(n+r)r}\Z}
\end{equation*}
and the fusion product is given by 
\begin{align*}
\spmod(\lambda,a)\boxtimes \spmod(\mu,b)\simeq \bigoplus_{\nu\in \widehat{P}_+^n(r)} N_{\lambda\mu}^\nu \spmod(\nu, a+b),
\end{align*}
where $\spmod(\lambda,a)$ denotes the simple $\prinsW$-module corresponding to $(\lambda, a)$ and $N_{\lambda\mu}^\nu$ is the affine fusion rule given by \eqref{eq:sl-fusion}.
\end{theorem}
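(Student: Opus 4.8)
The plan is to transcribe the argument proving Theorem~\ref{fusion rules of subregular}, with the simple current extension \eqref{extension to subregular} replaced by \eqref{extension to superW}. Thus I regard $\prinsW$ as a simple current extension $\mathcal{E}=\bigoplus_{a\in N/L}S_a\otimes V_{a+L}$ of $\prinW\otimes V_{\ssqrt{(n+r)r}\Z}$ in the sense of \S\ref{Simple current extensions by Lattice}, with $V=\prinW$ and $L=\ssqrt{(n+r)r}\,\Z$ (so that $L'/L\simeq\Z_{(n+r)r}$), and $N\subset L'$ the rank-one lattice with $N/L\simeq\Z_r$ generated by $\frac{n+r}{\ssqrt{(n+r)r}}+L$, so that the subgroup $\{S_a\}_{a\in N/L}\subset\Pic(\prinW)$ occurring in \eqref{extension to superW} is $\{\prmod(n\Lambda_i)\}_{i\in\Z_r}$. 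The hypotheses (S1), (S2) and $\theta_{\mathcal{E}}^2=\id$, $\theta_{\mathcal{E}_a}\theta_{\mathcal{E}_b}=\theta_{\mathcal{E}_{a+b}}$ of \S\ref{Simple current extensions by Lattice} hold because \eqref{extension to superW} already realizes $\prinsW$ as a simple vertex superalgebra extension by \cite{CGN,CL1}.

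First I would compute the monodromies feeding into Theorem~\ref{extension law}. By \eqref{conformal dimension of prinW}, exactly as in the proof of Theorem~\ref{fusion rules of subregular}, $\prmod(n\Lambda_i)$ has conformal weight $in(r-i)/2r$, so $\mathcal{M}_{\prmod(n\Lambda_i),\prmod(\lambda)}=\zeta_r^{-i\Proj(\lambda)}$ with $\zeta_r=\mathrm{e}^{\frac{2\pi\ssqrt{-1}}{r}}$; and by Proposition~\ref{prop: V_L property}(3), $\mathcal{M}_{V_{\frac{(n+r)i}{\ssqrt{(n+r)r}}+L},\,V_{\frac{a}{\ssqrt{(n+r)r}}+L}}=\mathrm{e}^{\frac{2\pi\ssqrt{-1}ia}{r}}=\zeta_r^{ia}$. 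Hence $\prmod(\lambda)\otimes V_{\frac{a}{\ssqrt{(n+r)r}}+L}$ is local with respect to $\mathcal{E}$ if and only if $\zeta_r^{i(a-\Proj(\lambda))}=1$ for all $i\in\Z_r$, i.e.\ $\Proj(\lambda)=a$ in $\Z_r$ (where $a\in\Z_{(n+r)r}$ is reduced modulo $r$). Plugging this into Theorem~\ref{extension law}(i) (equivalently, Corollary~\ref{classification of irreducibles}) gives the stated description of $\irr(\prinsW)$, and the residual $N/L\simeq\Z_r$-quotient is by fusion with the simple currents: by \eqref{cyclic action} this acts on the affine label by $\lambda\mapsto\sigma^m(\lambda)$, and by Proposition~\ref{prop: V_L property}(3) on $a\in\Z_{(n+r)r}$ by $a\mapsto a+m(n+r)$, giving the action $m\cdot(\lambda,a)=(\sigma^m(\lambda),a+m(n+r))$.

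For the branching rule I would use that $\spmod(\lambda,a)=\mathcal{E}\boxtimes_{\prinW\otimes V_L}\big(\prmod(\lambda)\otimes V_{\frac{a}{\ssqrt{(n+r)r}}+L}\big)$ by construction; restricting along $\prinW\otimes V_L\hookrightarrow\prinsW$ and using \eqref{extension to superW} together with \eqref{cyclic action} produces $\spmod(\lambda,a)\simeq\bigoplus_{i\in\Z_r}\prmod(\sigma^i(\lambda))\otimes V_{\frac{a+(n+r)i}{\ssqrt{(n+r)r}}+L}$. For the fusion product, the induction functor $\mathcal{E}\boxtimes_{\prinW\otimes V_L}(-)$ is a $\C$-linear monoidal superfunctor (Corollary~\ref{Grothendieck groups for induction}), so the affine fusion rules \eqref{fusion of prinW} together with $V_{x+L}\boxtimes V_{y+L}\simeq V_{x+y+L}$ from Proposition~\ref{prop: V_L property}(3) yield $\spmod(\lambda,a)\boxtimes\spmod(\mu,b)\simeq\bigoplus_{\nu\in\widehat{P}_+^n(r)}N_{\lambda\mu}^\nu\,\spmod(\nu,a+b)$; here each term is a legitimate label because $N_{\lambda\mu}^\nu\neq0$ forces $\Proj(\nu)=\Proj(\lambda)+\Proj(\mu)=a+b$ in $\Z_r$, so the locality condition is automatically satisfied.

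The content genuinely beyond the subregular case is minimal; the only step I would spell out is the lattice bookkeeping, namely identifying $N/L\simeq\Z_r$ and $N'/L\simeq\Z_{n+r}$ and verifying that the product of the two monodromy characters above is trivial precisely on $\{\,(\lambda,a)\mid\Proj(\lambda)=a\in\Z_r\,\}$. Once this is in place, the classification, the branching, and the fusion rule all follow by the identical chain of citations used for Theorem~\ref{fusion rules of subregular}.
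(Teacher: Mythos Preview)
Your proposal is correct and follows exactly the approach the paper intends: the paper's own proof is the single sentence ``By the same argument as in the proof of Theorem~\ref{fusion rules of subregular}, the simple current extension \eqref{extension to superW} implies the following,'' and you have faithfully transcribed that argument with the lattice $L=\ssqrt{(n+r)r}\,\Z$ in place of $\ssqrt{nr}\,\Z$. The monodromy computation, the locality condition $\Proj(\lambda)=a\in\Z_r$, and the appeals to Theorem~\ref{extension law}/Corollaries~\ref{classification of irreducibles}--\ref{Grothendieck groups for induction} are all identical to the subregular case, so nothing further is needed.
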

\begin{corollary}
We obtain the following.
\begin{enumerate}
    \item We have a ring isomorphism $$\K(\prinsW)\simeq\left(\K(L_n(\sll_{r}))\underset{\Z[\mathbb{Z}_{r}]}{\otimes}\Z[\Z_{(n+r)r}]\right)^{\mathbb{Z}_{r}}.$$
    \item The number of inequivalent simple $\prinsW$-modules is $\binom{n+r}{n}$.
    \item The group of simple currents of $\prinsW$-modules is
    \begin{equation*}
    \mathrm{Pic}(\prinsW)=\{\spmod(n\Lambda_0,ar)\}_{a\in \Z_{n+r}}\simeq\Z_{n+r},
    \end{equation*}
    where the isomorphism is given by $\spmod(n\Lambda_0,ar)\mapsto a$.
\end{enumerate}
\end{corollary}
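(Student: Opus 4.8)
The plan is to obtain all three statements from the simple current extension structure \eqref{extension to superW} established in Theorem~\ref{decomposition for principal super}, by specializing the general results of Section~\ref{Simple current extensions by Lattice} to $V=\prinW$, $V_L=V_{\ssqrt{(n+r)r}\Z}$ and $\mathcal{E}=\prinsW$, and combining them with the fusion ring isomorphism $\K(L_n(\sll_r))\simeq\K(\prinW)$ of \eqref{fusion of prinW}. First I would fix the lattice bookkeeping: writing $L=\ssqrt{(n+r)r}\Z$ for the rank-one lattice with generator of norm $(n+r)r$, one has $L'/L\simeq\Z_{(n+r)r}$, and under this identification the group of lattice simple currents appearing in \eqref{extension to superW} corresponds to the cyclic subgroup $N/L=(n+r)\,\Z_{(n+r)r}$ of order $r$, while dually $N'/L=r\,\Z_{(n+r)r}$ is cyclic of order $n+r$. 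Setting $S_i=\prmod(n\Lambda_i)$, which under \eqref{fusion of prinW} is the group of simple currents of $\prinW$ acting on $\widehat{P}_+^n(r)$ by $\sigma$, Theorem~\ref{decomposition for principal super} presents $\prinsW=\bigoplus_{i\in\Z_r}S_i\otimes V_{i+L}$ as an extension of the type treated in Theorem~\ref{extension law}. Item~(1) is then immediate from Theorem~\ref{extension law}(ii) and \eqref{fusion of prinW}, once one checks that the $\Z[\Z_r]$-module structures agree: the generator of $N/L\simeq\Z_r$ acts on $\Z[L'/L]=\Z[\Z_{(n+r)r}]$ by translation by $n+r$ and on $\K(\prinW)\simeq\K(L_n(\sll_r))$ by fusion with $S_1$, i.e.\ by $\sigma$.

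For item~(2) I would use Theorem~\ref{extension law}(i), giving $|\irr(\prinsW)|=|\irr(\prinW)|\cdot|N'/L|/|N/L|$. Since $|\irr(\prinW)|=|\widehat{P}_+^n(r)|=\binom{n+r-1}{n}$, the number of level-$n$ dominant weights of $\widehat{\sll}_r$, and $|N'/L|=n+r$, $|N/L|=r$, this product equals $\binom{n+r-1}{n}\cdot\frac{n+r}{r}=\binom{n+r}{n}$. Alternatively one reads the count directly off Theorem~\ref{fusion rules of superW}: for each $\lambda$ there are exactly $n+r$ choices of $a\in\Z_{(n+r)r}$ with $\Proj(\lambda)=a$ in $\Z_r$, and the $\Z_r$-action $m\cdot(\lambda,a)=(\sigma^m(\lambda),a+m(n+r))$ is free, because $m(n+r)\equiv0\bmod(n+r)r$ forces $m\equiv0\bmod r$.

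For item~(3) I would argue from the fusion product formula of Theorem~\ref{fusion rules of superW}. If $\spmod(\lambda,a)$ is a simple current, then $\spmod(\lambda,a)\boxtimes\spmod(\mu,b)\simeq\bigoplus_\nu N_{\lambda\mu}^\nu\,\spmod(\nu,a+b)$ must be simple for every $(\mu,b)$, forcing $\sum_\nu N_{\lambda\mu}^\nu=1$ for all $\mu$; hence $L(\lambda)$ is a simple current of $L_n(\sll_r)$ and so $\lambda=n\Lambda_i$ for some $i$. Using the free $\Z_r$-action to normalize $i=0$, the constraint $\Proj(n\Lambda_0)=0=a$ in $\Z_r$ forces $a\in r\,\Z_{(n+r)r}\simeq\Z_{n+r}$, so the pairwise non-isomorphic simple currents are precisely the $\spmod(n\Lambda_0,ar)$ with $a\in\Z_{n+r}$. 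Since $\prmod(n\Lambda_0)=\prinW$ is the unit of $\K(\prinW)$, one has $N_{n\Lambda_0,n\Lambda_0}^\nu=\delta_{\nu,n\Lambda_0}$, whence $\spmod(n\Lambda_0,ar)\boxtimes\spmod(n\Lambda_0,br)\simeq\spmod(n\Lambda_0,(a+b)r)$ and $\Pic(\prinsW)\simeq\Z_{n+r}$ as claimed.

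The only genuinely delicate point I anticipate is the lattice bookkeeping of the first paragraph: pinning down the generators of $N/L\simeq\Z_r$ and $N'/L\simeq\Z_{n+r}$ inside $L'/L\simeq\Z_{(n+r)r}$ correctly, and confirming that the $\Z[\Z_r]$-action on $\Z[\Z_{(n+r)r}]$ used in Theorem~\ref{extension law}(ii) is exactly the one appearing in item~(1). Everything else is a direct application of Theorems~\ref{extension law} and~\ref{fusion rules of superW}.
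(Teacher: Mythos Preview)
Your proposal is correct and follows exactly the approach the paper intends: the corollary is stated without proof immediately after Theorem~\ref{fusion rules of superW}, and the implicit argument is precisely the specialization of Theorem~\ref{extension law} to the simple current extension \eqref{extension to superW}, combined with the fusion ring identification \eqref{fusion of prinW} and the explicit fusion rules of Theorem~\ref{fusion rules of superW}. Your lattice bookkeeping ($L'/L\simeq\Z_{(n+r)r}$, $N/L\simeq\Z_r$ generated by $n+r$, $N'/L\simeq\Z_{n+r}$ generated by $r$) and the binomial identity $\binom{n+r-1}{n}\cdot\frac{n+r}{r}=\binom{n+r}{n}$ are the expected computations.
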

\begin{corollary}
The formal character 
\begin{equation*}
\ch(\spmod(\lambda,a))(q,z)
=\tr_{\spmod(\lambda,a)}(q^{L_{0}-c/24}z^{\Hsup_{(0)}})
\end{equation*}
is given by
\begin{align*}
\frac{1}{\eta(q)^{r}}\sum_{\begin{subarray}c i\in \Z_r\\ w\in W\end{subarray}}\varepsilon(w)
q^{\frac{|(1+r)w\sigma^{i}(\lambda+\widehat{\rho})
-(n+r)(\Lambda_{0}+\widehat{\rho})|^{2}}{2(n+r)(1+r)}}\sum_{\mu\in (n+r)(i+r\Z)}q^{\frac{(a+\mu)^{2}}{2(n+r)r}}z^{\frac{a+\mu}{n+r}}
\end{align*}
and the modular $S$-matrix for $\prinsW$ is given by
\begin{equation*}
S^{\,\sf spr}_{(\lambda,a),(\mu,b)}=e^{\frac{2\pi\sqrt{-1}ab}{(n+r)r}}\sqrt{\frac{r}{n+r}}S^{\W}_{\lambda,\mu}.
\end{equation*}
\end{corollary}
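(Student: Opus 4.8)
The plan is to deduce both assertions from the $\prinW\otimes V_{\ssqrt{(n+r)r}\Z}$-module decomposition of $\spmod(\lambda,a)$ established in Theorem~\ref{fusion rules of superW}. First I would record that this decomposition is compatible with the conformal and Heisenberg gradings: the conformal vector $\omega_{\mathrm{sup}}$ of $\prinsW$ is the sum of the conformal vectors of $\prinW$ and of $V_{\ssqrt{(n+r)r}\Z}$ (so $c(\prinsW)=c(\prinW)+1$), and the inclusion $\prinW\subset\Com(\pi^{\Hsup},\prinsW)$ forces $\Hsup_{(0)}$ to act by $0$ on each $\prmod$-factor and to track only the lattice factor. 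Taking the graded trace of $q^{L_0-c/24}z^{\Hsup_{(0)}}$ over the decomposition then yields the finite sum
\begin{align*}
\ch(\spmod(\lambda,a))(q,z)=\sum_{i\in\Z_r}\ch(\prmod(\sigma^{i}(\lambda)))(q)\cdot\ch(V_{\frac{a+(n+r)i}{\ssqrt{(n+r)r}}+\ssqrt{(n+r)r}\Z})(q,z),
\end{align*}
which I would check to be independent of the chosen representative of the $\Z_r$-orbit of $(\lambda,a)$, since the shift $(\lambda,a)\mapsto(\sigma(\lambda),a+(n+r))$ merely reindexes $i\mapsto i+1$.

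Next I would evaluate the two factors. For the lattice module, using that $\Hsup$ equals $\tfrac{1}{n+r}$ times the Heisenberg field of the norm-$(n+r)r$ generator of $\ssqrt{(n+r)r}\Z$ (consistent with $(\Hsup|\Hsup)=\varepsilon_-=\tfrac{r}{n+r}$), the rank-one lattice-module character, after reindexing the lattice sum so that $\mu$ runs over $(n+r)(i+r\Z)$, becomes
\begin{align*}
\ch(V_{\frac{a+(n+r)i}{\ssqrt{(n+r)r}}+\ssqrt{(n+r)r}\Z})(q,z)=\frac{1}{\eta(q)}\sum_{\mu\in(n+r)(i+r\Z)}q^{\frac{(a+\mu)^{2}}{2(n+r)r}}z^{\frac{a+\mu}{n+r}},
\end{align*}
which is exactly the inner sum in the asserted formula. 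For the principal $\W$-algebra factor I would invoke the Frenkel--Kac--Wakimoto character formula for the ``$-$''-reduction \cite{FKW}, in the normalization determined by $\omega_{\mathrm{sup}}$ and \eqref{eq:simple-prinW}: for $\nu\in\widehat{P}_+^n(r)$,
\begin{align*}
\ch(\prmod(\nu))(q)=\frac{1}{\eta(q)^{r-1}}\sum_{w\in W}\varepsilon(w)\,q^{\frac{|(1+r)w(\nu+\widehat{\rho})-(n+r)(\Lambda_0+\widehat{\rho})|^{2}}{2(n+r)(1+r)}},
\end{align*}
$W$ being the (affine) Weyl group of $\widehat{\sll}_r$; here $\nu+\widehat{\rho}$ has level $n+r$ and $\Lambda_0+\widehat{\rho}$ has level $1+r$, so the combination has level $0$ and the $W$-sum is a genuine theta-type series. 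Specializing to $\nu=\sigma^{i}(\lambda)$, using the $\sigma$-invariance of $\widehat{\rho}$ (so that $w\sigma^{i}(\lambda+\widehat{\rho})=w(\sigma^{i}(\lambda)+\widehat{\rho})$), multiplying the two factors, and absorbing $\eta(q)^{r-1}\cdot\eta(q)=\eta(q)^{r}$, I obtain precisely the displayed character formula. The delicate point is matching the $q^{-c/24}$ normalization and the conventions of \cite{FKW}---the $|\widehat{\rho}|^{2}$-shift, the treatment of $\delta$, and the modification $\omega_{\mathrm{sup}}=\omega_\Gamma-\tfrac{1}{2}\partial\Hsup$---so that no stray power of $q$ or overall constant survives; this is routine but is where an error is most likely to hide.

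For the modular $S$-matrix, I would exploit that $\prinsW$ is a simple current extension of $\prinW\otimes V_{\ssqrt{(n+r)r}\Z}$ by the cyclic group $\{\prmod(n\Lambda_i)\otimes V_{\frac{(n+r)i}{\ssqrt{(n+r)r}}+\ssqrt{(n+r)r}\Z}\}_{i\in\Z_r}\simeq\Z_r$, which acts freely on simple objects (already on the lattice factor), so that there are no fixed points. In this situation the $S$-matrix of the extension is, up to the standard normalization factors, the product of the $S$-matrices of the two tensor factors together with a multiplicity $|\Z_r|$; since the lattice $S$-matrix on $\irr(V_{\ssqrt{(n+r)r}\Z})\simeq\Z_{(n+r)r}$ is $\frac{1}{\sqrt{(n+r)r}}e^{\frac{2\pi\sqrt{-1}ab}{(n+r)r}}$ (with the pairing given by the discriminant form of $\ssqrt{(n+r)r}\Z$) and $|\Z_r|/\sqrt{(n+r)r}=\sqrt{\frac{r}{n+r}}$, this produces the claimed $S^{\,\sf spr}_{(\lambda,a),(\mu,b)}=e^{\frac{2\pi\sqrt{-1}ab}{(n+r)r}}\sqrt{\frac{r}{n+r}}\,S^{\W}_{\lambda,\mu}$. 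Equivalently, and perhaps more transparently, one may apply $\tau\mapsto-1/\tau$ (with $u\mapsto u/\tau$) directly to the character formula just obtained, using the known modular transformation of the $\prinW$-characters (governed by $S^{\W}$) and of rank-one theta functions and then re-collecting over $\Z_r$-orbits. I expect the decomposition and the lattice bookkeeping to be straightforward; the two genuinely delicate points will be pinning down the Frenkel--Kac--Wakimoto normalization noted above, and---on the $S$-matrix side---checking that the fixed-point-free simple-current-extension formulas for the module count and the $S$-matrix indeed apply, which hinges on the extending group acting without fixed points.
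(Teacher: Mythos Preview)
Your approach is correct and is exactly the intended one: the paper states this corollary without proof, as an immediate consequence of the $\prinW\otimes V_{\ssqrt{(n+r)r}\Z}$-module decomposition in Theorem~\ref{fusion rules of superW} together with the known character formula for $\prmod(\nu)$ from \cite{FKW} and the standard theta-function character of a rank-one lattice module. Your derivation of the $S$-matrix via the fixed-point-free simple current extension (or equivalently by direct modular transformation of the factored character) is likewise the natural argument and parallels the unproven companion corollary for $\subW$.
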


Finally, we give another description of $\irr(\prinsW)$ by the Kazama--Suzuki coset construction. It follows from \eqref{extension to superW} and \eqref{Kazama-Suzuki decomposition} that 
$$\Com(\prinsW, \subW\otimes V_\Z)\simeq \bigoplus_{a\in \Z}\pi^{\widetilde{H}_1}_{(n+r)a}\simeq V_{\ssqrt{(n+r)n}\Z}$$
as vertex superalgebras. Hence, $\subW\otimes V_\Z$ decomposes into
\begin{align*}
&\bigoplus_{i\in \Z_r}\bigoplus_{\mu\in \Z_{n+r}}\prmod(n\Lambda_i)\otimes V_{\ssqrt{(n+r)r}\Z+\frac{i(n+r)+r\mu}{\ssqrt{(n+r)r}}}\otimes V_{\frac{n\mu}{\ssqrt{n(n+r)}}+\ssqrt{n(n+r)}\Z}\\
&\simeq \bigoplus_{\mu\in \Z_{n+r}}\spmod(n\Lambda_0,\mu r)\otimes V_{\frac{n\mu}{\ssqrt{n(n+r)}}+\ssqrt{n(n+r)}\Z}
\end{align*}
as a $\prinsW\otimes V_{\ssqrt{n(n+r)}\Z}$-module. Thus, $\subW\otimes V_\Z$ is an order $(n+r)$ simple current extension of $\prinsW\otimes V_{\ssqrt{n(n+r)}\Z}$. Since $V_\Z$ is a holomorphic vertex operator superalgebra, 
$$\subW\Mod\simeq \subW\Mod\otimes V_\Z\Mod,\quad M\mapsto M\otimes V_\Z$$
is an isomorphism of braided tensor supercategories. Hence Theorem \ref{Inverting_SCE} together with the isomorphism (\ref{Fusion_Subregular_Affine}) imply the following.
\begin{theorem}\label{the final fusion for sprin}
There exists a one-to-one correspondence
\begin{align*}
\irr(\prinsW)\simeq \big\{(\lambda,a)\in \widehat{P}_+^r(n)\times \Z_{(n+r)n}\,\big|\,
\Proj(\lambda)=-a\in \Z_n
\big\}\big/\Z_n
\end{align*}
where $\Z_n$ acts on $\widehat{P}_+^r(n)\times \Z_{(n+r)n}$ by $m \cdot (\lambda, a) = \big(\sigma^m(\lambda), a - m(n+r)\big)$, $(m \in \Z_n)$. 
Let $\mathbf{L}_{\mathcal{SW}}(\lambda,a)$ denote the simple $\prinsW$-module under this parameterization. Then we have an isomorphism of fusion rings
\begin{equation*}
    \K(\prinsW)\simeq\left(\K(L_r(\sll_n))\underset{\Z[\Z_n]}{\otimes}\Z[\Z_{n(n+r)}]\right)^{\Z_n};\ \mathbf{L}_{\mathcal{SW}}(\lambda,a)\mapsto L(\lambda)\otimes [a].
\end{equation*}
\end{theorem}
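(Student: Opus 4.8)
The plan is to apply Theorem \ref{Inverting_SCE} to the order $(n+r)$ simple current extension
$$\subW\otimes V_\Z\simeq\bigoplus_{\mu\in\Z_{n+r}}\spmod(n\Lambda_0,\mu r)\otimes V_{\frac{n\mu}{\ssqrt{n(n+r)}}+\ssqrt{n(n+r)}\Z}$$
of $\prinsW\otimes V_{\ssqrt{n(n+r)}\Z}$ obtained just above, in the role of the extension $\mathcal{E}$ of $V\otimes V_L$ with $V=\prinsW$ and $L=\ssqrt{n(n+r)}\Z$; thus $L'/L\simeq\Z_{n(n+r)}$, the subgroup $N/L\subseteq L'/L$ generated by the currents $V_{n\mu/\ssqrt{n(n+r)}+L}$ is $n\Z_{n(n+r)}$, of order $n+r$, and hence $N'/L$ is the order $n$ subgroup $(n+r)\Z_{n(n+r)}\simeq\Z_n$. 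All assumptions needed to invoke Theorem \ref{Inverting_SCE} hold in this rational setting since $\prinsW$ is simple, $C_2$-cofinite and rational by Theorem \ref{decomposition for principal super}.

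First I would feed in the data on the $\mathcal{E}$-side. Since $\Z$ is unimodular, $V_\Z$ is holomorphic, so $M\mapsto M\otimes V_\Z$ is an equivalence $\subW\Mod\xrightarrow{\simeq}(\subW\otimes V_\Z)\Mod$ of braided tensor supercategories, identifying $\irr(\subW\otimes V_\Z)$ with $\irr(\subW)$ and $\K(\subW\otimes V_\Z)$ with $\K(\subW)$ compatibly with all monodromies. Composing with the level--rank isomorphism \eqref{Fusion_Subregular_Affine} of Proposition \ref{fusion ring of subregular} gives $\irr(\subW\otimes V_\Z)\simeq\widehat{P}_+^r(n)=\irr(L_r(\sll_n))$ and a ring isomorphism $\K(\subW\otimes V_\Z)\simeq\K(L_r(\sll_n))$; moreover, since $N'/L\simeq\Pic(\subW)\simeq\Z_n$, the $\Z[N'/L]$-action on $\K(\subW)$ by fusion with the lattice simple currents $\{V_{a+L}\mid a\in N'/L\}$ is identified with the $\Pic(L_r(\sll_n))\simeq\Z_n$-action, i.e.\ with the cyclic shift $L(\lambda)\mapsto L(\sigma^m(\lambda))$.

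Then Theorem \ref{Inverting_SCE}(ii) yields at once
$$\K(\prinsW)\simeq\Big(\K(L_r(\sll_n))\underset{\Z[\Z_n]}{\otimes}\Z[\Z_{n(n+r)}]\Big)^{\Z_n},$$
which is the asserted ring isomorphism, while Theorem \ref{Inverting_SCE}(i) parameterizes $\irr(\prinsW)$ by the pairs $(M,a)\in\irr(\subW\otimes V_\Z)\times(L'/L)$ with $\mathcal{M}_{V_{b+L},\,M\boxtimes_{\mathcal{E}}V_{a+L}}=\id$ for all $b\in N'/L$, modulo the action $b\cdot(M,a)=(M\boxtimes_{\mathcal{E}}V_{b+L},a-b)$. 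What then remains is to rewrite this condition. Writing $M\leftrightarrow L(\lambda)$, $a\leftrightarrow a'\in\Z_{n(n+r)}$ and $b\leftrightarrow(n+r)c$ with $c\in\Z_n$, multiplicativity of the monodromy splits $\mathcal{M}_{V_{b+L},\,M\boxtimes_{\mathcal{E}}V_{a+L}}$ into the lattice contribution, which is $e^{2\pi\sqrt{-1}\,a'c/n}$ by Proposition \ref{prop: V_L property}(3), times the contribution of the affine simple current $V_{b+L}$ against $L(\lambda)$, which is $e^{-2\pi\sqrt{-1}\,c\,\Proj(\lambda)/n}$ by the conformal weights $h^{\mathcal{W}}_{n\Lambda_i}$ of \eqref{conformal dimension of prinW} and the ribbon-twist formula for monodromies between simple currents. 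Triviality of the total phase for all $c\in\Z_n$ is exactly $\Proj(\lambda)=-a$ in $\Z_n$, and the residual action becomes $m\cdot(\lambda,a)=(\sigma^m(\lambda),a-m(n+r))$, which preserves this condition because $\Proj(\sigma^m\lambda)=\Proj(\lambda)+mr$ while $m(n+r)\equiv mr$ mod $n$. Unwinding the identifications then sends $\mathbf{L}_{\mathcal{SW}}(\lambda,a)$ to $L(\lambda)\otimes[a]$.

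I expect the main obstacle to be this last step: pinning down the sign in the monodromy-free condition (so that it is $\Proj(\lambda)=-a$, not $+a$) and the precise shift in the $\Z_n$-action. This amounts to threading the normalizations of the level--rank identifications $\irr(\subW)\simeq\irr(L_r(\sll_n))$ and $\Pic(\subW)\simeq\Pic(L_r(\sll_n))\simeq\Z_n$ from Appendix \ref{Level_Rank_Duality} through the embeddings of Section \ref{Feigin_Semikhatov_Duality}, the lattice monodromies of Proposition \ref{prop: V_L property}(3), and the weights $h^{\mathcal{W}}_{n\Lambda_i}$; once those conventions are fixed it is a finite check, entirely parallel to the proofs of Theorem \ref{fusion rules of subregular} and Theorem \ref{fusion rules of superW}.
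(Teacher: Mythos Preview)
Your proposal is correct and follows exactly the same route as the paper: the paper's entire proof is the sentence ``Hence Theorem \ref{Inverting_SCE} together with the isomorphism (\ref{Fusion_Subregular_Affine}) imply the following,'' immediately preceding the statement, and you have simply unpacked what that sentence means. One small point: when you compute the monodromy of $V_{b+L}$ against $M$ you cite the $\prinW$-weights $h^{\mathcal W}_{n\Lambda_i}$ from \eqref{conformal dimension of prinW}, but at this stage $M$ is a $\subW$-module identified with $L(\lambda)\in\irr(L_r(\sll_n))$ via Proposition \ref{fusion ring of subregular}, so the relevant monodromy data is the $L_r(\sll_n)$ simple-current monodromy (or equivalently the $\subW$ twist), not the $\prinW$ one; the outcome is the same, but the reference should be adjusted.
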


\subsection{On more correspondences}\label{more}

In this section, we used that the Heisenberg coset of our subregular $\subW$-algebra and our principal $\prinsW$-superalgebra are rational and $C_2$-cofinite. In particular the representation categories of the Heisenberg cosets are vertex tensor categories. This is in fact all we need in order to apply the simple current extension procedure. Note, that the theory of vertex algebra extensions applies also to infinite order extensions \cite{CMY1} and the existence of vertex tensor category on Fock modules for the Heisenberg vertex algebra is known as long as the Heisenberg weight is real \cite{CKLR}.
 There are only a few  cases of subregular $\W$-algebras where it is known that its Heisenberg coset has categories of modules that are vertex tensor categories. In all these case the singlet algebra $\mathcal M(p)$ \cite{Ad4, AM} appears as a coset. 

The first example are the $\mathcal B_p$-algebras for $p \in \mathbb Z_{\geq 2}$ of \cite{CRW}. The $\mathcal B_2$-algebra is just the vertex algebra of $\beta\gamma$-ghosts. This is not a subregular $\W$-algebra, but $\mathcal B_3$ is $L_{-4/3}(\sll_2)$ \cite{Ad3} and in general $\mathcal B_p$ is $\W_k(\sll_{p-1},\sub)$ for $k = -(p-1)^2/{p}$ \cite{ACGY}.  The $\mathcal B_p$-algebras have received attention since they are the chiral algebras of certain quantum field theories, called Argyres--Douglas theories \cite{BN, C2}, proven in  \cite{ACGY, ACKR}. 
The second example is the $\mathbb Z_n$-orbifold of $\beta\gamma$-ghosts, which is $\W_k(\sll_{n},\sub)$ for $k = -n +\frac{n+1}{n}$ \cite[Theorem 9.4]{CL1}.
The Heisenberg coset of the $\mathcal B_p$-algebra is the singlet algebra $\mathcal M(p)$ and the Heisenberg coset of $\beta\gamma$-ghosts and hence their $\mathbb Z_n$-orbifolds is the singlet algebra $\mathcal M(2)$. Existence of vertex tensor category is not completely understood, but at least it is established for all modules that appear as summands inside triplet algebra modules \cite{CMY2}. The $\mathcal B_p$-algebra as a simple current extension of a Heisenberg vertex algebra times the singlet algebra has already been investigated in \cite{ACKR}. 

It is in general a very difficult problem to get the existence of vertex tensor categories and so we will now study a procedure that does not require this.

\section{Beyond rational case: coset functors}\label{sec:Kazama-Suzuki}
In this section, we establish a linear equivalence of weight module categories for the subregular $\W$-algebra $\Wplus$ and the principal $\W$-superalgebra $\Wminus$ at arbitrary levels under the duality relation \eqref{def of Wpm}. This generalizes the known results for the $\mathcal{N}=2$ super Virasoro algebra and the affine vertex algebra associated to $\sll_2$ in \cite{FST,Sato1}, which corresponds to the case $n=2$.
\subsection{Coset functors}\label{Coset_Functor}
Recall that we have the category $\modcat{\pm}$ of strongly graded grading-restricted generalized $\Wpm$-modules. Here we introduce a full subcategory $\wtcat{\pm}$ of $\modcat{\pm}$ whose objects are semisimple as $\pi^{H^\pm}$-modules. For an  object $M\in Ob(\wtcat{\pm})$, let $\mathrm{Supp}(M)$ denote the set of $H^\pm_{(0)}$-eigenvalues of $M$. Since $\mathrm{Supp}(\Wpm)=\Z$ (see \S\ref{def of subreg}-\ref{def of sprin}), $\mathrm{Supp}(M)$ defines naturally a subset of $\C/\Z$. We may decompose $M$ into 
\begin{equation}\label{decomposition of module}
M=\bigoplus_{\lambda\in\mathrm{Supp}(M)} M_{\lambda}
\simeq \bigoplus_{\lambda\in\mathrm{Supp}(M)}
\Omega_{\lambda}^{H^\pm}(M)\otimes\pi_{\lambda}^{H^\pm}
\end{equation}
as $\mathrm{Com}(\pi^{H^\pm},\Wpm)\otimes \pi^{H^\pm}$-modules where
$$\Omega_{\lambda}^{H^\pm}(M):=\{w\in M\,|\, \forall m\geq0, H^\pm_{(m)}w=\delta_{m,0}\lambda w\}.$$
Note that for each $\lambda\in \C$, $M_{[\lambda]}:=\bigoplus_{\mu\in \lambda+\Z}M_\mu\subset M$ is a submodule by \eqref{support of W}. Thus we have a decomposition $M=\bigoplus_{[\lambda]\in \C/\Z}M_{[\lambda]}$ as a $\Wpm$-module. Motivated by this, we introduce full subcategories $\wtcat{\pm}_{[\lambda]}$ consisting of objects $M$ such that $\mathrm{Supp}(M)\subset \lambda+\Z$. Then we have a decomposition 
$$\wtcat{\pm} =\bigoplus_{[\lambda]\in\C/\Z}\wtcat{\pm}_{[\lambda]}.$$

For an object $M$ in $\wtcat{\pm}$, we have a decomposition
\begin{equation*}
M\otimes\mathcal{V}_{\pm}\simeq\bigoplus_{\xi\in\C}
\Omega_{\xi}^{\widetilde{H}^\pm}(M\otimes\mathcal{V}_{\pm})\otimes
\pi^{\widetilde{H}^\pm}_{\xi}
\end{equation*}
as a $\mathrm{Com}(\pi^{\widetilde{H}^\pm},\Wpm\otimes\mathcal{V}_{\pm})\otimes\pi^{\widetilde{H}^\pm}$-module.
Then the $\mathrm{Com}(\pi^{\widetilde{H}^\pm},\Wpm\otimes\mathcal{V}_\pm)$-module $ \Omega_{\xi}^{\widetilde{H}^\pm}(M\otimes\mathcal{V}_\pm)$ has a $\Wpm$-module structure through the isomorphism $\sf{KS}_\pm$ in \eqref{KS_Duality}, which we denote by $\coset^\pm_\xi(M)$.
For $M\in Ob(\wtcat{\pm}_{[\lambda]})$, it follows from 
\begin{align}\label{coset decomp}
\begin{split}
M\otimes \mathcal{V}_{\pm}
&\simeq\bigoplus_{\begin{subarray}c \mu\in \lambda+\Z\\ \nu\in \Z\end{subarray}}\Omega_\mu^{H^\pm}(M)\otimes \pi^{H^\pm}_\mu\otimes \pi^{\boson{\pm}}_\mu\\
&\simeq 
\bigoplus_{\begin{subarray}c \xi\in \mp\lambda+\Z\\ \nu\in \Z\end{subarray}}\Omega_{\nu\mp\xi}^{H^\pm}(M)\otimes \pi^{{\sf KS}_\pm(H^\mp)}_{\nu+\widetilde{\varepsilon}_{\mp}\xi}\otimes \pi^{\widetilde{H}^\pm}_\xi 
\end{split}
\end{align}
that $\coset^\pm_{\xi}(M)$ is non-zero only if $\xi\in \mp\lambda+\Z$ and in this case  
\begin{align}\label{coset functor}
\coset^+_{\xi}(M)\simeq\bigoplus_{\nu\in \Z}\Omega_{\nu-\xi}^{H^+}(M)\otimes \pi^{H^-}_{\nu+\widetilde{\varepsilon}_{-}\xi},\quad \coset^-_{\xi}(M)\simeq\bigoplus_{\nu\in \Z}\Omega_{\nu+\xi}^{H^-}(M)\otimes \pi^{H^+}_{\nu+\widetilde{\varepsilon}_{+}\xi}
\end{align}
Now, it is obvious that the assignments $\coset^\pm_{\xi}$ induce $\C$-linear superfunctors
$$\coset^+_{\xi}\colon\wtcat{+}_{[\lambda]}
\to\wtcat{-}_{[\widetilde{\varepsilon}_{-}\xi]},\quad \coset^-_{\xi}\colon
\wtcat{-}_{[\lambda]}
\to\wtcat{+}_{[\widetilde{\varepsilon}_{+}\xi]},$$
which are non-zero only if $\xi\in \mp\lambda+\Z$, respectively. We call $\coset^\pm_\xi$ \emph{coset functors}.
\subsection{Equivalence of categories}\label{sec: Equivalence of categories}
Here we establish an equivalence of categories $\Wpm\Mod$ weight-wisely:
\begin{theorem}\label{Quasi_Inv}
For $\lambda\in\C$, the two functors 
$$\coset^+_{-\lambda}\colon\wtcat{+}_{[\lambda]}
\to\wtcat{-}_{[-\widetilde{\varepsilon}_{-}\lambda]},\quad \coset^-_{-\widetilde{\varepsilon}_{-}\lambda}\colon\wtcat{-}_{[-\widetilde{\varepsilon}_{-}\lambda]}\to
\wtcat{+}_{[\lambda]}$$
are mutually quasi-inverse to each other and give an equivalence of categories.
\end{theorem}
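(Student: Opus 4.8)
The plan is to prove the equivalence by constructing natural isomorphisms between the two round-trip compositions and the respective identity functors: namely $\coset^-_{-\widetilde{\varepsilon}_{-}\lambda}\circ\coset^+_{-\lambda}\simeq\mathrm{id}$ on $\wtcat{+}_{[\lambda]}$, and, by the mirror-image argument interchanging the roles of $\Wplus$ and $\Wminus$ (and of $g_+$ and $g_-$), $\coset^+_{-\lambda}\circ\coset^-_{-\widetilde{\varepsilon}_{-}\lambda}\simeq\mathrm{id}$ on $\wtcat{-}_{[-\widetilde{\varepsilon}_{-}\lambda]}$. Together these make the two functors mutually quasi-inverse, hence an equivalence of $\C$-linear supercategories.

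First I would compute the composition at the level of graded vector spaces. Applying \eqref{coset functor} twice and using the identity $\widetilde{\varepsilon}_{+}\widetilde{\varepsilon}_{-}=-1$, one obtains for every $M\in\mathrm{Ob}(\wtcat{+}_{[\lambda]})$ an isomorphism
\[
\coset^-_{-\widetilde{\varepsilon}_{-}\lambda}\big(\coset^+_{-\lambda}(M)\big)\;\simeq\;\bigoplus_{\mu\in\Z}\Omega^{\Hsub}_{\mu+\lambda}(M)\otimes\pi^{\Hsub}_{\mu+\lambda}\;\simeq\;\bigoplus_{\nu\in\lambda+\Z}M_\nu\;=\;M
\]
of $\Com(\pi^{\Hsub},\Wplus)\otimes\pi^{\Hsub}$-modules, where the intermediate coset algebra is identified with $\Com(\pi^{\Hsub},\Wplus)$ through the Feigin--Semikhatov isomorphism \eqref{FS_duality}. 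The content of this step is that the two successive $\Omega$-constraints --- with respect to $\widetilde{H}^+$ in forming $\coset^+$ and to the relevant realization of $\widetilde{H}^-$ in forming $\coset^-$ --- together absorb the entire rank-two lattice vertex superalgebra $\mathcal V_+\otimes\mathcal V_-=V_{\Z\boson{+}\oplus\Z\boson{-}}$, leaving nothing of it behind.

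The substantive step is to upgrade this to an isomorphism of $\Wplus$-modules that is natural in $M$. By construction the $\Wplus$-action on $\coset^-_{-\widetilde{\varepsilon}_{-}\lambda}(\coset^+_{-\lambda}(M))$, viewed inside $M\otimes\mathcal V_+\otimes\mathcal V_-$, is the restriction of the external module structure along the double embedding $g_+\colon\Wplus\hookrightarrow\Wplus\otimes\mathcal V_+\otimes\mathcal V_-$ of Lemma \ref{Twisted_Emb}(2). Since the level is generic ($\kappa\notin\{0,\tfrac{n}{n-1}\}$, so $\varepsilon_+\neq0$ and $\pi^{\Hsub}$ is non-degenerate), each weight space of $M$ is free over $\pi^{\Hsub}$, i.e.\ $M_\nu\simeq\Omega^{\Hsub}_\nu(M)\otimes\pi^{\Hsub}_\nu$, and the weight constraints from the first step force the surviving highest-weight vector attached to $\bar v\in\Omega^{\Hsub}_\nu(M)$ to be $\bar v\otimes\ket{(\nu-\lambda)\boson{+}}\otimes\ket{(\nu-\lambda)\boson{-}}$ (which reduces to Lemma \ref{Twisted_Emb}(2) when $\lambda=0$). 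Accordingly I would define an even linear map
\[
\eta_M\colon M\longrightarrow M\otimes\mathcal V_+\otimes\mathcal V_-,\qquad \eta_M(v)={\sf exp}\big(\mathcal H_+^M\big)\big(v\otimes\ket{(\nu-\lambda)\boson{+}}\otimes\ket{(\nu-\lambda)\boson{-}}\big)\quad(v\in M_\nu),
\]
where $\mathcal H_+^M$ is the operator \eqref{Twist_Operator} with the modes $\Hsub_{(m)}$ now acting on $M$; it acts locally finitely on each generalized $L_0$-eigenspace, so ${\sf exp}(\mathcal H_+^M)$ is well-defined and invertible. Corollary \ref{Twist_Emb_VA} says $g_+$ itself is given by exactly this recipe on $\Wplus$, and since $g_+$ is a vertex superalgebra homomorphism and $\mathcal H_+$, $\mathcal H_+^M$ are assembled from the same modes of $\Hsub$ and $\boson{\pm}$, the conjugation relation ${\sf exp}(\mathcal H_+^M)\,Y_M(a,z)\,{\sf exp}(-\mathcal H_+^M)$ equals the action of $g_+(a)$ on $M\otimes\mathcal V_+\otimes\mathcal V_-$ by a routine Jacobi-identity manipulation; hence $\eta_M$ is a $\Wplus$-module homomorphism. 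Comparing $\Hsub$-supports with the first step identifies $\mathrm{im}\,\eta_M$ with $\coset^-_{-\widetilde{\varepsilon}_{-}\lambda}(\coset^+_{-\lambda}(M))$, so $\eta_M$ is an isomorphism; naturality in $M$ is immediate because $\Wplus$-module morphisms commute with all modes of $\Hsub$, hence with ${\sf exp}(\mathcal H_+^{(-)})$ and with tensoring by the fixed vectors $\ket{(\nu-\lambda)\boson{\pm}}$.

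I expect the main obstacle to be this last step: verifying that $\eta_M$ is a genuine $\Wplus$-module map rather than merely an isomorphism of $\Com(\pi^{\Hsub},\Wplus)\otimes\pi^{\Hsub}$-modules. The delicate part is the action of the ``charged'' strong generators of $\Wplus$ (those of nonzero $\Hsub_{(0)}$-weight), for which the conjugation relation above must be established for modules, not just for the vertex algebra itself; once a module version of Corollary \ref{Twist_Emb_VA} is in place this is formal, but it is precisely the point that uses the explicit free-field realization of ${\sf KS}_\pm$ recalled in \S\ref{Free_Field}. A further minor technical point is that $\mathcal V_-=V_{\ssqrt{-1}\Z}$ is negative-definite and hence not of CFT type, so all the $\Omega$-decompositions and module-category manipulations above must be read in the strongly graded framework of \S\ref{lattice VOA}, which is exactly what that framework was set up to handle.
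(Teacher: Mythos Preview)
Your proposal is correct and follows essentially the same approach as the paper. Your map $\eta_M$ is exactly the paper's $g_{+,\lambda}^M$, and the identification of its image with $\coset^-_{-\widetilde{\varepsilon}_{-}\lambda}\circ\coset^+_{-\lambda}(M)$ inside $M\otimes\mathcal V_+\otimes\mathcal V_-$ via the joint $\pi^{\widetilde H^+}\otimes\pi^{\widetilde H^-}$-highest-weight constraint matches the paper's argument verbatim. The ``module version of Corollary~\ref{Twist_Emb_VA}'' you correctly flag as the crux is stated and proved separately in the paper as Lemma~\ref{Twist_Emb_Mod}; rather than a pure conjugation/Jacobi manipulation, the paper verifies $g_\pm(a)(z)\,g_{\pm,\lambda}^M(w)=g_{\pm,\lambda}^M(a(z)w)$ by decomposing $Y_M$ into a $\Com(\pi^{H^\pm},\Wpm)\otimes\pi^{H^\pm}$-intertwining operator $I_1\otimes I_2$ and computing explicitly with the Heisenberg vertex operators $E(-,z)$, which cleanly handles the charged generators and the weight-dependent lattice shifts $\ket{\mu\boson{\pm}}$ that your conjugation formulation leaves slightly implicit.
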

The following lemma will play a key role in the proof:
\begin{lemma}\label{Twist_Emb_Mod}
For an object $M$ in $\wtcat{\pm}_{[\lambda]}$, the linear map
$g_{\pm,\lambda}^M\colon M\to M\otimes \mathcal{V}_+
\otimes\mathcal{V}_-$ defined by
$$g_{\pm,\lambda}^M(w)={\sf exp}(\mathcal{H}_\pm)
(w\otimes\ket{\mu \boson{+}}\otimes\ket{\mu \boson{-}}),\quad (w\in M_{\lambda+\mu},\ \mu\in\Z),$$
is an embedding of $\Wpm$-modules.
\end{lemma}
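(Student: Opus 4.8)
The plan is to reduce the statement to the vertex-algebra-level identity already recorded in Corollary \ref{Twist_Emb_VA}, and then check that the module structure is transported correctly through $\sf{KS}_\pm$. First I would observe that the composition $g_\pm = {\sf KS}_\mp\circ{\sf KS}_\pm\colon\Wpm\hookrightarrow\Wpm\otimes\mathcal{V}_+\otimes\mathcal{V}_-$ is a vertex superalgebra embedding by construction, and that on a generalized $\Wpm$-module $M$ the analogous map $g_{\pm,\lambda}^M$ should be the unique intertwiner compatible with $g_\pm$: concretely, for each $\lambda$ the target $M\otimes\mathcal{V}_+\otimes\mathcal{V}_-$ carries a $\Wpm$-module structure via $g_\pm$, and I would verify that the formula $g_{\pm,\lambda}^M(w)={\sf exp}(\mathcal{H}_\pm)(w\otimes\ket{\mu\boson{+}}\otimes\ket{\mu\boson{-}})$ for $w\in M_{\lambda+\mu}$ defines a $\Wpm$-module homomorphism onto a submodule. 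Injectivity is then immediate because ${\sf exp}(\mathcal{H}_\pm)$ is invertible (it is a locally finite exponential of a locally nilpotent operator raising conformal weight) and $w\mapsto w\otimes\ket{\mu\boson{+}}\otimes\ket{\mu\boson{-}}$ is clearly injective.

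The key computational step is the intertwining property: I must show
\begin{align*}
g_{\pm,\lambda}^M\bigl(Y_M(a,z)w\bigr)=Y_{M\otimes\mathcal{V}_+\otimes\mathcal{V}_-}\bigl(g_\pm(a),z\bigr)g_{\pm,\lambda}^M(w),\quad a\in\Wpm,\ w\in M.
\end{align*}
By Corollary \ref{Twist_Emb_VA} the same identity holds when $w=\mathbf{1}$ (this is precisely $g_\pm(v)={\sf exp}(\mathcal{H}_\pm)(v\otimes\ket{\lambda\boson{+}}\otimes\ket{\lambda\boson{-}})$ on the vacuum-type vectors), and the general case follows by a standard argument: both sides are intertwiners of $\Wpm\otimes\mathcal{V}_+\otimes\mathcal{V}_-$-modules agreeing on the generating vector, so they coincide. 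More explicitly, one can conjugate: since $\mathcal{H}_\pm$ is built from $\Hpm_{(m)}\otimes(\boson{+}_{(-m)}+\boson{-}_{(-m)})$ with $m>0$, the operator ${\sf exp}(\mathcal{H}_\pm)$ implements precisely the substitution $\Hpm(z)\mapsto\Hpm(z)+\varepsilon_\pm(\boson{+}(z)+\boson{-}(z))=g_\pm(\Hpm)(z)$ recorded after Lemma \ref{Twisted_Emb}, while it commutes with all the other (decoupled) fields appearing in $g_\pm(a)$. So the Taylor-expansion bookkeeping that proves Corollary \ref{Twist_Emb_VA} at the algebra level carries over verbatim to an arbitrary module, once one writes a general element of $M_{\lambda+\mu}$ in the form $F(\Hpm)\overline{w}$ with $\overline{w}\in\Omega^{\Hpm}_\mu(M)$ and uses $F(x+y)={\sf exp}(y\,d/dx)F(x)$.

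The main obstacle I anticipate is purely notational rather than conceptual: one must track how the $\mathfrak{gl}_1$-weight shifts interact with the Fock-module highest-weight vectors $\ket{\mu\boson{\pm}}$ as $\mu$ ranges over $\Z$, so that the map is well defined as a single linear map on all of $M=\bigoplus_\mu M_{\lambda+\mu}$ and lands in the correct weight components (this is where the grading bookkeeping of \eqref{coset decomp} is used). One should also confirm that $g_{\pm,\lambda}^M$ respects the conformal grading, i.e.\ that ${\sf exp}(\mathcal{H}_\pm)$ and the insertion of $\ket{\mu\boson{\pm}}$ together preserve $L_0$-generalized eigenspaces up to the expected finite shift, so that the image is a genuine strongly graded generalized $\Wpm$-module in $\wtcat{\pm}$. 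Granting these routine checks, the lemma follows.
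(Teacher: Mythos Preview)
Your overall plan matches the paper's: injectivity follows from invertibility of $\exp(\mathcal{H}_\pm)$, and the substance lies in verifying the intertwining identity $g_{\pm,\lambda}^M(Y_M(a,z)w)=Y(g_\pm(a),z)\,g_{\pm,\lambda}^M(w)$. But the two justifications you offer for this identity both have gaps.

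First, the ``generating vector'' argument does not apply: $M$ is an arbitrary object of $\wtcat{\pm}_{[\lambda]}$ and need not be cyclic, so agreement on a single vector proves nothing. Second, the conjugation claim is not correct as stated. The operator $\mathcal{H}_\pm$ contains only the \emph{positive} modes $\Hpm_{(m)}$ ($m>0$) on the first tensor factor, so conjugation by $\exp(\mathcal{H}_\pm)$ does not implement the full field substitution $\Hpm(z)\mapsto g_\pm(\Hpm)(z)$; it only shifts the negative-mode part. Moreover $\mathcal{H}_\pm$ contains $\boson{\pm}_{(-m)}$ and therefore does \emph{not} commute with the lattice vertex operators $Y_{\mathcal V_\pm}(\ket{\nu\boson{\pm}},z)$ appearing in $Y(g_\pm(a),z)$. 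So neither heuristic closes the argument, and the Taylor-expansion remark from Corollary~\ref{Twist_Emb_VA} is about the shape of \emph{elements} $g_\pm(v)$, not about conjugation of fields on a module.

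What the paper does instead is: (i) use Corollary~\ref{Twist_Emb_VA} to see that $g_{\pm,\lambda}^M$ is already a $\pi^{\Hpm}$-homomorphism, so it suffices to check the identity on $\pi^{\Hpm}$-\emph{singular} vectors $a\in\Omega_\nu^{\Hpm}(\Wpm)$ and $w\in\Omega_{\lambda+\mu}^{\Hpm}(M)$; (ii) on those weight components, factor the structure map as a tensor product $Y_M=I_1\otimes I_2$ of a $\Com(\pi^{\Hpm},\Wpm)$-intertwiner and a Heisenberg intertwiner, normalized so that $I_2(\ket{\nu},z)\ket{\lambda+\mu}=z^{\nu(\lambda+\mu)/\varepsilon_\pm}\,E\bigl(-\tfrac{\nu}{\varepsilon_\pm}\Hpm,z\bigr)\ket{\lambda+\mu+\nu}$; and (iii) compute both sides explicitly and match them using Lemma~\ref{Twisted_Emb}~(2). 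The factorization in step~(ii), coming from the decomposition \eqref{decomposition of module} of $M$, is the ingredient missing from your sketch.
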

\begin{proof}
Since ${\sf exp}(\mathcal{H}_\pm)$ is invertible, $g_{\pm,\lambda}^M$ is injective. By Corollary \ref{Twist_Emb_VA}, $g_{\pm,\lambda}^M$ is already a $\pi^{H^\pm}$-homomorphism. Therefore, it remains to show 
\begin{equation*}
g_\pm(a)(z)g_{\pm,\lambda}^M(w)=g_{\pm,\lambda}^M(a(z)w)
\end{equation*}
for $\pi^{H^\pm}$-singular vectors $a\in \Omega_\nu^{H^\pm}(\Wpm)$ and $w\in \Omega_{\lambda+\mu}^{H^\pm}(M)$. For this, we decompose the structure map of $M$ 
$$Y_M(\cdot,z)\colon \left(\Omega_\nu^{H^\pm}(\Wpm)\otimes \pi^{H^\pm}_\nu\right)\otimes \left(\Omega_{\lambda+\mu}^{H^\pm}(M)\otimes \pi^{H^\pm}_{\lambda+\mu}\right)\rightarrow \left(\Omega_{\lambda+\mu+\nu}^{H^\pm}(M)\otimes \pi^{H^\pm}_{\lambda+\mu+\nu}\right)(\!(z)\!)$$
to a $\Com(\pi^{H^\pm},\Wpm)\otimes \pi^{H^\pm}$-intertwining operator $Y(\cdot,z)=I_1(\cdot,z)\otimes I_2(\cdot,z)$ so that 
$$I_2(\ket{\nu},z)\ket{\lambda+\mu}=z^{\nu(\lambda+\mu)/\varepsilon_\pm}\difexp{-\frac{\nu}{\varepsilon_\pm}H^\pm}\ket{\lambda+\mu+\nu}$$
where we denote
\begin{align}\label{eq:difexp}
   \difexp{h}={\sf exp}\left(\sum_{m=1}^\infty\frac{h_{(-m)}}{m}z^m\right)
\end{align}
for a Heisenberg field $h(z)=\sum_{m\in\Z}h_{(n)}z^{-n-1}$.
Then, by Lemma \ref{Twisted_Emb}, we have
\begin{align*}
&g_\pm(a)(z)g_{\pm,\lambda}^M(w)\\
&=Y_M(a\otimes \ket{\nu\boson{+}}\otimes \ket{\nu\boson{-}},z) (w\otimes \ket{\mu\boson{+}}\otimes \ket{\mu\boson{-}})\\
&=I_1(a,z)w\otimes I_2(\ket{\nu},z)\ket{\lambda+\mu}\otimes Y_{V_\Z}(\ket{\nu\boson{+}},z)\ket{\mu\boson{+}}\otimes Y_{V_{\ssqrt{-1}\Z}}(\ket{\nu\boson{-}},z)\ket{\mu\boson{-}}\\
&=I_1(a,z)w\otimes z^{\nu(\lambda+\mu)/\varepsilon_\pm}\difexp{-\frac{\nu}{\varepsilon_\pm}H^\pm}\ket{\lambda+\mu+\nu}\\
&\hspace{1cm} \otimes \difexp{\nu \boson{+}}\ket{(\mu+\nu)\boson{+}}\otimes 
\difexp{\nu \boson{-}}\ket{(\mu+\nu)\boson{-}}\\
&=I_1(a,z)w\otimes z^{\nu(\lambda+\mu)/\varepsilon_\pm}\difexp{-\frac{\nu}{\varepsilon_\pm}g_\pm(H^\pm)}\ket{\lambda+\mu+\nu}\otimes \ket{(\mu+\nu)\boson{+}}\otimes\ket{(\mu+\nu)\boson{-}}\\
&=g^M_{\pm,\lambda}(a(z)w).
\end{align*}
This completes the proof.
\end{proof}
\proof[Proof of Theorem \ref{Quasi_Inv}]
We show $\mathrm{id}\simeq \coset^-_{-\widetilde{\varepsilon}_{-}\lambda}\circ\coset^+_{-\lambda}$. 
For an object $M$ in $\wtcat{+}_{[\lambda]}$, it follows from \eqref{coset decomp} that the $\Wplus$-module $\coset^-_{-\widetilde{\varepsilon}_{-}\lambda}\circ\coset^+_{-\lambda}(M)$ is realized as the subspace of 
$$\bigoplus_{\mu\in\Z}M_{\lambda+\mu}\otimes \pi^{\boson{+}}_{\mu\boson{+}}\otimes \pi^{\boson{-}}_{\mu\boson{-}}\subset M\otimes \mathcal{V}^+\otimes \mathcal{V}^-,$$
consisting of highest weight vectors of $\pi^{\widetilde{H}^+}\otimes \pi^{\widetilde{H}^-}$, i.e., elements $w$ satisfying 
$$\widetilde{H}^+_{(m)}w=-\lambda\delta_{m,0}w,\quad \widetilde{H}^-_{(m)}w=-\widetilde{\varepsilon}_{-}\lambda\delta_{m,0}w,\quad (m\geq0).$$
Hence, the $\Wplus$-module  $\coset^-_{-\widetilde{\varepsilon}_{-}\lambda}\circ\coset^+_{-\lambda}(M)$ coincides with the image of 
$g_{+,\lambda}^M$ in Lemma \ref{Twist_Emb_Mod}.
Therefore, the family of $\Wplus$-homomorphisms
\begin{equation}\label{Natural_Isom_g}
\left\{g_{+,\lambda}^M\colon M\to\coset^-_{-\widetilde{\varepsilon}_{-}\lambda}\circ\coset^+_{-\lambda}(M)\,\big|\,M\in Ob(\wtcat{+}_{[\lambda]})\right\}
\end{equation}
gives a desired natural isomorphism. Similarly, one can show that
\begin{equation*}
\left\{g_{-,-\widetilde{\varepsilon}_{-}\lambda}^M\colon M\rightarrow \coset^+_{-\lambda}\circ\coset^-_{-\widetilde{\varepsilon}_{-}\lambda}(M)\,\big|\,M\in Ob(\wtcat{-}_{[-\widetilde{\varepsilon}_{-}\lambda]})\right\}
\end{equation*}
gives a natural isomorphism $\mathrm{id}\simeq \coset^+_{-\lambda}\circ\coset^-_{-\widetilde{\varepsilon}_{-}\lambda}$. This completes the proof.
\endproof
%

\section{Relative semi-infinite cohomology functor}
Here we reinterpret the coset functors in terms of the relative semi-infinite cohomology and then study its compatibility with monoidal structure on the module categories of $\Wpm$.
\subsection{Semi-infinite Cohomology of $\widehat{\mathfrak{gl}}_{1}$ 
relative to $\mathfrak{gl}_{1}$}

Let $\pi^A$ be a degenerate (i.e., commutative) Heisenberg vertex algebra generated by a field $A(z)$ and $\mathcal{E}$ be the $bc$-system vertex superalgebra generated by (odd) fields $\varphi(z), \varphi^*(z)$ satisfying the  OPEs
$$\varphi(z)\varphi^*(w)\sim \frac{1}{z-w},\quad \varphi(z)\varphi(w)\sim 0\sim \varphi^*(z)\varphi^*(w).$$
We introduce a cohomological grading $\mathcal{E}^\bullet= \bigoplus_{i\in\Z}\mathcal{E}^i$ on $\mathcal{E}$ as a strong $\Z$-grading given by $\varphi \in \mathcal{E}^{-1}$ and $\varphi^*\in \mathcal{E}^{1}$. 
For a $\pi^A$-module $M$, the $\pi^A\otimes \mathcal{E}$-module 
$C_\infty^\bullet(M):=M\otimes \mathcal{E}^\bullet$ forms a cochain complex with differential $d_{(0)}$ where $d(z) = A(z)\otimes \varphi^*(z)$.
The cohomology
\begin{align*}
H^{\bullet}_\infty(M) = H\left(C^\bullet_\infty(M),d_{(0)}\right)
\end{align*}
is called the semi-infinite cohomology of $\widehat{\mathfrak{gl}}_{1}$ with coefficients in $M$ \cite{Fe,FGZ}. In the following, we will use cohomologies obtained from a subcomplex
\begin{align*}
C^{\mathrm{\textsf{rel}},\bullet}_\infty(M)
:= \{ u \in C^\bullet_\infty(M) \mid A_{(0)}u = \varphi_{(0)}u =0\},
\end{align*}
which coincides with $M^{A_{(0)}}\otimes\mathcal{SF}$. Here $M^{A_{(0)}}$ is the space of $A_{(0)}$-invariants in $M$ and $\mathcal{SF}$ is a subalgebra of $\mathcal{E}$ generated by $\varphi(z)$ and $\partial \varphi^*(z)$. The cohomology 
\begin{align*}
\relcoh{\bullet}(M) = H\left(C^\mathrm{\textsf{rel},\bullet}_\infty(M),d_{(0)}\right)
\end{align*}
is called the \emph{relative semi-infinite cohomology} of $\widehat{\mathfrak{gl}}_{1}$ with coefficients in $M$ \cite{Fe,FGZ}. 
Note that if $V$ is a vertex superalgebra which contains $\pi^A$, then $\relcoh{0}(V)$ is naturally a vertex superalgebra and if $M$ is a $V$-module, then $\relcoh{p}(M)$ is naturally a $\relcoh{0}(V)$-module for each $p\in\Z$.

Consider the following spacial case: let $\pi^{B_{\pm}}$ be Heisenberg vertex algebras generated by fields $B_\pm(z)$ satisfying the OPEs
$$B^\pm(z)B^\pm(w)\sim \frac{\pm b}{(z-w)^2}$$
for some scalar $b\neq 0$. Then the Fock module $\pi^{B^+}_\lambda\otimes \pi^{B^-}_\mu$ is a module over the degenerate Heisenberg vertex algebra generated by $A(z)=B^+(z)+B^-(z)$. We will use the following fundamental result, which is a special case of \cite{FGZ, CFL}.
\begin{proposition}[{\cite{FGZ, CFL}}]\label{prop:CFL} 
For $p\in \Z$,
\begin{align*}
\relcoh{p}\left(\pi^{B^+}_\lambda\otimes\pi^{B^-}_\mu\right)=  \delta_{p,0}\delta_{\lambda+\mu,0}\C \left[\ket{\lambda}\otimes \ket{\mu}\right].
\end{align*}
\end{proposition}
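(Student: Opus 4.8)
The plan is to compute the relative semi-infinite cohomology of the rank-one pair directly from the explicit description of the relative subcomplex given just above the statement. Recall that $\pi^{B^+}_\lambda\otimes\pi^{B^-}_\mu$ is a module over the degenerate Heisenberg vertex algebra $\pi^A$ with $A(z)=B^+(z)+B^-(z)$, and that the relative subcomplex is
$C^{\mathrm{\textsf{rel}},\bullet}_\infty(M)=M^{A_{(0)}}\otimes\mathcal{SF}$, where $M^{A_{(0)}}$ is the kernel of $A_{(0)}$ on $M=\pi^{B^+}_\lambda\otimes\pi^{B^-}_\mu$ and $\mathcal{SF}$ is the subalgebra of $\mathcal{E}$ generated by $\varphi(z)$ and $\partial\varphi^*(z)$. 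First I would observe that $A_{(0)}$ acts on $\ket{\lambda}\otimes\ket{\mu}$ by $\lambda+\mu$, so this vector lies in $M^{A_{(0)}}$ precisely when $\lambda+\mu=0$, which already isolates the $\delta_{\lambda+\mu,0}$ factor; for $\lambda+\mu\neq 0$ one instead notes that $A_{(0)}$ acts \emph{invertibly} on the whole Fock module (it shifts the $A$-weight, and all $A$-weights are $\lambda+\mu$ since $A$ is central in the Heisenberg algebra generated by $B^\pm$ — wait, more precisely, $A_{(n)}$ for $n<0$ increase the conformal weight but the $A_{(0)}$-eigenvalue is constant equal to $\lambda+\mu$), so $M^{A_{(0)}}=0$ and the relative complex vanishes.

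Next, for the case $\lambda+\mu=0$, I would change variables in the rank-two Heisenberg algebra: besides $A=B^++B^-$ choose a complementary generator $A^\dagger$ with $A(z)A^\dagger(w)\sim b(z-w)^{-2}$ (say $A^\dagger=\tfrac{1}{2}(B^+-B^-)$ up to normalization), so that $A(z)A(w)\sim 0$, $A^\dagger(z)A^\dagger(w)\sim 0$, and $A,A^\dagger$ form a ``$\beta\gamma$-like'' pair of commuting Heisenbergs. Then $M^{A_{(0)}}$ is the Fock-type module generated over the modes $A_{(-m)}, A^\dagger_{(-m)}$ ($m\ge 1$) by $\ket{\lambda}\otimes\ket{-\lambda}$, and the relative complex becomes a standard Koszul-type complex: the differential $d_{(0)}$ with $d(z)=A(z)\varphi^*(z)$ contracts the $A^\dagger$-creation modes against the $\partial\varphi^*$-modes. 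Concretely I would introduce the free-field description in which the relative complex is the tensor product over $m\ge1$ of two-term complexes, each computing the cohomology of $A_{(-m)}$ acting between a boson and a fermion direction; this is exactly the mechanism by which such semi-infinite cohomologies collapse. The key computational input is that the operator $d_{(0)}$ together with a contracting homotopy $s_{(0)}$ (built from $A^\dagger$ and $\varphi$) satisfies $d_{(0)}s_{(0)}+s_{(0)}d_{(0)}=N$ where $N$ is a diagonalizable ``number'' operator that is invertible away from the line $\mathbb{C}\bigl[\ket{\lambda}\otimes\ket{-\lambda}\bigr]$; this forces $\relcoh{p}$ to be one-dimensional, concentrated in degree $p=0$, and spanned by the class of $\ket{\lambda}\otimes\ket{\mu}$ (the vacuum of $\mathcal{SF}$ being the identity).

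The cleanest route, and the one I would actually write, is to cite \cite{FGZ} and \cite{CFL} for the general vanishing-and-collapse theorem for semi-infinite cohomology of $\widehat{\mathfrak{gl}}_1$ relative to $\mathfrak{gl}_1$ with coefficients in Fock modules, and then simply check that the rank-one pair $\pi^{B^+}_\lambda\otimes\pi^{B^-}_\mu$ satisfies the hypotheses: the bilinear form pairing $B^+$ with itself and $B^-$ with itself is the relevant nondegenerate pairing after the change of basis to $A,A^\dagger$, the diagonal field $A$ is isotropic, and the module is a lowest-weight (Fock) module for the corresponding Heisenberg. The $A$-weight being $\lambda+\mu$ on the entire module gives the $\delta_{\lambda+\mu,0}$, the concentration in cohomological degree zero and one-dimensionality come from the cited theorem, and identifying the generator with $\ket{\lambda}\otimes\ket{\mu}$ is immediate since it is the unique (up to scalar) relative cocycle of degree zero and lowest conformal weight. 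The main obstacle, such as it is, is purely bookkeeping: matching the normalization conventions here (the sign $\pm b$ in the OPE of $B^\pm$, the grading $\varphi\in\mathcal{E}^{-1}$, $\varphi^*\in\mathcal{E}^{1}$, and the definition of $\mathcal{SF}$) with those of \cite{FGZ,CFL} so that the cited statement applies verbatim; no genuinely new argument is needed.
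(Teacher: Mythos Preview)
Your proposal is correct and follows the same overall line as the paper: the vanishing for $\lambda+\mu\neq 0$ via the constant $A_{(0)}$-eigenvalue, the change of basis to $A=B^++B^-$ and $A^\dagger\propto B^+-B^-$, and the mode-by-mode decomposition are exactly what the paper does. The difference is in how the remaining computation is finished. The paper does not use a global contracting homotopy $s_{(0)}$ with $d_{(0)}s_{(0)}+s_{(0)}d_{(0)}=N$; instead it writes the relative complex literally as $\bigotimes_{n\ge 1}\mathcal{C}[n]$, identifies each factor with the finite Koszul-type complex $\mathcal{D}^\bullet=\C[x,y]\otimes\bigwedge(\psi,\psi^*)$ via $x\mapsto \tfrac{1}{2nb}(B^+_{(-n)}-B^-_{(-n)})$, $y\mapsto B^+_{(-n)}+B^-_{(-n)}$, $\psi\mapsto\varphi_{(-n)}$, $\psi^*\mapsto\varphi^*_{(-n)}$ with differential $D=\psi^*\partial_y+x\,\partial_\psi$, checks $H^p(\mathcal{D}^\bullet)=\delta_{p,0}\C$ by hand, and concludes by K\"unneth. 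Note that each factor is a three-term complex (degrees $-1,0,1$), not two-term as you wrote; the factor $\C[x,y]\psi\psi^*$ sits in degree $0$ alongside $\C[x,y]$. Your homotopy route would also work, but the paper's explicit Koszul-plus-K\"unneth argument is shorter and avoids any normalization bookkeeping with the cited references.
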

\proof
We give a proof for the completeness of the paper. 
Since  
$C^{\mathrm{\textsf{rel}},\bullet}_\infty(\pi^{B^+}_\lambda\otimes\pi^{B^-}_\mu)$ is 0 if $\lambda+\mu\neq0$, we may assume $\mu=-\lambda$. Then the complex is isomorphic to the tensor product $\otimes_{n\geq1}\mathcal{C}[n]$ where each $\mathcal{C}[n]$ is isomorphic to the same complex $\mathcal{D}^\bullet=\C[x,y]\otimes \bigwedge^\bullet_{\psi,\psi^*}$ given by 
\begin{align}\label{complex}0\rightarrow \C[x,y]\psi\rightarrow \C[x,y]\oplus  \C[x,y]\psi\psi^*\rightarrow \C[x,y]\psi^*\rightarrow 0
\end{align}
with differential $D=\psi^*\partial/\partial y+x\partial/\partial \psi$ 
by change of variables
$$x\mapsto \frac{1}{2na}\left(B^+_{(-n)}-B^-_{(-n)}\right),\ y\mapsto B^+_{(-n)}+B^-_{(-n)},\ \psi\mapsto \varphi_{(-n)},\ \psi^*\mapsto  \varphi_{(-n)}^*.$$
It is straightforward to show $H^p(\mathcal{D}^\bullet)= \delta_{p,0}\C$ and then the assertion follows from the 
K$\ddot{\text{u}}$nneth formula.
\endproof
\subsection{Coset as relative semi-infinite cohomology}
We reinterpret the coset  functors in \S\ref{sec: Equivalence of categories} in terms of relative semi-infinite cohomology. For this, we begin with the $\W$-superalgebras $\Wpm$.

Fix a scalar $\epsilon$ satisfying $\epsilon^2=\varepsilon_-/\varepsilon_+$.
By using $\epsilon$, we introduce new sets of generating fields $\{s^\pm(z),t^\pm(z)\}$ of $\pi^{\boson{+}}\otimes \pi^{\boson{-}}$ by   
\begin{align*}
\left(
\begin{array}{c}
s^+ \\
t^+
\end{array}
\right)=
\left(
\begin{array}{cc}
-1 & -\epsilon^{-1} \\
1 & \epsilon
\end{array}
\right)
\left(
\begin{array}{c}
\boson{+} \\
\boson{-}
\end{array}
\right),\quad
\left(
\begin{array}{c}
s^-\\
t^-
\end{array}
\right)=
\left(
\begin{array}{cc}
-\epsilon & 1 \\
\epsilon^{-1} & -1
\end{array}
\right)
\left(
\begin{array}{c}
\boson{+} \\
\boson{-}
\end{array}
\right),
\end{align*}
which satisfy the OPEs
\begin{equation*}
s^\pm(z)s^\pm(w)\sim\frac{-\varepsilon_\pm}{(z-w)^2},\quad
t^\pm(z)t^\pm(w)\sim\frac{\varepsilon_{\mp}}{(z-w)^2},\quad
s^\pm(z)t^\pm(w)\sim 0.
\end{equation*}
Then the vertex superalgebra $\gluing{\pm} = V_{\ssqrt{\pm 1}\Z} \otimes \pi^{\boson{\mp}}$ decomposes into
$$\gluing{\pm}\simeq \bigoplus_{\mu\in \Z}\pi^{s^\pm}_{-\mu} \otimes \pi^{t^\pm}_{\mu}$$
as $\pi^{s^\pm}\otimes \pi^{t^\pm}$-modules. 
The tensor product $\Wpm\otimes\gluing{\pm}$ has a degenerate Heisenberg vertex subalgebra generated by a field $A^\pm(z)=H^\pm(z)+s^\pm(z)$. 
It follows from \eqref{support of W} and Proposition \ref{prop:CFL} that
\begin{align}\label{eq: shape of relcoh}
\relcoh{0}(\Wpm\otimes \gluing{\pm})\simeq \bigoplus_{\nu\in\Z}\Omega_\mu^{H^\pm}(\Wpm)\otimes \pi^{t^\pm}_\mu
\end{align}
as $\mathrm{Com}(\pi^{H^\pm},\Wpm)\otimes \pi^{t^\pm}$-modules. Since $\pi^{t^\pm}\simeq \pi^{H^\mp}$ by $t^\pm(z)\mapsto H^\mp(z)$, we have 
$\relcoh{0}(\Wpm\otimes \gluing{\pm})\simeq \Wmp$
as $\mathrm{Com}(\pi^{H^\mp},\Wmp)\otimes \pi^{H^\mp}$-modules by \eqref{FS_duality}.
Indeed, this is an isomorphism of vertex superalgebra through \eqref{KS_Duality} as follows:
\begin{proposition}\label{Comparison_VA}
The natural embedding 
$\Wpm\otimes\mathcal{V}^{\pm}\hookrightarrow 
C^\bullet_\infty(\Wpm\otimes\gluing{\pm})$ induces an isomorphism of vertex superalgebras
\begin{align*}
\eta_{\pm}\colon
\Com(\pi^{\widetilde{H}^\pm},\Wpm\otimes\mathcal{V}^{\pm})\xrightarrow{\simeq}
\relcoh{0}(\Wpm\otimes\gluing{\pm}).
\end{align*}
\begin{proof}
We show the case $\eta_+$.
It follows from 
$$\Wplus\otimes \mathcal{V}^+\subset \Wplus\otimes \mathcal{V}^+\otimes \pi^{\boson{-}}\otimes \mathcal{E}=C^\bullet_\infty(\Wplus\otimes \gluing{+})$$
and $A^+(z)=-(\widetilde{H}^+(z)+\epsilon^{-1}\boson{-}(z))$ that the embedding is restricted to  
$$\Com(\pi^{\widetilde{H}^+},\Wplus\otimes\mathcal{V}^{+})\hookrightarrow C^{\textsf{rel},\bullet}_\infty(\Wplus\otimes\gluing{+})$$
and that $\Com(\pi^{\widetilde{H}^+},\Wplus\otimes\mathcal{V}^+)$ is annihilated by the differential $d_{(0)}$. Thus, the embedding induces a homomorphism of vertex superalgebras
\begin{align*}
\eta_{+}\colon
\Com(\pi^{\widetilde{H}^+},\Wplus\otimes\mathcal{V}^+)\rightarrow
\relcoh{0}(\Wplus\otimes\gluing{+}).
\end{align*}
Under the isomorphism \eqref{eq: shape of relcoh}, $\eta_+$ is the identity on $\Omega_{\mu}^{H^+}(\Wplus)$ for each $\mu\in \Z$ and sends $H^-(z)$ to $t^+(z)$. Hence $\eta_+$ is an isomorphism. The case $\eta_-$ can be shown in the same way. This completes the proof.
\end{proof}
\end{proposition}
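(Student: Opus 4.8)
It suffices to treat $\eta_+$; the case $\eta_-$ is obtained by interchanging the roles of $\Wplus$ and $\Wminus$ throughout. The idea is to make $\eta_+$ completely explicit on the complex, check that it is a well-defined morphism of vertex superalgebras, and then identify both its source and its target with $\Wminus$ so that $\eta_+$ becomes the tautological identification. First I would fix coordinates: since $\gluing{+}=\mathcal{V}^+\otimes\pi^{\boson{-}}$, the complex is $C^\bullet_\infty(\Wplus\otimes\gluing{+})=\Wplus\otimes\mathcal{V}^+\otimes\pi^{\boson{-}}\otimes\mathcal{E}$, and $\Wplus\otimes\mathcal{V}^+$ sits inside it as the subspace with trivial $\pi^{\boson{-}}$- and $\mathcal{E}$-components. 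Using the linear change of generators defining $s^+$ and $t^+$ one records the identity $A^+(z)=H^+(z)+s^+(z)=-(\tHsub(z)+\epsilon^{-1}\boson{-}(z))$, so that the differential $d(z)=A^+(z)\varphi^*(z)$ has $d_{(0)}=\sum_{m\in\Z}A^+_{(m)}\varphi^*_{(-m-1)}$.

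\textbf{Step 1: $\eta_+$ is well defined.} Let $a\in\Com(\pi^{\tHsub},\Wplus\otimes\mathcal{V}^+)$. Since $a$ has trivial $\mathcal{E}$-component it is annihilated by $\varphi_{(0)}$ and lies in the degree-zero part of $\mathcal{SF}$; since it is $\pi^{\tHsub}$-singular and has trivial $\pi^{\boson{-}}$-component, the displayed identity for $A^+$ gives $A^+_{(m)}a=0$ for all $m\ge0$, in particular $A^+_{(0)}a=0$. Hence $a$ lies in the relative subcomplex $C^{\mathrm{\textsf{rel}},0}_\infty(\Wplus\otimes\gluing{+})$, and because $\varphi^*_{(-m-1)}\mathbf{1}_\mathcal{E}=0$ for $m\ge0$ while $A^+_{(m)}a=0$ for $m\ge0$, we get $d_{(0)}a=0$. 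Thus the vertex superalgebra embedding $\Wplus\otimes\mathcal{V}^+\hookrightarrow C^\bullet_\infty(\Wplus\otimes\gluing{+})$ restricts to an embedding of $\Com(\pi^{\tHsub},\Wplus\otimes\mathcal{V}^+)$ into the relative cocycles in degree zero, and composing with the projection onto cohomology yields a vertex superalgebra homomorphism $\eta_+$.

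\textbf{Step 2: $\eta_+$ is an isomorphism.} Here I would use the description \eqref{eq: shape of relcoh}, which identifies $\relcoh{0}(\Wplus\otimes\gluing{+})$ with $\bigoplus_{\mu\in\Z}\Omega_\mu^{H^+}(\Wplus)\otimes\pi^{t^+}_\mu$ as a $\Com(\pi^{H^+},\Wplus)\otimes\pi^{t^+}$-module: a K\"unneth argument in which the differential only touches the $\pi^{H^+}_\mu\otimes\pi^{s^+}_{-\mu}\otimes\mathcal{E}$ factors, the relative condition $A^+_{(0)}u=0$ forces the $\Wplus$-weight and the $s^+$-weight to be opposite, and Proposition \ref{prop:CFL} gives $\relcoh{\bullet}(\pi^{H^+}_\mu\otimes\pi^{s^+}_{-\mu})=\delta_{\bullet,0}\C$, spanned by the highest-weight vector. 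On the source side, ${\sf KS}_+$ identifies $\Com(\pi^{\tHsub},\Wplus\otimes\mathcal{V}^+)$ with $\Wminus\simeq\bigoplus_{\mu}\Omega_\mu^{H^-}(\Wminus)\otimes\pi^{H^-}_\mu$, and its restriction, the Feigin--Semikhatov duality \eqref{FS_duality}, identifies $\Com(\pi^{H^-},\Wminus)$ with $\Com(\pi^{H^+},\Wplus)$ compatibly with the $H^\pm_{(0)}$-gradings, hence $\Omega_\mu^{H^-}(\Wminus)\cong\Omega_\mu^{H^+}(\Wplus)$. I would then verify that under these identifications $\eta_+$ is the identity on each $\Omega_\mu^{H^+}(\Wplus)$ and sends the Heisenberg generator $H^-(z)$ of $\Wminus$ --- realized in $\Wplus\otimes\mathcal{V}^+$ as $\psi^+(z)=\widetilde{\varepsilon}_-\tHsub(z)+\boson{+}(z)$ by Lemma \ref{Twisted_Emb}(1) --- to the class of $t^+(z)$, noting that $t^+(z)t^+(w)\sim\varepsilon_-/(z-w)^2$ so that $\pi^{t^+}\simeq\pi^{H^-}$. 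Since $\eta_+$ is then a vertex superalgebra map that is a bijection on each graded summand $\Omega_\mu^{H^+}(\Wplus)\otimes\pi^{t^+}_\mu$, it is an isomorphism.

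\textbf{Main obstacle.} The delicate point is the matching in Step 2: one must track how ${\sf KS}_+$ --- together with Lemma \ref{Twisted_Emb}, which describes the composite ${\sf KS}_-\circ{\sf KS}_+$ --- embeds the weight pieces $\Omega_\mu^{H^-}(\Wminus)\otimes\pi^{H^-}_\mu$ of $\Wminus$ into $\Wplus\otimes\mathcal{V}^+\subset C^\bullet_\infty(\Wplus\otimes\gluing{+})$, and check that their images project isomorphically, under \eqref{eq: shape of relcoh}, onto $\Omega_\mu^{H^+}(\Wplus)\otimes\pi^{t^+}_\mu$ with $H^-\mapsto t^+$. Once this book-keeping is in place, injectivity and surjectivity of $\eta_+$ are automatic --- it is the identity on the coset generators and a relabelling on the Heisenberg generator --- and nothing beyond Proposition \ref{prop:CFL} (no separate vanishing of higher cohomology, no exactness estimate) is needed.
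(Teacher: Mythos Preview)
Your proposal is correct and follows essentially the same approach as the paper: both arguments use the identity $A^+(z)=-(\tHsub(z)+\epsilon^{-1}\boson{-}(z))$ to see that the coset lands in the relative cocycles, and then identify $\eta_+$ with the identity on each $\Omega_\mu^{H^+}(\Wplus)$ together with $H^-(z)\mapsto t^+(z)$ via \eqref{eq: shape of relcoh}. Your write-up simply makes more of the book-keeping explicit (in particular the role of ${\sf KS}_+$ and Lemma~\ref{Twisted_Emb}(1) in realizing $H^-$ as $\psi^+$), which the paper leaves implicit.
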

Next, we consider the relative semi-infinite cohomology for arbitrary $\Wpm$-modules in $\wtcat{\pm}$. For this purpose, we replace the vertex superalgebras $\gluing{\pm}$ with their modules:
\begin{align*}
\gluing{+}_\lambda = V_{\Z} \otimes \pi^{\boson{-}}_{\lambda},\quad
\gluing{-}_\lambda =V_{\ssqrt{-1}\Z}\otimes \pi^{\boson{+}}_{\lambda},\quad (\lambda\in\C).
\end{align*}
As a $\pi^{s^\pm}\otimes \pi^{t^\pm}$-module, $\gluing{\pm}_\lambda$ decomposes into 
$$\gluing{+}_\lambda\simeq \bigoplus_{\mu\in\Z}\pi^{s^+}_{-\mu+\epsilon^{-1}\lambda}\otimes \pi^{t^+}_{\mu+\epsilon\lambda},\quad
\gluing{-}_\lambda\simeq\bigoplus_{\mu\in\Z} \pi^{s^-}_{-\mu-\epsilon\lambda}\otimes \pi^{t^-}_{\mu+\epsilon^{-1}\lambda},$$
respectively. Then for a $\Wpm$-module $M$ in $\wtcat{\pm}$, the tensor product $M\otimes \gluing{\pm}_\lambda$ is a module of a degenerate Heisenberg vertex algebra $\pi^{A^\pm}$ generated by $A^\pm(z)=H^\pm(z)+s^\pm(z)$. The relative semi-infinite cohomology 
$H^{{\sf rel},p}_{\infty}(M\otimes \gluing{\pm}_\lambda)$ is naturally a $\Wmp$-module by the isomorphism
$$\Upsilon_\pm:=\eta_\pm\circ {\sf KS}_\pm \colon \Wmp\xrightarrow{\simeq} H^{{\sf rel},0}_{\infty}(\Wpm\otimes \gluing{\pm}_\lambda)$$
by \eqref{KS_Duality} and Proposition \ref{Comparison_VA}. 
non-zero only for $p=0$ by Proposition \ref{prop:CFL}. Therefore, this assignment induce exact $\C$-linear superfunctors
\begin{align*}
\Relcoh_{+,\lambda}\colon\wtcat{+}\to\wtcat{-},\quad
\Relcoh_{-,\lambda}\colon\wtcat{-}\to\wtcat{+},
\end{align*}
respectively. More precisely, for objects $M_+$ in $\wtcat{+}_{[\lambda]}$ and $M_-$ in $\wtcat{-}_{[\epsilon^2\lambda]}$ and $\xi\in \lambda+\Z$, we have 
$$\Relcoh_{+,\epsilon\xi}(M_+)\simeq \bigoplus_{\mu\in\Z}\Omega_{\xi+\mu}^{H^+}(M_+)\otimes\pi^{H^-}_{\epsilon^2\xi+\mu},\quad \Relcoh_{-,\epsilon\xi}(M_-)\simeq \bigoplus_{\mu\in\Z} \Omega_{\epsilon^2\xi+\mu}^{H^-}(M_-)\otimes\pi^{H^+}_{\xi+\mu}$$
as $\mathrm{Com}(\pi^{\widetilde{H}^\mp},\Wmp)\otimes \pi^{H^\mp}$-modules by Proposition \ref{prop:CFL}. It follows from \eqref{coset functor} that these decomposition coincide with $\coset^+_{-\xi}(M_+)$ and $\coset^-_{\epsilon^2\xi}(M_-)$, respectively. 
The following can be proven in the same way as Proposition \ref{Comparison_VA}.
\begin{theorem}\label{Comparison_Module}
For objects $M_+$ in $\wtcat{1}_{[\lambda]}$ and $M_-$ in $\wtcat{2}_{[\epsilon^2\lambda]}$,
the natural embeddings
$M_\pm\otimes \mathcal{V}^\pm\hookrightarrow C^\bullet_\infty(M\otimes\gluing{\pm}_{\epsilon\lambda})$ induce isomorphisms 
\begin{equation}\label{Natural_Isom_eta}
\eta_{+,\lambda}^M\colon\coset^+_{-\lambda}(M)
\xrightarrow{\simeq}\Relcoh_{+,\epsilon\lambda}(M),\quad
\eta_{-,\lambda}^M\colon\coset^-_{\epsilon^2\lambda}(M)
\xrightarrow{\simeq}\Relcoh_{-,\epsilon\lambda}(M)
\end{equation}
of $\Wmp$-modules, respectively. Moreover, the family $\{\eta_{\pm,\lambda}^\bullet\}$ gives natural isomorphisms of superfunctors 
\begin{align*}
&\coset^{+}_{-\lambda}\xrightarrow{\simeq}\Relcoh_{+,\epsilon\lambda}\colon \wtcat{+}_{[\lambda]}\rightarrow \wtcat{-}_{[\epsilon^2\lambda]},\\
&\coset^{-}_{\epsilon^2\lambda}
\xrightarrow{\simeq}\Relcoh_{-,\epsilon\lambda}\colon \wtcat{-}_{[\epsilon^2\lambda]}\rightarrow \wtcat{+}_{[\lambda]}.
\end{align*}
\end{theorem}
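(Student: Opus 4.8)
The plan is to transcribe the proof of Proposition~\ref{Comparison_VA} from vertex superalgebras to their modules. Note first that the object-wise linear identification $\coset^\pm_{\mp\lambda}(M)\simeq\Relcoh_{\pm,\epsilon\lambda}(M)$, as $\mathrm{Com}(\pi^{\widetilde H^\mp},\Wmp)\otimes\pi^{H^\mp}$-modules, has already been recorded above from the explicit decomposition \eqref{coset functor} together with Proposition~\ref{prop:CFL}; what the theorem adds is that this isomorphism is realised by the stated natural embedding, that it is $\Wmp$-linear, and that it is natural in $M$.

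First I would spell out the embedding. Since $C^\bullet_\infty(M_\pm\otimes\gluing{\pm}_{\epsilon\lambda})=M_\pm\otimes\mathcal{V}^\pm\otimes\pi^{\boson{\mp}}_{\epsilon\lambda}\otimes\mathcal{E}^\bullet$, the natural embedding is $\iota\colon M_\pm\otimes\mathcal{V}^\pm\hookrightarrow C^\bullet_\infty(M_\pm\otimes\gluing{\pm}_{\epsilon\lambda})$, $w\mapsto w\otimes\ket{\epsilon\lambda\boson{\mp}}\otimes\mathbf{1}$, landing in cohomological degree $0$. By Lemma~\ref{Twisted_Emb}(1) and the change of generators $s^\pm,t^\pm$, the degenerate generator satisfies $A^+(z)=-(\widetilde H^+(z)+\epsilon^{-1}\boson{-}(z))$ and $A^-(z)=\widetilde H^-(z)-\epsilon\boson{+}(z)$, exactly as in the proof of Proposition~\ref{Comparison_VA}. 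Hence on $\coset^+_{-\lambda}(M_+)=\Omega^{\widetilde H^+}_{-\lambda}(M_+\otimes\mathcal{V}^+)$ (resp.\ on $\coset^-_{\epsilon^2\lambda}(M_-)=\Omega^{\widetilde H^-}_{\epsilon^2\lambda}(M_-\otimes\mathcal{V}^-)$) tensored with the highest weight vector $\ket{\epsilon\lambda\boson{\mp}}$ of $\pi^{\boson{\mp}}_{\epsilon\lambda}$, the mode $A^\pm_{(0)}$ acts by $0$ (from $\widetilde H^\pm_{(0)}=\mp\lambda$, resp.\ $\epsilon^2\lambda$, against $\boson{\mp}_{(0)}=\epsilon\lambda$), while $A^\pm_{(m)}$ and $\boson{\mp}_{(m)}$ with $m\geq1$ annihilate these vectors; so $\iota$ restricts to $\coset^\pm_{\mp\lambda}(M_\pm)\hookrightarrow C^{\mathrm{\textsf{rel}},\bullet}_\infty(M_\pm\otimes\gluing{\pm}_{\epsilon\lambda})$, and the standard one-line argument (nonnegative modes of $A^\pm$ vanish on the first two tensor factors, negative ones produce $\varphi^*_{(\geq0)}\mathbf{1}=0$) shows $d_{(0)}$ kills the image. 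This gives a $\C$-linear map $\eta^M_{\pm,\lambda}\colon\coset^\pm_{\mp\lambda}(M_\pm)\to\relcoh{0}(M_\pm\otimes\gluing{\pm}_{\epsilon\lambda})=\Relcoh_{\pm,\epsilon\lambda}(M_\pm)$.

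Next I would show $\eta^M_{\pm,\lambda}$ is an isomorphism of $\Wmp$-modules and is natural. Bijectivity follows by comparing the decomposition \eqref{coset functor} of the source with $\relcoh{0}(M_\pm\otimes\gluing{\pm}_{\epsilon\lambda})$ computed by applying Proposition~\ref{prop:CFL} fibrewise to the pairs $\pi^{H^\pm}_a\otimes\pi^{s^\pm}_b$ of opposite-sign Heisenbergs: under these identifications $\eta^M_{\pm,\lambda}$ is the identity on each summand $\Omega^{H^\pm}_a(M_\pm)$ and carries the generator of $\pi^{\widetilde H^\mp}$ (equivalently of $\pi^{t^\pm}$) to $H^\mp$, which is literally the restriction to $M_\pm$ of the vertex superalgebra isomorphism $\eta_\pm$ of Proposition~\ref{Comparison_VA}. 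Since the $\Wmp$-action on $\coset^\pm_{\mp\lambda}(M_\pm)$ is transported along ${\sf KS}_\pm$ and that on $\Relcoh_{\pm,\epsilon\lambda}(M_\pm)$ along $\Upsilon_\pm=\eta_\pm\circ{\sf KS}_\pm$, and $\iota$ is the tensor of the embedding defining $\eta_\pm$ with $\mathrm{id}_{M_\pm}$, the map $\eta^M_{\pm,\lambda}$ intertwines these actions. For naturality, any morphism $f\colon M\to N$ in $\wtcat{\pm}_{[\lambda]}$ commutes with $\iota$, and taking $A^\pm_{(0)}$-invariants, $d_{(0)}$-cohomology, and $\Omega^{\widetilde H^\pm}_{\mp\lambda}$ are all functorial operations, so the naturality squares commute and $\{\eta^\bullet_{\pm,\lambda}\}$ is the asserted natural isomorphism of superfunctors $\coset^+_{-\lambda}\xrightarrow{\simeq}\Relcoh_{+,\epsilon\lambda}$ on $\wtcat{+}_{[\lambda]}$ and $\coset^-_{\epsilon^2\lambda}\xrightarrow{\simeq}\Relcoh_{-,\epsilon\lambda}$ on $\wtcat{-}_{[\epsilon^2\lambda]}$.

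The step I expect to require the most care — as opposed to a mechanical copy of Proposition~\ref{Comparison_VA} — is the bookkeeping of Heisenberg weights: one must check that tensoring a module in $\wtcat{\pm}_{[\lambda]}$ with the Fock module $\pi^{\boson{\mp}}_{\epsilon\lambda}$ of weight precisely $\epsilon\lambda$ makes the relative complex supported exactly on the $\widetilde H^\pm$-weight $\mp\lambda$ (resp.\ $\epsilon^2\lambda$) summand of $M_\pm\otimes\mathcal{V}^\pm$, so that Proposition~\ref{prop:CFL} returns precisely the summands of \eqref{coset functor} and the residual grading identifies $\pi^{t^\pm}$ with $\pi^{H^\mp}$ with no extra shift. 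These weight identities rely on $\epsilon^2=\varepsilon_-/\varepsilon_+$ together with the relations among $\varepsilon_\pm$ and $\widetilde\varepsilon_\pm$ recorded in Section~\ref{Feigin_Semikhatov_Duality}; once they are verified, everything else follows as in the vertex superalgebra case.
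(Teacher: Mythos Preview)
Your proposal is correct and follows precisely the approach the paper indicates: the paper merely states that Theorem~\ref{Comparison_Module} ``can be proven in the same way as Proposition~\ref{Comparison_VA},'' and you have faithfully transcribed that argument to the module level, supplying the Heisenberg-weight bookkeeping that the paper leaves implicit. One minor notational slip: in your shorthand $\coset^\pm_{\mp\lambda}$ the $-$-case should read $\coset^-_{\epsilon^2\lambda}$ (as you correctly write elsewhere), not $\coset^-_{+\lambda}$.
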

By composing the natural isomorphisms in \eqref{Natural_Isom_g} and Theorem \ref{Comparison_Module}, we obtain natural isomorphisms
\begin{align}\label{id to relcoh}
\Upsilon_{\pm,\lambda}^M \colon M\xrightarrow{\simeq}\Relcoh_{\mp,\epsilon\lambda}\circ\Relcoh_{\pm,\epsilon\lambda}(M)
\end{align}
for each object $M$ in $\wtcat{\pm}$.
More explicitly, first note that $\Relcoh_{\mp,\epsilon\lambda}\circ\Relcoh_{\pm,\epsilon\lambda}(M)$ is a subquotient of the total complex:
\begin{align}\label{total complex}
(M\otimes V_\Z\otimes \pi^{\boson{-}}_{\epsilon\lambda}\otimes \mathcal{E})\otimes V_{\ssqrt{-1}\Z}\otimes \pi^{\boson{+}}_{\epsilon\lambda}\otimes \mathcal{E}\simeq (M\otimes V_\Z\otimes V_{\ssqrt{-1}\Z})\otimes \pi^{\boson{+}}_{\epsilon \lambda}\otimes \pi^{\boson{-}}_{\epsilon\lambda}\otimes \mathcal{E}^{\otimes2}.    
\end{align}
Then $\Upsilon_{\pm,\lambda}^M$ is given by
\begin{align}\label{explicit form of upsilon}
\begin{split}
\Upsilon_{\pm,\lambda}^M(w)
&=\left[g_{+,\lambda}^M(w)\otimes \ket{\epsilon\lambda\boson{+}}\otimes \ket{-\epsilon\lambda\boson{-}}\otimes \mathbf{1}\right]\\
&=\left[\left(\exp(\mathcal{H}_\pm)(w)\otimes\ket{\mu\boson{+}_L}\otimes\ket{\mu\boson{-}_L} \right)\otimes \ket{\epsilon\lambda\boson{+}_R}\otimes \ket{-\epsilon\lambda\boson{-}_R}\otimes \mathbf{1}\right]
\end{split}
\end{align}
for $w\in M_{\lambda+\mu}$ ($\mu\in\Z$). 
Here we have replaced the notation $\boson{\pm}$ with $\boson{\pm}_L$ (reps.\ $\boson{\pm}_R$) standing for the elements in $V_\Z$ or $V_{\ssqrt{-1}\Z}$ (resp.\ $\pi^{\boson{+}}_{\epsilon \lambda}$ or $\pi^{\boson{-}}_{\epsilon\lambda}$) for clarity.
\subsection{Relation to fusion product}
Here we prove that the superfunctors $\Relcoh_{\pm,\lambda}$ respect the fusion products of the module categories $\wtcat{\pm}$.
More strongly, the relative semi-infinite cohomology functors induce isomorphisms between the spaces of (logarithmic) intertwining operators.

Let $\lambda_i\in\C$ ($i=1,2,3$) with $\lambda_3=\lambda_1+\lambda_2$ and $Y^{\pm}(\cdot,z)$ denote the $\pi^{\boson{\pm}}$-intertwining operator
$$Y^\pm(\cdot,z)\colon \pi^{\boson{\pm}}_{\epsilon\lambda_1}\otimes  \pi^{\boson{\pm}}_{\epsilon\lambda_2}\rightarrow \pi^{\boson{\pm}}_{\epsilon\lambda_3}\{z\}$$
satisfying
\begin{align*}
Y^{\pm}\left(\ket{\pm\epsilon\lambda_1\boson{\pm}},z\right)\ket{\pm\epsilon\lambda_2\boson{\pm}}=z^{\pm\epsilon^2\lambda_1\lambda_2}\difexp{\pm\epsilon\lambda_1\phi^\pm}\ket{\pm\epsilon\lambda_3\boson{\pm}},
\end{align*}
see \eqref{eq:difexp} for the definition of $E(\cdot,z)$. Take a $\Wplus$-module $M_i$ in $\wtcat{+}_{[\lambda_i]}$ for each $i=1,2,3$ and a $\Wplus$-intertwining operator $\mathcal{Y}(\cdot,z)$ of type $\binom{M_3}{M_1\ M_2}$.
Then the product 
$$\mathcal{Y}^{\sf rel}(\cdot,z):=\mathcal{Y}(\cdot,z)\otimes Y_{V_\Z}(\cdot,z)\otimes Y^-(\cdot,z)\otimes Y_{\mathcal{E}}(\cdot,z)$$
is a 
$C^\bullet_\infty(\Wplus\otimes\gluing{+})$-intertwining operator $$C^\bullet_\infty(M_1\otimes\gluing{+}_{\epsilon\lambda_1})\otimes C^\bullet_\infty(M_2\otimes\gluing{+}_{\epsilon\lambda_2})\rightarrow C^\bullet_\infty(M_3\otimes\gluing{+}_{\epsilon\lambda_3})\{z\}[\log z].$$
Recall that by \eqref{Jacobi for intertwining op} an intertwining operator $\mathcal{Y}$ satisfies the derivation property:
\begin{align*}
\mathcal{Y}(a_{(0)}m,z)=[a_{(0)},\mathcal{Y}(m,z)].
\end{align*}
It follows that $\mathcal{Y}^{\sf rel}(\cdot,z)$ can be restricted to
$$C^{\mathrm{\textsf{rel}},\bullet}_\infty(M_1\otimes\gluing{+}_{\epsilon\lambda_1})\otimes C^{\mathrm{\textsf{rel}},\bullet}_\infty(M_2\otimes\gluing{+}_{\epsilon\lambda_2})\rightarrow C^{\mathrm{\textsf{rel}},\bullet}_\infty(M_3\otimes\gluing{+}_{\epsilon\lambda_3})\{z\}[\log z]$$
and induces a $\Wminus$-intertwining operator
$$\mathbf{H}_{+}(\mathcal{Y})(\cdot,z)\colon \Relcoh_{+,\epsilon\lambda_1}(M_1)\otimes \Relcoh_{+,\epsilon\lambda_2}(M_2)\rightarrow \Relcoh_{+,\epsilon\lambda_3}(M_3)\{z\}[\log z].$$
Similarly, by using $Y^+(\cdot,z)$, we can construct from a $\Wminus$-intertwining operator $\mathcal{Y}(\cdot,z)$, a $\Wplus$-intertwining operator
$$\mathbf{H}_{-}(\mathcal{Y})(\cdot,z)\colon \Relcoh_{-,\epsilon\lambda_1}(M_1)\otimes \Relcoh_{-,\epsilon\lambda_2}(M_2)\rightarrow \Relcoh_{-,\epsilon\lambda_3}(M_3)\{z\}[\log z],$$
where $M_i$ ($i=1,2,3$) denotes a $\Wminus$-module in $\wtcat{-}_{[\epsilon^2\lambda_i]}$ by abuse of notation.
\begin{theorem}\label{isom for intertwining operators}
The linear maps
\begin{align*}
\mathbf{H}_{\pm}\colon I_{\Wpm}\binom{M_3}{M_1\ M_2}\rightarrow I_{\Wmp}\binom{\Relcoh_{\pm,\epsilon\lambda_3}(M_3)}{\Relcoh_{\pm,\epsilon\lambda_1}(M_1)\ \Relcoh_{\pm,\epsilon\lambda_2}(M_2)}
\end{align*}
are isomorphisms of vector superspaces.
\end{theorem}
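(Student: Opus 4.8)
The plan is to prove that the constructions $\mathbf{H}_{\pm}$ of the preceding paragraphs are bijective by exhibiting an explicit two-sided inverse, using the fact (Theorem \ref{Comparison_Module} together with \eqref{id to relcoh}) that $\Relcoh_{\mp,\epsilon\lambda}\circ\Relcoh_{\pm,\epsilon\lambda}$ is naturally isomorphic to the identity functor. Concretely, I would focus on $\mathbf{H}_{+}$; the case of $\mathbf{H}_{-}$ is obtained by exchanging the roles of $\Wplus$ and $\Wminus$ and of $\boson{+}$ and $\boson{-}$ throughout, replacing $V_\Z$ by $V_{\ssqrt{-1}\Z}$. The first step is to run the construction of $\mathbf{H}_{+}$ once more in reverse: given a $\Wminus$-intertwining operator $\mathcal{Z}(\cdot,z)$ of type $\binom{\Relcoh_{+,\epsilon\lambda_3}(M_3)}{\Relcoh_{+,\epsilon\lambda_1}(M_1)\ \Relcoh_{+,\epsilon\lambda_2}(M_2)}$, apply $\mathbf{H}_{-}$ with the Fock-module intertwiner $Y^{+}(\cdot,z)$ among the $\pi^{\boson{+}}_{\epsilon\lambda_i}$ to produce a $\Wplus$-intertwining operator among the $\Relcoh_{-,\epsilon\lambda_i}\circ\Relcoh_{+,\epsilon\lambda_i}(M_i)$. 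Transporting along the natural isomorphisms $\Upsilon_{+,\lambda_i}^{M_i}\colon M_i\xrightarrow{\simeq}\Relcoh_{-,\epsilon\lambda_i}\circ\Relcoh_{+,\epsilon\lambda_i}(M_i)$ of \eqref{id to relcoh} then yields a $\Wplus$-intertwining operator of type $\binom{M_3}{M_1\ M_2}$, and this defines the candidate inverse map.

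The key step is to check that the two composites agree with the identity on the level of intertwining operators. Here one uses the explicit formula \eqref{explicit form of upsilon} for $\Upsilon_{+,\lambda}^{M}$: both $\mathbf{H}_{+}$ and the candidate inverse are defined by tensoring the given intertwiner with fixed lattice/Fock/$bc$-intertwiners $Y_{V_\Z},Y^{\mp},Y_{\mathcal{E}}$ and then passing to relative cohomology, so the composite of the two amounts to tensoring $\mathcal{Y}(\cdot,z)$ with the intertwiner $Y_{V_\Z}(\cdot,z)\otimes Y^{-}(\cdot,z)\otimes Y^{+}(\cdot,z)\otimes Y_{\mathcal{E}}(\cdot,z)\otimes Y_{\mathcal{E}}(\cdot,z)$ on $\gluing{+}\otimes\gluing{-}$-type factors of \eqref{total complex} and then restricting to the subspace picked out by $\Upsilon_{+,\lambda_i}^{M_i}$. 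By Corollary \ref{Twist_Emb_VA} and Lemma \ref{Twist_Emb_Mod} the operator $\exp(\mathcal{H}_+)$ intertwines the naive product of structure maps with the twisted one, exactly as in the proof of Theorem \ref{Quasi_Inv}; so on the relevant subspace this composite intertwiner reduces to $\mathcal{Y}(\cdot,z)$ itself, i.e. the composite is the identity on $I_{\Wplus}\binom{M_3}{M_1\ M_2}$. The symmetric computation, using $\Upsilon_{-,\epsilon^2\lambda_i}^{M_i}$, shows the other composite is the identity.

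I expect the main obstacle to be bookkeeping rather than conceptual: one must verify that $\mathbf{H}_{+}(\mathcal{Y})$ is genuinely a logarithmic intertwining operator of $\Wminus$-modules — that is, that the product intertwiner $\mathcal{Y}^{\sf rel}(\cdot,z)$ descends to the relative subcomplexes (this is where the derivation property \eqref{Jacobi for intertwining op} of $\mathcal{Y}$ and the commutativity of $Y^{\mp}$ with the diagonal Heisenberg field are needed so that $d_{(0)}$-cocycles map to $d_{(0)}$-cocycles and $d_{(0)}$-coboundaries to coboundaries), and that the resulting operator has the correct $\Wminus$-module bracket relations after passing through the isomorphisms $\Upsilon_\pm$. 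The vanishing $\relcoh{p}=0$ for $p\neq 0$ from Proposition \ref{prop:CFL} ensures there is no obstruction to this passage to cohomology and that no spurious intertwining operators of nonzero cohomological degree appear. Finally one observes that the whole construction is manifestly parity-preserving when $\mathcal{Y}$ is parity-homogeneous, hence $\mathbf{H}_{\pm}$ and its inverse are maps of vector superspaces, giving the claimed isomorphism.
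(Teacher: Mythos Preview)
Your plan is correct and matches the paper's proof: show that $\mathbf{H}_{\mp}\circ\mathbf{H}_{\pm}$, transported along the natural isomorphisms $\Upsilon^{M_i}_{\pm,\lambda_i}$ of \eqref{id to relcoh}, equals the identity on intertwining operators by an explicit computation with \eqref{explicit form of upsilon} and the argument of Lemma \ref{Twist_Emb_Mod}. The one technical wrinkle you should anticipate in carrying this out is that in the total complex \eqref{total complex} the diagonal Heisenberg field $t^+(z)+s^-(z)=(\boson{+}_L+\boson{-}_L)(z)-\epsilon(\boson{+}_R+\boson{-}_R)(z)$ is a coboundary (by Proposition \ref{prop:CFL}), which is what lets you consolidate the exponential factors $E(\cdot,z)$ coming from the various lattice and Fock intertwiners into the single $\exp(\mathcal{H}_+)$ shift needed to invoke Lemma \ref{Twist_Emb_Mod}.
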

\begin{proof}
Let us abbreviate the compositions $\mathbf{H}_{\pm\circ \mp}=\mathbf{H}_{\pm}\circ \mathbf{H}_{\mp}$ and $\Relcoh_{\pm\circ \mp}=\Relcoh_{\pm,\epsilon\lambda_p}\circ\Relcoh_{\mp,\epsilon\lambda_p}$.
Then to show that $\mathbf{H}_\pm$ are isomorphisms, it suffices to show that the compositions
\begin{align*}
I_{\Wpm}\binom{M_3}{M_1 M_2}
&\xrightarrow{\mathbf{H}_{\mp\circ \pm}}
I_{\Wpm} \binom{\Relcoh_{\mp\circ \pm}(M_3)}{\Relcoh_{\mp\circ \pm}(M_1)\ \Relcoh_{\mp\circ \pm}(M_2)}
\xrightarrow{\simeq}I_{\Wpm}\binom{M_3}{M_1\ M_2}
\end{align*}
are the identity, where the last isomorphism is induced by \eqref{id to relcoh}.
We show the claim for $\mathbf{H}_{-\circ +}$.
By taking a $\Wplus$-intertwining operator $\mathcal{Y}(\cdot,z)$ of type $\binom{M_3}{M_1\ M_2}$, we need to show that the diagram
\begin{equation}\label{diagram to check}
\SelectTips{cm}{}
\xymatrix@W20pt@H11pt@R26pt@C48pt{
M_1\otimes M_2 \ar[r]^-{\mathcal{Y}(\cdot,z)}\ar[d]_-{\Upsilon^{M_1}_{+,\lambda_1}\otimes \Upsilon^{M_2}_{+,\lambda_2}}
&M_3\{z\}[\log z]\ar[d]_-{\Upsilon^{M_3}_{+,\lambda_3}}\\
\Relcoh_{-\circ +}(M_1)\otimes \Relcoh_{-\circ +}(M_2)\ar[r]^-{\mathbf{H}_{-\circ+}(\mathcal{Y})(\cdot,z)}
&\Relcoh_{-\circ +}(M_3)\{z\}[\log z]}
\end{equation}
commutes. By using the decomposition \eqref{decomposition of module} of modules $M_i$ ($i=1,2$), we may take $w_i\in\Omega_{\lambda_i+\mu_i}^{H^+}(M_i)\otimes\pi^{H^+}_{\lambda_i+\mu_i}$ for some $\mu_i\in\Z$ without loss of generality. It follows from \eqref{explicit form of upsilon} that 
\begin{align*}
&\mathbf{H}_{-\circ+}(\mathcal{Y})\left(\Upsilon^{M_1}_{+,\lambda_1}(w_1),z)\right) \Upsilon^{M_2}_{+,\lambda_2}(w_2) \nonumber\\
&=\mathbf{H}_{-\circ+}(\mathcal{Y})
\left(
\Bigl[w_1\otimes\ket{\mu_1\boson{+}_L}\otimes\ket{\mu_1\boson{-}_L} \otimes \ket{\epsilon\lambda_1\boson{+}_R}\otimes \ket{-\epsilon_1\lambda\boson{-}_R}\otimes \mathbf{1}\Bigr]
,z
\right) \\
&\hspace{3cm}
\Bigl[w_2\otimes\ket{\mu_2\boson{+}_L}\otimes\ket{\mu_2\boson{-}_L} \otimes \ket{\epsilon\lambda_2\boson{+}_R}\otimes \ket{-\epsilon\lambda_2\boson{-}_R}\otimes \mathbf{1}\Bigr] \\
&=\Bigl[\mathcal{Y}(w_1,z)w_2\otimes Y_{V_\Z} (\ket{\mu_1\boson{+}_L},z)\ket{\mu_2\boson{+}_L}\otimes
Y_{V_{\ssqrt{-1}\Z}}(\ket{\mu_1\boson{-}_L},z)\ket{\mu_2\boson{-}_L} \\
&\hspace{3cm} Y^+(\ket{\epsilon\lambda_1\boson{+}_R},z)\ket{\epsilon\lambda_2\boson{+}_R}\otimes
Y^-(-\epsilon\lambda_1\boson{-}_R,z)\ket{-\epsilon\lambda_2\boson{-}_R}\otimes \mathbf{1}\Bigr] \\
&=
\Bigl[\mathcal{Y}(w_1,z)w_2\otimes \difexp{\mu_1(\boson{+}_L+\boson{-}_L)}\ket{\mu_3\boson{+}_L}\otimes \ket{\mu_3\boson{-}_L} \\
\label{}&\hspace{3cm} \difexp{\epsilon\lambda_1(\boson{+}_R-\boson{-}_R)}\ket{\epsilon\lambda_3\boson{+}_R}\otimes\ket{-\epsilon\lambda_3\boson{-}_R}\otimes \mathbf{1}\Bigr]
\end{align*}
where $\mu_3=\mu_1+\mu_2$.
By Proposition \ref{prop:CFL}, the Heisenberg field
$$t^+(z)+s^-(z)=(\boson{+}_L+\boson{-}_L)(z)-\epsilon(\boson{+}_R+\boson{-}_R)(z)$$
in \eqref{total complex} is a coboundary. Hence, the right-hand side in the last equality is equal to 
\begin{align*}
&\Bigl[\mathcal{Y}(w_1,z)w_2\otimes \difexp{(\lambda_1+\mu_1)(\boson{+}_L+\boson{-}_L)}
\ket{\mu_3\boson{+}_L}\otimes\ket{\mu_3\boson{-}_L}
\otimes\ket{\epsilon\lambda_3\boson{+}_R}\otimes\ket{-\epsilon\lambda_3\boson{-}_R}\Bigr].
\end{align*}
Then by decomposing the intertwining operator $\mathcal{Y}(\cdot,z)$ into a $\mathrm{Com}(\pi^{H^+},\Wplus)\otimes \pi^{H^+}$-intertwining operator, the argument in the proof of Lemma \ref{Twist_Emb_Mod} implies that the last cohomology class is equal to 
\begin{align*}
&\Bigl[\left(\exp(\mathcal{H}_+)\mathcal{Y}(w_1,z)w_2\otimes 
\ket{\mu_3\boson{+}_L}\otimes\ket{\mu_3\boson{-}_L}\right)
\otimes\ket{\epsilon\lambda_3\boson{+}_R}\otimes\ket{-\epsilon\lambda_3\boson{-}_R}\Bigr]\\
&=\Upsilon^{M_3}_{+,\lambda_3}\left(\mathcal{Y}(w_1,z)w_2\right).
\end{align*}
Therefore, we obtain the commutativity of the diagram \eqref{diagram to check}. The claim for $\mathbf{H}_{+\circ -}$ can be shown in the same way. This completes the proof.
\end{proof}

Suppose that there exist full subcategories $$\mathcal{S}^\pm=\bigoplus_{[\lambda]\in \C/\Z}\mathcal{S}^\pm_{[\lambda]}\subset\wtcat{\pm}$$ 
which have the vertex tensor supercategory structure \cite{HLZ1,CKM} and that the relative semi-infinite cohomology functors give an equivalence of categories 
\begin{equation}\label{equiv of cat}
    \Relcoh_{+,\epsilon\lambda}\colon \mathcal{S}^+_{[\lambda]}\xrightarrow{\simeq} \mathcal{S}^-_{[\epsilon^2\lambda]},\quad \Relcoh_{-,\epsilon\lambda}\colon \mathcal{S}^-_{[\epsilon^2\lambda]}\xrightarrow{\simeq} \mathcal{S}^+_{[\lambda]},\quad (\lambda\in\C).
\end{equation}
\begin{corollary}\label{SIcoh_Fusion}
For objects $M_i^\pm$ ($i=1,2$) in $\wtcat{+}_{[\lambda_i]}$ (resp.\ $\wtcat{-}_{[\epsilon^2\lambda_i]}$), 
the pair 
$(\Relcoh_{\pm,\epsilon(\lambda_1+\lambda_2)}(M_1^\pm\boxtimes M_2^\pm),\ \mathbf{H}_{\pm}(\mathcal{Y}_{M_1^\pm\boxtimes M_2^\pm}))$
induced by the canonical intertwining operator $\mathcal{Y}_{M_1^\pm\boxtimes M_2^\pm}(\cdot,z)$ of $M_1^\pm$ and $M_2^\pm$,
is the fusion product of $\Relcoh_{\pm,\epsilon\lambda_1}(M_1^\pm)$ and $\Relcoh_{\pm,\epsilon\lambda_2}(M_2^\pm)$. 
In particular, we have an isomorphism
\begin{equation}\label{tensor isom}
\Psi_{M_1,M_2}^\pm\colon \Relcoh_{\pm,\epsilon\lambda_1}(M_1^\pm)\boxtimes\Relcoh_{\pm,\epsilon\lambda_2}(M_2^\pm)\simeq\Relcoh_{\pm,\epsilon(\lambda_1+\lambda_2)}(M_1^\pm\boxtimes M_2^\pm).
\end{equation}
\end{corollary}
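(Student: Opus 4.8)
The plan is to prove the \emph{universal property} of the fusion product directly: I will use the equivalence \eqref{equiv of cat} to transport the defining property of $M_1^\pm\boxtimes M_2^\pm$, and then Theorem \ref{isom for intertwining operators} to pass to intertwining operators for $\Wmp$; the isomorphism \eqref{tensor isom} is then the unique isomorphism of representing objects, automatically compatible with the canonical intertwining operators. Write $\lambda_3=\lambda_1+\lambda_2$, and set $N_i:=\Relcoh_{\pm,\epsilon\lambda_i}(M_i^\pm)$ and $P:=\Relcoh_{\pm,\epsilon\lambda_3}(M_1^\pm\boxtimes M_2^\pm)$; since $\mathcal{Y}_{M_1^\pm\boxtimes M_2^\pm}$ has type $\binom{M_1^\pm\boxtimes M_2^\pm}{M_1^\pm\ M_2^\pm}$, Theorem \ref{isom for intertwining operators} applies to it and $\mathbf{H}_\pm(\mathcal{Y}_{M_1^\pm\boxtimes M_2^\pm})$ has type $\binom{P}{N_1\ N_2}$. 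Since homomorphisms and intertwining operators respect the $H^\mp_{(0)}$-weight grading, $\Hom_{\Wmp}(P,N)$ and $I_{\Wmp}\binom{N}{N_1\ N_2}$ both vanish whenever $N$ lies outside the grading component of $P$; hence it suffices to show, for $N$ in that component, that composition with $\mathbf{H}_\pm(\mathcal{Y}_{M_1^\pm\boxtimes M_2^\pm})$ is a linear isomorphism $\Hom_{\Wmp}(P,N)\xrightarrow{\simeq}I_{\Wmp}\binom{N}{N_1\ N_2}$.

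For such an $N$, I would use \eqref{equiv of cat} to produce $M_3:=\Relcoh_{\mp,\epsilon\lambda_3}(N)$, an object of $\mathcal{S}^\pm$, together with the natural isomorphism $\Upsilon_{\mp,\lambda_3}^N\colon N\xrightarrow{\simeq}\Relcoh_{\pm,\epsilon\lambda_3}(M_3)$ of \eqref{id to relcoh}; in particular $\Relcoh_{\pm,\epsilon\lambda_3}$ is fully faithful on the relevant subcategories. Then I would chain the linear isomorphisms
\[
\Hom_{\Wmp}(P,N)\xrightarrow{\simeq}\Hom_{\Wpm}(M_1^\pm\boxtimes M_2^\pm,\,M_3)\xrightarrow{\simeq}I_{\Wpm}\binom{M_3}{M_1^\pm\ M_2^\pm}\xrightarrow{\simeq}I_{\Wmp}\binom{N}{N_1\ N_2},
\]
where the first combines $\Upsilon_{\mp,\lambda_3}^N$ with full faithfulness of $\Relcoh_{\pm,\epsilon\lambda_3}$, the second is the defining universal property of $M_1^\pm\boxtimes M_2^\pm$ (sending $f$ to $\overline{f}\circ\mathcal{Y}_{M_1^\pm\boxtimes M_2^\pm}$), and the third is Theorem \ref{isom for intertwining operators} followed by the isomorphism on intertwining-operator spaces induced by $\Upsilon_{\mp,\lambda_3}^N$. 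The crux is that this composite sends $g\in\Hom_{\Wmp}(P,N)$ to $\overline{g}\circ\mathbf{H}_\pm(\mathcal{Y}_{M_1^\pm\boxtimes M_2^\pm})$, which is exactly the universal property of the pair $\bigl(P,\mathbf{H}_\pm(\mathcal{Y}_{M_1^\pm\boxtimes M_2^\pm})\bigr)$; granting this, $P$ is a fusion product of $N_1$ and $N_2$ in $\mathcal{S}^\mp$, and \eqref{tensor isom} is the unique isomorphism to $N_1\boxtimes N_2$ identifying the two canonical intertwining operators.

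The one step I expect to require genuine care — everything else being formal bookkeeping — is the naturality of $\mathbf{H}_\pm$ under post-composition by module homomorphisms: for a $\Wpm$-homomorphism $h\colon M_3\to M_3'$ and an intertwining operator $\mathcal{Y}$ of type $\binom{M_3}{M_1^\pm\ M_2^\pm}$ one needs $\mathbf{H}_\pm(\overline{h}\circ\mathcal{Y})=\overline{\Relcoh_{\pm,\epsilon\lambda_3}(h)}\circ\mathbf{H}_\pm(\mathcal{Y})$. This should follow from the construction of $\mathbf{H}_\pm$ — tensoring $\mathcal{Y}$ with $Y_{V_\Z}\otimes Y^\mp\otimes Y_{\mathcal{E}}$, restricting to the relative subcomplex $C^{\mathrm{\textsf{rel}},\bullet}_\infty$, and passing to cohomology — together with the fact that $\Relcoh_{\pm,\epsilon\lambda_3}(h)$ is induced by $h\otimes\id$ on the relative complexes; applying this with $h=\overline{f}$ and unwinding the natural isomorphisms $\Upsilon$ (using naturality of the family $\{\Upsilon_{\mp,\lambda_3}^\bullet\}$ established in Theorem \ref{Comparison_Module} and \eqref{id to relcoh}) yields the required identity $\mathbf{H}_\pm(\overline{f}\circ\mathcal{Y}_{M_1^\pm\boxtimes M_2^\pm})=\overline{\Relcoh_{\pm,\epsilon\lambda_3}(f)}\circ\mathbf{H}_\pm(\mathcal{Y}_{M_1^\pm\boxtimes M_2^\pm})$. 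The two directions of the statement (the $M_i^+$ and the $M_i^-$ case) are completely parallel, obtained by interchanging $\pm$ and $\mp$ throughout.
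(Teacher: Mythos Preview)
Your proof is correct and follows essentially the same strategy as the paper's own proof: both verify the universal property of the fusion product by reducing to test objects $N$ in the correct weight block, transporting $N$ back to $\mathcal{S}^\pm$ via the equivalence \eqref{equiv of cat}, invoking the defining universal property of $M_1^\pm\boxtimes M_2^\pm$, and using Theorem \ref{isom for intertwining operators} to pass between the two sides. The paper phrases this as ``given $I$, produce $\widetilde{I}$, then $\widetilde{I}^\sharp$, then $I^\sharp:=\mathbf{H}_\pm(\widetilde{I}^\sharp)$'' and checks $I=I^\sharp\circ\mathbf{H}_\pm(\mathcal{Y}_{M_1^\pm\boxtimes M_2^\pm})$, whereas you package the same chain as a composite of linear isomorphisms and then identify it with post-composition by $\mathbf{H}_\pm(\mathcal{Y}_{M_1^\pm\boxtimes M_2^\pm})$; the naturality of $\mathbf{H}_\pm$ under post-composition that you single out is precisely what makes the paper's final sentence (``$I^\sharp$ is the unique homomorphism satisfying $I=I^\sharp\circ\mathbf{H}_+(\mathcal{Y}_{M_1^+\boxtimes M_2^+})$'') go through, and your sketch of why it holds is correct.
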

\proof
Since the proofs for $(M_1^\pm,M_2^\pm)$ are quite similar, we only show the case $(M_1^+,M_2^+)$. By the definition of fusion product, it suffices to show that given an arbitrary object $N$ in $\mathcal{S}^-$ and an intertwining operator 
\begin{align}\label{test intertwiner}
I(\cdot,z)\colon \Relcoh_{+,\epsilon\lambda_1}(M_1^+)\otimes \Relcoh_{+,\epsilon\lambda_2}(M_2^+)\rightarrow N\{z\}[\log z],
\end{align}
there exists a unique $\Wminus$-homomorphism 
$I^\sharp\colon \Relcoh_{+,\epsilon\lambda_1}(M_1^+)\boxtimes\Relcoh_{+,\epsilon\lambda_2}(M_2^+)\rightarrow N$ such that $I(\cdot,z)=I^\sharp\circ \mathbf{H}_{+}(\mathcal{Y}_{M_1^+\boxtimes M_2^+})$. For this, we may assume $N$ is an object in $\mathcal{S}^-_{[\epsilon^2]\lambda_3]}$ with $\lambda_3=\lambda_1+\lambda_2$. Then by \eqref{equiv of cat}, there exists an object $\widetilde{N}$ in $\mathcal{S}^+_{[\lambda_3]}$ such that $\Relcoh_{+,\epsilon\lambda_3}(\widetilde{N})\simeq N$. Then by Theorem \ref{isom for intertwining operators}, there exists a unique intertwining operator 
$$\widetilde{I}(\cdot,z)\colon M_1^+\otimes M_2^+\rightarrow \widetilde{N}\{z\}[\log z]$$
corresponding to \eqref{test intertwiner}. Hence there exists a unique $\Wplus$-homomorphism $\widetilde{I}^\sharp\colon M_1^+\boxtimes M_2^+\rightarrow \widetilde{N}$ such that $\widetilde{I}(\cdot,z)=\widetilde{I}^\sharp\circ \mathcal{Y}_{M_1^+\boxtimes M_2^+}(\cdot,z)$. Therefore, $I^\sharp:=\mathbf{H}_+(\widetilde{I}^\sharp)$ is the unique $\Wminus$-homomorphism satisfying $I(\cdot,z)=I^\sharp\circ \mathbf{H}_{+}(\mathcal{Y}_{M_1^+\boxtimes M_2^+})(\cdot,z)$ by \eqref{equiv of cat}.
\endproof
\begin{remark}
By using the associators of vertex tensor categories, one can prove that the triple of families of isomorphisms
\begin{itemize}
    \item $\Relcoh_{+,\epsilon\lambda}\colon \mathcal{S}^+_{[\lambda]}\xrightarrow{\simeq},  \mathcal{S}^-_{[\epsilon^2\lambda]}$,
    \item 
    $\Psi_{M,N}^+\colon \Relcoh_{+,\epsilon\lambda}(M)\boxtimes\Relcoh_{+,\epsilon\mu}(N)\simeq\Relcoh_{+,\epsilon(\lambda+\mu)}(M\boxtimes N)$
    \item 
    $\Upsilon^+\colon \Wminus\xrightarrow{\simeq} \Relcoh_{+,0}(\Wplus)$
\end{itemize}
satisfy the same axiom of tensor superfunctors, that is, the following commutative diagrams:
\begin{equation*}
\SelectTips{cm}{}
\xymatrix@W20pt@H11pt@R24pt@C36pt{
\left(\Relcoh_{+,\lambda}(M)\boxtimes\Relcoh_{+,\mu}(N)\right)\boxtimes\Relcoh_{+,\nu}(L)\ar[r]^-{\mathcal{A}^2}\ar[d]_-{\Psi^+_{M\boxtimes N}\,\boxtimes\,\id}
&\Relcoh_{+,\nu}(M)\boxtimes\left(\Relcoh_{+,\mu}(N)\boxtimes\Relcoh_{+,\nu}(L)\right)\ar[d]_-{\id\,\boxtimes\,\Psi^+_{N,L}}\\
\Relcoh_{+,\lambda+\mu}(M\boxtimes N)\boxtimes\Relcoh_{+,\nu}(L)\ar[d]_-{\Psi^+_{M\boxtimes N,L}}
&\Relcoh_{+,\lambda}(M)\boxtimes\Relcoh_{+,\mu+\nu}(N\boxtimes L)\ar[d]_-{\Psi^+_{M,N\boxtimes L}}\\
\Relcoh_{+,\lambda+\mu+\nu}\left((M\boxtimes N)\boxtimes L\right)\ar[r]_-{\Relcoh_{+,\lambda+\mu+\nu}(\mathcal{A}^1)}
&\Relcoh_{+,\lambda+\mu+\nu}\left(M\boxtimes(N\boxtimes L)\right),}
\end{equation*}
\begin{equation*}
\SelectTips{cm}{}
\xymatrix@W20pt@H11pt@R24pt@C36pt{
\Wminus\boxtimes \Relcoh_{+,\lambda}(M)\ar[r]^-{\ell^-}\ar[d]_-{\Upsilon^+\,\boxtimes\,\id}
&\Relcoh_{+,\lambda}(M)\\
\Relcoh_{+,0}(\Wplus)\boxtimes \Relcoh_{+,\lambda}(M)\ar[r]^-{\Psi^+_{\Wplus,M}}
&\Relcoh_{+,\lambda}(\Wplus\boxtimes M)\ar[u]_-{\Relcoh_{+,\lambda}(\ell^+)},
}
\end{equation*}
\begin{equation*}
\SelectTips{cm}{}
\xymatrix@W20pt@H11pt@R24pt@C36pt{
\Relcoh_{+,\lambda}(M)\boxtimes\Wminus \ar[r]^-{r^-}\ar[d]_-{\id\,\boxtimes\,\Upsilon^+}
&\Relcoh_{+,\lambda}(\Wplus)\\
\Relcoh_{+,\lambda}(M)\boxtimes \Relcoh_{+,0}(\Wplus)\ar[r]^-{\Psi^+_{M,\Wplus}}
&\Relcoh_{+,\lambda}(M\boxtimes\Wplus)\ar[u]_-{\Relcoh_{+,\lambda}(r^+)},
}
\end{equation*}
where $\mathcal{A}^\pm$ is the associativity isomorphism, $\ell^\pm$ (resp.\ $r^\pm$) is the left (resp.\ right) unit isomorphism on $\mathcal{S}^\pm$. (See also Appendix \ref{BTC and Simple currents}).
Therefore, $(\Relcoh_{+,\epsilon\bullet},\Psi_{\bullet,\bullet}^+,\Upsilon^+)$ is a lax tensor superfunctor. In this point of view, the decomposition $\mathcal{S}^+=\bigoplus_{[\lambda]\in \C/\Z}\mathcal{S}^+_{[\lambda]}$, which a priori is defined by the eigenvalues for $H^+_{(0)}$, should be interpreted as the decomposition in terms of the tensor autofunctor $\mathcal{G}=\mathrm{exp}(2\pi \ssqrt{-1}H^+_{(0)})$. Similarly, $(\Relcoh_{-,\epsilon\bullet},\Psi_{\bullet,\bullet}^-,\Upsilon^-)$ forms a lax tensor superfunctor in the same sense and these two functors are weight-wise quasi-inverse to each other. However, these two functors do not preserve the braided structure since we use Fock modules $\pi^{\boson{\pm}}_\lambda$ to define $\Relcoh_{\pm,\lambda}$, which give a nontrivial additional braiding themselves.
\end{remark}
\subsection{Rational case}
We present Corollary \ref{SIcoh_Fusion} more explicitly in the rational cases treated in \S\ref{SuperW_rational}.
Recalling the notation $\subW$ and $\prinsW$, we have
\begin{equation*}
\varepsilon_+=\frac{r}{n},\quad\varepsilon_-=\frac{r}{n+r},\quad\epsilon=\sqrt{\frac{\varepsilon_-}{\varepsilon_+}}=\sqrt{\frac{n}{n+r}}
\end{equation*}
and obtain the following proposition by comparing the branching rule.
\begin{proposition}\label{explicit form} Let $\xi\in\C$ and $\lambda\in\widehat{P}_+^n(r)$.\\
(1)  For $a\in\Z_{nr}$, we have
\begin{equation*}
\Relcoh_{+,\frac{\xi}{\sqrt{n(n+r)}}}\left(\sbmod(\lambda,a)\right)\simeq
\begin{cases}
\spmod\big(\lambda,\frac{(n+r)a-r\xi}{n}\big) & \text{ if }\xi\equiv a\text{ mod }n\Z,\\
0 & \text{ otherwise.}
\end{cases}
\end{equation*}\\
(2) For $a\in\Z_{(n+r)r}$, we have
\begin{equation*}
\Relcoh_{-,\frac{\xi}{\sqrt{n(n+r)}}}\left(\spmod(\lambda,a)\right)\simeq
\begin{cases}
\sbmod\big(\lambda,\frac{na+r\xi}{n+r}\big) &\text{ if }\xi\equiv a\text{ mod }(n+r)\Z,\\
0 & \text{ otherwise.}
\end{cases}
\end{equation*}
\end{proposition}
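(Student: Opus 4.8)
The plan is to reduce everything to a branching-rule computation, using the module-level decompositions established in Section~\ref{SuperW_rational} together with the explicit formula for $\Relcoh_{\pm,\epsilon\xi}$ obtained just before this proposition. First I would recall that, by the discussion following Theorem~\ref{Comparison_Module}, for an object $M$ in $\wtcat{+}_{[\lambda]}$ and $\xi\in\lambda+\Z$ one has
\begin{equation*}
\Relcoh_{+,\epsilon\xi}(M)\simeq\bigoplus_{\mu\in\Z}\Omega_{\xi+\mu}^{H^+}(M)\otimes\pi^{H^-}_{\epsilon^2\xi+\mu}
\end{equation*}
as a $\mathrm{Com}(\pi^{\widetilde H^-},\Wminus)\otimes\pi^{H^-}$-module, and that this coincides with $\coset^+_{-\xi}(M)$; in particular $\Relcoh_{+,\epsilon\xi}(M)=0$ unless $\xi\in\lambda+\Z$. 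For $M=\sbmod(\lambda,a)$ the relevant $\lambda$-class in $\C/\Z$ is determined by $\mathrm{Supp}(\sbmod(\lambda,a))$, which by the branching rule in Theorem~\ref{fusion rules of subregular} is $\{\tfrac{a+ni}{\sqrt{nr}}\cdot\sqrt{nr}+\sqrt{nr}\Z\text{-shifts}\}$ read off the $V_{\frac{a+ni}{\ssqrt{nr}}+\ssqrt{nr}\Z}$-factors; normalizing the $H^+$-field as in \eqref{H_plus} with $\varepsilon_+=r/n$ one checks that $\mathrm{Supp}(\sbmod(\lambda,a))\subset\lambda_a+\Z$ where $\lambda_a$ is the image of $a$ under the natural map $\Z_{nr}\to\C/\Z$ scaled appropriately, so that the hypothesis ``$\xi\equiv a\bmod n\Z$'' is exactly the condition $\frac{\xi}{\sqrt{n(n+r)}}\in\lambda_a+\Z$ after substituting $\varepsilon_\pm$, $\epsilon=\sqrt{n/(n+r)}$.

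Second, assuming $\xi\equiv a\bmod n\Z$, I would compute the right-hand side directly. Start from the $\prinW\otimes V_{\ssqrt{nr}\Z}$-decomposition of $\sbmod(\lambda,a)$ in Theorem~\ref{fusion rules of subregular}, tensor with $V_\Z$ and refine it into a $\prinW\otimes\pi^{\widetilde H^+}\otimes\pi^{t^+}$-decomposition exactly as in the chain \eqref{Kazama-Suzuki decomposition} (which was the vertex-algebra version of this very computation). Applying $\relcoh{0}$ kills the $\pi^{\widetilde H^+}$-factor via Proposition~\ref{prop:CFL}, leaving a direct sum of $\prmod(\sigma^i(\lambda))\otimes\pi^{t^+}_{(\ast)}$ over $i\in\Z_r$; reading off the resulting Heisenberg weights for $H^-=t^+$ and using $(n+r)\psi^+=n\Hsub+r\boson{+}$ from Lemma~\ref{Twisted_Emb}(1), the exponents work out to $\frac{(n+r)i+c}{\sqrt{(n+r)r}}+\sqrt{(n+r)r}\Z$ with $c=\frac{(n+r)a-r\xi}{n}$ (an integer precisely because $\xi\equiv a\bmod n\Z$). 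Comparing with the branching rule \eqref{extension to superW}, i.e.\ Theorem~\ref{fusion rules of superW}, this is exactly the $\prinW\otimes V_{\ssqrt{(n+r)r}\Z}$-decomposition of $\spmod\big(\lambda,\frac{(n+r)a-r\xi}{n}\big)$, and since the simple module is determined by its branching, the claimed isomorphism follows. Part (2) is the mirror computation: run the same argument with $+\leftrightarrow-$, using the $\gluing{-}$-side decomposition, $\varepsilon_-=r/(n+r)$, $\epsilon^{-1}=\sqrt{(n+r)/n}$, and the branching rule for $\spmod(\lambda,a)$, landing in $\sbmod\big(\lambda,\frac{na+r\xi}{n+r}\big)$.

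The main obstacle I anticipate is purely bookkeeping rather than conceptual: matching the normalization conventions across the three gradings involved — the intrinsic $H^+_{(0)}$-grading used to define $\wtcat{+}_{[\lambda]}$, the lattice grading in the $V_{\ssqrt{nr}\Z}$-decomposition, and the $\widetilde H^\pm$/$t^\pm$-bases — so that the congruence ``$\xi\equiv a\bmod n\Z$'' and the formula $\frac{(n+r)a-r\xi}{n}$ come out with the correct scalars and the correct $\Z_r$- versus $\Z_n$-quotient on the parameter space. Concretely one must verify that the identification $\pi^{t^+}\simeq\pi^{H^-}$ via $t^+(z)\mapsto H^-(z)$ together with $\widetilde\varepsilon_-=\varepsilon_-+1$ sends the exponent $\nu+\widetilde\varepsilon_-\xi$ appearing in \eqref{coset functor} to $\epsilon^2\xi+\mu$ as in the displayed formula for $\Relcoh_{+,\epsilon\xi}$, and then that the substitution $\xi\mapsto\frac{\xi}{\sqrt{n(n+r)}}$ reproduces precisely the stated congruence. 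Once these scalar identifications are pinned down, the isomorphism is forced by simplicity together with the already-established branching rules, so no genuinely new input is needed beyond what is proved earlier in the paper.
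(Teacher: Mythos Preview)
Your proposal is correct and follows essentially the same approach as the paper: decompose $\sbmod(\lambda,a)\otimes\gluing{+}_{\xi/\sqrt{n(n+r)}}$ into $\prinW\otimes\pi^{H^+}\otimes\pi^{s^+}\otimes\pi^{t^+}$-pieces using the branching rule of Theorem~\ref{fusion rules of subregular}, apply Proposition~\ref{prop:CFL} to read off the vanishing condition $\xi\equiv a\bmod n\Z$ and the surviving $\pi^{H^-}$-weights, then match against the branching rule of Theorem~\ref{fusion rules of superW}. The paper carries this out directly in the $(s^+,t^+)$-basis rather than routing through the coset-functor identification and \eqref{Kazama-Suzuki decomposition} as you suggest, which makes the bookkeeping you flag slightly cleaner, but the argument is the same.
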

\begin{proof}
Since the proofs for (1) and (2) are similar, we only prove (i).
Since $\sbmod(\lambda,a)\otimes\widetilde{\mathcal{V}}^+_{\frac{\xi}{\sqrt{n(n+r)}}}$ decomposes into the direct sum
\begin{equation}
\bigoplus_{i\in\Z_r}\mathbf{L}_\W(\sigma^{i}(\lambda))\otimes\bigoplus_{m\in \Z}\pi^{\Hsub}_{\frac{a}{n}+i+rm}\otimes\bigoplus_{\mu\in \Z}\pi^{s^+}_{-\frac{\xi}{n}-\mu} \otimes \pi^{t^+}_{\frac{\xi}{n+r}+\mu},
\end{equation}
we obtain by Proposition \ref{prop:CFL}
\begin{equation*}
    \Relcoh_{+,\frac{\xi}{\sqrt{n(n+r)}}}\left(\sbmod(\lambda,a)\right)
    \simeq
    \begin{cases}
    \displaystyle
    \bigoplus_{i\in\Z_r}\mathbf{L}_\W(\sigma^{i}(\lambda))\otimes\bigoplus_{m\in \Z} \pi^{H^-}_{\frac{\xi}{n+r}+\frac{a-\xi}{n}+i+rm} & \text{ if }\xi\equiv a\text{ mod }n\Z,\\
    0 & \text{ otherwise}
    \end{cases}
\end{equation*}
as $\prinW\otimes\pi^{H^-}$-modules.
This decomposition and Theorem \ref{fusion rules of superW} imply (1).
\end{proof}
The following is Corollary \ref{SIcoh_Fusion} in the rational case, which can be deduced directly from the above proposition and the fusion rules obtained in \S\ref{SuperW_rational}.
\begin{theorem}\label{check of monoidality in the rational case} Let $\xi_1,\xi_2\in\C$ and $\lambda_1,\lambda_2\in\widehat{P}_+^n(r)$.\\
(1)
For $a_1,a_2\in\Z_{nr}$, we have
\begin{align*}
&\Relcoh_{+,\frac{\xi_1}{\sqrt{n(n+r)}}}\left(\sbmod(\lambda_1,a_1)\right)\boxtimes \Relcoh_{+,\frac{\xi_2}{\sqrt{n(n+r)}}}\left(\sbmod(\lambda_2,a_2)\right)\\
&\simeq
\begin{cases}
\Relcoh_{+,\frac{\xi_1+\xi_2}{\sqrt{n(n+r)}}}\left(\sbmod(\lambda_1,a_1)\boxtimes\sbmod(\lambda_2,a_2)\right)
& \text{ if }\xi_i\equiv a_i\text{ mod }n\Z\text{ for }i=1,2,\\
0 & \text{ otherwise.}
\end{cases}
\end{align*}\\
(2)
For $a_1,a_2\in\Z_{(n+r)r}$, we have
\begin{align*}
&\Relcoh_{-,\frac{\xi_1}{\sqrt{n(n+r)}}}\left(\spmod(\lambda_1,a_1)\right)\boxtimes \Relcoh_{-,\frac{\xi_2}{\sqrt{n(n+r)}}}\left(\spmod(\lambda_2,a_2)\right)\\
&\simeq
\begin{cases}
\Relcoh_{-,\frac{\xi_1+\xi_2}{\sqrt{n(n+r)}}}\left(\spmod(\lambda_1,a_1)\boxtimes\spmod(\lambda_2,a_2)\right) & \text{ if }\xi_i\equiv a_i\text{ mod }(n+r)\Z\text{ for }i=1,2,\\
0 & \text{ otherwise.}
\end{cases}
\end{align*}
\end{theorem}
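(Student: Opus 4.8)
The plan is to reduce Theorem \ref{check of monoidality in the rational case} to a direct combination of Corollary \ref{SIcoh_Fusion} and Proposition \ref{explicit form}, keeping careful track of the $H^\pm_{(0)}$-weight labels throughout. First I would record that the hypotheses of Corollary \ref{SIcoh_Fusion} are met in the rational case: the module categories $\wtcat{\pm}$ contain the full subcategories generated by the $\sbmod(\lambda,a)$ (resp.\ $\spmod(\lambda,a)$), which are semisimple, $C_2$-cofinite, and therefore carry a vertex tensor supercategory structure by Proposition \ref{prop: V-Mod property}; moreover the relative semi-infinite cohomology functors $\Relcoh_{\pm,\epsilon\lambda}$ restrict to equivalences of these categories by Theorem \ref{Comparison_Module} combined with Theorem \ref{Quasi_Inv}. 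Hence \eqref{tensor isom} applies verbatim, giving a natural isomorphism $\Relcoh_{\pm,\epsilon(\lambda_1+\lambda_2)}(M_1^\pm\boxtimes M_2^\pm)\simeq\Relcoh_{\pm,\epsilon\lambda_1}(M_1^\pm)\boxtimes\Relcoh_{\pm,\epsilon\lambda_2}(M_2^\pm)$ whenever both factors are nonzero.

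Next I would perform the bookkeeping that converts the abstract weight $\lambda$ appearing in $\Relcoh_{\pm,\epsilon\lambda}$ into the normalization $\xi/\sqrt{n(n+r)}$ used in the statement. The point is that $\epsilon=\sqrt{n/(n+r)}$, so $\epsilon\cdot\frac{\xi}{\sqrt{n(n+r)}}=\frac{\xi}{n+r}$ and $\epsilon^2\cdot\frac{\xi}{\sqrt{n(n+r)}}=\frac{\xi}{n}$ — these are exactly the shifts of the $\pi^{s^\pm}$- and $\pi^{t^\pm}$-weights that appear in the branching computation in the proof of Proposition \ref{explicit form}. I would then invoke Proposition \ref{explicit form}(1): for $i=1,2$ the cohomology $\Relcoh_{+,\xi_i/\sqrt{n(n+r)}}(\sbmod(\lambda_i,a_i))$ is nonzero exactly when $\xi_i\equiv a_i\bmod n\Z$, and in that case equals $\spmod(\lambda_i,\frac{(n+r)a_i-r\xi_i}{n})$. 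When both are nonzero, the congruence $\xi_1+\xi_2\equiv a_1+a_2\bmod n\Z$ holds automatically, so $\Relcoh_{+,(\xi_1+\xi_2)/\sqrt{n(n+r)}}$ applied to the fusion product is again computed by Proposition \ref{explicit form}(1); combining with the fusion rule $\sbmod(\lambda_1,a_1)\boxtimes\sbmod(\lambda_2,a_2)\simeq\bigoplus_\nu N_{\lambda_1\lambda_2}^\nu\,\sbmod(\nu,a_1+a_2)$ from Theorem \ref{fusion rules of subregular} and additivity of $\Relcoh$ over direct sums, one sees both sides of the claimed isomorphism match the same direct sum of $\spmod(\nu,\cdot)$'s. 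If either $\xi_i\not\equiv a_i$, the corresponding factor on the left is zero, making the left side zero, which matches the stated ``otherwise'' case. The argument for (2) is identical after swapping the roles of $n$ and $n+r$, using Proposition \ref{explicit form}(2), Theorem \ref{fusion rules of superW}, and the congruence modulo $(n+r)\Z$.

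The only genuinely nontrivial point — and the one I would be most careful about — is the consistency of the second arguments of $\spmod$ (resp.\ $\sbmod$) on the two sides. One must check that $\frac{(n+r)a_1-r\xi_1}{n}+\frac{(n+r)a_2-r\xi_2}{n}$ (the label produced by fusing the images) agrees, as an element of $\Z_{(n+r)r}$, with $\frac{(n+r)(a_1+a_2)-r(\xi_1+\xi_2)}{n}$ (the label produced by taking the image of the fusion), which is immediate from linearity; the subtlety is only that these formulas take values in the right cyclic group and that the $\Z_r$-quotient identifications in Theorems \ref{fusion rules of subregular} and \ref{fusion rules of superW} are respected — this follows because Proposition \ref{explicit form} was itself proved by matching branching rules through exactly those quotients. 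I expect no serious obstacle beyond this index-chasing, since all the heavy lifting (existence of the tensor isomorphism, exactness of $\Relcoh$, explicit branching) is already in place. I would close by remarking that the theorem is the promised explicit incarnation of Main Theorem 2(2) in the rational setting.
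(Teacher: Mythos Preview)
Your proposal is correct and follows essentially the same approach as the paper: the paper simply states that the theorem ``can be deduced directly from the above proposition and the fusion rules obtained in \S\ref{SuperW_rational},'' i.e., from Proposition~\ref{explicit form} together with Theorems~\ref{fusion rules of subregular} and~\ref{fusion rules of superW}, which is exactly the combination you spell out. Your additional verification that the hypotheses of Corollary~\ref{SIcoh_Fusion} hold in the rational setting and your careful label-matching are more detail than the paper provides, but the route is the same.
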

\appendix

\section{Categorical aspects of simple currents}\label{Sec. BTC and Simple currents}
We consider the theory of simple currents of vertex operator algebras purely in the categorical manner, following \cite{CKL,CKM}.
We refer to a category enriched by the category of $\Z_2$-graded sets as a supercategory.
For a supercategory $\mathcal{C}$, we write $\underline{\mathcal{C}}$ for its underlying category, namely, the objects in $\underline{\mathcal{C}}$ are the same as $\mathcal{C}$ and the morphisms in $\underline{\mathcal{C}}$ are the even morphisms of $\mathcal{C}$. 
For an additive supercategory $\mathcal{C}$ such that $\underline{\mathcal{C}}$ is an \emph{abelian category}, by a subquotient object in $\mathcal{C}$ we mean one in the underlying category $\underline{\mathcal{C}}$. We use the notion of simplicity as well. 
We stress that we use this terminology even if $\mathcal{C}$ is an \emph{abelian supercategory}.
\subsection{Preliminaries}\label{BTC and Simple currents}
Let $\mathcal{C}$ be an essentially small, $\C$-linear, monoidal supercategory whose underlying category is abelian. We write 
$$\boxtimes\colon \mathcal{C}\times \mathcal{C}\rightarrow \mathcal{C},\quad (M,N)\mapsto M\boxtimes N$$
for the monoidal product superfunctor with unit object $1_\mathcal{C}$, by 
\begin{align*}
&l_{\bullet}\colon 1_\mathcal{C} \operatorname{\boxtimes} \bullet\xrightarrow{\simeq} \bullet,\quad (l_M\colon 1_\mathcal{C}\boxtimes M\xrightarrow{\simeq} M),\\
&r_{\bullet}\colon \bullet \operatorname{\boxtimes} 1_\mathcal{C}\xrightarrow{\simeq} \bullet,\quad (r_M\colon M\boxtimes  1_\mathcal{C}\xrightarrow{\simeq} M),\\
&\mathcal{A}_{\bullet,\bullet,\bullet}\colon \bullet \boxtimes(\bullet\boxtimes \bullet)\xrightarrow{\simeq} (\bullet\boxtimes \bullet)\boxtimes \bullet,\quad \left(\mathcal{A}_{M,N,L}\colon M\boxtimes (N\boxtimes L)\xrightarrow{\simeq} (M\boxtimes N)\boxtimes L\right),
\end{align*}
the structural natural isomorphisms of superfunctors satisfying the pentagon and triangle axioms, see \cite{BK,EGNO}. The last one is called the associativity isomorphism.
Here we use the convention that the parity of a parity-homogeneous morphism $f$ is denoted by $\bar{f}$ and the composition of two parity-homogeneous morphisms $(f_1,f_2)$, $(g_1,g_2)$ in $\mathcal{C}\times\mathcal{C}$ is given by $(-1)^{\bar{f}_2\bar{g}_1}(f_1g_1,f_2g_2)$.

For an object $M\in Ob(\mathcal{C})$, a right dual of $M$ is a triple $(M^*,e_M^R,i_M^R)$ of $M^*\in Ob(\mathcal{C})$ and even morphisms
\begin{align}\label{eq: right dual}
e_M^R\colon M^*\boxtimes M\rightarrow 1_\mathcal{C},\quad  i_M^R\colon 1_\mathcal{C}\rightarrow M\boxtimes M^*,
\end{align}
satisfying the rigidity axioms. A left dual of $M$ is similarly defined to be a triple $({}^*\!M,e_M^L,i_M^L)$ consisting of ${}^*\!M\in Ob(\mathcal{C})$ and even morphisms
\begin{align}\label{eq: left dual}
e_M^L\colon M\boxtimes {}^*\!M\rightarrow 1_\mathcal{C},\quad i_M^L\colon 1_\mathcal{C}\rightarrow {}^*\!M\boxtimes M .
\end{align}
By \cite[Proposition 2.10.15]{EGNO}, right (left) duals are unique up to even isomorphisms if they exist. The supercategory $\mathcal{C}$ is called rigid if every object in $\mathcal{C}$ has a right and left dual. We call an object $M$ of $\mathcal{C}$ an \emph{invertible object} if it admits a left and right dual such that the morphisms in \eqref{eq: right dual} and \eqref{eq: left dual} are isomorphisms.
\begin{lemma}\label{lem: exactness of simple currents}
For an invertible object $S$, the superfunctors
\begin{align*}
&S\boxtimes\bullet\colon \mathcal{C}\rightarrow \mathcal{C},\quad M\mapsto S\boxtimes M,\\
&\bullet\operatorname{\boxtimes} S\colon \mathcal{C}\rightarrow \mathcal{C},\quad M\mapsto  M \boxtimes S,
\end{align*}
are exact.
\end{lemma}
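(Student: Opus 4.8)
The claim is that for an invertible object $S$ in a $\C$-linear monoidal supercategory $\mathcal{C}$ with abelian underlying category, the functors $S\boxtimes(-)$ and $(-)\boxtimes S$ are exact. The plan is to exhibit an explicit two-sided inverse functor, so that $S\boxtimes(-)$ becomes an equivalence of the underlying abelian category $\underline{\mathcal{C}}$, hence in particular exact. Concretely, let $(S^*,e_S^R,i_S^R)$ and $({}^*\!S,e_S^L,i_S^L)$ be the right and left duals, whose structure morphisms are isomorphisms by hypothesis. First I would observe that the candidate inverse is $S^*\boxtimes(-)$ (equivalently ${}^*\!S\boxtimes(-)$, and one should note these agree up to natural isomorphism once $S$ is invertible, although for the proof it suffices to work with one of them).

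The key steps are as follows. First, build the natural isomorphism $S^*\boxtimes(S\boxtimes M)\xrightarrow{\simeq} M$: apply the associativity isomorphism $\mathcal{A}$ to rebracket as $(S^*\boxtimes S)\boxtimes M$, then apply $e_S^R\boxtimes\id_M$ — which is an isomorphism precisely because $S$ is invertible — landing in $1_\mathcal{C}\boxtimes M$, and finish with the left unit isomorphism $l_M$. Each of these is natural in $M$ and even, so the composite is a natural isomorphism of endofunctors $S^*\boxtimes(S\boxtimes(-))\Rightarrow \id_{\mathcal{C}}$. Symmetrically, using $i_S^R$ (or $e_S^L$, $i_S^L$ as appropriate) together with $\mathcal{A}^{-1}$ and the unit isomorphisms, construct a natural isomorphism $S\boxtimes(S^*\boxtimes(-))\Rightarrow \id_{\mathcal{C}}$. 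This exhibits $S\boxtimes(-)$ as an autoequivalence of $\underline{\mathcal{C}}$. An autoequivalence of an abelian category is exact (it preserves all limits and colimits, being an equivalence), which gives the exactness of $S\boxtimes(-)$; the argument for $(-)\boxtimes S$ is identical with the roles of left/right units and duals interchanged.

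The one point that needs mild care — and the only candidate for an obstacle — is that we are working in a \emph{supercategory}, so one must check that the relevant structure morphisms can be taken even and that the composites behave correctly under the sign convention $(-1)^{\bar{f}_2\bar{g}_1}(f_1g_1,f_2g_2)$ for composition in $\mathcal{C}\times\mathcal{C}$. Since the rigidity morphisms $e_S^R,i_S^R,e_S^L,i_S^L$ are required to be even (see \eqref{eq: right dual} and \eqref{eq: left dual}), the unit and associativity isomorphisms are even, and $\id_M$ is even, every morphism entering the construction is even; hence the sign factors are all trivial and the verifications reduce to the ordinary (non-super) rigidity computations, which are routine triangle-identity manipulations. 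Finally, one should remark that exactness is a statement about $\underline{\mathcal{C}}$ (the abelian underlying category), consistent with the convention fixed in the appendix that (sub)quotients and exactness are always interpreted in $\underline{\mathcal{C}}$. So no genuine difficulty arises; the proof is a short diagram chase producing the inverse functor.
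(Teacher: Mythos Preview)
Your proof is correct and in fact slightly cleaner than the paper's. Both arguments rest on the same key observation---that invertibility makes $e_S^R$ and $i_S^R$ isomorphisms, whence $S^*\boxtimes S\simeq 1_\mathcal{C}\simeq S\boxtimes S^*$---but they package it differently. You assemble these into natural isomorphisms $S^*\boxtimes(S\boxtimes(-))\simeq\id$ and $S\boxtimes(S^*\boxtimes(-))\simeq\id$, conclude that $S\boxtimes(-)$ is an autoequivalence of $\underline{\mathcal{C}}$, and invoke the general fact that equivalences of abelian categories are exact. The paper instead tests exactness object-by-object against $\Hom$: it uses the duality adjunction to get $\Hom_{\mathcal C}(A,M)\simeq\Hom_{\mathcal C}(S\boxtimes A,S\boxtimes M)$, transfers the left-exactness of $\Hom(A,-)$ to the sequence $0\to S\boxtimes M\to S\boxtimes N\to S\boxtimes L$, and then (having noted $S\boxtimes(-)$ is essentially surjective) substitutes $A\mapsto S^*\boxtimes A$ to conclude; right exactness is handled symmetrically with $\Hom(-,A)$. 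Your route is more conceptual and avoids the separate left/right exactness checks; the paper's route is more explicit about which Hom isomorphisms are being used and makes the role of the adjunction visible. Either is fine, and your remark about parities is well taken: since all structural morphisms involved are even, no signs intervene.
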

\proof
Since the proofs for $S\boxtimes \bullet$ and $\bullet\boxtimes S$ are similar, we prove only for $S\boxtimes\bullet$.
Note that any object lies in the image of $S\boxtimes\bullet$ since for any object $M\in Ob(\mathcal{C})$,
$$S\boxtimes(S^*\boxtimes M)\simeq (S\boxtimes S^*)\boxtimes M\simeq 1_\mathcal{C}\boxtimes M\simeq M.$$
Now, take a short exact sequence in $\mathcal{C}$
\begin{align*}
0\rightarrow M\rightarrow N\rightarrow L\rightarrow0.
\end{align*}
We show that the complex
\begin{align}\label{original exactness}
0\rightarrow S\boxtimes M\rightarrow S \boxtimes N\rightarrow S\boxtimes L\rightarrow0
\end{align}
is exact. For the left exactness of \eqref{original exactness}, it suffices to show that 
for any object $A\in Ob(\mathcal{C})$, the induced complex
\begin{align}\label{first case}
0\rightarrow \Hom_\mathcal{C}(A,S\boxtimes M)\rightarrow \Hom_\mathcal{C}(A,S\boxtimes N)\rightarrow  \Hom_\mathcal{C}(A,S\boxtimes L)
\end{align}
is exact. 
Indeed, by the left exactness of $\Hom_{\mathcal{C}}(A,\bullet)$, we have the exact sequence 
\begin{align}\label{first case; 1}
0\rightarrow \Hom_{\mathcal{C}}(A,M)\rightarrow \Hom_{\mathcal{C}}(A,N)\rightarrow \Hom_{\mathcal{C}}(A,L).
\end{align}
Then, by the functoriality of the following isomorphisms
\begin{align}\label{eq: identification of hom space}
\begin{split}
\Hom_\mathcal{C}(A,M&)\simeq \Hom_\mathcal{C}(A,1_\mathcal{C}\boxtimes M)\simeq \Hom_\mathcal{C}(A,({}^*\!S\boxtimes S)\boxtimes M)\\
&\simeq  \Hom_\mathcal{C}(A,{}^*\!S\boxtimes (S\boxtimes M))\simeq \Hom_\mathcal{C}(S\boxtimes A,S\boxtimes M),
\end{split}
\end{align}
(see \cite[Proposition 2.10.8]{EGNO}), the exactness of \eqref{first case; 1} implies that of 
$$0\rightarrow \Hom_{\mathcal{C}}(S\boxtimes A,S\boxtimes M)\rightarrow \Hom_{\mathcal{C}}(S\boxtimes A,S\boxtimes N)\rightarrow \Hom_{\mathcal{C}}(S\boxtimes A,S\boxtimes L).$$
Replacing $A$ by $S^*\boxtimes A$ in $S\boxtimes A$, we conclude that \eqref{first case} is exact.
We can prove the right exactness of \eqref{original exactness} in a similar way by showing the exactness of 
$$0\rightarrow \Hom_\mathcal{C}(S\boxtimes L,A)\rightarrow \Hom_\mathcal{C}(S\boxtimes N,A)\rightarrow \Hom_\mathcal{C}(S\boxtimes M,A)$$
for any object $A\in Ob(\mathcal{C})$ and thus we omit it. This completes the proof.
\endproof
We call a simple invertible object a \emph{simple current} following the terminology of the theory of vertex algebra.
By Lemma \ref{lem: exactness of simple currents}, a simple current exists if and only if the unit object $1_\mathcal{C}$ is simple. 
\begin{assumption}\label{assumption1}
The unit object $1_\mathcal{C}$ is simple.
\end{assumption}

\begin{definition}\label{Def:Picard}
We call the group of isomorphism classes of simple currents in $\mathcal{C}$, denoted by $\Pic(\mathcal{C})$, the Picard group of $\mathcal{C}$.
\end{definition}
A braided monoidal supercategory is a monoidal supercategory $\mathcal{C}$ equipped with a natural isomorphism of superfunctors, called the \emph{braiding},
$$\mathcal{R}_{\bullet,\bullet}\colon(\bullet\boxtimes\bullet)\xrightarrow{\simeq} ( \bullet\boxtimes \bullet)\circ \sigma,\quad (\mathcal{R}_{M,N}\colon M\boxtimes N\xrightarrow{\simeq}N\boxtimes M),$$
where $\sigma$ is the superfunctor
$$\sigma\colon \mathcal{C}\times \mathcal{C}\rightarrow \mathcal{C}\times \mathcal{C},\quad (M,N)\mapsto (N,M),$$
under which every parity-homogeneous morphism $(f,g)$ maps to $(-1)^{\bar{f}\bar{g}}(g,f)$. The natural isomorphism $\mathcal{R}_{\bullet,\bullet}$ is required to satisfy the hexagon identity. The natural isomorphism
$$\mathcal{M}_{\bullet,\bullet}:=\mathcal{R}_{\bullet,\bullet}^2\colon (\bullet\boxtimes \bullet) \xrightarrow{\simeq}  (\bullet\boxtimes \bullet),\quad (\mathcal{M}_{M,N}\colon M\boxtimes N\xrightarrow{\simeq} M\boxtimes N)$$
is called the \emph{monodromy}.
It is straightforward to check that, in a braided monoidal supercategory, the existence of a right dual implies that of a left dual and vice versa. Indeed, given a right dual $(M^*,e_M^R,i_M^R)$ of an object $M$, the triple $(M^*,e^R_M\circ \mathcal{R}_{M,M^*}^{-1}, \mathcal{R}_{M,M^*}\circ i^R_M)$ defines a left dual of $M$ and conversely, given a left dual $({}^*\!M,e_M^L,i_M^L)$, a triple $({}^*\!M,e^L_M\circ  \mathcal{R}_{M,{}^*\!M}, \mathcal{R}_{M,{}^*\!M}^{-1}\circ i^L_M)$ defines a right dual. 

One of the simplest examples of braided monoidal supercategory is the supercategory $\mathrm{SVect}_{\C}$ of vector superspaces over $\C$ of at most countable dimension equipped with the tensor product $M\boxtimes N=M\otimes_\C N$
and the braiding
\begin{align*}
R_{M,N}\colon M\otimes_\C N\rightarrow N\otimes_\C M,\quad m\otimes n\mapsto (-1)^{\bar{n}\bar{m}}n\otimes m.
\end{align*}
Obviously, the supercategory $\mathcal{C}$ is $\C$-linear and its underlying category $\underline{\mathcal{C}}$ is abelian.
Note that the supercategory $\mathrm{SVect}_{\C}$ has admits a natural parity reversing endofunctor $\Pi$, which exchanges the parity of objects.

In this paper, we make the following assumption on our monoidal supercategory $\mathcal{C}$ in consideration:
\begin{assumption}\label{assumption2}
Every object in $\mathcal{C}$ has a structure of a $\C$-vector superspace of at most countable dimension, the forgetful functor $\mathcal{C}\rightarrow \mathrm{SVect}_\C$ is a $\C$-linear exact faithful superfunctor, and there exists an involutive autofunctor $\Pi_{\mathcal{C}}$ of $\mathcal{C}$ which coincides with the parity reversing autofunctor $\Pi$ of $\mathrm{SVect}_\C$ through the forgetful functor. In addition, each $\C$-vector superspace of morphisms in $\mathcal{C}$ has a finite dimension.
\end{assumption}
Such a situation naturally appears when we consider module categories of suitable superalgebras over $\C$ like vertex operator superalgebras or Hopf superalgebras consisting of modules of at most countable dimension. 
The existence of the forgetful functor implies Diximir's lemma (or Schur's lemma) for $\mathcal{C}$.
\begin{lemma}\label{categorical Schur}
A simple object $M\in Ob(\mathcal{C})$ satisfies $\mathrm{End}_\mathcal{C}(M)_{\bar{0}}=\C \id_M$ and  $\mathrm{End}_\mathcal{C}(M)_{\bar{1}}=0$ or $\C\Pi$
where $\Pi$ is a certain linear map satisfying $\Pi^2=\mathrm{id}_M$. 
\end{lemma}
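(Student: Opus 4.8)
The plan is to prove Lemma~\ref{categorical Schur} by imitating the classical Dixmier–Schur argument, but carried out entirely through the faithful exact forgetful superfunctor $\mathcal{C}\to\mathrm{SVect}_\C$. First I would observe that since this functor is $\C$-linear, exact and faithful, a morphism $f\colon M\to M$ in $\mathcal{C}$ is an isomorphism if and only if its image in $\mathrm{SVect}_\C$ is bijective, and $\ker f = 0 = \operatorname{coker} f$ (computed in the abelian category $\underline{\mathcal{C}}$) can be detected after applying the functor. Combined with Assumption~\ref{assumption2}, each $\operatorname{End}_\mathcal{C}(M)$ is a finite-dimensional $\Z_2$-graded associative $\C$-algebra, so eigenvalues of its elements exist.

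Next I would handle the even part. Let $f\in\operatorname{End}_\mathcal{C}(M)_{\bar 0}$. Then $f$ is an even endomorphism, hence a morphism in the abelian category $\underline{\mathcal{C}}$. Because $\operatorname{End}_\mathcal{C}(M)_{\bar 0}$ is a finite-dimensional $\C$-algebra, $f$ has an eigenvalue $c\in\C$; the morphism $f-c\,\id_M\in\underline{\mathcal{C}}(M,M)$ then has nonzero kernel (this can be checked after applying the forgetful functor, using that the functor is exact and faithful and that $M$ has countable dimension so $f$ genuinely has a spectral value). Since $M$ is simple in the sense of $\underline{\mathcal{C}}$, a nonzero subobject $\ker(f-c\,\id_M)$ must be all of $M$, so $f = c\,\id_M$. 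This gives $\operatorname{End}_\mathcal{C}(M)_{\bar 0}=\C\id_M$.

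For the odd part, let $0\neq\Pi\in\operatorname{End}_\mathcal{C}(M)_{\bar 1}$. Then $\Pi^2\in\operatorname{End}_\mathcal{C}(M)_{\bar 0}=\C\id_M$, so $\Pi^2=c\,\id_M$ for some $c\in\C$. If $c=0$ then $\Pi$ is a nonzero even-degree-shifting nilpotent with $\im\Pi$ and $\ker\Pi$ proper nonzero subobjects of $M$ in $\underline{\mathcal{C}}$, contradicting simplicity; hence $c\neq 0$, and after rescaling $\Pi$ by $\sqrt{c}^{-1}$ we may assume $\Pi^2=\id_M$. Any other odd endomorphism $g$ satisfies $g\Pi^{-1}=g\Pi\in\operatorname{End}_\mathcal{C}(M)_{\bar 0}=\C\id_M$, so $g\in\C\Pi$; thus $\operatorname{End}_\mathcal{C}(M)_{\bar 1}$ is either $0$ or a one-dimensional space $\C\Pi$ with $\Pi^2=\id_M$. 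The main obstacle is the usual subtlety in infinite-dimensional Schur-type arguments: one must be careful that an element of the finite-dimensional algebra $\operatorname{End}_\mathcal{C}(M)_{\bar 0}$ really does have a genuine eigenvalue as an operator on the underlying (possibly infinite-dimensional) space $M$, so that $f-c\,\id_M$ has nonzero kernel \emph{as a morphism in $\underline{\mathcal{C}}$}; this is why Assumption~\ref{assumption2} (countable dimension, faithful exact forgetful functor, finite-dimensional Hom spaces) is exactly what makes the argument go through, and I would make this point explicit rather than invoke the classical statement as a black box.
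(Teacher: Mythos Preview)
Your proof is correct. For the even part $\operatorname{End}_\mathcal{C}(M)_{\bar 0}=\C\id_M$ you follow the same Dixmier--Schur route as the paper, which simply cites \cite{Wal} for this step. One simplification: since Assumption~\ref{assumption2} already gives finite-dimensional Hom spaces, you do not actually need the countable-dimension spectral argument; $\operatorname{End}_\mathcal{C}(M)_{\bar 0}$ is a finite-dimensional division algebra over $\C$ by Schur, hence equals $\C$.

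For the odd part your argument is genuinely more direct than the paper's. After normalizing so that $\Pi^2=\id_M$, you observe that any odd $g$ satisfies $g\Pi\in\operatorname{End}_\mathcal{C}(M)_{\bar 0}=\C\id_M$, whence $g=(g\Pi)\Pi\in\C\Pi$ in one line. The paper instead also normalizes $g$ so that $g^2=\id_M$, writes $fg=\alpha\id$, computes $\alpha^2=1$, deduces $fg=gf$, and then factors $(f+g)(f-g)=0$ to conclude $g=\pm f$. Your route avoids this detour entirely. One small point to make precise: when you rule out $\Pi^2=0$ by saying $\ker\Pi$ would be a proper nonzero subobject, remember that $\Pi$ is odd and hence not a morphism in $\underline{\mathcal{C}}$; you should invoke the parity-shift autofunctor $\Pi_\mathcal{C}$ from Assumption~\ref{assumption2} to regard $\Pi$ as an even morphism $M\to\Pi_\mathcal{C}(M)$, whose kernel is then a legitimate subobject of $M$ in $\underline{\mathcal{C}}$. (The paper sidesteps this by asserting that $f^2$ is an even isomorphism, though it does not explicitly justify $f^2\neq 0$ either; the same parity-shift trick is what makes that work.)
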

\proof
The first assertion follows from the argument in the purely even case, see e.g. \cite[\S 0.5]{Wal}. For the second one, we suppose $\End_\mathcal{C}(M)_{\bar{1}}\neq 0$ and take a nonzero morphism $f\in\End_\mathcal{C}(M)_{\bar{1}}$. We prove $\mathrm{End}_\mathcal{C}(M)_{\bar{1}}=\C f$. Since $f^2$ is an even isomorphism, we may assume that $f^2=\mathrm{id}_\mathcal{C}$ by rescaling. Then it suffices to show $g=\pm f$ for $g\in \mathrm{End}_\mathcal{C}(M)_{\bar{1}}$ such that $g^2=\mathrm{id}_\mathcal{C}$. Let $\alpha\in\C$ be the scalar such that $fg=\alpha \mathrm{id}_\mathcal{C}$. Since $\mathrm{id}_\mathcal{C}=f(fg)g=\alpha fg=\alpha^2 \id_{\mathcal{C}}$, we obtain $\alpha=\pm1$. Then, by $fg=\pm\mathrm{id}_\mathcal{C}$ and $(fg)(gf)=\id_{\mathcal{C}}$, we have $fg=gf$. Now, we have $(f+g)(f-g)=f^2-g^2=0$, which implies that $f+g=0$ or $f-g=0$. This completes the proof.   
\endproof 
In practice, the case $\mathrm{End}_\mathcal{C}(M)_{\bar{1}}=\C \Pi$ occurs when $\mathcal{C}$ is a module category of a $\C$-superalgebra $A=A_{\bar{0}}\oplus A_{\bar{1}}$ and an object  $M=M_{\bar{0}}\oplus M_{\bar{1}}$ satisfies $f\colon M_{\bar{0}}\simeq M_{\bar{1}}$ as $A_{\bar{0}}$-modules. In this case, an isomorphism $(f,f^{-1})\colon M=M_{\bar{0}}\oplus M_{\bar{1}}\simeq M_{\bar{1}}\oplus M_{\bar{0}}=M$ as $A_{\bar{0}}$-modules has odd parity and it might define an isomorphism as $A$-modules, see \cite[\S 3.1]{CW}.
\subsection{Block decomposition and Monodromy filtration by Simple currents}\label{Block decomposition and Monodromy filtration}
Let $\mathcal{C}$ be a monoidal supercategory as in \S2.1 equipped with a braided monoidal supercategory structure.
Here we study decompositions and filtrations on $\mathcal{C}$ induced by simple currents. Take a simple current $S\in Ob(\mathcal{C})$. We set 
$$S^{n}:=
\begin{cases}S\boxtimes (S\boxtimes(\cdots (S\boxtimes S)\cdots),& (n\geq1),\\
1_{\mathcal{C}},& (n=0),\\
S^*\boxtimes (S^*\boxtimes(\cdots (S^*\boxtimes S^*)\cdots),& (n\leq-1).
\end{cases}$$ 
Since simple currents are closed under taking left, (right), dual and monoidal product by \cite[Proposition 2.11.3]{EGNO}, $S^{n}$, ($n\in\Z$), are simple currents and satisfy
$$S^{n}\boxtimes S^{m}\simeq S^{n+m},\quad (S^{n})^*\simeq S^{-n}.$$
Then by \eqref{eq: identification of hom space}, we have 
$$\End_{\mathcal{C}}(M)\simeq \End_{\mathcal{C}}(S^{n}\boxtimes M),\quad (M\in Ob(\mathcal{C})).$$
\begin{lemma}\label{lem: identification of hom space}
The map 
\begin{align}\label{identification}
\End_{\mathcal{C}}(M)\rightarrow  \End_{\mathcal{C}}(S^{n}\boxtimes M),\quad f\mapsto \id_{S^{n}}\boxtimes f
\end{align}
is an isomorphism of $\C$-superalgebras.
\end{lemma}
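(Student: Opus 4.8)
The statement to prove is Lemma~\ref{lem: identification of hom space}: the map $f\mapsto \id_{S^{n}}\boxtimes f$ gives an isomorphism of $\C$-superalgebras $\End_{\mathcal{C}}(M)\xrightarrow{\sim}\End_{\mathcal{C}}(S^{n}\boxtimes M)$. The plan is to separate the two claims---that the map is an isomorphism of $\C$-vector superspaces, and that it is multiplicative (hence an algebra map)---since the first is already essentially recorded just above via \eqref{eq: identification of hom space} and the second is the genuinely new content.

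First I would establish bijectivity. Since $S$ is a simple current, $S^{n}$ is an invertible object with left dual $S^{-n}$, so by the functoriality of the chain of isomorphisms \eqref{eq: identification of hom space} (with $A=M$ there, and reading it with $S^{n}$ in place of $S$) we have $\Hom_{\mathcal{C}}(M,M)\simeq \Hom_{\mathcal{C}}(S^{n}\boxtimes M, S^{n}\boxtimes M)$. One must check that under this identification the induced map on $\Hom_{\mathcal{C}}(M,M)$ is precisely $f\mapsto \id_{S^{n}}\boxtimes f$; this is the standard statement that the functor $S^{n}\boxtimes(\cdot)$ is fully faithful, which follows from the existence of an adjoint $S^{-n}\boxtimes(\cdot)$ with invertible unit and counit (built from $i^{L}_{S^{n}}$, $e^{L}_{S^{n}}$ and the associativity/unit isomorphisms). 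Both directions---$S^{n}\boxtimes(\cdot)$ and $S^{-n}\boxtimes(\cdot)$---are mutually quasi-inverse on Hom-spaces, giving the bijection. Parity is preserved because all the structural isomorphisms $\mathcal{A}$, $l$, $r$ and the (co)evaluation morphisms of the dual are even, so $\id_{S^{n}}\boxtimes f$ has the same parity as $f$; hence the map is a morphism of vector superspaces.

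Next I would verify multiplicativity. For $f,g\in\End_{\mathcal{C}}(M)$ parity-homogeneous, functoriality of the monoidal product superfunctor $\boxtimes$ gives $\id_{S^{n}}\boxtimes(g\circ f) = (\id_{S^{n}}\circ\id_{S^{n}})\boxtimes(g\circ f)$, and by the interchange law for $\boxtimes$ in a monoidal supercategory this equals $(-1)^{\bar{0}\cdot\bar{g}}(\id_{S^{n}}\boxtimes g)\circ(\id_{S^{n}}\boxtimes f)$; since $\id_{S^{n}}$ is even the sign is trivial and we get $(\id_{S^{n}}\boxtimes g)\circ(\id_{S^{n}}\boxtimes f)$. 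Thus the map is multiplicative, and it sends $\id_{M}$ to $\id_{S^{n}\boxtimes M}$, so it is a unital $\C$-superalgebra homomorphism; combined with bijectivity it is an isomorphism.

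The only point requiring genuine care---the ``hard part''---is pinning down that the abstract isomorphism of $\Hom$-spaces coming from \eqref{eq: identification of hom space} is literally the concrete map $f\mapsto\id_{S^{n}}\boxtimes f$ rather than merely some isomorphism; this amounts to a diagram chase through the adjunction $(S^{n}\boxtimes(\cdot),\,S^{-n}\boxtimes(\cdot))$ using the triangle identities, together with the fact that an invertible object has invertible unit/counit for this adjunction. Once that naturality is in hand, everything else is a routine consequence of the axioms of a $\C$-linear monoidal supercategory recorded in \S\ref{BTC and Simple currents}.
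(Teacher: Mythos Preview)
Your proposal is correct and follows essentially the same idea as the paper: both use that $S^{-n}\boxtimes(\cdot)$ inverts $S^{n}\boxtimes(\cdot)$. The paper is simply more direct---it writes down the candidate inverse $g\mapsto \id_{S^{-n}}\boxtimes g$ and identifies $\End_{\mathcal{C}}(S^{-n}\boxtimes(S^{n}\boxtimes M))$ with $\End_{\mathcal{C}}(M)$ via the associativity and unit isomorphisms, declaring the verification ``straightforward''---whereas you route through the adjunction/equivalence language and then worry about matching the abstract isomorphism with the concrete map; your ``hard part'' is exactly what the paper's explicit inverse computation handles, and the multiplicativity you check via the interchange law the paper leaves implicit in the functoriality of $\boxtimes$.
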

\proof
It is straightforward to show that the map
\begin{align*}
\End_{\mathcal{C}}(S^{n}\boxtimes M)\rightarrow \End_{\mathcal{C}}\left(S^{-n}\boxtimes(S^{n}\boxtimes M)\right),\quad f\mapsto
\id_{S^{-n}}\boxtimes (\id_{S^{n}}\boxtimes f)
\end{align*}
is an inverse of \eqref{identification} under the natural isomorphisms
\begin{align*}
\begin{split}
\End_{\mathcal{C}}\left(S^{-n}\boxtimes(S^{n}\boxtimes M)\right)&\simeq
\End_{\mathcal{C}}\left((S^{-n}\boxtimes S^{n})\boxtimes M\right)\\
&\simeq\End_\mathcal{C}(1_\mathcal{C}\boxtimes M)\simeq \End_{\mathcal{C}}(M).
\end{split}
\end{align*}
\endproof
By Lemma \ref{lem: identification of hom space}, the monodromy 
$$\mathcal{M}_{S^{n},M}=\mathcal{R}_{M,S^{n}}\circ\mathcal{R}_{S^{n},M}\in \End_{\mathcal{C}}(S^{n}\boxtimes M)$$
defines a unique element $m_S(n)\in \End_{\mathcal{C}}(M)^{\bar{0}}$ satisfying 
$$\mathcal{M}_{S^{n},M}=\id_{S^{n}}\boxtimes\ m_S(n).$$
\begin{proposition}\label{eq: one-parameter subgroup}
The endomorphism $m_S(n)$ is invertible and moreover
\begin{align*}
m_S(n)=m_S(1)^n\quad (n\in \Z).
\end{align*}
\end{proposition}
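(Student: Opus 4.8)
The statement to prove is that $m_S(n)$ is invertible with $m_S(n)=m_S(1)^n$ for all $n\in\Z$. The plan is to use the multiplicativity of the monodromy in a braided monoidal supercategory together with the identification of endomorphism algebras from Lemma~\ref{lem: identification of hom space}. First I would recall that for objects $A,B$ and $M$ in a braided monoidal (super)category, the double braiding satisfies the hexagon-derived identity
$$\mathcal{M}_{A\boxtimes B,M}=(\id_A\boxtimes \mathcal{M}_{B,M})\circ(\mathcal{M}_{A,M}\boxtimes\id_B),$$
modulo the associativity isomorphism $\mathcal{A}$; this is the standard fact that $\mathcal{M}_{-,M}$ is a ``tensor-multiplicative'' natural transformation. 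Applying this with $A=S^{n-1}$, $B=S$ (or $A=S$, $B=S^{n-1}$, using the symmetry from the two hexagons) and the isomorphism $S^{n-1}\boxtimes S\simeq S^n$, one gets $\mathcal{M}_{S^n,M}$ expressed in terms of $\mathcal{M}_{S^{n-1},M}$ and $\mathcal{M}_{S,M}$.

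The key steps, in order, are: (1) State and use the hexagon-coherence identity for double braidings to write $\mathcal{M}_{S^n,M}$ in terms of $\mathcal{M}_{S^{n-1},M}$ and $\mathcal{M}_{S,M}$ under the canonical isomorphism $\phi\colon S^{n}\xrightarrow{\simeq}S^{n-1}\boxtimes S$. (2) Transport this identity along the algebra isomorphism of Lemma~\ref{lem: identification of hom space}, $\End_\mathcal{C}(M)\simeq\End_\mathcal{C}(S^n\boxtimes M)$, which sends $m_S(n)$ to $\mathcal{M}_{S^n,M}$; since this is a \emph{ring} isomorphism, the composition on the endomorphism side corresponds to the product $m_S(n-1)\,m_S(1)$ in $\End_\mathcal{C}(M)$. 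This yields the recursion $m_S(n)=m_S(n-1)\,m_S(1)$ for $n\geq 1$, hence $m_S(n)=m_S(1)^n$ for $n\geq 0$ by induction, with the base case $m_S(0)=\id_M$ coming from $\mathcal{M}_{1_\mathcal{C},M}=\id_M$ (the unit object has trivial monodromy). (3) For negative $n$, use $(S^{n})^*\simeq S^{-n}$ together with naturality of the braiding with respect to the rigidity morphisms $e^R,i^R$ to show $m_S(-1)=m_S(1)^{-1}$, equivalently $\mathcal{M}_{S^{*},M}$ is inverse to $\mathcal{M}_{S,M}$ under the identification; this simultaneously establishes invertibility of $m_S(n)$ for all $n$. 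Then the recursion from step~(2), now valid for all $n\in\Z$, gives $m_S(n)=m_S(1)^n$.

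The main obstacle is bookkeeping the associativity and unit coherence isomorphisms carefully: the identities $S^{n-1}\boxtimes S\simeq S^n$, $S^{-n}\boxtimes S^n\simeq 1_\mathcal{C}$, and the compatibility of $\mathcal{M}$ with these are only defined up to the structure isomorphisms $\mathcal{A}$, $l$, $r$, so one must check that conjugating by them does not disturb the computation inside $\End_\mathcal{C}(M)$ after applying the isomorphism of Lemma~\ref{lem: identification of hom space}. Concretely, one should verify that the element of $\End_\mathcal{C}(M)$ extracted from $\mathcal{M}_{S^n,M}$ is independent of the choice of bracketing of $S^n$, which follows from the coherence theorem (Mac~Lane) for braided monoidal supercategories applied to the structural isomorphisms; since all these structure maps are even isomorphisms, the super-signs play no role in this particular argument. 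Once coherence is invoked, the computation reduces to the purely formal manipulation sketched above. The invertibility assertion is then immediate: $m_S(1)$ has inverse $m_S(-1)$, so $m_S(n)=m_S(1)^n$ is invertible with inverse $m_S(1)^{-n}=m_S(-n)$.
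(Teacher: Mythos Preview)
Your proposal is correct and follows essentially the same route as the paper's proof: both use the hexagon-derived multiplicativity of the monodromy to set up an induction for $n\geq 1$, and handle negative $n$ via the dual $S^*$ together with $\mathcal{M}_{1_\mathcal{C},M}=\id$. The paper carries this out as an explicit diagram chase with the associators written in, whereas you package it as the standard fact that $\mathcal{M}_{-,M}$ is tensor-multiplicative and then invoke Lemma~\ref{lem: identification of hom space}; these are the same computation.
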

\proof
We show the assertion for $n\geq1$ by induction on $n$. By the hexagon identity, the following diagram commutes.
\begin{equation*}
\SelectTips{cm}{}
\xymatrix@W10pt@H12pt@R10pt@C40pt{
(S\boxtimes S^{n-1})\boxtimes M\ar[r]^{\mathcal{R}_{S^{n},M}}\ar[d]_{\simeq}& M\boxtimes (S\boxtimes S^{n-1})\ar@/^18pt/[ddd]^\simeq\ar[r]^{\mathcal{R}_{M,S^{n}}}& (S\boxtimes S^{n-1})\boxtimes M\\
S\boxtimes(S^{n-1}\boxtimes M)\ar[d]_{\id\oboxtimes \mathcal{R}}&&S\boxtimes (S^{n-1}\boxtimes M)\ar[u]_\simeq\\
S\boxtimes(M\boxtimes S^{n-1})\ar[d]_{\simeq}&&S\boxtimes (M\boxtimes S^{n-1})\ar[u]_{\id\oboxtimes \mathcal{R}}\\
(S\boxtimes M)\boxtimes S^{n-1}\ar[r]_{\mathcal{R}\oboxtimes \id}&(M\boxtimes S)\boxtimes S^{n-1}\ar[r]_{\mathcal{R}\oboxtimes \id}\ar@/^18pt/[uuu]^\simeq&(S\boxtimes M)\boxtimes S^{n-1}\ar[u]_\simeq.
}
\end{equation*}
Here all the isomorphisms without symbols are associativity isomorphisms. Then the morphisms in the bottom are composed as 
\begin{align*}
(\mathcal{R}_{M,S}\boxtimes \id_{S^{n-1}})\circ (\mathcal{R}_{S,M}\boxtimes \id_{S^{n-1}})
&=\mathcal{M}_{S,M}\boxtimes \id_{S^{n-1}}\\
&=(\id_S\oboxtimes m_S(1))\boxtimes \id_{S^{n-1}}.
\end{align*}
Now, we may use the naturality of the associativity $\mathcal{A}_{\bullet,\bullet,\bullet}$ and the braiding $\mathcal{R}_{\bullet,\bullet}$ to conclude $\mathcal{M}_{S^{n},M}=\id_{S^{n}}\oboxtimes m_S(1)^n$ since
\begin{align*}
\mathcal{M}_{S^{n},M}
&=\mathcal{R}_{M,S^{n}}\circ \mathcal{R}_{S^{n},M}\\
&=\mathcal{A}_{S,S^{n-1},M}\circ (\id_{S}\oboxtimes \mathcal{R}_{S^{n-1},M})\circ \mathcal{A}_{S,M,S^{n-1}}^{-1}\\
&\hspace{1cm} \circ(\mathcal{M}_{S,M}\boxtimes \id_{S^{n-1}})\circ \mathcal{A}_{S,M,S^{n-1}}\circ
(\id_S\oboxtimes \mathcal{R}_{S^{n-1},M})\circ \mathcal{A}^{-1}_{S,S^{n-1},M}\\
&=(\id_{S^{n}}\oboxtimes m_S(1)) \circ\mathcal{A}_{S,S^{n-1},M}\circ (\id_{S}\oboxtimes \mathcal{R}_{S^{n-1},M})\circ \mathcal{A}_{S,M,S^{n-1}}^{-1}\\
&\hspace{2cm} \circ\mathcal{A}_{S,M,S^{n-1}}\circ
(\id_S\oboxtimes \mathcal{R}_{S^{n-1},M})\circ \mathcal{A}^{-1}_{S,S^{n-1},M}\\
&=(\id_{S^{n}}\oboxtimes m_S(1))\circ
\mathcal{A}_{S,S^{n-1},M}\circ (\id_{S}\oboxtimes \mathcal{M}_{S^{n-1},M})\circ \mathcal{A}^{-1}_{S,S^{n-1},M}\\
&=(\id_{S^{n}}\oboxtimes m_S(1))\circ
\mathcal{A}_{S,S^{n-1},M}\circ (\id_{S}\oboxtimes (\id_{S^{n-1}}\oboxtimes m_S(1)^{n-1}))\circ \mathcal{A}^{-1}_{S,S^{n-1},M}\\
&=\id_{S^{n}}\oboxtimes m_S(1)^n.
\end{align*}
Similarly, we can show $m_S(-n)=m_S(-1)^n$ for $n\geq1$. Thus it remains to prove
$$m_S(1)m_S(-1)=\id_{M}.$$
For this, note that by using the same argument as above, we obtain that 
$$\mathcal{M}_{S\boxtimes S^*,M}=\id_{S\boxtimes S^*}\boxtimes (m_S(1)m_S(-1)).$$
Then $S\boxtimes S^*\simeq 1_\mathcal{C}$ and $\mathcal{M}_{1_\mathcal{C},M}=\id_{1_\mathcal{C}\boxtimes M}$ (see \cite[Proposition XIII. 1.2]{Kassel}) implies the assertion. This completes the proof.
\endproof
\begin{proposition}\label{decomposition of object}
An object $M\in Ob(\mathcal{C})$ admits the generalized eigenspace decomposition of $m_S(1)$
$$M=\bigoplus_{\alpha\in \C^\times}M_\alpha,$$
where
$$M_\alpha=\bigcup_{n\in \Z_{\geq0}}M_\alpha[n],\quad M_\alpha[n]:=\Ker  (m_S(1)-\alpha)^n.$$ 
\end{proposition}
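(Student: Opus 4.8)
The plan is to reduce the statement to the primary (generalized eigenspace) decomposition of a single operator on a finite dimensional algebra, and then to transport the resulting idempotent decomposition back into $\mathcal{C}$ using that $\underline{\mathcal{C}}$ is abelian. First I would note that, by Assumption \ref{assumption2}, the $\C$-algebra $\End_{\mathcal{C}}(M)$ is finite dimensional, so the even endomorphism $m_S(1)\in\End_{\mathcal{C}}(M)^{\bar 0}$ (well defined by Lemma \ref{lem: identification of hom space}) is annihilated by some nonzero polynomial; let $p(x)\in\C[x]$ be its minimal polynomial. Since $m_S(1)$ is invertible by Proposition \ref{eq: one-parameter subgroup}, we have $p(0)\neq0$, hence $p(x)=\prod_{i=1}^{\ell}(x-\alpha_i)^{n_i}$ with pairwise distinct $\alpha_i\in\C^{\times}$. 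Applying the Chinese remainder theorem to the coprime factors $(x-\alpha_i)^{n_i}$ produces polynomials $e_i(x)\in\C[x]$ with $e_ie_j\equiv\delta_{ij}e_i$ and $\sum_i e_i\equiv1$ modulo $p(x)$, and with $(x-\alpha_i)^{n_i}e_i(x)\equiv0\bmod p(x)$. Evaluating at $m_S(1)$ then gives a complete family $\{e_i(m_S(1))\}_{i=1}^{\ell}$ of mutually orthogonal even idempotents of $\End_{\mathcal{C}}(M)$ summing to $\id_M$.

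Next, since $\underline{\mathcal{C}}$ is abelian, these idempotents split, so I would set $M_{\alpha_i}:=\im\bigl(e_i(m_S(1))\bigr)$, an object of $\mathcal{C}$ with an even inclusion into $M$, and check that the orthogonality relations assemble $M$ into the biproduct $\bigoplus_{i=1}^{\ell}M_{\alpha_i}$ in $\mathcal{C}$ (the splitting being in $\underline{\mathcal{C}}$ because all $e_i(m_S(1))$ are even). On $M_{\alpha_i}$ one has $(m_S(1)-\alpha_i)^{n_i}=0$, so $M_{\alpha_i}=\Ker(m_S(1)-\alpha_i)^{n_i}=M_{\alpha_i}[n_i]$; conversely, any $v$ annihilated by a power of $m_S(1)-\alpha_i$ lies in $M_{\alpha_i}$ since $m_S(1)-\alpha_i$ is invertible on $\im e_j(m_S(1))$ for $j\neq i$ (its annihilating polynomial $(x-\alpha_j)^{n_j}$ is coprime to $x-\alpha_i$), whence $M_{\alpha_i}=\bigcup_{n\geq0}M_{\alpha_i}[n]$, the chain in fact stabilizing at $n=n_i$. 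Finally, for $\alpha\notin\{\alpha_1,\dots,\alpha_{\ell}\}$ the element $x-\alpha$ is coprime to $p(x)$, so $m_S(1)-\alpha$ is invertible in $\End_{\mathcal{C}}(M)$ and $M_{\alpha}=0$. All kernels and images here are taken in $\underline{\mathcal{C}}$, and they coincide with the set-theoretic ones via the exact faithful forgetful functor to $\mathrm{SVect}_{\C}$ of Assumption \ref{assumption2}.

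The arithmetic is entirely routine once the finite dimensionality of $\End_{\mathcal{C}}(M)$ is in hand, and I do not expect a genuine obstacle; the only point deserving care is the passage from the algebraic idempotent decomposition to an honest internal direct sum of subobjects of $\mathcal{C}$ compatible with the superstructure, which is handled by observing that the $e_i(m_S(1))$ are even and that abelian categories are idempotent complete.
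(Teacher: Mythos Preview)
Your proof is correct and follows essentially the same approach as the paper: both exploit the finite dimensionality of $\End_{\mathcal{C}}(M)$ from Assumption~\ref{assumption2} to extract a finite set of generalized eigenvalues and a corresponding decomposition $\id_M=\sum_\alpha \pi_\alpha$ into orthogonal even idempotents, then push this back to a direct-sum decomposition of $M$. Your version is slightly more explicit (minimal polynomial and Chinese remainder) and more careful about the categorical bookkeeping, but the underlying idea is the same.
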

\proof
It is immediate that we have 
\begin{align}\label{eq: decomposition}
M\supset \sum_{\alpha\in \C^\times}M_\alpha=\bigoplus_{\alpha\in \C^\times}M_\alpha
\end{align}
and that $\{M_\alpha[n]\}_{n\in \Z_{\geq0}}$ defines a filtration $M_\alpha[p]\subset M_\alpha[q]$, ($p<q$). Thus it remains to prove the equality of \eqref{eq: decomposition}. For this, we use the multiplication of $m_S(1)$ on $\End_\mathcal{C}(M)$. Since the $\C$-vector superspace $\End_\mathcal{C}(M)$ is finite dimensional by Assumption \ref{assumption2}, the multiplication of $m_S(1)$ gives a generalized eigenspace decomposition 
\begin{align*}
&\End_\mathcal{C}(M)=\bigoplus_{\alpha \in \C^\times} \End_\mathcal{C}(M)_\alpha,\\
&\End_\mathcal{C}(M)_\alpha:=\left\{f\in  \End_\mathcal{C}(M)_\alpha\mid (m_S(1)-\alpha)^nf=0,\quad (\forall n\gg0)\right\}. 
\end{align*}
This gives the decomposition $\id_M=\sum_{\alpha}\pi_\alpha$. Thus we obtain 
$$M=\id_M M=\sum_{\alpha\in\C^\times}\pi_\alpha M\subset\sum_{\alpha\in \C^\times}M_\alpha.$$
This completes the proof.
\endproof 
We introduce full subcategories $\mathcal{C}_\alpha[n]\subset \mathcal{C}$, ($\alpha\in \C^\times, n\in \Z_{\geq0}$) defined by 
$$\mathcal{C}_{\alpha}[n]:=\{M\in Ob(\mathcal{C})\mid (m_S(1)-\alpha)^{n+1}\id_M=0\}.$$
For a fixed $\alpha\in \C^\times$, they give a filtration
\begin{align}\label{monodromy filtration}
\mathcal{C}_\alpha[0]\subset \mathcal{C}_\alpha[1]\subset \cdots\subset \mathcal{C}_\alpha:=\bigcup_{n \geq 0}\mathcal{C}_\alpha[n].
\end{align}
Then by Proposition \ref{decomposition of object}, we have the following block decomposition of $\mathcal{C}$:
\begin{align}\label{monodromy decomposition}
\mathcal{C}=\bigoplus_{\alpha\in \C^\times}\mathcal{C}_\alpha.
\end{align}
We call it the \emph{monodromy decomposition} of $\mathcal{C}$ by $S$ and \eqref{monodromy filtration} the \emph{monodromy filtration} of $\mathcal{C}$ by $S$.
\begin{proposition}\label{decomposition by a single current}
(i) Any object $M$ in $\mathcal{C}_\alpha[m]$ is an extension 
$$0\rightarrow M_1\rightarrow M\rightarrow M_2\rightarrow 0$$
for some $M_1\in Ob(\mathcal{C}_\alpha[0])$ and $ M_2\in Ob(\mathcal{C}_\alpha[m-1])$.\\
(ii) We have $$\boxtimes\colon \mathcal{C}_\alpha[m]\times \mathcal{C}_\beta[n]\rightarrow \mathcal{C}_{\alpha\beta}[m+n].$$
\end{proposition}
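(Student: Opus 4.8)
\textbf{Proof proposal for Proposition \ref{decomposition by a single current}.}

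The plan is to deduce both statements directly from the basic structural facts about the endomorphism $m_S(1)$ established in Propositions \ref{eq: one-parameter subgroup} and \ref{decomposition of object}, together with the naturality of monodromy. For part (i), let $M$ be an object of $\mathcal{C}_\alpha[m]$, so that $(m_S(1)-\alpha)^{m+1}\id_M=0$ but, working inside the abelian category $\underline{\mathcal{C}}$, we set $M_1=\Ker\big((m_S(1)-\alpha)\id_M\big)$ and $M_2=\coker\big(M_1\hookrightarrow M\big)$. First I would check that $M_1$ is a subobject in $\mathcal{C}$: since $m_S(1)\in\End_\mathcal{C}(M)_{\bar 0}$, its kernel is an object of $\underline{\mathcal{C}}$, and the inclusion is a morphism in $\mathcal{C}$, so $M_1\in Ob(\mathcal{C}_\alpha[0])$ by construction. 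Then $m_S(1)$ induces an endomorphism of $M_2$ (naturality of $m_S(1)$ under the quotient map, which is just functoriality of the assignment $N\mapsto m_S(1)_N$ along the epimorphism $M\twoheadrightarrow M_2$), and one checks that $(m_S(1)-\alpha)^{m}$ annihilates $M_2$: indeed $(m_S(1)-\alpha)^{m}\id_M$ factors through $M_1$ because $(m_S(1)-\alpha)\cdot(m_S(1)-\alpha)^{m}\id_M=0$, hence its image lies in $M_1=\Ker(m_S(1)-\alpha)$, and therefore the composite $M\twoheadrightarrow M_2$ kills it. Thus $M_2\in Ob(\mathcal{C}_{\alpha}[m-1])$ and the short exact sequence $0\to M_1\to M\to M_2\to 0$ is the desired extension. (One must be a little careful about the indexing convention $\mathcal{C}_\alpha[n]=\{M\mid (m_S(1)-\alpha)^{n+1}\id_M=0\}$, so that ``$M\in\mathcal C_\alpha[m]$'' means nilpotence exponent $\le m+1$; with this convention the computation above gives exponent $\le m$ for $M_2$, i.e.\ $M_2\in\mathcal C_\alpha[m-1]$, as claimed.)

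For part (ii) the key input is the compatibility of the monodromy with the tensor product, which follows from the hexagon axioms in the same spirit as the computation in the proof of Proposition \ref{eq: one-parameter subgroup}. Concretely, for objects $M\in Ob(\mathcal{C}_\alpha[m])$ and $N\in Ob(\mathcal{C}_\beta[n])$ I would show that the endomorphism $m_S(1)_{M\boxtimes N}\in\End_\mathcal{C}(M\boxtimes N)$ is identified, under $\End_\mathcal{C}(M\boxtimes N)\simeq \End_\mathcal{C}\big(S\boxtimes(M\boxtimes N)\big)$, with the product $m_S(1)_M\boxtimes m_S(1)_N$ — more precisely, that there is an equality $m_S(1)_{M\boxtimes N}=\big(m_S(1)_M\big)\boxtimes\big(m_S(1)_N\big)$ after transporting everything through associativity isomorphisms, using $\mathcal{M}_{S,M\boxtimes N}=(\id_S\boxtimes m_S(1)_M)\boxtimes\cdots$ expanded via the hexagon. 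Granting this, since $m_S(1)_M$ and $m_S(1)_N$ are commuting (scalar-extended) operators acting on $M$ and $N$ with $(m_S(1)_M-\alpha)^{m+1}=0$ and $(m_S(1)_N-\beta)^{n+1}=0$, a standard nilpotency estimate for the product of a generalized $\alpha$-eigen-operator and a generalized $\beta$-eigen-operator gives $(m_S(1)_{M\boxtimes N}-\alpha\beta)^{m+n+1}\id_{M\boxtimes N}=0$, which is exactly $M\boxtimes N\in Ob(\mathcal{C}_{\alpha\beta}[m+n])$.

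The main obstacle, and the step deserving the most care, is establishing the tensor-multiplicativity $m_S(1)_{M\boxtimes N}=m_S(1)_M\boxtimes m_S(1)_N$ (up to coherence isomorphisms). This is a diagram chase through several applications of the hexagon identity and naturality of $\mathcal{R}_{\bullet,\bullet}$ and $\mathcal{A}_{\bullet,\bullet,\bullet}$, entirely parallel to — but slightly more involved than — the induction already carried out in the proof of Proposition \ref{eq: one-parameter subgroup}; the sign bookkeeping in the supercategory setting must also be tracked, although since $m_S(1)$ is even the signs will not ultimately obstruct anything. Everything else is routine linear algebra in $\underline{\mathcal{C}}$ together with the finite-dimensionality of $\Hom$ spaces from Assumption \ref{assumption2}.
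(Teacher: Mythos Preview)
Your proposal is correct and follows essentially the same route as the paper. For (i) the paper simply says it is ``immediate from the definition of $\mathcal{C}_\alpha[m]$,'' which is exactly the kernel/cokernel construction you spell out; for (ii) the paper likewise first establishes $m_{S,M\boxtimes N}(1)=m_{S,M}(1)\boxtimes m_{S,N}(1)$ by invoking the same hexagon diagram from the proof of Proposition~\ref{eq: one-parameter subgroup}, and then makes your ``standard nilpotency estimate'' explicit via the binomial expansion
\[
(m_{S,M}(1)\boxtimes m_{S,N}(1)-\alpha\beta)^{m+n+1}=\bigl((m_{S,M}(1)-\alpha)\boxtimes m_{S,N}(1)+\alpha\,\id_M\boxtimes(m_{S,N}(1)-\beta)\bigr)^{m+n+1}=0.
\]
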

\proof
(i) is immediate from the definition of $\mathcal{C}_\alpha[m]$. We prove (ii). Take $M\in Ob(\mathcal{C}_{\alpha}[m])$ and $N\in Ob(\mathcal{C}_{\beta}[n])$. Let us write the monodromy operator $m_S(1)$ for $M$ as $m_{S,M}(1)$ for clarity. By using the same diagram in the proof of Proposition \ref{eq: one-parameter subgroup}, we obtain
$$\mathcal{M}_{S,M\boxtimes N}=\id_S\boxtimes(m_{S,M}(1)\boxtimes m_{S,N}(1)),$$ 
and thus $m_{S,M\boxtimes N}(1)=m_{S,M}(1)\boxtimes m_{S,N}(1)$.
Now the assertion holds since
\begin{align*}
&(m_{S,M\boxtimes N}(1)-\alpha \beta)^{m+n+1}\\
&=(m_{S,M}(1)\boxtimes m_{S,N}(1)-\alpha \beta)^{m+n+1}\\
&=\left((m_{S,M}(1)-\alpha)\boxtimes m_{S,N}(1)+\alpha \id_{M}\boxtimes (m_{S,N}(1)-\beta)\right)^{m+n+1}\\
&=\sum_{k=0}^{m+n+1}\binom{m+n+1}{k}\alpha^k(m_{S,M}(1)-\alpha)^{m+n+1-k}\boxtimes m_{S,N}(1)^{m+n+1-k}(m_{S,N}(1)-\beta)^k\\
&=0.
\end{align*}
\begin{corollary}
The full subcategory 
$$\mathcal{C}[0]:=\bigoplus_{\alpha\in \C^\times}\mathcal{C}_\alpha[0]$$
is a braided monoidal supercategory.
\end{corollary}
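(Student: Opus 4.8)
The plan is to verify that $\mathcal{C}[0] = \bigoplus_{\alpha\in\C^\times}\mathcal{C}_\alpha[0]$ satisfies the three structural requirements: it is closed under the monoidal product $\boxtimes$, it contains the unit object $1_\mathcal{C}$, and the braiding $\mathcal{R}_{\bullet,\bullet}$ restricts to it. Closure under $\boxtimes$ is essentially immediate from Proposition~\ref{decomposition by a single current}(ii): for $M\in Ob(\mathcal{C}_\alpha[0])$ and $N\in Ob(\mathcal{C}_\beta[0])$ we have $M\boxtimes N\in Ob(\mathcal{C}_{\alpha\beta}[0])$, and $\mathcal{C}_{\alpha\beta}[0]\subset \mathcal{C}[0]$; additivity then extends this to arbitrary objects of $\mathcal{C}[0]$, which are finite direct sums of objects lying in the various $\mathcal{C}_\alpha[0]$. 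That the unit lies in $\mathcal{C}[0]$ follows from $\mathcal{M}_{1_\mathcal{C},M}=\id_{1_\mathcal{C}\boxtimes M}$, which gives $m_S(1)\id_{1_\mathcal{C}} = \id_{1_\mathcal{C}}$ (using the defining relation $\mathcal{M}_{S,1_\mathcal{C}}=\id_S\boxtimes m_{S,1_\mathcal{C}}(1)$ together with $\mathcal{M}_{S,1_\mathcal{C}}=\id_{S}$ up to the unit isomorphisms), so $1_\mathcal{C}\in Ob(\mathcal{C}_1[0])\subset \mathcal{C}[0]$; in particular $1_\mathcal{C}$ is simple by Assumption~\ref{assumption1}, so $\mathcal{C}[0]$ inherits a well-defined Picard group.

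Next I would check that all the structural isomorphisms of the braided monoidal supercategory restrict. The point is that $\mathcal{C}[0]$ is a \emph{full} subcategory, so once we know it is closed under $\boxtimes$, the associativity isomorphisms $\mathcal{A}_{M,N,L}$, the unit isomorphisms $l_M, r_M$, and the braiding $\mathcal{R}_{M,N}$ — all of which are morphisms in $\mathcal{C}$ between objects that now lie in $\mathcal{C}[0]$ — are automatically morphisms in $\mathcal{C}[0]$, and the pentagon, triangle, and hexagon axioms hold because they held in $\mathcal{C}$. The only genuine thing to confirm is that the target of the braiding $\mathcal{R}_{M,N}\colon M\boxtimes N\to N\boxtimes M$ again lands in $\mathcal{C}[0]$: since $M\boxtimes N\in Ob(\mathcal{C}_{\alpha\beta}[0])$ and $N\boxtimes M\in Ob(\mathcal{C}_{\beta\alpha}[0])=Ob(\mathcal{C}_{\alpha\beta}[0])$, this is fine. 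One should also note the compatibility of the involutive autofunctor $\Pi_\mathcal{C}$ with the decomposition, which is clear because $\Pi_\mathcal{C}$ commutes with $m_S(1)$ (it is a functor, so it sends the endomorphism $m_S(1)\id_M$ to $m_S(1)\id_{\Pi_\mathcal{C}M}$).

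I do not expect a serious obstacle here; the entire statement is a formal consequence of Proposition~\ref{decomposition by a single current}(ii) plus the observation that fullness makes all coherence data descend automatically. If there is a subtle point, it is the verification that $1_\mathcal{C}\in Ob(\mathcal{C}_1[0])$ precisely (rather than in some $\mathcal{C}_\alpha[0]$ with $\alpha\neq 1$), which requires tracking the unit isomorphisms carefully when identifying $\mathcal{M}_{S,1_\mathcal{C}}$ with $\id_S$; this is exactly the content of \cite[Proposition XIII.1.2]{Kassel} already invoked in the proof of Proposition~\ref{eq: one-parameter subgroup}. So the proof amounts to assembling these pieces: closure under $\boxtimes$ from Proposition~\ref{decomposition by a single current}(ii), the unit object from the triviality of $\mathcal{M}_{S,1_\mathcal{C}}$, and the restriction of $\mathcal{A}$, $l$, $r$, $\mathcal{R}$ from fullness.
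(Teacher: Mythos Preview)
Your proposal is correct and takes essentially the same approach as the paper: the corollary is stated without proof immediately after Proposition~\ref{decomposition by a single current}, so it is meant to follow directly from part (ii) with $m=n=0$, together with the fullness of $\mathcal{C}[0]$ forcing all structural isomorphisms to restrict. Your write-up simply makes explicit what the paper leaves implicit.
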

\begin{remark}
If the simple current $S$ is of finite order $S^n\simeq 1_{\mathcal{C}}$, then the decomposition \eqref{monodromy decomposition} holds without Assumption \ref{assumption2}. Indeed, by the complete reducibility of representations of finite abelian groups, $m_S(1)^n=1$ implies the block decomposition 
$$\mathcal{C}=\mathcal{C}[0]=\bigoplus_{\alpha \in \Z/n\Z}\mathcal{C}_\alpha[0],$$
where $\Z_n \hookrightarrow \C^\times$, $[1]\mapsto e^{2\pi i/n}$ (cf. \cite[Lemma 3.17]{CKL}).
\end{remark}

Next, we generalize \eqref{monodromy decomposition} to a simultaneous decomposition by simple currents $\{S_g\}_{g\in G}$ parametrized by a group $G$, i.e.,
$S_g\boxtimes S_h\simeq S_{gh}$, ($g,h\in G$).
Since $\mathcal{C}$ is braided, we have
$$S_{gh}\simeq S_{g}\boxtimes S_h\xrightarrow[\mathcal{R}_{S_g,S_h}]{\simeq}S_h\boxtimes S_g\simeq S_{hg},$$ 
and $G$ is necessarily abelian. As the functoriality of $\mathcal{M}_{\bullet,\bullet}$ implies that each monodromy operator $m_{S_g}(1)$ on $M\in Ob(\mathcal{C})$ lies in the center of $\End_{\mathcal{C}}(M)$, the abelian group $G$ acts on $M$ by $g\mapsto m_{S_g}(1)$, ($g\in G$). 
We write $G^\vee:=\Hom_{\operatorname{Grp}}(G,\C^\times)$ for the dual of $G$ and introduce full subcategories $\mathcal{C}_\phi\subset \mathcal{C}$, ($\phi\in G^\vee$),  by
$$\mathcal{C}_\phi:=\{M\in Ob(M)\mid \forall g\in G, (m_{S_g}(1)-\phi(g))^N=0, (\forall N\gg0)\}.$$
Then Proposition \ref{decomposition of object} and the proof of Proposition \ref{decomposition by a single current} implies the following immediately.
\begin{theorem}\label{thm: monodromy decomposition} The supercategory $\mathcal{C}$ admits a decomposition
$$\mathcal{C}=\bigoplus_{\phi\in G^\vee}\mathcal{C}_\phi$$
as additive supercategories and the monoidal product respects the decomposition, i.e.,
$\boxtimes \colon\mathcal{C}_\phi\times \mathcal{C}_\psi\rightarrow \mathcal{C}_{\phi\psi}$.
\end{theorem}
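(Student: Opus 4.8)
The plan is to deduce this from the single-current analysis of \S\ref{Block decomposition and Monodromy filtration} applied simultaneously to the whole family $\{S_g\}_{g\in G}$. First I would record that for each object $M$ the even operators $m_{S_g}(1)\in\End_\mathcal{C}(M)$, $g\in G$, pairwise commute: by functoriality of the monodromy $\mathcal{M}_{\bullet,\bullet}$ each $m_{S_g}(1)$ lies in the center of $\End_\mathcal{C}(M)$, so they all commute with one another, and moreover $m_{S_g}(1)\circ m_{S_h}(1)=m_{S_{gh}}(1)$ follows from the identification $S_g\boxtimes S_h\simeq S_{gh}$ together with Proposition \ref{eq: one-parameter subgroup} and the computation $m_{S,M\boxtimes N}(1)=m_{S,M}(1)\boxtimes m_{S,N}(1)$ used in the proof of Proposition \ref{decomposition by a single current} (specialized to $N$ a simple current). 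Thus $g\mapsto m_{S_g}(1)$ is a homomorphism from $G$ into the group of invertible central elements of $\End_\mathcal{C}(M)$.

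Next, since $\End_\mathcal{C}(M)$ is a finite-dimensional $\C$-algebra by Assumption \ref{assumption2}, the commutative subalgebra $R_M\subset\End_\mathcal{C}(M)$ generated by all the $m_{S_g}(1)$ is a finite-dimensional commutative $\C$-algebra, hence a finite product of local Artinian $\C$-algebras; the corresponding idempotents give a finite decomposition $\id_M=\sum_\phi \pi_\phi$ indexed by the finitely many characters $\phi\colon G\to\C^\times$ through which $R_M$ can act, where $M_\phi:=\pi_\phi M$ is precisely the common generalized eigenspace $\{v\in M\mid \forall g,\ (m_{S_g}(1)-\phi(g))^k v=0\text{ for }k\gg0\}$. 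This is the object-level statement $M=\bigoplus_{\phi\in G^\vee}M_\phi$, and it recovers Proposition \ref{decomposition of object} when $G=\Z$. Functoriality is immediate: any morphism $f\colon M\to N$ satisfies $f\circ m_{S_g,M}(1)=m_{S_g,N}(1)\circ f$ by naturality of $\mathcal{M}_{\bullet,\bullet}$, so $f$ maps $M_\phi$ into $N_\phi$; hence each $\mathcal{C}_\phi$ is a full additive subcategory, $\mathcal{C}=\bigoplus_{\phi}\mathcal{C}_\phi$ as additive supercategories, and the parity autofunctor $\Pi_\mathcal{C}$ preserves each $\mathcal{C}_\phi$ since it commutes with the even central operators $m_{S_g}(1)$.

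Finally, for compatibility with $\boxtimes$: given $M\in\mathcal{C}_\phi$ and $N\in\mathcal{C}_\psi$, the hexagon computation carried out in the proof of Proposition \ref{eq: one-parameter subgroup} gives $\mathcal{M}_{S_g,M\boxtimes N}=\id_{S_g}\boxtimes(m_{S_g,M}(1)\boxtimes m_{S_g,N}(1))$, i.e.\ $m_{S_g,M\boxtimes N}(1)=m_{S_g,M}(1)\boxtimes m_{S_g,N}(1)$. Then the binomial expansion already displayed in the proof of Proposition \ref{decomposition by a single current}(ii), applied with $\alpha=\phi(g)$ and $\beta=\psi(g)$, shows that $(m_{S_g,M\boxtimes N}(1)-\phi(g)\psi(g))^{k}$ annihilates $M\boxtimes N$ for $k\gg 0$ and every $g$; hence $M\boxtimes N\in\mathcal{C}_{\phi\psi}$, giving $\boxtimes\colon\mathcal{C}_\phi\times\mathcal{C}_\psi\to\mathcal{C}_{\phi\psi}$. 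The one point that needs a little care — and where I would be most careful — is the simultaneity of the eigenspace decomposition across all $g\in G$ when $G$ is infinite: this is exactly what the passage to the finite-dimensional commutative algebra $R_M$ handles, guaranteeing that only finitely many characters $\phi$ occur for a given $M$ and that the primary decomposition is finite, so no completion or convergence issue arises.
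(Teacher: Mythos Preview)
Your proof is correct and follows essentially the same route as the paper: the paper simply asserts that Theorem~\ref{thm: monodromy decomposition} ``follows immediately'' from Proposition~\ref{decomposition of object} and the proof of Proposition~\ref{decomposition by a single current}, and your argument is a careful unpacking of precisely that. The one genuine addition you make is the passage through the finite-dimensional commutative subalgebra $R_M\subset\End_\mathcal{C}(M)$ generated by all the $m_{S_g}(1)$, which cleanly handles the simultaneous generalized eigenspace decomposition for arbitrary (possibly infinite) $G$ in one stroke; the paper instead leans on the single-current Proposition~\ref{decomposition of object} and leaves the simultaneity implicit. Your justification of the homomorphism property $m_{S_g}(1)\circ m_{S_h}(1)=m_{S_{gh}}(1)$ is slightly imprecise in its citation---it is really the hexagon computation of Proposition~\ref{eq: one-parameter subgroup} with $S^{n-1}$ replaced by an arbitrary second simple current, not the $m_{S,M\boxtimes N}$ formula---but the paper also takes this for granted in the sentence preceding the theorem, so no gap.
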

Finally, we consider the case that $G$ is finitely generated. In this case, by the fundamental theorem of finitely generated abelian group, we have 
$G\simeq G_{\operatorname{fin}}\times \Z^n$
for some finite abelian group $G_{\operatorname{fin}}$ and non-negative integer $n\geq0$. Then the dual group $G^\vee$ is 
$G^\vee\simeq G_{\operatorname{fin}}^\vee\times (\C^\times)^n$.
Let $\mathcal{C}_{\phi,\alpha}[p]\subset \mathcal{C}$, ($\phi\in G_{\operatorname{fin}}^\vee$, $\alpha\in (\C^\times)^n$, $p\in \Z_{\geq0}^n$), denote the full subcategory whose objects consist of $M\in Ob(\mathcal{C})$ such that 
$$m_{S_g}(1)=\phi(g),\ (\forall g\in G_{\operatorname{fin}}),\quad (m_{S,1_i}-\alpha_i)^{p_i}|_M=0,\ (1\leq \forall i \leq n),$$
where $1_i$, ($1\leq i\leq n$), denotes the generator of the $i$-th component of $\Z^n\subset G$. 
For each $(\phi,\alpha)\in G^\vee$, we have $\mathcal{C}_{\phi,\alpha}[p]\subset\mathcal{C}_{\phi,\alpha}[q]$ if $q-p\in \Z_{\geq0}^n$ and define
$$\mathcal{C}_{\phi,\alpha}:=\bigcup_{p\in\Z_{\geq0}^n}\mathcal{C}_{\phi,\alpha}[p].$$
Then we have the following:
\begin{theorem}\label{general monodromy decomposition}
Assume that the group $G$ is finitely generated. Then, \\
(i) the supercategory $\mathcal{C}$ admits a decomposition
\begin{align}\label{block decomp by G}
\mathcal{C}=\bigoplus_{(\phi,\alpha)\in G^\vee}\mathcal{C}_{\phi,\alpha}.
\end{align}
and the monoidal product respects the filtration, i.e.,
$$\boxtimes \colon\mathcal{C}_{\phi,\alpha}[p]\times \mathcal{C}_{\psi,\beta}[q]\rightarrow \mathcal{C}_{\phi\psi,\alpha\beta}[p+q],$$
(ii) the full subcategory 
\begin{align*}
\mathcal{C}[0]:=\bigoplus_{(\phi,\alpha)\in G^\vee}\mathcal{C}_{\phi,\alpha}[0]
\end{align*}
is naturally a braided monoidal subsupercategory,\\
(iii) every object in $\mathcal{C}_{\phi,\alpha}[p]$ is expressed as an extension of 
certain objects in $\mathcal{C}_{\phi,\alpha}[0]$ and objects in 
$\mathcal{C}_{\phi,\alpha}[(p_1,\cdots, p_j-1,\cdots,p_n)]$ for some $j$.
\end{theorem}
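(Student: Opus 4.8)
The plan is to bootstrap the theorem from the single-current analysis carried out in \S\ref{Block decomposition and Monodromy filtration}, namely Propositions \ref{eq: one-parameter subgroup}, \ref{decomposition of object} and \ref{decomposition by a single current} and Theorem \ref{thm: monodromy decomposition}, together with the structure theorem for finitely generated abelian groups. Fixing an isomorphism $G\simeq G_{\operatorname{fin}}\times\Z^n$, we have the simple currents $S_g$ with $g$ ranging over $G_{\operatorname{fin}}$ and the $n$ distinguished simple currents $S_{1_1},\dots,S_{1_n}$ attached to the generators of the free part. For each object $M$, the operators $m_{S_g}(1)$ ($g\in G_{\operatorname{fin}}$) and $m_{S_{1_i}}(1)$ ($1\le i\le n$) all lie in the centre of $\End_{\mathcal{C}}(M)$ by functoriality of the monodromy, and by Proposition \ref{eq: one-parameter subgroup} they satisfy $m_{S_g}(1)^{|G_{\operatorname{fin}}|}=\id_M$, so the $G_{\operatorname{fin}}$-part acts semisimply with eigenvalues in $|G_{\operatorname{fin}}|$-th roots of unity. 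Since $\End_{\mathcal{C}}(M)$ is finite-dimensional by Assumption \ref{assumption2}, simultaneously diagonalizing the commuting semisimple operators $\{m_{S_g}(1)\}_{g\in G_{\operatorname{fin}}}$ and taking the simultaneous generalized eigenspace decomposition of the commuting operators $\{m_{S_{1_i}}(1)\}_{i=1}^n$ — exactly the argument of Proposition \ref{decomposition of object}, now applied to several commuting central endomorphisms — yields $M=\bigoplus_{(\phi,\alpha)}M_{\phi,\alpha}$. Functoriality of this decomposition in $M$ promotes it to the block decomposition \eqref{block decomp by G}, which proves (i)'s first assertion; and every block is closed under the parity-reversing autofunctor $\Pi_{\mathcal{C}}$ since $\Pi_{\mathcal{C}}$ commutes with all monodromy operators.

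For the monoidal compatibility in (i), I would reuse verbatim the computation in the proof of Proposition \ref{decomposition by a single current}(ii): the hexagon-identity diagram there shows $m_{S,M\boxtimes N}(1)=m_{S,M}(1)\boxtimes m_{S,N}(1)$ for any single current $S$, hence for each $S_g$ and each $S_{1_i}$ simultaneously. Then the binomial expansion at the end of that proof, applied coordinatewise in $i=1,\dots,n$ (and the obvious product rule for the semisimple $G_{\operatorname{fin}}$-eigenvalues), gives $(m_{S_{1_i},M\boxtimes N}-\alpha_i\beta_i)^{p_i+q_i}|_{M\boxtimes N}=0$ whenever $(m_{S_{1_i},M}-\alpha_i)^{p_i}|_M=0$ and $(m_{S_{1_i},N}-\beta_i)^{q_i}|_N=0$, which is precisely $\boxtimes\colon\mathcal{C}_{\phi,\alpha}[p]\times\mathcal{C}_{\psi,\beta}[q]\to\mathcal{C}_{\phi\psi,\alpha\beta}[p+q]$. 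Specializing to $p=q=0$ shows that $\mathcal{C}[0]$ is closed under $\boxtimes$, and it visibly contains $1_{\mathcal{C}}$ (where every monodromy is trivial by \cite[Proposition XIII.1.2]{Kassel}) and is closed under left and right duals (duals of simple currents are simple currents with inverse monodromy, as in the proof of Proposition \ref{eq: one-parameter subgroup}); the braiding, associativity and unit isomorphisms of $\mathcal{C}$ restrict, giving (ii).

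For (iii), fix $(\phi,\alpha)$ and $p=(p_1,\dots,p_n)$ with $p_j\ge1$ for some $j$, and let $M\in Ob(\mathcal{C}_{\phi,\alpha}[p])$. Put $N=\operatorname{Im}\big((m_{S_{1_j}}(1)-\alpha_j)|_M\big)$, which is a subobject of $M$ in the abelian category $\underline{\mathcal{C}}$ since $(m_{S_{1_j}}(1)-\alpha_j)|_M$ is an even endomorphism. The image $N$ is annihilated by $(m_{S_{1_j}}(1)-\alpha_j)^{p_j}$ — in fact one degree lower than on $M$ — while retaining the same exponents $p_i$ ($i\ne j$) and the same $\phi$ and $\alpha$, since all the relevant operators are central and act compatibly on subobjects; thus $N\in Ob\big(\mathcal{C}_{\phi,\alpha}[(p_1,\dots,p_j-1,\dots,p_n)]\big)$. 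The quotient $M/N$ is annihilated by $(m_{S_{1_j}}(1)-\alpha_j)$ itself and again keeps the exponents $p_i$ ($i\ne j$); iterating this construction in the remaining coordinates reduces it to an object of $\mathcal{C}_{\phi,\alpha}[0]$, so $M/N$ is an iterated extension of objects of $\mathcal{C}_{\phi,\alpha}[0]$. This exhibits $0\to N\to M\to M/N\to 0$ as the required extension. I do not anticipate a serious obstacle: the only point requiring care is bookkeeping the simultaneous generalized eigenspace decomposition of several commuting central operators and checking that passing to subquotients never raises any of the nilpotency exponents — but this is a routine consequence of centrality of the $m_{S_g}(1)$ and exactness of the forgetful functor to $\mathrm{SVect}_{\C}$, so the main "work" is really just assembling the already-proven single-current statements.
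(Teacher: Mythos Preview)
Your proposal is correct and follows exactly the route the paper intends: the paper does not write out a proof of this theorem at all, treating it as an immediate consequence of the single-current analysis (Propositions \ref{eq: one-parameter subgroup}, \ref{decomposition of object}, \ref{decomposition by a single current} and Theorem \ref{thm: monodromy decomposition}) together with the structure theorem for finitely generated abelian groups, and your write-up is precisely the assembly of those ingredients. One small remark on (iii): your short exact sequence $0\to N\to M\to M/N\to 0$ has $N\in\mathcal{C}_{\phi,\alpha}[(p_1,\dots,p_j-1,\dots,p_n)]$ but $M/N$ only has the $j$-th exponent dropped to the bottom while the other coordinates are unchanged, so it is not literally in $\mathcal{C}_{\phi,\alpha}[0]$; you then appeal to iteration in the remaining coordinates. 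This is fine for the applications in \S\ref{Fusion Rings} (the isomorphism $\K(\mathcal{C}[0])\simeq\K(\mathcal{C})$ and the fact that simples lie in $\mathcal{C}[0]$), and indeed the paper's phrasing of (iii) is somewhat loose on this point, but it may be cleaner to mirror Proposition \ref{decomposition by a single current}(i) directly by taking $M_1=\Ker(m_{S_{1_j}}(1)-\alpha_j)$ so that $M/M_1\in\mathcal{C}_{\phi,\alpha}[(p_1,\dots,p_j-1,\dots,p_n)]$, and then note that $M_1$ itself is handled by induction on $\sum_i p_i$.
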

We call the decomposition \eqref{block decomp by G} the \emph{monodromy decompositions} by $G$ and the filtration $\{\mathcal{C}_{\phi,\alpha}[p]\}$ the \emph{monodromy filtration} by $G$.
\begin{remark}
By Theorem \ref{general monodromy decomposition} (iii), every simple object lies in $\mathcal{C}[0]$. Thus $\mathcal{C}=\mathcal{C}[0]$ holds if $\mathcal{C}$ is semisimple. Equivalently, $\mathcal{C}\neq \mathcal{C}[0]$ implies that the supercategory $\mathcal{C}$ is not semisimple.
\end{remark}
\subsection{Fusion rings}\label{Fusion Rings}
Let $\mathcal{C}$ be a braided monoidal supercategory as in \S 2.2 with Assumption \ref{assumption1} and \ref{assumption2}. Let $\K(\mathcal{C})$ denote its Grothendieck group, which is generated over $\Z$ by isomorphism classes $[M]$ of objects $M$ in $\mathcal{C}$.
Note that, if every object in $\mathcal{C}$ of finite length, then the set $\irr\mathcal{C}$ of simple objects of $\mathcal{C}$ gives a $\Z$-basis of $\K(\mathcal{C})$.
From now on, we assume the following condition:
\begin{assumption}\label{assumption3}
The bifunctor $\boxtimes\colon\mathcal{C}\times\mathcal{C}\to\mathcal{C}$ is biexact.
\end{assumption}
Then $\K(\mathcal{C})$ is a commutative ring by 
$$\K(\mathcal{C})\times \K(\mathcal{C})\rightarrow\K(\mathcal{C}),\quad ([M],[N])\rightarrow[M\boxtimes N]$$
and is called the fusion ring of $\mathcal{C}$.
The set of simple currents in $\mathcal{C}$, denoted by $\Pic \mathcal{C}$, is naturally an abelian group by $\boxtimes$ and we regard the group ring $\Z[\Pic \mathcal{C}]$ as a subring of $\mathcal{K}(\mathcal{C})$. 

Suppose that we have a set of simple currents $\{S_g\}_{g\in G}$ parametrized by a finitely generated abelian group $G$ as in \S\ref{Block decomposition and Monodromy filtration}. By Theorem \ref{general monodromy decomposition} (i), (ii), the fusion ring is $G^\vee$-graded
$$\K(\mathcal{C})=\bigoplus_{\xi\in G^\vee}\K(\mathcal{C}_\xi).$$
By Theorem \ref{general monodromy decomposition} (iii), the embedding $\mathcal{C}[0]\subset \mathcal{C}$ induces an isomorphism 
$\K(\mathcal{C}[0])\simeq \K(\mathcal{C})$.
Finally, we note that $\K(\mathcal{C})$ is a $\Z[G]$-algebra, where $\Z[G]$ denotes the group ring of $G$, by
$$\Z[G]\times \K(\mathcal{C})\rightarrow \K(\mathcal{C}),\quad (g,[M])\rightarrow [S_g\boxtimes M].$$
Before we remark a criterion for the $\Z[G]$-freeness of $\K(\mathcal{C})$, we recall that Lemma \ref{lem: exactness of simple currents} implies that $G$ acts on $\irr \mathcal{C}$ by 
$$G \times \irr \mathcal{C}\rightarrow \irr \mathcal{C},\quad (g,M)\rightarrow S_g\boxtimes M.$$
Then it is clear that if every object in $\mathcal{C}$ is of finite length, then
 $\K(\mathcal{C})$ is free over $\Z[G]$ if and only if the $G$-action on $\irr \mathcal{C}$ is free.
Thus we have proved the following proposition:
\begin{proposition}\label{fusion algebra}
For a set of simple currents $\{S_g\}_{g\in G}$ in $\mathcal{C}$ parametrized a finitely generated abelian group $G$, the fusion rings $\K(\mathcal{C}[0])$ and $\K(\mathcal{C})$ are $G^\vee$-graded $\Z[G]$-algebras. Moreover, the embedding $\mathcal{C}[0]\subset \mathcal{C}$ gives an isomorphism
$\K(\mathcal{C})\simeq \K(\mathcal{C}[0])$
as $G^\vee$-graded $\Z[G]$-algebras. If every object in $\mathcal{C}$ is of finite length, then $\K(\mathcal{C})$ is free over $\Z[G]$ if and only if the $G$-action on $\irr \mathcal{C}$ is free.
\end{proposition}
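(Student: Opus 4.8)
\emph{Strategy.} The plan is to assemble the proposition from the structural results already in hand: the block/filtration decompositions of Theorem \ref{general monodromy decomposition}, the exactness of tensoring with invertible objects (Lemma \ref{lem: exactness of simple currents}), and the biexactness Assumption \ref{assumption3}. First I would record the ring structures. Assumption \ref{assumption3} makes $\boxtimes$ descend to a commutative product on $\K(\mathcal{C})$, commutativity coming from the braiding; since $\mathcal{C}[0]$ is a braided monoidal subcategory by Theorem \ref{general monodromy decomposition}(ii) and is closed under subquotients in $\mathcal{C}$, its underlying category is a full abelian subcategory and the restriction of $\boxtimes$ is again biexact, so $\K(\mathcal{C}[0])$ is a commutative ring as well. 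The $G^\vee$-grading on both is read off from $\mathcal{C}=\bigoplus_{\xi\in G^\vee}\mathcal{C}_\xi$ and $\mathcal{C}[0]=\bigoplus_{\xi}\mathcal{C}_\xi[0]$, with multiplicativity of the grading supplied by $\boxtimes\colon\mathcal{C}_{\phi,\alpha}[p]\times\mathcal{C}_{\psi,\beta}[q]\to\mathcal{C}_{\phi\psi,\alpha\beta}[p+q]$ (Theorem \ref{general monodromy decomposition}(i)). The $\Z[G]$-algebra structure is $g\cdot[M]=[S_g\boxtimes M]$: this is well defined since $S_g\boxtimes(\cdot)$ is exact, it is a $G$-action since $S_g\boxtimes S_h\simeq S_{gh}$, and it is an algebra action because the associativity and braiding isomorphisms give $S_g\boxtimes(M\boxtimes N)\simeq(S_g\boxtimes M)\boxtimes N\simeq M\boxtimes(S_g\boxtimes N)$. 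The inclusion $\iota\colon\mathcal{C}[0]\hookrightarrow\mathcal{C}$ manifestly respects all three structures.

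\emph{The isomorphism $\K(\iota)$.} Being exact, $\iota$ induces a morphism $\K(\iota)\colon\K(\mathcal{C}[0])\to\K(\mathcal{C})$ of $G^\vee$-graded $\Z[G]$-algebras. Surjectivity is a d\'evissage: by Theorem \ref{general monodromy decomposition}(iii) every object of $\mathcal{C}_{\phi,\alpha}[p]$ is an extension of an object of $\mathcal{C}_{\phi,\alpha}[0]$ by one of $\mathcal{C}_{\phi,\alpha}[p']$ with $|p'|<|p|$; since $G$ is finitely generated the monodromy filtration is finite on any fixed object, so induction on $|p|$ shows every class in $\K(\mathcal{C})$ is a $\Z$-combination of classes of objects of $\mathcal{C}[0]$. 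For injectivity I would exhibit a left inverse $r\colon\K(\mathcal{C})\to\K(\mathcal{C}[0])$ sending $[M]$ to $\sum_n[\operatorname{gr}_n M]$, the class of the associated graded of the monodromy filtration, whose pieces $M_\alpha[n]/M_\alpha[n-1]$ are killed by $m_S(1)-\alpha$ and hence lie in $\mathcal{C}[0]$; functoriality and exhaustiveness of the filtration give $r$ additivity on short exact sequences, and $r\circ\K(\iota)=\mathrm{id}$ since an object already in $\mathcal{C}[0]$ is split by its own filtration. I expect the genuinely technical point to be exactly the well-definedness of $r$ — that $\sum_n[\operatorname{gr}_n M]$ is additive in $\K(\mathcal{C}[0])$, not merely in $\K(\mathcal{C})$ — which is handled by the standard snake-lemma bookkeeping for the subquotients $B[n]\cap(\cdot)$ of a functorial exhaustive filtration along a short exact sequence $0\to A\to B\to C\to 0$.

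\emph{Freeness criterion.} Finally assume every object of $\mathcal{C}$ has finite length, so that (with the supercategory conventions in force, cf.\ Lemma \ref{categorical Schur}) $\K(\mathcal{C})$ is free abelian on $\irr\mathcal{C}$ by Jordan--H\"older. By Lemma \ref{lem: exactness of simple currents} each $S_g\boxtimes(\cdot)$ is an autoequivalence, hence permutes $\irr\mathcal{C}$, so $\K(\mathcal{C})\cong\Z[\irr\mathcal{C}]$ is a permutation $\Z[G]$-module, isomorphic to $\bigoplus_O\Z[G/G_O]$ over the $G$-orbits $O\subset\irr\mathcal{C}$ with point stabilizers $G_O$. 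A permutation $\Z[G]$-module is $\Z[G]$-free precisely when the underlying $G$-set is free: a nontrivial stabilizer by an element of infinite order contradicts freeness because then $h-1$ is a nonzerodivisor in $\Z[G]$, while one of finite order $m>1$ is excluded by reducing modulo a prime $p\mid m$ and comparing $\mathbb{F}_p$-dimensions over the local group algebra $\mathbb{F}_p[\langle h^{m/p}\rangle]$. This says $\K(\mathcal{C})$ is $\Z[G]$-free iff all $G_O$ are trivial iff $G$ acts freely on $\irr\mathcal{C}$, completing the proof.
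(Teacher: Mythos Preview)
Your proof is correct and follows essentially the same route as the paper, which treats the proposition as a summary of the preceding discussion (invoking Theorem~\ref{general monodromy decomposition} for the $G^\vee$-grading and the filtration d\'evissage, and Lemma~\ref{lem: exactness of simple currents} for the permutation action on simples). You supply considerably more detail than the paper does---most notably the associated-graded retraction establishing injectivity of $\K(\iota)$ and the permutation-module analysis for the freeness criterion, both of which the paper simply asserts as clear.
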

\subsection{Algebra objects and Induction functor}\label{algebra object}
Following \cite{CKM}, we review the notion of (unital, associative, commutative) algebra objects and their module objects in a braided monoidal supercategory, originally introduced by \cite{KO} in the purely even case. We note that the authors of \cite{CKM} deal with superizations of non-super categories (see Remark \ref{Superalgebra Object}), but the proofs apply to our setting. 

In this subsection, $\mathcal{C}$ denotes an essentially small, $\C$-linear, monoidal supercategory whose underlying category is abelian, satisfying the following assumption which is a weaker version of Assumption \ref{assumption3}:
\begin{remark}\label{needs only right exactness}
In this subsection, we may replace Assumption \ref{assumption3} by a weaker one, that is, the bifunctor $\boxtimes$ is right exact.
\end{remark}
An algebra object in $\mathcal{C}$ is a triple $(\mathcal{E},\mu_\mathcal{E},\iota)$, (or $\mathcal{E}$ for simplicity), consisting of $\mathcal{E}\in Ob(\mathcal{C})$ and even morphisms  $\mu_\mathcal{E}\in \Hom_\mathcal{C}(\mathcal{E},\mathcal{E})_{\bar{0}}$ and $\iota\in \Hom_\mathcal{C}(1_\mathcal{C},\mathcal{E})_{\bar{0}}$ satisfying 
\begin{itemize}
    \item The morphism $\iota\colon 1_\mathcal{C}\rightarrow \mathcal{E}$ is injective
\end{itemize}
and the following commutative diagrams: 
\begin{enumerate}
\item[(A1)]Unity
\begin{equation*}
\SelectTips{cm}{}
\xymatrix@W15pt@H11pt@R10pt@C30pt{
1_\mathcal{C}\boxtimes \E\ar[r]^-{\iota\boxtimes \id_\E}\ar[rd]_-{l_\E}&\E\boxtimes \E\ar[d]^{\mu_\E}\\
&\E.
}
\end{equation*}
\item[(A2)]Associativity
\begin{equation*}
\SelectTips{cm}{}
\xymatrix@W15pt@H11pt@R18pt@C34pt{
\E\boxtimes (\E\boxtimes \E)\ar[r]^-{\id_{\E}\boxtimes \mu_{\E}}\ar[d]_-{\mathcal{A}_{\E,\E,\E}}&\E\boxtimes \E\ar[r]^{\mu_\E}&\E\\
(\E\boxtimes \E)\boxtimes \E\ar[r]^-{\mu_\E\boxtimes \id_{\E}}&\E\boxtimes\E \ar[ur]_-{\mu_\E}.
}
\end{equation*}
\item[(A3)]Commutativity
\begin{equation*}
\SelectTips{cm}{}
\xymatrix@W15pt@H11pt@R10pt@C30pt{
\E\boxtimes \E\ar[r]^-{\mathcal{R}_{\E,\E}}\ar[rd]_{\mu_\E}&\E\boxtimes \E\ar[d]^{\mu_\E}\\
&\E.
}
\end{equation*}
\end{enumerate}
\begin{remark}\label{Superalgebra Object}${}$\\
(1) A typical example of our braided monoidal supercategory is the superization $\mathcal{SC}$ of a braided abelian monoidal category $\mathcal{C}$, whose objects are pairs $M=(M_{\bar{0}},M_{\bar{1}})$ of $M_{\bar{i}}\in Ob(\mathcal{C})$. This induces a $\Z_2$-graded structure on the set of morphisms. In this case, an algebra object in $\mathcal{SC}$ is called a superalgebra object in $\mathcal{C}$, see \cite{CKM}.\\
(2) For an application to extensions of vertex superalgebras, the condition 
$$\Hom_\mathcal{C}(1_\mathcal{C},\E)\simeq\End_{\mathcal{C}}(1_\mathcal{C})$$
is often assumed so that the extended vertex superalgebra is of CFT type.
\end{remark}

An $\E$-module is a pair $(M,\mu_M)$, (or $M$ for simplicity), consisting of $M\in Ob(\mathcal{C})$ and an even morphism $\mu_M\in \Hom_\mathcal{C}(\E\boxtimes M,M)_{\bar{0}}$ satisfying the following commutative diagrams:
\begin{enumerate}
\item[(M1)]Unity
\begin{equation*}
\SelectTips{cm}{}
\xymatrix@W15pt@H11pt@R12pt@C30pt{
1_\mathcal{C}\boxtimes M\ar[r]^-{\iota\boxtimes \id_M}\ar[rd]_-{l_M}&\E\boxtimes M\ar[d]^{\mu_M}\\
&M.
}
\end{equation*}
\item[(M2)]Associativity
\begin{equation*}
\SelectTips{cm}{}
\xymatrix@W15pt@H11pt@R18pt@C32pt{
\E\boxtimes (\E\boxtimes M)\ar[r]^-{\id_{\E}\boxtimes \mu_{M}}\ar[d]_-{\mathcal{A}_{\E,\E,M}}&\E\boxtimes M\ar[r]^{\mu_M}&M\\
(\E\boxtimes \E)\boxtimes M\ar[r]^-{\mu_\E\boxtimes \id_{M}}&\E\boxtimes M \ar[ur]_-{\mu_M}.
}
\end{equation*}
\end{enumerate}
An $\E$-module $(M,\mu_M)$ is called \emph{local} if it further satisfies the following commutative diagram:
\begin{enumerate}
\item[(M3)] Locality
\begin{equation*}
\SelectTips{cm}{}
\xymatrix@W15pt@H11pt@R10pt@C29pt{
\E\boxtimes M\ar[r]^-{\mathcal{M}_{\E,M}}\ar[rd]_-{\mu_M}&\E\boxtimes M\ar[d]^{\mu_M}\\
&M.
}
\end{equation*}
\end{enumerate}
A morphism of $\E$-modules from $(M,\mu_M)$ to $(N,\mu_N)$ is a morphism $f\in \Hom_\mathcal{C}(M,N)$ satisfying the following commutative diagram:
\begin{equation*}
\SelectTips{cm}{}
\xymatrix@W15pt@H11pt@R12pt@C30pt{
\E\boxtimes M\ar[r]^-{\id_\E\boxtimes f}\ar[d]_-{\mu_M}&\E\boxtimes N\ar[d]^{\mu_N}\\
M\ar[r]^-{f}&N.
}
\end{equation*}
Let $\Rep(\E)$ denote the supercategory of $\E$-modules with morphisms of $\E$-modules, and $\Rep^0(\E)$ the full subcategory of $\Rep(\E)$ consisting of local $\E$-modules.

Although $\Rep(\mathcal{E})$ is just a $\C$-linear additive supercategory, the underlying category $\underline{\Rep(\mathcal{E})}$ is an abelian category. Furthermore, by the existence of the involutive autofunctor $\Pi_\mathcal{C}$, every parity-homogeneous morphism in $\Rep(\E)$ admits kernel and cokernel objects. The same is true for $\Rep^0(\mathcal{E})$.
Each $\Rep(\mathcal{E})$ and $\Rep^0(\mathcal{E})$ admits a natural monoidal structure in the following way \cite[\S 2]{CKM}. Consider the two compositions
\begin{align*}
&\xi_1\colon \E\boxtimes (M\boxtimes N)\xrightarrow{\mathcal{A}_{\E,\E,M}}(\E\boxtimes M)\boxtimes N\xrightarrow{\mu_M\boxtimes \id_N}M\boxtimes N,\\
&\xi_2\colon \E\boxtimes (M\boxtimes N)\xrightarrow{\mathcal{A}_{\E,M,N}} (\E\boxtimes M)\boxtimes N\xrightarrow{\mathcal{R}_{\E,M}\boxtimes N}(M\boxtimes \E)\boxtimes N\\
&\hspace{4cm}\xrightarrow{\mathcal{A}^{-1}_{M,\E,N}}M\boxtimes (\E\boxtimes N)\xrightarrow{\id_M\boxtimes \mu_N}M\boxtimes N.
\end{align*}
Then $M\boxtimes_\E N$ is defined by $M\boxtimes_\E N:=\mathrm{Coker}(\xi_1-\xi_2)$,
which is an object of $\mathcal{C}$. Let $\eta_{M,N}$ denote the canonical surjection $M\boxtimes N\rightarrow M\boxtimes_\E N$.
The $\E$-module structure 
$\mu_{M\boxtimes_\E N}\colon \E\boxtimes (M\boxtimes_\E N)\rightarrow M\boxtimes_\E N$ 
is the unique even morphism, which makes the diagram
\begin{equation*}
\SelectTips{cm}{}
\xymatrix@W15pt@H11pt@R12pt@C30pt{
\E\boxtimes (M\otimes N)\ar[r]^-{\xi_i}\ar[d]_-{\id_\E\boxtimes \eta_{M,N}}&M\boxtimes N \ar[d]^{\eta_{M,N}}\\
\E\boxtimes (M\boxtimes_\E N)\ar[r]^-{\mu_{M\boxtimes N}}&M\boxtimes_\E N.
}
\end{equation*}
commutes for $i=1,2$. The associativity 
$\mathcal{A}^\E_{\bullet,\bullet,\bullet}\colon \bullet\boxtimes_\E(\bullet \boxtimes_\E \bullet)\simeq (\bullet\boxtimes_\E \bullet)\boxtimes_{\E}\bullet$
is given by the family of unique even morphisms $\mathcal{A}^\E_{M,N,L}\colon M\boxtimes_\E(N \boxtimes_\E L)\simeq (M\boxtimes_\E N)\boxtimes_{\E}L$ for $M,N,L\in Ob(\Rep(\E))$, which make the following diagram commute:
\begin{equation*}
\SelectTips{cm}{}
\xymatrix@W15pt@H11pt@R12pt@C30pt{
M\boxtimes (N\otimes L)\ar[r]^-{\mathcal{A}_{M,N,L}}\ar[d]_-{\id_M\boxtimes \eta_{N,L}}&(M\boxtimes N)\boxtimes L \ar[d]^{\eta_{M,N}}\\
M\boxtimes (N\boxtimes_\E L)\ar[d]_-{\eta_{M,N\boxtimes_\E L}}&(M\boxtimes_\E N)\boxtimes L\ar[d]^-{\eta_{M\boxtimes_\E N,L}}\\
M\boxtimes_\E (N\boxtimes_\E L)\ar[r]^{\mathcal{A}^\E_{M,N,L}}&(M\boxtimes_\E N)\boxtimes_\E L.
}
\end{equation*}
The unit object of $\Rep(\E)$ and $\Rep^0(\mathcal{E})$ is $\E$ equipped with even natural morphisms 
$$l^\E_\bullet\colon \E\boxtimes_\E \bullet \simeq \bullet,\quad r^\E_\bullet\colon \bullet \boxtimes_\E \E\simeq \bullet$$
given by the family of unique even morphisms $l^\E_M\colon \E\boxtimes_\E M\simeq M$ and $r^\E_M\colon M\boxtimes_\E \E\simeq M$, which make the following diagrams commute:
\begin{equation*}
\SelectTips{cm}{}
\xymatrix@W15pt@H11pt@R12pt@C30pt{
\E\boxtimes M \ar[r]^-{\mu_M}\ar[d]_-{\eta_{\E,N}}&M\ar@{=}[d]
&M\boxtimes \E\ar[r]^-{\mathcal{R}_{M,\E}^{-1}}\ar[d]_-{\eta_{M,\E}}&\E\boxtimes M\ar[r]^-{\mu_M}&M\ar@{=}[d]\\
\E\boxtimes_\E M\ar[r]^-{l^\E_M}&M&M\boxtimes_\E \E\ar[rr]^-{r^\E_M}&&M.
}
\end{equation*}
The braiding $\mathcal{R}_{\bullet,\bullet}$ on $\mathcal{C}$ induces a braiding $\mathcal{R}^{\E}_{\bullet,\bullet}$ on $\Rep^0(\E)$, which is a family of unique even morphisms $\mathcal{R}^\E_{M,N}\colon M\boxtimes_\E N\simeq N\boxtimes_\E M$, which make the diagram
\begin{equation*}
\SelectTips{cm}{}
\xymatrix@W15pt@H11pt@R12pt@C30pt{
M\boxtimes N\ar[r]^-{\mathcal{R}_{M,N}}\ar[d]_-{\eta_{M,N}}&N\boxtimes M \ar[d]^{\eta_{N,M}}\\
M\boxtimes_\E N\ar[r]^-{\mathcal{R}^\E_{M,N}}&N\boxtimes_\E M
}
\end{equation*}
commute. To summarize, we obtain the following.
\begin{theorem}[{\cite[Theorem 2.53]{CKM}}]
The supercategory $\Rep(\E)$ (resp.\ $\Rep^0(\E)$) is naturally a $\C$-linear additive (resp.\ braided) monoidal supercategory such that the underlying category $\underline{\Rep(\E)}$, (resp.\ $\underline{\Rep^0(\E)}$) is an abelian category.
\end{theorem}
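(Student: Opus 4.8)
The plan is to verify, one by one, the axioms of a $\C$-linear additive (resp.\ braided) monoidal supercategory for $\Rep(\E)$ (resp.\ $\Rep^0(\E)$) using the structure morphisms $\mu_{M\boxtimes_\E N}$, $\mathcal{A}^\E_{\bullet,\bullet,\bullet}$, $l^\E_\bullet$, $r^\E_\bullet$ and $\mathcal{R}^\E_{\bullet,\bullet}$ constructed above, and then to check that the underlying categories are abelian. The key mechanism throughout is that each of these structure morphisms is characterized by a universal property of a cokernel, realized through the canonical epimorphisms $\eta_{M,N}\colon M\boxtimes N\twoheadrightarrow M\boxtimes_\E N$; since $\eta$ (and iterated versions of it) are epic, any identity among the structure morphisms can be checked after precomposition with the relevant $\eta$'s and then reduces to the corresponding identity already holding in $\mathcal{C}$.

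First I would check that $\boxtimes_\E$ is a well-defined bifunctor: given morphisms $f\colon M\to M'$ and $g\colon N\to N'$ of $\E$-modules, one shows $\eta_{M',N'}\circ(f\boxtimes g)$ coequalizes $\xi_1-\xi_2$, hence factors uniquely through an even morphism $f\boxtimes_\E g$, and functoriality and $\C$-linearity are then immediate. Here only right exactness of $\boxtimes$ is needed (Remark \ref{needs only right exactness}), to guarantee both that $M\boxtimes_\E N=\coker(\xi_1-\xi_2)$ carries the claimed $\E$-module structure $\mu_{M\boxtimes_\E N}$ and that this is compatible with $\eta$. Next I would verify that $\mathcal{A}^\E$, $l^\E$, $r^\E$ are well-defined natural transformations, with inverses again produced by the same cokernel universal property, and that they satisfy the pentagon and triangle axioms: the defining squares of $\mathcal{A}^\E_{M,N,L}$, $l^\E_M$, $r^\E_M$ have $\eta$'s on the left and, on the right, the structure morphisms $\mathcal{A}$, $l$, $r$ of $\mathcal{C}$, so the pentagon/triangle for $(\mathcal{A},l,r)$ in $\mathcal{C}$ descends. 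For $\Rep^0(\E)$ one additionally checks that $M\boxtimes_\E N$ is local whenever $M$ and $N$ are — this uses the commutativity axiom (A3) for $\E$ together with the hexagon for $\mathcal{R}$ in $\mathcal{C}$ — and that $\mathcal{R}^\E_{M,N}$ is well-defined and satisfies the hexagon identities, once more by descent along $\eta$.

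For the abelian statement, I would show that $\underline{\Rep(\E)}$ has kernels and cokernels: for an even morphism $f\colon M\to N$ of $\E$-modules, $\coker f$ (taken in $\mathcal{C}$) inherits a unique $\E$-module structure because $\E\boxtimes-$ is right exact, while $\Ker f\hookrightarrow M$ inherits one because it is a monomorphism on which $\mu_M$ restricts; a routine diagram chase identifies these as kernel/cokernel in $\Rep(\E)$ and shows that the canonical map $\mathrm{coim}\,f\to\im f$ is an isomorphism, since it is so in $\mathcal{C}$. The supercategory bookkeeping is handled by the involutive autofunctor $\Pi_\mathcal{C}$: any parity-homogeneous morphism becomes even after composing with $\Pi_\mathcal{C}$, so kernels and cokernels exist for all homogeneous morphisms. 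Finally, locality (M3) is a closed condition — an equality of two morphisms $\E\boxtimes M\to M$ — so $\underline{\Rep^0(\E)}$ is a full subcategory of $\underline{\Rep(\E)}$ closed under subquotients and hence abelian.

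The main obstacle I anticipate is organizational rather than conceptual: systematically tracking parities and signs while descending the pentagon, triangle and hexagon coherence diagrams through the epimorphisms $\eta$ in the super setting, and confirming at each stage that the even morphism produced by the cokernel universal property is the \emph{unique} one making the relevant square commute. This is precisely the content of \cite[Theorem 2.53]{CKM}; the only new points here are that our standing hypotheses are weaker (we require $\boxtimes$ merely right exact, Remark \ref{needs only right exactness}) and that we work with a genuine supercategory rather than the superization of a non-super category as in Remark \ref{Superalgebra Object}, and one checks line by line that the arguments of \cite{CKM} use nothing beyond what is assumed here.
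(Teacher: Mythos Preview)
Your proposal is correct and matches the paper's treatment: the paper does not give its own proof but simply cites \cite[Theorem 2.53]{CKM}, having already remarked at the start of \S\ref{algebra object} that the arguments of \cite{CKM} (written for superizations of non-super categories) carry over verbatim to the present genuinely super setting. Your outline is precisely a sketch of the CKM argument together with the observation that only right exactness of $\boxtimes$ is used (Remark \ref{needs only right exactness}), which is exactly the gap the paper asks the reader to fill.
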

For $M\in Ob(\mathcal{C})$, we define an $\mathcal{E}$-module $(\mathcal{F}(M), \mu_{\mathcal{F}(M)})$ by 
\begin{align*}
&\F(M):=\E\boxtimes M,\\
&\mu_{\mathcal{F}(M)}\colon \E\boxtimes (\E\boxtimes M)\simeq (\E\boxtimes \E)\boxtimes M\xrightarrow{\mu_\E\boxtimes \id_M}\E\boxtimes M.
\end{align*}
We also define a superfunctor $\mathcal{F}\colon \mathcal{C}\rightarrow\Rep(\mathcal{E})$ by $M\mapsto (\mathcal{F}(M), \mu_{\mathcal{F}(M)})$ and $\F(f):=\id_\E\boxtimes f\in \Hom_{\Rep(\mathcal{E})}(\F(M),\F(N))$ for $f\in \Hom_{\mathcal{C}}(M,N)$.
The superfunctor $\mathcal{F}$ is called the induction functor and enjoys the following property.
\begin{theorem}[{\cite[Theorem 2.59]{CKM}}]
The induction functor $\mathcal{F}\colon \mathcal{C}\rightarrow\Rep(\mathcal{E})$ is a $\C$-linear, additive, strong monoidal superfunctor.
\end{theorem}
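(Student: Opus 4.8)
The final statement to prove is that the induction functor $\mathcal{F}\colon \mathcal{C}\to \Rep(\mathcal{E})$ is a $\C$-linear, additive, strong monoidal superfunctor. Since this is cited as \cite[Theorem 2.59]{CKM}, my plan is to indicate the essential checks while leaning on the algebra-object formalism already set up in \S\ref{algebra object}.

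\emph{Approach.} The plan is to verify in turn: (i) $\mathcal{F}$ is a well-defined superfunctor, (ii) it is $\C$-linear and additive, and (iii) it carries the monoidal structure of $\mathcal{C}$ to that of $\Rep(\mathcal{E})$ with the coherence data being isomorphisms. First I would check that $(\mathcal{F}(M),\mu_{\mathcal{F}(M)})$ genuinely satisfies the module axioms (M1) and (M2): unity follows from the unit axiom (A1) of $\mathcal{E}$ together with the triangle axiom for the associativity isomorphism $\mathcal{A}$, while associativity of the action follows from the associativity axiom (A2) of $\mathcal{E}$ together with the pentagon axiom for $\mathcal{A}$ — both are diagram chases using naturality of $\mathcal{A}$ and $\boxtimes$. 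Functoriality, $\C$-linearity and additivity of $\mathcal{F}$ are immediate since $\mathcal{F}(f)=\id_\mathcal{E}\boxtimes f$ and $\boxtimes$ is a $\C$-linear bifunctor; parity is preserved because $\id_\mathcal{E}$ is even.

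\emph{Monoidal structure.} The heart of the proof is constructing a natural isomorphism $\mathcal{F}(M)\boxtimes_\mathcal{E}\mathcal{F}(N)\xrightarrow{\simeq}\mathcal{F}(M\boxtimes N)$ and a unit isomorphism $\mathcal{E}\xrightarrow{\simeq}\mathcal{F}(1_\mathcal{C})$, and checking the hexagon/triangle coherence diagrams for a strong monoidal functor. For the unit, $\mathcal{F}(1_\mathcal{C})=\mathcal{E}\boxtimes 1_\mathcal{C}\xrightarrow{r_\mathcal{E}}\mathcal{E}$ works. For the tensor structure, I would exhibit the composite
\[
(\mathcal{E}\boxtimes M)\boxtimes_\mathcal{E}(\mathcal{E}\boxtimes N)\longrightarrow \mathcal{E}\boxtimes(M\boxtimes N),
\]
built from the universal property of the relative tensor product $\boxtimes_\mathcal{E}=\mathrm{Coker}(\xi_1-\xi_2)$: one checks that the map $(\mathcal{E}\boxtimes M)\boxtimes(\mathcal{E}\boxtimes N)\to \mathcal{E}\boxtimes(M\boxtimes N)$ obtained by using $\mu_\mathcal{E}$ on the two $\mathcal{E}$-factors (after suitable associativity and braiding moves) coequalizes $\xi_1-\xi_2$, hence descends; its inverse comes from $\iota$ on one factor. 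Naturality in $M$ and $N$ follows from naturality of all the structural isomorphisms, and the coherence axioms reduce, after inserting the definitions of $\mathcal{A}^\mathcal{E}$, $l^\mathcal{E}$, $r^\mathcal{E}$ via their defining commutative squares, to the pentagon and triangle axioms in $\mathcal{C}$ together with axioms (A1)--(A3) for $\mathcal{E}$. Here I would use that $\boxtimes$ is at least right exact (Remark \ref{needs only right exactness}) so that cokernels behave well under $\mathcal{E}\boxtimes(-)$, and that $\Pi_\mathcal{C}$ guarantees cokernels of parity-homogeneous morphisms exist in $\Rep(\mathcal{E})$.

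\emph{Main obstacle.} The routine-but-delicate part — and the step most likely to consume space — is verifying that the candidate tensor-structure morphism is well-defined on $\boxtimes_\mathcal{E}$ (i.e., that it kills $\xi_1-\xi_2$) and that it is an isomorphism with the claimed inverse; this is a genuine multi-step diagram chase intertwining the associativity isomorphism $\mathcal{A}$, the braiding $\mathcal{R}$, and $\mu_\mathcal{E}$, and one must be careful with the Koszul signs coming from the super-structure of $\boxtimes$ and $\sigma$. Everything else is formal. Since the statement is quoted verbatim from \cite{CKM} and the proof there applies without change in the present (not necessarily superized) setting, I would in the paper simply cite \cite[Theorem 2.59]{CKM} and remark that the argument goes through verbatim, only indicating the construction of the structural isomorphisms as above.
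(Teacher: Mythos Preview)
Your proposal is correct and matches the paper's approach: the paper gives no proof at all, simply citing \cite[Theorem 2.59]{CKM}, and your final recommendation to do exactly that (with the remark that the argument carries over to the present non-superized setting) is precisely what the paper does. The sketch you provide of the actual construction is accurate and more detailed than anything in the paper itself.
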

Let $\mathcal{C}^0$ denote the full subcategory of $\mathcal{C}$ consisting of objects $M\in Ob(\mathcal{C})$ such that 
$\mathcal{M}_{\E,M}=\id_{\E,M}$. We call $\mathcal{C}^0$ the category of $\mathcal{E}$-local objects. 
\begin{theorem}[\cite{CKM}]\label{locality statement}
We have the following.
\begin{enumerate}
\item The supercategory $\mathcal{C}^0$ is a braided monoidal subsupercategory of $\mathcal{C}$.
\item For $M\in Ob(\mathcal{C})$, the object $\F(M)$ lies in $\Rep^0(\E)$ if and only if $M\in Ob(\mathcal{C}^0)$.
\item The restriction of $\F$ to $\mathcal{C}^0$ gives a braided monoidal superfunctor
$$\F\colon \mathcal{C}^0\rightarrow \Rep^0(\E).$$
\end{enumerate}
\end{theorem}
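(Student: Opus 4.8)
The plan is to reduce all three parts to two elementary facts about the interaction of the braiding with $\boxtimes$ and with the algebra structure of $\mathcal{E}$. First, the hexagon axioms together with naturality of $\mathcal{R}$ — this is precisely the commutative diagram established in the proof of Proposition~\ref{eq: one-parameter subgroup}, which nowhere uses invertibility of the object in question — give a canonical description of the monodromy on a monoidal product: up to conjugation by associativity isomorphisms, $\mathcal{M}_{X,\,Y\boxtimes Z}$ is the composite of $\mathcal{M}_{X,Y}$ performed in the $X,Y$-tensorands and $\mathcal{M}_{X,Z}$ performed in the $X,Z$-tensorands, and in particular it is the identity whenever both $\mathcal{M}_{X,Y}$ and $\mathcal{M}_{X,Z}$ are (the super grading plays no role here, $\mathcal{M}$ being even). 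Second, the commutativity axiom (A3) for $\mathcal{E}$ gives $\mu_{\mathcal{E}}\circ\mathcal{R}_{\mathcal{E},\mathcal{E}}=\mu_{\mathcal{E}}$ and hence $\mu_{\mathcal{E}}\circ\mathcal{M}_{\mathcal{E},\mathcal{E}}=\mu_{\mathcal{E}}$, while the unit axiom (A1) gives $\mu_{\mathcal{E}}\circ(\iota\boxtimes\id_{\mathcal{E}})=l_{\mathcal{E}}$ and, combined with (A3), $\mu_{\mathcal{E}}\circ(\id_{\mathcal{E}}\boxtimes\iota)=r_{\mathcal{E}}$.

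For (1): $\mathcal{C}^{0}$ is a full subsupercategory, so I only need that it contains $1_{\mathcal{C}}$, is closed under $\boxtimes$, and that the braiding and the coherence isomorphisms of $\mathcal{C}$ restrict to it. That $\mathcal{M}_{\mathcal{E},1_{\mathcal{C}}}=\id$ is the standard fact \cite[Proposition~XIII.1.2]{Kassel} already recalled. Closure under $\boxtimes$ is immediate from the first fact above applied with $X=\mathcal{E}$: if $\mathcal{M}_{\mathcal{E},M}=\id$ and $\mathcal{M}_{\mathcal{E},N}=\id$ then $\mathcal{M}_{\mathcal{E},M\boxtimes N}=\id$. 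Fullness then makes restriction of the braiding and of the unit/associativity isomorphisms automatic.

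For (2): write $\mu_{\mathcal{F}(M)}=(\mu_{\mathcal{E}}\boxtimes\id_{M})\circ\mathcal{A}_{\mathcal{E},\mathcal{E},M}$, with source $\mathcal{E}\boxtimes\mathcal{F}(M)=\mathcal{E}\boxtimes(\mathcal{E}\boxtimes M)$. Feeding the description of $\mathcal{M}_{\mathcal{E},\mathcal{E}\boxtimes M}$ from the first fact into the locality condition (M3) and absorbing the $\mathcal{M}_{\mathcal{E},\mathcal{E}}$-contribution via $\mu_{\mathcal{E}}\circ\mathcal{M}_{\mathcal{E},\mathcal{E}}=\mu_{\mathcal{E}}$, the condition (M3) for $\mathcal{F}(M)$ becomes the identity $(\mu_{\mathcal{E}}\boxtimes\id_{M})\circ\widetilde{\mathcal{M}}=(\mu_{\mathcal{E}}\boxtimes\id_{M})$, where $\widetilde{\mathcal{M}}$ is the endomorphism of $\mathcal{E}\boxtimes(\mathcal{E}\boxtimes M)$ obtained by applying $\mathcal{M}_{\mathcal{E},M}$ to the outer $\mathcal{E}$ and to $M$. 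If $M\in Ob(\mathcal{C}^{0})$ this is trivially true, giving $\mathcal{F}(M)\in\Rep^{0}(\mathcal{E})$. Conversely, precomposing that identity with $\id_{\mathcal{E}}\boxtimes(\iota\boxtimes\id_{M})$ — inserting $\iota$ into the \emph{inner} $\mathcal{E}$-slot, which $\widetilde{\mathcal{M}}$ leaves untouched — and using $\mu_{\mathcal{E}}\circ(\id_{\mathcal{E}}\boxtimes\iota)=r_{\mathcal{E}}$ collapses both sides and yields $\mathcal{M}_{\mathcal{E},M}=\id_{\mathcal{E}\boxtimes M}$, i.e.\ $M\in Ob(\mathcal{C}^{0})$. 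I expect the main obstacle of the whole argument to be exactly this step together with its companion in part (1): checking that the associativity isomorphisms really conspire so that $\mathcal{M}_{\mathcal{E},\mathcal{E}\boxtimes M}$ has the claimed form and that substituting $\iota$ into the inner slot produces the stated collapse. There is no conceptual difficulty, only the bookkeeping of coherence isomorphisms.

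For (3): by (2), $\mathcal{F}$ maps $\mathcal{C}^{0}$ into $\Rep^{0}(\mathcal{E})$, and $\mathcal{F}$ is $\C$-linear, additive and strong monoidal by the theorem immediately preceding Theorem~\ref{locality statement}; so it only remains to check that the strong monoidal structure isomorphism $\mathcal{F}(M)\boxtimes_{\mathcal{E}}\mathcal{F}(N)\xrightarrow{\simeq}\mathcal{F}(M\boxtimes N)$ intertwines $\mathcal{R}^{\mathcal{E}}_{\mathcal{F}(M),\mathcal{F}(N)}$ with $\mathcal{F}(\mathcal{R}_{M,N})$. Both morphisms are pinned down by universal properties — $\mathcal{R}^{\mathcal{E}}$ by the commuting square with the canonical surjections $\eta$ and with $\mathcal{R}$ recalled above, and the monoidal structure of $\mathcal{F}$ by $\mu_{\mathcal{E}}$ and $\mathcal{A}$ — so the claim follows from a diagram chase comparing the two composites $(\mathcal{E}\boxtimes M)\boxtimes(\mathcal{E}\boxtimes N)\to\mathcal{E}\boxtimes(M\boxtimes N)$ and invoking the hexagon identities for $\mathcal{R}$ on $\mathcal{C}$; nothing harder than bookkeeping is expected. (All three parts are of course contained in \cite{CKM}; the point here is only to indicate the mechanism.)
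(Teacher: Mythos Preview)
Your sketch is essentially correct and in fact more detailed than the paper's own proof, which simply cites \cite[Proposition~2.65 and Theorem~2.67]{CKM} for (2) and (3) and for the braided monoidal part of (1). Your unpacking of the monodromy-factorization argument for closure under $\boxtimes$, of the $\iota$-insertion trick for the converse in (2), and of the diagram chase in (3) is the right mechanism and matches what is done in \cite{CKM}.

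There is one point you miss that the paper explicitly addresses. In the paper's setting (Assumption~\ref{assumption1}--\ref{assumption3}), the supercategory $\mathcal{C}$ has underlying abelian category, and ``braided monoidal subsupercategory'' in (1) is meant to include that $\underline{\mathcal{C}^{0}}$ is abelian, i.e.\ that $\mathcal{C}^{0}$ is closed under kernels and cokernels. This does \emph{not} follow from fullness and closure under $\boxtimes$ alone; it requires the exactness of $\mathcal{E}\boxtimes\bullet$ (Assumption~\ref{assumption3}, or at least right exactness as in Remark~\ref{needs only right exactness}) so that for a morphism $f$ in $\mathcal{C}^{0}$ the monodromy $\mathcal{M}_{\mathcal{E},-}$ restricts to the identity on $\ker f$ and descends to the identity on $\coker f$. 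The paper singles this out as the one step beyond \cite{CKM}, and you should add it to your argument for (1).
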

\proof
By \cite[Theorem 2.67]{CKM}, $\mathcal{C}^0$ is a $\C$-linear additive braided monoidal supercategory, Thus to show (1), it remains to show that $\mathcal{C}^0$ is closed under kernel and cokernel, which immediately follows from the exactness of $\mathcal{E}\boxtimes\bullet$ in Assumption \ref{assumption3}. (2) is \cite[Proposition 2.65]{CKM} and (3) is \cite[Theorem 2.67]{CKM}.
\endproof
At last, the induction functor $\mathcal{F}$ is related to the forgetful functor
$$\G\colon \Rep(\E)\rightarrow \mathcal{C},\quad (M,\mu_M)\rightarrow M.$$
by the Frobenius reciprocity:
\begin{proposition}[{\cite[Lemma 2.61]{CKM}}]\label{Frobenius reciprocity}
The superfunctor $\G\colon \Rep(\E)\rightarrow \mathcal{C}$ is right adjoint to the superfunctor $\F\colon \mathcal{C}\rightarrow \Rep(\E)$, that is, we have a natural isomorphism
\begin{align*}
\Hom_{\Rep(\E)}(\F(N),M)\simeq \Hom_\mathcal{C}(N,\G(M)),
\end{align*}
for $M\in Ob(\Rep(\E))$ and $N\in Ob(\mathcal{C})$. 
More explicitly, for $f\in  \Hom_\mathcal{C}(N,\G(M))$, the corresponding morphism of $\E$-modules is given by
\begin{align*}
\F(N)=\E\boxtimes N\rightarrow M,\quad a\boxtimes m\mapsto \mu_M(a\boxtimes f(m)).
\end{align*}
\end{proposition}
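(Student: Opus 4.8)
The plan is to establish the adjunction by writing down the natural bijection explicitly, together with its inverse, and then verifying the required compatibilities by diagram chases using the algebra and module axioms of \S\ref{algebra object}. Concretely, for $f\in\Hom_{\mathcal{C}}(N,\G(M))=\Hom_{\mathcal{C}}(N,M)$ I would set
\[
\widetilde{f}\colon \F(N)=\E\boxtimes N\xrightarrow{\ \id_{\E}\boxtimes f\ }\E\boxtimes M\xrightarrow{\ \mu_M\ }M,
\]
which is an even morphism since $\mu_M$ is even, and for $g\in\Hom_{\Rep(\E)}(\F(N),M)$ I would set
\[
\widehat{g}\colon N\xrightarrow{\ l_N^{-1}\ }1_{\mathcal{C}}\boxtimes N\xrightarrow{\ \iota\boxtimes\id_N\ }\E\boxtimes N\xrightarrow{\ g\ }M .
\]
Reading $\widetilde{f}$ off on elements through the faithful forgetful functor of Assumption \ref{assumption2} gives exactly the asserted formula $a\boxtimes m\mapsto\mu_M(a\boxtimes f(m))$.

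The key steps, in order, are: (1) check that $\widetilde{f}$ is a morphism of $\E$-modules, i.e. $\widetilde{f}\circ\mu_{\F(N)}=\mu_M\circ(\id_{\E}\boxtimes\widetilde{f})$; unwinding $\mu_{\F(N)}=(\mu_{\E}\boxtimes\id_N)$ conjugated by the associator and invoking functoriality of $\boxtimes$ together with the module-associativity axiom (M2) for $M$, this collapses to naturality of $\mathcal{A}$ and no further input is needed. (2) Verify $f\mapsto\widetilde{f}$ and $g\mapsto\widehat{g}$ are mutually inverse: for $\widehat{\widetilde{f}}$ the interchange law gives $(\id_{\E}\boxtimes f)\circ(\iota\boxtimes\id_N)=(\iota\boxtimes\id_M)\circ(\id_{1_{\mathcal{C}}}\boxtimes f)$, and then the module-unity axiom (M1), $\mu_M\circ(\iota\boxtimes\id_M)=l_M$, together with naturality of $l$ yields $\widehat{\widetilde{f}}=l_M\circ(\id_{1_{\mathcal{C}}}\boxtimes f)\circ l_N^{-1}=f$; for $\widetilde{\widehat{g}}$ one replaces $\mu_M\circ(\id_{\E}\boxtimes g)$ by $g\circ\mu_{\F(N)}$ using that $g$ is an $\E$-module map, then simplifies $\mu_{\F(N)}\circ\bigl(\id_{\E}\boxtimes((\iota\boxtimes\id_N)\circ l_N^{-1})\bigr)$ to $\id_{\E\boxtimes N}$ via the algebra-unity axiom (A1), $\mu_{\E}\circ(\iota\boxtimes\id_{\E})=l_{\E}$, and the coherence of $\mathcal{A}$ and $l$. (3) Finally, naturality in both $N$ and $M$ is immediate from functoriality of $\boxtimes$ and the defining compatibility of a morphism of $\E$-modules with the action maps.

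The only point requiring care is the bookkeeping of the coherence isomorphisms $\mathcal{A},l,r$ in the non-strict monoidal supercategory; this is where a careless chase would go wrong, but it is a purely mechanical obstacle rather than a conceptual one. Moreover, since every structural morphism entering the argument ($\iota$, $\mu_{\E}$, $\mu_M$, and the coherence isomorphisms) is even, the Koszul signs of the supercategory never appear, so the verification is word-for-word the purely even computation of \cite{KO}; in this sense the statement is the super-analogue recorded as \cite[Lemma 2.61]{CKM}, and one may simply quote it, the explicit formula above being what is actually used in the sequel.
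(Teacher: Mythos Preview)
Your proof is correct and is the standard argument for Frobenius reciprocity of the induction/forgetful pair for an algebra object in a monoidal (super)category. Note, however, that the paper does not supply its own proof of this proposition: it is stated with a direct citation to \cite[Lemma~2.61]{CKM} and no argument is given. Your write-up is thus not being compared against an in-paper proof but rather against the cited reference, and what you have sketched is precisely the argument one finds there (and, in the purely even case, in \cite{KO}). Your observation that all structural maps involved are even, so that no Koszul signs intervene and the super case reduces verbatim to the even one, is exactly the point; you could simply cite the result, as the paper does.
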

\subsection{Categorical simple current extensions}\label{Categorical simple current extensions}
Let $\mathcal{C}$ be a braided monoidal supercategory as in \S \ref{Block decomposition and Monodromy filtration}, satisfying Assumption \ref{assumption1}--\ref{assumption3} and the following assumption.
\begin{assumption}\label{assumption4}
Every object in $\mathcal{C}$ has finite length.
\end{assumption}
Let $\E$ be an algebra object in $\mathcal{C}$ of the form
$$\E=\bigoplus_{g\in G} S_g,$$
where $\{S_g\}_{g\in G}\subset \Pic\mathcal{C}$ is a set of simple currents parametrized by a finite abelian group $G$ with 
$S_e=1_\mathcal{C}$. Here $e\in G$ denotes the unit of $G$. We call $\mathcal{E}$ a categorical simple current extension of $1_\mathcal{C}$.
In the rest of this subsection, we assume 
\begin{enumerate}
\item[(S1)]
the product $\mu_\E$ restricts to a non-zero morphism $S_g\boxtimes S_h\rightarrow S_{gh}$ for $g,h\in G$,
\item[(S2)]
the action of $G$ on $\irr\mathcal{C}$ is fixed-point free.
\end{enumerate}
Note that these assumptions imply $S_g\simeq S_h$ if and only if $g=h$.
\begin{proposition}\label{induction} Suppose (S1) and (S2).
\begin{enumerate}
\item If $M$ is a simple object in $\mathcal{C}$, so is $\mathcal{F}(M)$ in $\mathrm{Rep}(\mathcal{E})$.
\item For a simple object $M$ in $\mathrm{Rep}(\mathcal{E})$, there exists a simple object $N$ in $\mathcal{C}$ such that $M\simeq \mathcal{F}(N)$ in $\mathrm{Rep}(\mathcal{E})$.
\item For simple objects $M$ and $N$ in $\mathcal{C}$, we have $\mathcal{F}(M)\simeq \mathcal{F}(N)$ in $\mathrm{Rep}(\mathcal{E})$ if and only if we have $M\simeq S_g\boxtimes N$ for some $g\in G$.
\item The supercategory $\mathcal{C}$ is semisimple if and only if $\mathrm{Rep}(\mathcal{E})$ is semisimple. In this case, $\mathcal{C}^0\subset \mathcal{C}$ and $\mathrm{Rep}^0(\mathcal{E})\subset \mathrm{Rep}(\mathcal{E})$ are also semisimple.
\end{enumerate}
\end{proposition}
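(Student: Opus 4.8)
The plan is to derive all four parts from Frobenius reciprocity (Proposition \ref{Frobenius reciprocity}), the exactness of $S\boxtimes(\cdot)$ for a simple current $S$ (Lemma \ref{lem: exactness of simple currents}), Schur's lemma in the super setting (Lemma \ref{categorical Schur}, which makes every nonzero even morphism between simple objects an isomorphism), and the hypotheses (S1)--(S2). The recurring input is the splitting $\G\F(M)=\E\boxtimes M\simeq\bigoplus_{g\in G}S_g\boxtimes M$ in $\mathcal C$, in which every summand is simple by Lemma \ref{lem: exactness of simple currents} and the summands are pairwise non-isomorphic by (S2) (since $S_g\boxtimes M\simeq S_h\boxtimes M$ would force $gh^{-1}$ to fix the class of $M$, hence $g=h$).

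For part (1), I would take a nonzero $\E$-submodule $M'\subseteq\F(M)$; since objects have finite length (Assumption \ref{assumption4}) it has a simple subobject, and because $\bigoplus_g S_g\boxtimes M$ is semisimple with multiplicity-free socle, that subobject is literally one summand $S_{g_0}\boxtimes M\subseteq M'$. Applying $\mu_{\F(M)}=(\mu_{\E}\boxtimes\id_M)\circ\mathcal A$ to $\E\boxtimes(S_{g_0}\boxtimes M)\subseteq\E\boxtimes M'$ and using that (S1) forces $\mu_{\E}$ to restrict to a nonzero, hence invertible, morphism $S_h\boxtimes S_{g_0}\to S_{hg_0}$, one checks summand by summand that $\mu_{\E}$ carries $\E\boxtimes(S_{g_0}\boxtimes M)$ onto all of $\bigoplus_h S_{hg_0}\boxtimes M=\F(M)$, so $M'=\F(M)$. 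For part (2), given a simple $M\in\Rep(\E)$, I would pick a simple subobject $N$ of the finite-length object $\G(M)\in\mathcal C$ (nonzero since $\G$ is faithful); Frobenius reciprocity turns the even inclusion $N\hookrightarrow\G(M)$ into a nonzero even morphism $\F(N)\to M$, and since $\F(N)$ is simple by part (1) and $M$ is simple, this is an isomorphism. For part (3), the ``if'' direction uses that $\F$ is strong monoidal and that $\F(S_g)$ — simple by part (1), with a nonzero even morphism to the unit $\E=\F(1_{\mathcal C})$ supplied by Frobenius reciprocity, $\E$ being simple again by part (1) — is isomorphic to $\E$, whence $\F(M)\simeq\F(S_g\boxtimes N)\simeq\F(S_g)\boxtimes_{\E}\F(N)\simeq\F(N)$; the ``only if'' direction is immediate from $\Hom_{\Rep(\E)}(\F(M),\F(N))\simeq\bigoplus_{g}\Hom_{\mathcal C}(M,S_g\boxtimes N)$ together with the simplicity of $M,N$.

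Part (4) I would handle by the standard induction--quotient argument. If $\mathcal C$ is semisimple, then for $M\in\Rep(\E)$ the counit $\mu_M\colon\F\G(M)\to M$ is an epimorphism (it is split epi in $\mathcal C$ by axiom (M1), hence epi in $\Rep(\E)$ as $\G$ reflects epimorphisms), while $\F\G(M)=\bigoplus_i\F(T_i)$ is a finite direct sum of simple $\E$-modules by part (1) because $\G(M)=\bigoplus_i T_i$ is a finite sum of simples; a quotient of a semisimple object of $\underline{\Rep(\E)}$ is semisimple, so $\Rep(\E)$ is semisimple. Conversely, if $\Rep(\E)$ is semisimple, then $\F(X)$ is semisimple, so $\E\boxtimes X=\G\F(X)$ is semisimple in $\mathcal C$ by part (2), and $X\simeq S_e\boxtimes X$ is a direct summand of it, hence semisimple. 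The assertions for $\mathcal C^0$ and $\Rep^0(\E)$ then follow since these are full subcategories closed under subobjects (their monodromy-triviality conditions pass to subobjects using exactness of $\E\boxtimes(\cdot)$; cf.\ Theorem \ref{locality statement}), so in a semisimple ambient category they are themselves semisimple. I expect the only genuinely fiddly step to be the summand-wise identification of $\mu_{\F(M)}$ in part (1): unwinding the associativity isomorphism and the distributivity of $\boxtimes$ over the direct sum $\E=\bigoplus_g S_g$ is routine but must be done carefully; everything else is formal manipulation of the adjunction and Schur's lemma.
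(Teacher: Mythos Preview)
Your proof is correct. Parts (1)--(3) follow essentially the same route as the paper: decompose $\mathcal{F}(M)=\bigoplus_g S_g\boxtimes M$ with pairwise non-isomorphic simple summands by (S2), use (S1) to propagate a single summand to the whole via $\mu_{\mathcal{F}(M)}$, and deduce (2), (3) from (1) together with Frobenius reciprocity. The fiddly step you flag in (1) is exactly the one the paper handles in the same way.

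Where you genuinely diverge is the forward implication in (4), ``$\mathcal{C}$ semisimple $\Rightarrow\Rep(\mathcal{E})$ semisimple''. The paper argues by splitting an arbitrary short exact sequence of simples in $\Rep(\mathcal{E})$: it reduces to the case $\mathcal{N}_1\simeq\mathcal{N}_2=\mathcal{F}(N)$, further reduces to cyclic $G=\mathbb{Z}_n$, writes the $S_1$-action on $\mathcal{N}\simeq\mathcal{F}(N)\oplus\mathcal{F}(N)$ as a $2\times 2$ block matrix $\bigl(\begin{smallmatrix}E&A\\0&E\end{smallmatrix}\bigr)$ in a basis of one-dimensional intertwining spaces, uses $S_1^{\,n}\simeq 1_\mathcal{C}$ to get $\sum_i a_i=0$, and conjugates to the split form. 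Your argument bypasses all of this: the adjunction counit $\mu_M\colon\mathcal{F}\mathcal{G}(M)\to M$ is split epi in $\mathcal{C}$ by the unit axiom (M1), hence epi in $\Rep(\mathcal{E})$ since $\mathcal{G}$ is exact and faithful; meanwhile $\mathcal{G}(M)$ is a finite sum of simples in $\mathcal{C}$, so $\mathcal{F}\mathcal{G}(M)$ is semisimple in $\Rep(\mathcal{E})$ by (1), and $M$ is a quotient of it. This is shorter and more conceptual; the paper's computation, on the other hand, makes the splitting explicit and isolates exactly where the relation $S_1^{\,n}\simeq 1_\mathcal{C}$ enters. The converse direction and the statements for $\mathcal{C}^0$, $\Rep^0(\mathcal{E})$ are handled the same way in both.
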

\proof
Although these statements are well-known in the theory of vertex algebras, (see e.g., \cite[Proposition 4.5]{CKM}), we include a proof for the completeness of the paper.  
First we prove (1). Let $M$ be a simple object in $\mathcal{C}$. Then $\mathcal{F}(M)=\oplus_{g\in G} S_g\boxtimes M$ and all summands are pairwise non-isomorphic by (S2). Let $\mathcal{N}$ be a nonzero subobject of $\mathcal{F}(M)$ in $\mathrm{Rep}(\mathcal{E})$.
Since $\mathcal{N}$ is a semisimple object in $\mathcal{C}$, it has a simple subobject which is isomorphic to $S_g\boxtimes M$ for some $g\in G$. By (S1), the structure morpihsm $\mu_{\mathcal{F}(M)}$ restricts to an isomorphism $S_h\boxtimes (S_g\boxtimes M)\xrightarrow{\simeq}S_{hg}\boxtimes M$ for any $h\in G$. Since $\mathcal{N}$ is closed under the $\mathcal{E}$-action, it contains $\sum_{h\in G}\mu_{\mathcal{F}(M)}(S_h\boxtimes(S_g\boxtimes M))=\oplus_{h\in G}S_{hg}\boxtimes M=\mathcal{F}(M)$. This proves (1). Then (2) and (3) follow from (1) and Proposition \ref{Frobenius reciprocity}.
Finally, we show (4). 
Assume that $\mathrm{Rep}(\mathcal{E})$ is semisimple and take $N\in Ob(\mathcal{C})$. Since $\mathcal{F}(N)$ is semisimple, we have
$$\mathcal{F}(N)\simeq \bigoplus_{i\in I} \mathcal{N}_i$$
for some simple objects $\mathcal{N}_i$ in $\mathrm{Rep}(\mathcal{E})$ indexed by a finite set $I$. Then by (2), we may replace $\mathcal{N}_i$ by $\mathcal{F}(N_i)$ for some simple objects $N_i$ in $\mathcal{C}$. Thus,
\begin{align*}
M\subset \mathcal{F}(N)\simeq \bigoplus_{i\in I}\mathcal{F}(N_i)= \bigoplus_{\begin{subarray}c i\in I\\ g\in G\end{subarray}} S_g\boxtimes N_i.
\end{align*}
Since $S_g\boxtimes N_i$ are all simple objects in $\mathcal{C}$, $N$ is semisimple.
To prove the converse, assume that $\mathcal{C}$ is semisimple. Since every object in $\mathcal{C}$ has finite length, so does every object in $\mathrm{Rep}(\mathcal{E})$. Thus to show that $\mathrm{Rep}(\mathcal{E})$ is semisimple, it suffices to show the splitting of any short exact sequence 
\begin{align}\label{short exact sequence}
0\rightarrow \mathcal{N}_1\rightarrow \mathcal{N}\rightarrow \mathcal{N}_2\rightarrow 0
\end{align}
in $\mathrm{Rep}(\mathcal{E})$ where $\mathcal{N}_1$, $\mathcal{N}_2$ are simple. By (2), we may assume $\mathcal{N}_i=\mathcal{F}(N_i)$ for some simple object $N_i\in Ob(\mathcal{C})$. If $\mathcal{F}(N_1)\not\simeq \mathcal{F}(N_2)$, then $S_g\boxtimes N_1\not\simeq S_h \boxtimes N_2$ for all $g,h\in G$ in $\mathcal{C}$. This implies the splitting of \eqref{short exact sequence} in $\mathrm{Rep}(\mathcal{E})$. Thus we may assume $\mathcal{F}(N_1)\simeq \mathcal{F}(N_2)$ and moreover $N:=N_1=N_2$ from the beginning. Since $G$ is a finite abelian group, it is isomorphic to some direct product of cyclic groups $\prod_{i} \mathbb{Z}_{n_i}$. Then it suffices to show the splitting in the case of $G=\mathbb{Z}_n$ for some $n\in \mathbb{Z}_{>0}$. Let $S_p$ denote the simple current corresponding to $p\in \mathbb{Z}_n$. Since the space of intertwining operators
$I\binom{S_{p+1}\boxtimes N}{S_{1}\ S_{p}\boxtimes N}$ is one dimensional, we may take its basis by the intertwining operator
$$S_{1}\boxtimes (S_{p}\boxtimes N)\simeq (S_{1}\boxtimes S_{p})\boxtimes N\simeq S_{p+1}\boxtimes N$$
used in the definition of $\mathcal{F}(N)$. On the other hand, the restriction of the $\mathcal{E}$-module structure of $\mathcal{N}$ gives an intertwining operator $S_1\boxtimes\mathcal{N}\rightarrow\mathcal{N}$.
Along with the decomposition $\mathcal{N}\simeq \mathcal{F}(N)\oplus \mathcal{F}(N)$ in $\mathcal{C}$, the intertwining operator is expressed by the matrix:
$$K:=\left(\begin{array}{c|c}E&A\\\hline 0&E\end{array}\right)$$
where $E=\sum_{i\in {\mathbb{Z}_n}} E_{i+1,i}$ and $A=\sum_{i\in \mathbb{Z}_n} a_i E_{i+1,i}$ for some $a_i\in \mathbb{C}$. Since $S_1^{n}\simeq 1_{\mathcal{C}}$, we have $\sum_i a_i=0$. Then it is straightforward to check that $K$ is conjugate to the matrix $K$ with $a_i=0$ for all $i$. This implies that we may take a decomposition $\mathcal{N}=\mathcal{F}(N)\oplus \mathcal{F}(N)$ in $\mathcal{C}$ which is preserved by the action of $S_1\subset \mathcal{E}$. Since the other action of $S_p\subset \mathcal{E}$ is obtained from the action of $S_{1}$ via iteration, the above decomposition of $\mathcal{N}$ is actually the decomposition as an $\mathcal{E}$-module. The remaining statements in (4) are now obvious. This completes the proof.
\endproof
In particular, we have the following.
\begin{corollary}\label{classification of irreducibles}
The induction functor $\mathcal{F}\colon \mathcal{C}\rightarrow \mathrm{Rep}(\mathcal{E})$ induces the following natural identifications:
\begin{enumerate}
\item $\mathrm{Irr}(\mathrm{Rep}(\mathcal{E}))\simeq \mathrm{Irr}(\mathcal{C})/G$ and $\mathrm{Pic}(\mathrm{Rep}(\mathcal{E}))\simeq\mathrm{Pic}(\mathcal{C})/G$;
\item $\mathrm{Irr}(\mathrm{Rep}^0(\mathcal{E}))\simeq \mathrm{Irr}(\mathcal{C}^0)/G$ and $\mathrm{Pic}(\mathrm{Rep}^0(\mathcal{E}))\simeq\mathrm{Pic}(\mathcal{C}^0)/G$.
\end{enumerate}
\end{corollary}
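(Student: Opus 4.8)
The plan is to deduce everything from Proposition~\ref{induction}, the strong monoidality of the induction functor \cite{CKM}, and Theorem~\ref{locality statement}. First I would settle the statement about simple objects in (1). Since every $S_g$ is a simple current, $S_g\boxtimes\bullet$ is an autoequivalence of $\mathcal{C}$ with quasi-inverse $S_{g^{-1}}\boxtimes\bullet$ (by Lemma~\ref{lem: exactness of simple currents} and $S_g\boxtimes S_{g^{-1}}\simeq 1_{\mathcal{C}}$), so it preserves simplicity; hence $G$ acts on $\mathrm{Irr}(\mathcal{C})$, freely by (S2). By Proposition~\ref{induction}(1) the functor $\mathcal{F}$ maps $\mathrm{Irr}(\mathcal{C})$ into $\mathrm{Irr}(\mathrm{Rep}(\mathcal{E}))$, by Proposition~\ref{induction}(3) it identifies two simple objects exactly when they lie in one $G$-orbit, and by Proposition~\ref{induction}(2) it hits every element of $\mathrm{Irr}(\mathrm{Rep}(\mathcal{E}))$; therefore $\mathcal{F}$ descends to a bijection $\mathrm{Irr}(\mathcal{C})/G\xrightarrow{\simeq}\mathrm{Irr}(\mathrm{Rep}(\mathcal{E}))$.

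For the Picard statement in (1), strong monoidality of $\mathcal{F}$ immediately gives that it sends invertible objects to invertible objects, so it restricts to a map $\mathrm{Pic}(\mathcal{C})\to\mathrm{Pic}(\mathrm{Rep}(\mathcal{E}))$ which, together with the previous paragraph, descends to an injection $\mathrm{Pic}(\mathcal{C})/G\hookrightarrow\mathrm{Pic}(\mathrm{Rep}(\mathcal{E}))$. Surjectivity is the step that needs a small argument: given a simple current $\mathcal{S}$ in $\mathrm{Rep}(\mathcal{E})$, write $\mathcal{S}\simeq\mathcal{F}(N)$ and $\mathcal{S}^{-1}\simeq\mathcal{F}(N')$ with $N,N'\in\mathrm{Irr}(\mathcal{C})$ by Proposition~\ref{induction}(2); then $\mathcal{F}(N\boxtimes N')\simeq\mathcal{F}(N)\boxtimes_{\mathcal{E}}\mathcal{F}(N')\simeq\mathcal{E}\simeq\mathcal{F}(1_{\mathcal{C}})$, and applying the forgetful functor gives $\bigoplus_{g\in G}S_g\boxtimes(N\boxtimes N')\simeq\bigoplus_{g\in G}S_g$ in $\mathcal{C}$. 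Comparing lengths (here Assumption~\ref{assumption4} is used) forces $N\boxtimes N'$ to be simple, hence isomorphic to some $S_h$, so $N$ is invertible with inverse $S_{h^{-1}}\boxtimes N'$ and $\mathcal{S}=\mathcal{F}(N)$ with $N\in\mathrm{Pic}(\mathcal{C})$. I expect this ``reverse'' step --- ruling out that a non-invertible $N$ could have invertible induction --- to be the only nontrivial part.

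Finally, part (2) follows by running the same two arguments inside the braided monoidal subcategories $\mathcal{C}^0\subset\mathcal{C}$ and $\mathrm{Rep}^0(\mathcal{E})\subset\mathrm{Rep}(\mathcal{E})$. By Theorem~\ref{locality statement}, $\mathcal{F}$ restricts to a braided monoidal functor $\mathcal{C}^0\to\mathrm{Rep}^0(\mathcal{E})$ and $\mathcal{F}(N)\in\mathrm{Rep}^0(\mathcal{E})$ if and only if $N\in\mathcal{C}^0$. A short monodromy computation, as in the proof of Proposition~\ref{decomposition by a single current}(ii) together with the commutativity axiom (A3) for $\mathcal{E}$, shows that each $S_g$ lies in $\mathcal{C}^0$, so $\mathcal{C}^0$ is stable under the $G$-action. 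Every simple object of $\mathrm{Rep}^0(\mathcal{E})$ is simple in $\mathrm{Rep}(\mathcal{E})$, hence of the form $\mathcal{F}(N)$ with $N\in\mathrm{Irr}(\mathcal{C})$, and then $N\in\mathcal{C}^0$; conversely $\mathcal{F}$ carries $\mathrm{Irr}(\mathcal{C}^0)$ into $\mathrm{Irr}(\mathrm{Rep}^0(\mathcal{E}))$. Combining this with Proposition~\ref{induction}(3) and the strong monoidality of $\mathcal{F}|_{\mathcal{C}^0}$, exactly as in the first two paragraphs, yields the bijections $\mathrm{Irr}(\mathcal{C}^0)/G\xrightarrow{\simeq}\mathrm{Irr}(\mathrm{Rep}^0(\mathcal{E}))$ and $\mathrm{Pic}(\mathcal{C}^0)/G\xrightarrow{\simeq}\mathrm{Pic}(\mathrm{Rep}^0(\mathcal{E}))$.
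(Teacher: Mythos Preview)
Your proposal is correct and follows the same approach the paper intends: the paper states this corollary with no separate proof, presenting it as an immediate consequence of Proposition~\ref{induction} (together with the monoidality of $\mathcal{F}$ and Theorem~\ref{locality statement} for the local case). You have simply made explicit the details the paper omits, in particular the surjectivity argument for the Picard statement via a length comparison, which is a perfectly valid way to complete the reasoning.
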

Since the induction functor is a monoidal superfunctor, we may write the fusion ring of $\mathrm{Rep}(\mathcal{E})$ in terms of $\mathcal{C}$.
\begin{corollary}\label{Grothendieck groups for induction}
Suppose (S1) and (S2). If the superfunctor $\boxtimes_{\mathcal{E}}$ is bi-exact, then the induction functor $\mathcal{F}\colon \mathcal{C}\rightarrow \mathrm{Rep}(\mathcal{E})$ induces the following isomorphism of rings:\begin{enumerate}
\item $\mathcal{K}(\mathrm{Rep}(\mathcal{E}))\simeq \mathcal{K}(\mathcal{C})/\mathcal{I}$
 where $\mathcal{I}=\langle [M]-[S_g\boxtimes M]\mid g\in G, M\in Ob(\mathcal{C})\rangle$;
\item $\mathcal{K}(\mathrm{Rep}^0(\mathcal{E}))\simeq \mathcal{K}(\mathcal{C}^0)/\mathcal{I}^0$
where $\mathcal{I}^0=\langle [M]-[S_g\boxtimes M]\mid g\in G, M\in Ob(\mathcal{C}^0)\rangle$.
\end{enumerate}
\end{corollary}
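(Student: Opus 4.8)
The plan is to show that the exact $\C$-linear strong monoidal superfunctor $\mathcal{F}$ descends to the claimed ring isomorphisms by a basis-matching argument. First I would record that $\mathcal{F}$ is exact: since $\mathcal{F}(M)=\mathcal{E}\boxtimes M$ and $\boxtimes$ is biexact (Assumption \ref{assumption3}), the composite $\G\circ\mathcal{F}=\mathcal{E}\boxtimes\bullet\colon\mathcal{C}\to\mathcal{C}$ is exact, and since the forgetful functor $\G\colon\Rep(\mathcal{E})\to\mathcal{C}$ is exact and faithful it reflects exactness, so $\mathcal{F}$ is exact. Being moreover $\C$-linear and strong monoidal, $\mathcal{F}$ induces a ring homomorphism $\mathcal{K}(\mathcal{F})\colon\mathcal{K}(\mathcal{C})\to\mathcal{K}(\Rep(\mathcal{E}))$, $[M]\mapsto[\mathcal{F}(M)]$, where $\mathcal{K}(\Rep(\mathcal{E}))$ is a ring because $\boxtimes_\mathcal{E}$ is biexact by hypothesis. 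By Assumption \ref{assumption4} every object of $\mathcal{C}$, and hence via $\G$ every object of $\Rep(\mathcal{E})$, has finite length, so $\mathcal{K}(\Rep(\mathcal{E}))$ is the free abelian group on $\irr(\Rep(\mathcal{E}))$, and Proposition \ref{induction}(2) makes $\mathcal{K}(\mathcal{F})$ surjective.

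Next I would show $\mathcal{I}\subseteq\ker\mathcal{K}(\mathcal{F})$. For simple $M$, Proposition \ref{induction}(3) gives $\mathcal{F}(S_g\boxtimes M)\simeq\mathcal{F}(M)$ in $\Rep(\mathcal{E})$ (alternatively, $\mu_\mathcal{E}$ restricts by (S1) and the associativity axiom (A2) to an isomorphism of $\mathcal{E}$-modules $\mathcal{F}(S_g)\xrightarrow{\sim}\mathcal{E}$, whence $\mathcal{F}(S_g\boxtimes M)\simeq\mathcal{F}(S_g)\boxtimes_\mathcal{E}\mathcal{F}(M)\simeq\mathcal{F}(M)$). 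Extending along a composition series using exactness of $S_g\boxtimes\bullet$ (Lemma \ref{lem: exactness of simple currents}) and of $\mathcal{F}$ shows $[\mathcal{F}(M)]=[\mathcal{F}(S_g\boxtimes M)]$ for every object $M$, so the ideal $\mathcal{I}$ generated by the differences $[M]-[S_g\boxtimes M]$ lies in $\ker\mathcal{K}(\mathcal{F})$. Thus $\mathcal{K}(\mathcal{F})$ factors through a surjective ring homomorphism $\overline{\mathcal{K}(\mathcal{F})}\colon\mathcal{K}(\mathcal{C})/\mathcal{I}\twoheadrightarrow\mathcal{K}(\Rep(\mathcal{E}))$.

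For injectivity I would use (S2). By Lemma \ref{lem: exactness of simple currents} the group $G$ acts on $\irr(\mathcal{C})$, and this action is free by (S2); hence, by Proposition \ref{fusion algebra}, $\mathcal{K}(\mathcal{C})$ is a free $\Z[G]$-module with a $\Z[G]$-basis given by a set of $G$-orbit representatives in $\irr(\mathcal{C})$. Since $\mathcal{I}$ is exactly the ideal (equivalently, the $\Z[G]$-submodule) generated by $\{gx-x\mid g\in G,\ x\in\mathcal{K}(\mathcal{C})\}$, the quotient $\mathcal{K}(\mathcal{C})/\mathcal{I}$ is identified with the coinvariants $\Z\otimes_{\Z[G]}\mathcal{K}(\mathcal{C})$, which is the free abelian group on $\irr(\mathcal{C})/G$, the classes of orbit representatives forming a $\Z$-basis. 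Under $\overline{\mathcal{K}(\mathcal{F})}$ such a basis element $[M]$ maps to $[\mathcal{F}(M)]$, and by Proposition \ref{induction}(1),(3) — equivalently Corollary \ref{classification of irreducibles}(1) — the assignment $[M]\mapsto[\mathcal{F}(M)]$ is a bijection $\irr(\mathcal{C})/G\xrightarrow{\sim}\irr(\Rep(\mathcal{E}))$. Thus $\overline{\mathcal{K}(\mathcal{F})}$ sends a $\Z$-basis bijectively onto a $\Z$-basis, so it is an isomorphism of abelian groups, and being a ring map it is a ring isomorphism; this proves (1).

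For (2) I would run the same argument with $\mathcal{C}^0$ and $\Rep^0(\mathcal{E})$ in place of $\mathcal{C}$ and $\Rep(\mathcal{E})$, using that $\mathcal{F}$ restricts to a braided monoidal superfunctor $\mathcal{C}^0\to\Rep^0(\mathcal{E})$ and that $\mathcal{C}^0$ is an abelian subcategory closed under subquotients (Theorem \ref{locality statement}), and concluding with Corollary \ref{classification of irreducibles}(2). The one extra point needing a separate check is that $\mathcal{C}^0$ is stable under $S_g\boxtimes\bullet$, so that $\mathcal{I}^0$ is a genuine ideal of $\mathcal{K}(\mathcal{C}^0)$: this holds because $\mu_\mathcal{E}$ restricts to an isomorphism $S_g\boxtimes S_h\xrightarrow{\sim}S_{gh}$ by (S1) together with commutativity (A3), forcing $\mathcal{M}_{S_g,S_h}=\id$, so the formula $m_{S_g,S_h\boxtimes M}(1)=m_{S_g,S_h}(1)\boxtimes m_{S_g,M}(1)$ from the proof of Proposition \ref{decomposition by a single current}(ii) gives $m_{S_g,S_h\boxtimes M}(1)=\id$ whenever $M\in Ob(\mathcal{C}^0)$. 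The main obstacle is precisely this injectivity step: turning the surjection into an isomorphism requires combining the freeness of the $G$-action (S2) with the simplicity statements of Proposition \ref{induction} to match the two distinguished bases, and in the local case one must first confirm the stability of $\mathcal{C}^0$ under the simple-current action so that the quotient $\mathcal{K}(\mathcal{C}^0)/\mathcal{I}^0$ even makes sense.
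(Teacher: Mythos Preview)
Your proof is correct and follows essentially the same approach as the paper: the paper's own proof is the two-line remark that (1) follows from Proposition~\ref{induction} and (2) from (1) together with Theorem~\ref{locality statement}, and you have simply unpacked these references into the explicit surjectivity/kernel/basis-matching argument they encode. Your extra verification that $\mathcal{C}^0$ is stable under $S_g\boxtimes\bullet$ (via $\mathcal{M}_{S_g,S_h}=\id$ from (S1) and (A3)) is a detail the paper leaves implicit but which is indeed needed for the quotient $\mathcal{K}(\mathcal{C}^0)/\mathcal{I}^0$ to make sense.
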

\proof 
(1) follow from Proposition \ref{induction}. (2) follows from (1) and Theorem \ref{locality statement}.
\endproof
\section{Proof of Proposition \ref{fusion ring of subregular}}\label{Level_Rank_Duality}

We recall the level-rank duality between $L_m(\mathfrak{sl}_n)$ and $L_n(\mathfrak{sl}_m)$ for $n,m\geq2$,  \cite{Fr,OS}. The isomorphism $\C^{nm}\simeq \C^n\otimes \C^m$ induces an embedding of Lie algebras $\mathfrak{sl}_n\oplus \mathfrak{sl}_m\hookrightarrow \mathfrak{sl}_{nm}$ and thus $L_m(\mathfrak{sl}_n)\otimes L_n(\mathfrak{sl}_m)\hookrightarrow L_1(\mathfrak{sl}_{nm})$. It is a conformal embedding and gives a finite decomposition
$$L_1(\mathfrak{sl}_{nm})\simeq \bigoplus_{\begin{subarray}c \lambda\in \widehat{P}_+^m(n)\\ \pi_{P/Q}(\lambda)=0\end{subarray}}L_m(\lambda)\otimes L_n(\lambda^t)$$
as $L_m(\mathfrak{sl}_n)\otimes L_n(\mathfrak{sl}_m)$-modules where $\lambda\mapsto \lambda^t$ denotes the transpose. More precisely, let $C_{n,m}$ denote the set of Young diagrams lying in the $n\times m$ rectangle. Then we have an embedding $\widehat{P}_+^m(n)\hookrightarrow C_{n,m}$, ($\lambda=\sum_{i\in \Z_n}a_i\Lambda_{i}\mapsto \sqcup_{i=1}^{n-1}a_i R_i$) where $R_i$ denote the column of boxes of height $i$. ($R_n$ is identified with the empty set.) Then the transpose $\lambda\mapsto \lambda^t$ is just the transpose of Young diagrams, e.g.,
\begin{align*}
\setlength{\unitlength}{1mm}
\begin{picture}(0, 0)(20,7)
\put(-33,3){$\widehat{P}_+^5(3)\ni\Lambda_0+\Lambda_1+3\Lambda_2=$}
\put(12,0){\line(0,1){03}}
\put(12,0){\line(1,0){9}}
\put(12,3){\line(1,0){12}}
\put(12,6){\line(1,0){12}}
\put(12,0){\line(0,1){6}}
\put(15,0){\line(0,1){6}}
\put(18,0){\line(0,1){6}}
\put(21,0){\line(0,1){6}}
\put(24,3){\line(0,1){3}}
\put(30,3){$\xrightarrow{t}$}
\put(38,-3){\line(0,1){12}}
\put(41,-3){\line(0,1){12}}
\put(44,0){\line(0,1){9}}
\put(38,9){\line(1,0){6}}
\put(38,6){\line(1,0){6}}
\put(38,3){\line(1,0){6}}
\put(38,0){\line(1,0){6}}
\put(38,-3){\line(1,0){3}}
\put(48,3){$=\Lambda_0+\Lambda_3+\Lambda_4\in \widehat{P}_+^3(5)$}
\put(85,0){.}
\end{picture}
\quad\\ \quad\\
\end{align*}
Note that $\pi_{P/Q}(\lambda)=\ell(\lambda)\in \Z_n$ where $\ell(\lambda)$ denotes the number of boxes of the Young diagram of $\lambda$. 
By the Frenkel--Kac construction, the natural embedding $Q(\mathfrak{sl}_{nm})\hookrightarrow\Z^{nm}$ gives rise to a vertex algebra embedding $L_1(\mathfrak{sl}_{nm})\simeq V_{Q(\mathfrak{sl}_{nm})}\hookrightarrow V_{\Z^{nm}}$. Then $\Com(L_1(\mathfrak{sl}_{nm}),V_{\Z^{nm}})\simeq V_{\sqrt{nm}\Z}$ with $\sqrt{nm}\Z\hookrightarrow \Z^{nm};\ a\sqrt{nm}\mapsto (a,a,\ldots,a)$ and we have
$$V_{\Z^{nm}}\simeq \bigoplus_{a\in \Z_{nm}}L_{nm}(\Lambda_a)\otimes V_{\frac{a}{\sqrt{nm}}+\sqrt{nm}\Z}$$
as $L_1(\mathfrak{sl}_{nm})\otimes V_{\sqrt{nm}\Z}$-modules. Now, by the branching law of $L_1(\Lambda_a)$ as an $L_m(\mathfrak{sl}_n)\otimes L_n(\mathfrak{sl}_m)$-module \cite[Theorem 4.1]{OS}, we obtain 
\begin{align}\label{pre level-rank duality} 
V_{\Z^{nm}}\simeq \bigoplus_{a\in \Z_{nm}}\left(\bigoplus_{\substack{\lambda\in \widehat{P}_+^m(n)\\\Proj(\lambda)=a}}L_m(\lambda)\otimes L_n(\sigma^{\frac{a-\ell(\lambda)}{n}}(\lambda^t))\right)\otimes  V_{\frac{a}{\sqrt{nm}}+\sqrt{nm}\Z}
\end{align}
as $L_m(\mathfrak{sl}_n)\otimes L_n(\mathfrak{sl}_m)\otimes V_{\sqrt{nm}\Z}$-modules, where we identify $a\in\Z_{nm}$ with its image in $\Z_m$ under the natural projection $\Z_{nm}\to\Z_{m}$.
Thus, $\mathcal{E}_{m,n}:=\Com(L_m(\mathfrak{sl}_n),V_{\Z^{nm}})$ is an order $m$ simple current extension of $L_n(\mathfrak{sl}_m)\otimes V_{\sqrt{nm}\Z}$ and we have
$$\Com(L_m(\mathfrak{sl}_n),V_{\Z^{nm}})\simeq \bigoplus_{a\in \Z_{m}} L_n(n\Lambda_a)\otimes V_{\frac{an}{\sqrt{nm}\Z}+\sqrt{nm}\Z}.$$
Therefore, by Theorem \ref{extension law}, we have
\begin{align*}
\mathrm{Irr}(\E_{m,n})= \left\{\mathbf{M}(\lambda,a)\,\Bigg| \begin{array}{c}(\lambda,a)\in \widehat{P}_+^n(m)\times \Z_{nm}/\Z_m\\\Proj(\lambda)=a\in \Z_m\end{array}\right\},
\end{align*}
where $\Z_m$ acts on $\widehat{P}_+^n(m)\times \Z_{nm}$ by $r \cdot (\lambda, a) = (\sigma^r(\lambda), a + r n)$, ($r \in \Z_m$), and
$$\mathcal{K}(\mathcal{E}_{m,n})\simeq (\mathcal{K}(L_n(\mathfrak{sl}_m))\otimes_{\Z[\Z_m]}\Z[\Z_{nm}])^{\Z_m},\quad \mathbf{M}(\lambda,a)\mapsto L_n(\lambda)\otimes [a].$$
Now, \eqref{pre level-rank duality} implies the decomposition
$$V_{\Z^{nm}}\simeq \bigoplus_{\lambda\in \widehat{P}_+^m(n)}L_m(\lambda)\otimes \mathbf{M}(\lambda^t,\ell(\lambda))$$
as $L_m(\mathfrak{sl}_n)\otimes \mathcal{E}_{m,n}$-modules. This gives an one-to-one correspondence
$$\irr(L_m(\mathfrak{sl}_n))\rightarrow \irr(\mathcal{E}_{m,n}),\quad L_m(\lambda)\mapsto \mathbf{M}(\lambda^t,\ell(\lambda))$$
which implies a braided-reverse equivalence of braided tensor categories between $L_m(\mathfrak{sl}_n)\Mod$ and $\mathcal{E}_{m,n}\Mod$ by \cite{CKM2} and, in particular, an isomorphism
\begin{align}\label{level-rank duality}
\mathcal{K}(L_m(\mathfrak{sl}_n))\simeq \left(\mathcal{K}(L_n(\mathfrak{sl}_m))\underset{\Z[\Z_m]}{\otimes}\Z[\Z_{nm}]\right)^{\Z_m},\quad L_m(\lambda)\mapsto L_n(\lambda^t)\otimes [\ell(\lambda)].
\end{align}

\end{document}